\newcommand{\pigmonkey}{\lambda}
\DeclareMathOperator{\cInd}{c-Ind}
\newcommand{\Eins}{\boldsymbol{1}}
\newcommand{\Proj}{\operatorname{Proj}}
\newlength\xvec@height\newlength\xvec@depth\newlength\xvec@width\newcommand{\xvec}[2][]{\ifmmode\settoheight{\xvec@height}{$#2$}\settodepth{\xvec@depth}{$#2$}\settowidth{\xvec@width}{$#2$}\else\settoheight{\xvec@height}{#2}\settodepth{\xvec@depth}{#2}\settowidth{\xvec@width}{#2}\fi\def\xvec@arg{#1}\def\xvec@dd{:}\def\xvec@d{.}\raisebox{.2ex}{\raisebox{\xvec@height}{\rlap{\kern.05em\begin{tikzpicture}[scale=1]
    \pgfsetroundcap
    \draw (.05em,0)--(\xvec@width-.05em,0);
    \draw (\xvec@width-.05em,0)--(\xvec@width-.15em, .075em);
    \draw (\xvec@width-.05em,0)--(\xvec@width-.15em,-.075em);
    \ifx\xvec@arg\xvec@d\fill(\xvec@width*.45,.5ex) circle (.5pt);\else\ifx\xvec@arg\xvec@dd\fill(\xvec@width*.30,.5ex) circle (.5pt);\fill(\xvec@width*.65,.5ex) circle (.5pt);\fi\fi\end{tikzpicture}}}}#2}
\def\A{\mathbb A}
\def\C{\mathbb C}
\def\F{\mathbb F}
\def\Q{\mathbb{Q}}
\def\Z{\mathbb{Z}}
\def\Fbar{\overline{\F}}
\def\m{\mathfrak m}
\def\Mod{\mathrm{Mod}}
\def\chibar{\overline{\chi}}
\def\thetahat{\widehat{\theta}}
\def\unif{\varpi}
\def\id{\mathrm{id}}
\def\et{\mathrm{\acute{e}t}}
\def\alg{\mathrm{alg}}
\def\lalg{\mathrm{l.alg}}
\def\ladm{\mathrm{l.adm}}
\def\an{\mathrm{an}}
\def\ab{\mathrm{ab}}
\def\sm{\mathrm{sm}}
\def\cont{\mathrm{cont}}
\def\GL{\operatorname{GL}}
\def\Gal{\mathrm{Gal}}
\def\Sym{\mathrm{Sym}}
\def\Ext{\mathrm{Ext}}
\def\End{\mathrm{End}}
\def\Art{\mathop{\mathrm{Art}}\nolimits}
\def\Hom{\mathop{\mathrm{Hom}}\nolimits}
\def\Spec{\mathop{\mathrm{Spec}}\nolimits}
\def\Spf{\mathop{\mathrm{Spf}}\nolimits}
\def\Frob{\mathop{\mathrm{Frob}}\nolimits}
\def\Ind{\mathop{\mathrm{Ind}}\nolimits}
\def\Fil{\mathop{\mathrm{Fil}}\nolimits}
\def\soc{\mathop{\mathrm{soc}}\nolimits}
\def\rhobar{\overline{\rho}}
\def\cotimes{\widehat{\otimes}}
\def\WD{\mathrm{WD}}
\def\m{\mathfrak{m}}
\def\iso{\buildrel \sim \over \longrightarrow}
\def\triv{\mathds{1}}
\def\cosoc{\mathop{\mathrm{cosoc}}\nolimits}
\newcommand{\onto}{\twoheadrightarrow}
\newcommand{\into}{\hookrightarrow}
\newcommand{\To}{\longrightarrow}
\newcommand{\isoto}{\stackrel{\sim}{\To}}
\newcommand{\Dcris}{D_{\cris}}
\newlength{\ownl}
\DeclareMathOperator{\cind}{c-Ind}
\newcommand{\rec}{{\operatorname{rec}}}
\newcommand{\tr}{{\operatorname{tr}\,}}
\newcommand{\PGL}{\operatorname{PGL}}
\newcommand{\cris}{{\operatorname{cris}}}
\newcommand{\op}{{\operatorname{op}}}
\newcommand{\univ}{{\operatorname{univ}}}
\newcommand{\OO}{{\mathcal{O}}}
\renewcommand{\cH}{\mathcal{H}}
\newcommand{\cO}{\mathcal{O}}
\newcommand{\cV}{\check{\mathbf{V}}}
\newcommand{\ga}{{\mathfrak{a}}}
\newcommand{\barR}{\overline{{R}}}
\newcommand{\tP}{\widetilde{{P}}}
\newcommand{\tR}{\widetilde{{R}}}
\newcommand{\tw}{\mathrm{tw}}
\newcommand{\varepsilonbar  }{\overline{\varepsilon}}
 \newcommand{\sigmabar   }{\overline{\sigma}}
\def\RCS$#1: #2 ${\expandafter\def\csname RCS#1\endcsname{#2}}
\DeclareMathOperator{\mSpec}{m-Spec}
\newcommand{\bb}{\mathbb}
\newcommand{\mf}{\mathfrak}
\newcommand{\TT}{\bb{T}}
\DeclareMathOperator{\ssg}{ss}
\newcommand{\rbar}{{\bar{r}}}
\newcommand{\HT}{\operatorname{HT}}
 \newcommand{\Qp}{{\Q_p}}
\newcommand{\Qptimes}{{\Q_p^\times}}
\newcommand{\Qpbartimes}{\overline{\mathbb{Q}}_p^{\times}}
\newcommand{\Zptimes}{{\Z_p^\times}}
\newcommand{\Zp}{{\Z_p}}
\newcommand{\Ql}{\Q_l} 
\newcommand{\Qpbar}{{\overline{\Q}_p}}
\newcommand{\Zpbar}{{\overline{\Z}_p}}
\newcommand{\Fpbar}{{\overline{\F}_p}}
\newcommand{\Fp}{\F_p}
\newcommand{\md}{\mathrm{m}}
\newcommand{\wE}{\widetilde{E}}\DeclareMathOperator{\Tor}{Tor}  \newcommand{\pp}{\mathfrak p} \newcommand{\ur}{\mathrm{ur}} \newcommand{\ps}{\mathrm{ps}}\newcommand{\gal}{G_{\Qp}}
\newtheorem{thm}[subsection]{Theorem}
\newtheorem{lemma}[subsection]{Lemma}
\newtheorem{lem}[subsection]{Lemma}
\newtheorem{cor}[subsection]{Corollary}
\newtheorem{prop}[subsection]{Proposition}
\theoremstyle{definition}
\newtheorem{defn}[subsection]{Definition}
\newtheorem{defi}[subsection]{Definition}
\theoremstyle{remark}
\newtheorem{remark}[subsection]{Remark}
\newtheorem{rem}[subsection]{Remark}
\newtheorem{example}[subsection]{Example}
\newtheorem{assumption}[subsection]{Assumption}
\newcommand{\dualcat}{\mathfrak C} \DeclareMathOperator{\wtimes}{\widehat{\otimes}} \newcommand{\mm}{\mathfrak{m}} 
\newcommand{\wP}{\widetilde{P}}
\newcommand{\sigmao}{\sigma^{\circ}}
\def\numequation{\addtocounter{subsection}{1}\begin{equation}}
\def\nummultline{\addtocounter{subsubsection}{1}\begin{multline}}
\def\anumequation{\addtocounter{subsection}{1}\begin{equation}}
\begin{document}

\title{Patching and the $p$-adic Langlands program for~$\GL_2(\Qp)$}

\author[A. Caraiani]{Ana Caraiani}\email{a.caraiani@imperial.ac.uk}
\address{Department of Mathematics, Imperial College London,
  London SW7 2AZ, UK}

\author[M. Emerton]{Matthew Emerton}\email{emerton@math.uchicago.edu}
\address{Department of Mathematics, University of Chicago,
5734 S.\ University Ave., Chicago, IL 60637, USA}

\author[T. Gee]{Toby Gee} \email{toby.gee@imperial.ac.uk} \address{Department of
  Mathematics, Imperial College London,
  London SW7 2AZ, UK}
  
  \author[D. Geraghty]{David Geraghty}
\email{david.geraghty@bc.edu}\address{Department of Mathematics, 301
  Carney Hall,
  Boston College, Chestnut Hill, MA 02467, USA}

\author[V. Pa\v{s}k\=unas]{Vytautas
  Pa\v{s}k\=unas}\email{paskunas@uni-due.de}\address{
   Fakult\"at f\"ur Mathematik, Universit\"at Duisburg-Essen,  45117 Essen, Germany}

\author[S. W. Shin]{Sug Woo Shin}\email{sug.woo.shin@berkeley.edu}\address{Department of Mathematics, UC Berkeley, Berkeley, CA 94720, USA / Korea Institute for Advanced Study, 85 Hoegiro,
Dongdaemun-gu, Seoul 130-722, Republic of Korea}

\shortauthors{ A. Caraiani et al.}
\classification{11S37, 22E50}
\keywords{$p$-adic Langlands, local--global compatibility, Taylor--Wiles patching }

\thanks{A.C.\ was partially
  supported by the NSF Postdoctoral Fellowship DMS-1204465 and NSF Grant DMS-1501064. M.E.\ was
  partially supported by NSF grants DMS-1003339, DMS-1249548, and DMS-1303450. T.G.\ was
  partially supported by  a Leverhulme Prize, EPSRC grant EP/L025485/1, Marie Curie Career
  Integration Grant 303605, and by
  ERC Starting Grant 306326. D.G.\ was partially supported by NSF grants
  DMS-1200304 and DMS-1128155. V.P.\ was partially supported by the DFG,
  SFB/TR45. S.W.S.\ was partially supported by NSF grant DMS-1162250
  and a Sloan Fellowship.}

\begin{abstract}
We present a new construction of the $p$-adic local Langlands correspondence
for $\GL_2(\Q_p)$ via the patching method of Taylor--Wiles and Kisin.
This construction sheds light on the relationship between the
various other approaches to both the local and global
aspects of the $p$-adic Langlands program; in particular, it gives a
new proof of many cases of the second author's local-global compatibility
theorem, and relaxes a hypothesis on the local mod~$p$ representation in that theorem. \end{abstract}

\maketitle

\section{Introduction}

The primary goal of this paper is to explain how (under mild technical
hypotheses) the patching construction
of \cite{Gpatch}, when applied to the group $\GL_2(\Q_p)$, gives rise to the
$p$-adic local Langlands correspondence for $\GL_2(\Q_p)$, as constructed
in~\cite{MR2642409}, and as further analyzed in \cite{paskunasimage} and
\cite{CDP}. As a by-product, we obtain a new proof of many cases of the local-global
compatibility theorem of~\cite{emerton2010local} (and of some cases
not treated there).

\subsection{Background} We start by recalling the main results of~\cite{Gpatch} and the role we
expect them to play in the (hypothetical) $p$-adic local Langlands correspondence.
Let $F$ be a finite extension of $\Qp$, and let $G_F$ be its absolute Galois group.
One would like to have an analogue of the local Langlands correspondence
for all finite-dimensional, continuous, $p$-adic representations of $G_F$.
Let $E$ be another finite extension of $\mathbb{Q}_p$, which will be our field
of coefficients, assumed large enough,
with ring of integers $\OO$, uniformizer $\varpi$ and residue field $\F$. To a continuous
Galois representation $r: G_F \rightarrow \GL_n(E)$ one would like to attach
an admissible unitary $E$-Banach space representation
$\Pi(r)$ of $G:=\GL_n(F)$ (or possibly a family of such Banach space representations).
Ideally, such a construction should be compatible with deformations,
should encode the classical local Langlands correspondence and should be
compatible with a global $p$-adic correspondence, realized in the completed cohomology
of locally symmetric spaces.

It is expected that the Banach spaces $\Pi(r)$ should encode
the classical local Langlands correspondence in the following way:
if $r$ is potentially semi-stable with regular Hodge--Tate weights,
then the subspace of locally algebraic vectors $\Pi(r)^{\lalg}$ in $\Pi(r)$ should be
isomorphic to $\pi_{\sm}(r)\otimes \pi_{\alg}(r)$ as a
$G$-representation, where
$\pi_{\sm}(r)$ is the smooth representation of $G$ corresponding via classical local Langlands
to the Weil--Deligne representation obtained from
$r$ by Fontaine's recipe, and $\pi_{\alg}(r)$ is an algebraic representation
of $G$, whose highest weight vector is determined by the Hodge--Tate weights of $r$.

\begin{example} If $F=\Qp$, $n=2$ and $r$ is crystalline with Hodge--Tate weights $a< b$,
then $\pi_{\sm}(r)$ is a smooth unramified principal series representation,
whose Satake parameters can be calculated in terms of the trace and
determinant of Frobenius on $D_{\mathrm{cris}}(r)$, and
$\pi_{\alg}(r)= \Sym^{b-a-1} E^2 \otimes\det^{1-a}$.
(We note that in the literature different normalisations
lead to different twists by a power of $\det$.)
\end{example}

Such a correspondence has been established in the case of $n=2$
and $F=\Qp$ by the works of Breuil, Colmez and others,
see~\cite{MR2493214}, \cite{MR2642402} as well as the
introduction to \cite{MR2642409}. Moreover,
when $n=2$ and $F=\Qp$, this correspondence has
been proved (in most cases) to satisfy local-global compatibility with the
$p$-adically completed cohomology of modular curves, see~\cite{emerton2010local}.
However, not much is known beyond this case. In~\cite{Gpatch} we
have constructed a candidate for such a correspondence
using the Taylor--Wiles--Kisin patching method,
which has been traditionally employed to prove modularity lifting theorems for Galois representations.
We now describe the end product of the paper \cite{Gpatch}.

Let  $\rbar: G_F\rightarrow \GL_n(\F)$ be a continuous representation and let $R^{\square}_p$ be its universal framed deformation ring. Under the assumption that $p$ does not divide $2n$
we construct an $R_{\infty}[G]$-module $M_{\infty}$, which is finitely generated as a module over the completed group algebra $R_{\infty}[[ \GL_n(\OO_F)]]$, where $R_\infty$ is a complete local noetherian
$R_p^{\square}$-algebra with residue field $\F$. If  $y\in \Spec R_\infty$ is an $E$-valued point then $$\Pi_y:=\Hom_{\OO}^{\cont}( M_{\infty}\otimes_{R_\infty, y} \OO, E)$$ is an admissible
unitary $E$-Banach space representation of $G$. The composition $R_p^{\square}\rightarrow R_{\infty}\overset{y}{\rightarrow} E$ defines an $E$-valued point $x\in \Spec R_p^{\square}$ and
thus a continuous Galois representation $r_x: G_F \rightarrow
\GL_n(E)$. We expect that the Banach space representation $\Pi_y$
depends only on $x$ and that it should be related to $r_x$ by the hypothetical
$p$-adic Langlands correspondence; see \S 6 of \cite{Gpatch} for a detailed discussion. We show in
\cite[Theorem 4.35]{Gpatch} that
if $\pi_{\sm}(r_x)$ is generic and $x$ lies on an automorphic component of a potentially crystalline deformation ring of $\rbar$ then
$\Pi_y^{\lalg}\cong \pi_{\sm}(r_x)\otimes \pi_{\alg}(r_x)$ as
expected; moreover, the points $x$ such that $\pi_{\sm}(r_x)$ is
generic are Zariski dense in every irreducible component of a
potentially crystalline deformation ring. (It is expected that every irreducible component of a
potentially crystalline deformation ring is automorphic; this expectation is motivated by the
Fontaine--Mazur and Breuil--M\'ezard conjectures. However, it is intrinsic to our method that we would not be able to access these
non-automorphic components even if they existed.)

However, there are many natural questions regarding our construction for $\GL_n(F)$
that we cannot answer at the moment and that appear to be genuinely
deep, as they are intertwined with questions about local-global compatibility
for $p$-adically completed cohomology, with the Breuil--M\'ezard conjecture
on the geometry of local deformation rings and with the
Fontaine--Mazur conjecture for global Galois representations.
For example, it is not clear that $\Pi_y$ depends only on $x$,
it is not clear that $\Pi_y$ is non-zero for an arbitrary $y$, and
that furthermore~$\Pi_y^\lalg$ is non-zero if~$r_x$ is potentially
semistable of regular weight,
and it is not at all clear that $M_{\infty}$ does
not depend on the different choices made during the patching process.

\subsection{The present paper} In this paper, we specialize
the construction of \cite{Gpatch} to the case $F=\Qp$ and $n=2$
(so that $G:=\GL_2(\Qp)$ and $K:=\GL_2(\Zp)$ from now on)
to confirm our expectation that, firstly,
$M_{\infty}$ does not depend on any of the choices made during the patching process
and, secondly, that it does recover the $p$-adic local
Langlands correspondence as constructed by Colmez.

We achieve the first part
\emph{without} appealing to Colmez's construction
(which relies on the theory of $(\varphi,\Gamma)$-modules).
The proof that $M_\infty$ is uniquely determined highlights
some key features of the $\GL_2(\mathbb{Q}_p)$ setting beyond
the use of $(\varphi,\Gamma)$-modules:
the classification of irreducible mod $p$ representations of
$\GL_2(\Q_p)$ in terms of Serre weights and Hecke operators,
and the fact that the Weil--Deligne representation and
the Hodge--Tate weights determine a (irreducible) $2$-dimensional crystalline
representation of $G_{\Qp}$ uniquely (up to isomorphism).

When combined with the results of~\cite{paskunasimage}
(which \emph{do} rely on Colmez's functor $\cV$), we obtain that $M_{\infty}$ realizes the $p$-adic Langlands correspondence
as constructed by Colmez.

We also obtain a new proof of
local-global compatibility, which helps clarify the relationship
between different perspectives and approaches to $p$-adic local Langlands.

\subsection{Arithmetic actions} In the body of the paper we restrict the representations $\rbar$
we consider by assuming that they have only scalar endomorphisms, so that $\End_{G_{\Qp}}(\rbar)=\F$, and that
 $\rbar\not \cong \bigl ( \begin{smallmatrix} \omega & \ast \\ 0 & 1 \end{smallmatrix}\bigr)\otimes\chi$
 for any character $\chi: G_{\Qp} \rightarrow \F^{\times}$.
 For simplicity, let us assume in this introduction that $\rbar$ is irreducible
 and let $R_p$ be its universal deformation ring. Then $R_p^{\square}$ is formally
 smooth over $R_p$. Moreover, (as $F=\Qp$ and $n=2$)
 we may also assume that $R_{\infty}$ is formally smooth over
 $R_p^{\square}$, and thus over~$R_p$.

The following definition is meant to axiomatize the key properties of the patched module $M_{\infty}$.

\begin{defi} Let $d$ be a non-negative integer, let $R_{\infty}:=R_p[[x_1, \ldots, x_d]]$
and let $M$ be a non-zero $R_\infty[G]$-module. We say that the action of $R_{\infty}$ on $M$ is \textit{arithmetic}
if the following conditions hold:
\begin{itemize}
\item[(AA1)] $M$ is a finitely generated module over the completed group algebra $R_{\infty}[[K]]$;
\item[(AA2)] $M$ is projective in the category of pseudo-compact $\OO[[K]]$-modules;
\item[(AA3)] for each pair of integers $a< b$, the action of $R_{\infty}$ on
$$M(\sigma^{\circ}):=\Hom^{\cont}_{\OO[[K]]}(M, (\sigma^{\circ})^d)^d$$ factors through the action
of $R_{\infty}(\sigma):=R_p(\sigma)[[x_1,\ldots, x_d]]$. Here $R_p(\sigma)$ is the
quotient of $R_p$ constructed by Kisin, which parameterizes crystalline
representations with Hodge--Tate weights $(a, b)$, $\sigma^{\circ}$ is a
$K$-invariant $\OO$-lattice in $\sigma:=\Sym^{b-a-1} E^2 \otimes \det^{1-a}$ and $(\ast)^d:=\Hom^{\cont}_{\OO}(\ast, \OO)$ denotes
the Schikhof dual.

 Moreover,
$M(\sigma^{\circ})$ is maximal Cohen--Macaulay over $R_\infty(\sigma)$ and
the $R_{\infty}(\sigma)[1/p]$-module $M(\sigma^{\circ})[1/p]$ is locally
free of rank $1$ over its support.
\item[(AA4)] for each $\sigma$ as above and each maximal ideal $y$ of $R_{\infty}[1/p]$ in the support of
$M(\sigma^{\circ})$,  there is a non-zero $G$-equivariant map $$ \pi_{\sm}(r_x)\otimes
\pi_{\alg}(r_x)\rightarrow \Pi_y^{\lalg}$$ where~$x$ is the image of $y$ in $\Spec R_p$.\end{itemize}
\end{defi}
The last condition says that $M$ encodes the classical
local Langlands correspondence. This is what motivated us to call such actions arithmetic.
(In fact in the main body of the paper we use a reformulation of condition (AA4),
see \S\ref{subsec: axioms for arithmetic action} and Remark \ref{rem: AA4 alternatives}.)
To motivate (AA3), we note for the sake of the reader familiar with Kisin's proof of the
Fontaine--Mazur conjecture \cite{kisinfmc} that the modules $M(\sigma^\circ)$ are analogues of the patched modules
denoted by $M_{\infty}$ in \cite{kisinfmc}, except that Kisin patches algebraic automorphic forms for definite quaternion
algebras and in this paper we will ultimately be making use of patching
arguments for algebraic automorphic forms on forms of~$U(2)$.

\subsection{Uniqueness of $M_\infty$} As already mentioned,
the patched module $M_{\infty}$ of \cite{Gpatch}
carries an arithmetic action of $R_{\infty}$ for some $d$. In order to prove that $M_\infty$ is uniquely
determined, it is enough to show that for any given $d$, any $R_{\infty}[G]$-module $M$ with an arithmetic action of $R_\infty$
is uniquely determined. The following is our main result, which for simplicity we state under the assumption that $\rbar$ is irreducible.

\begin{thm}\label{main_intro} Let $M$ be an $R_{\infty}[G]$-module
with an arithmetic action of $R_{\infty}$.

\begin{enumerate}
\item If $\pi$ is any irreducible $G$-subrepresentation of the Pontryagin
dual $M^{\vee}$ of $M$  then $\pi$ is isomorphic to the representation
of $G$ associated to $\rbar$ by the mod $p$ local Langlands correspondence for $\GL_2(\Qp)$.
\item Let $\pi\hookrightarrow J$ be an injective
envelope of the above $\pi$ in the category of smooth locally
admissible representations of $G$ on $\OO$-torsion modules.
Let $\wP$ be the Pontryagin dual of $J$. Then $\wP$ carries a unique arithmetic
action of $R_p$ and, moreover,
$$ M \cong \wP\wtimes_{R_p} R_{\infty}$$
as $R_{\infty}[G]$-modules.
\end{enumerate}
\end{thm}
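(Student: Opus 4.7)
The plan is to prove Part~(1) by identifying the $G$-socle of $M^{\vee}$ via the axioms and the mod~$p$ local Langlands recipe, and then to deduce Part~(2) by comparing $M$ with the candidate $M' := \wP \wtimes_{R_p} R_{\infty}$. For Part~(1), axioms (AA1) and (AA2) say that $M$ is a finitely generated projective pseudo-compact $\OO[[K]]$-module, so the $K$-socle of $M^{\vee}$ is a finite-dimensional semisimple $K$-representation whose irreducible constituents are controlled by the nonvanishing of the modules $M(\sigma^{\circ})$. By axiom (AA3), nonvanishing of $M(\sigma^{\circ})$ is equivalent to $R_p(\sigma) \neq 0$; by the Breuil--M\'ezard conjecture for $\GL_2(\Qp)$ (known in this setting) this picks out precisely the Serre weights of $\rbar$. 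The action of the spherical Hecke algebra on each Serre weight component can be identified by evaluating at crystalline points: for any closed point $y$ in the support of $M(\sigma^{\circ})$, axiom (AA4) produces a map $\pi_{\sm}(r_x) \otimes \pi_{\alg}(r_x) \to \Pi_y^{\lalg}$ whose $K$-fixed vectors compute the Hecke eigenvalues in terms of the classical Satake parameters of $r_x$. Reducing these eigenvalues modulo $\varpi$ and invoking the Barthel--Livn\'e and Breuil classification of irreducible smooth mod~$p$ $G$-representations by pairs (Serre weight, Hecke eigensystem), one recovers exactly the pairs predicted by the mod~$p$ local Langlands recipe for $\rbar$. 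This forces every irreducible $G$-subrepresentation of $M^{\vee}$ to be isomorphic to $\pi$.

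For Part~(2), set $M' := \wP \wtimes_{R_p} R_{\infty}$. The $R_p$-action on $\wP$ is the canonical one coming from the identification $\End_G(\wP) \cong R_p$ due to Pa\v{s}k\=unas; it is arithmetic for $d=0$, and any other arithmetic $R_p$-action on $\wP$ must coincide with it, because (AA4) rigidifies the $R_p$-action at the Zariski-dense locus of crystalline points in $\Spec R_p[1/p]$. Since $R_p \to R_{\infty}$ is formally smooth, $M'$ inherits an arithmetic $R_{\infty}$-action. Using Part~(1) and the projectivity of $\wP$ in the category of pseudo-compact duals of smooth locally admissible $G$-representations on $\OO$-torsion modules, I would construct an $R_{\infty}[G]$-linear map $\phi : M' \to M$ lifting a nonzero map $\wP \to M$ arising from the embedding $\pi \hookrightarrow M^{\vee}$; (AA1) together with Nakayama's lemma applied to $\coker(\phi)$ shows that $\phi$ is surjective.

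It remains to prove injectivity of $\phi$. Fix a Serre weight $\sigma$ of $\rbar$; then $\phi$ induces a surjection $\phi(\sigma^{\circ}) : M'(\sigma^{\circ}) \twoheadrightarrow M(\sigma^{\circ})$ of $R_{\infty}(\sigma)$-modules. By (AA3), both sides are maximal Cohen--Macaulay and generically of rank one over $R_{\infty}(\sigma)$, so $\ker \phi(\sigma^{\circ})$ is a maximal Cohen--Macaulay module of generic rank zero, hence zero. Letting $\sigma$ range over all Serre weights of $\rbar$ and using (AA2) --- so that the $K$-socle of $(\ker \phi)^{\vee}$ is forced into the $K$-socle of $M^{\prime \vee}$, which consists precisely of these Serre weights --- gives $\ker \phi = 0$. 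The main obstacle is exactly this last step: while surjectivity is a soft consequence of Part~(1) and projectivity of $\wP$, injectivity requires that the collection $\{M(\sigma^{\circ})\}_{\sigma}$ of Cohen--Macaulay evaluations faithfully detect $M$ itself. It is the rigidity built into axiom (AA3) --- maximal Cohen--Macaulay and generically of rank one --- that makes this possible, translating pointwise information over crystalline deformation rings into a global isomorphism of $R_{\infty}[G]$-modules.
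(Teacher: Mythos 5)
Your sketch of Part~(1) is in the right spirit, but the real work in the paper's Proposition~\ref{prop: abstract Serre weight freeness and Hecke compatibility} is more delicate than you indicate: (AA3) alone only gives the ``only if'' direction of the equivalence $M(\sigma^\circ)\ne 0 \Leftrightarrow R_p(\sigma)\ne 0$, and the identification of the Hecke eigensystem mod~$\varpi$ requires showing that the character $\alpha':\cH(\sigmabar')\to\F'$ coming from $\Hom_G(\pi',M^\vee)$ actually coincides with the explicit $\alpha$ of Lemma~\ref{lem:rbar has a unique Serre weight, and the deformation ring is smooth, and given by Tp}. Neither Breuil--M\'ezard nor ``reducing Satake parameters'' is quite the mechanism used; rather one uses flatness of $M(\sigmabar)$ over $\cH(\sigmabar)$ plus the explicit isomorphism $\widehat{\cH(\sigmao)}\cong R_p(\sigma)$.

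Part~(2) as you have written it has two genuine gaps, one of which is circular.

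First, the map $\phi$ is not well defined as stated. The embedding $\pi\into M^\vee$ dualizes to a surjection $M\onto\pi^\vee$, and projectivity of $\wP$ in $\dualcat(\cO)$ lets you lift $\wP\onto\pi^\vee$ through this to an $\cO[G]$-linear map $\wP\to M$. But to extend this to an $R_\infty[G]$-linear map $\wP\wtimes_{R_p}R_\infty\to M$ you would need the lift $\wP\to M$ to be \emph{$R_p$-linear}, where $R_p$ acts on $\wP$ through the (Pa\v{s}k\=unas) arithmetic action and on $M$ through the structure map $R_p\to R_\infty$. That these two $R_p$-actions agree is precisely the content of the theorem; assuming it to build $\phi$ is circular. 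The paper avoids this by a different decomposition: one first shows (Theorem~\ref{thm: Minfty is projective}) that $M^\vee$ is injective in $\Mod^{\ladm}_G(\cO)$ --- via the $\Ext^1$-vanishing argument of Lemma~\ref{ext_vanish}, which uses a Koszul resolution and the flatness of $M(\sigmabar)$ over $\cH(\sigmabar)$, not merely (AA3) --- and then factors out the patching variables by the purely category-theoretic Proposition~\ref{factoring patching variables over O} to get $M\cong\wP\wtimes_\cO A$ only as an object of $\dualcat(A)$, with \emph{no} $R_p$-linearity claimed at that stage. The $R_p$-linearity is then supplied afterwards by the uniqueness of arithmetic actions, evaluated at all specializations $A\to\cO$.

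Second, the claimed uniqueness of the arithmetic $R_p$-action on $\wP$ is asserted on the grounds that (AA4) rigidifies the action at crystalline points, but this is not automatic: (AA4) only determines the $\cH(\sigma)$-action on $M(\sigma^\circ)[1/p]$, and one must show the $\cH(\sigma)$-action already determines the $R_p(\sigma)[1/p]$-action. This is a special feature of $\GL_2(\Q_p)$: it is Proposition~\ref{prop: generic fibre of cristabelline deformation ring} (the map $\eta$ is a local isomorphism at all crystalline closed points, which rests on Lemma~\ref{well_known} --- the trace and determinant of Frobenius determine a crystalline representation). One then needs the capture machinery (Proposition~\ref{prop:capture implies unique action}) to pass from ``the $R_p$-action on each $M(\sigma^\circ)[1/p]$ is determined'' to ``the $R_p$-action on $\wP$ is determined''. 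Your injectivity argument is likewise a gap: that $\ker\phi(\sigma^\circ)=0$ for all $\sigma$ does not directly give $\ker\phi=0$; it is precisely the capture/projectivity formalism that bridges this. Finally, note that invoking $\End_G(\wP)\cong R_p$ from~\cite{paskunasimage} is contrary to the intent of Theorem~\ref{main_intro}, whose proof is designed to be independent of Colmez's functor; in the paper the arithmetic action on $\wP$ is instead obtained by specialization from $M_\infty$ (Proposition~\ref{it_exists}), and the identification $R_p\cong\End_{\dualcat(\cO)}(\wP)$ is a \emph{corollary} proved later using the results of~\cite{paskunasimage}.
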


\noindent The theorem completely characterizes modules with an arithmetic action
and shows that $M_{\infty}$ does not depend on the choices made in the patching process.
A further consequence is that the Banach space $\Pi_y$ depends
only on the image of $y$ in $\Spec R_p$, as expected.

Let us sketch the proof of Theorem~\ref{main_intro} assuming for simplicity that $d=0$.
The first step is to show that
$M^{\vee}$ is an injective object
in the category  of smooth locally admissible representations of
$G$ on $\OO$-torsion modules and that its $G$-socle is isomorphic to $\pi$.
This is done by computing $\Hom_G(\pi', M^{\vee})$ and
showing that $\Ext^1_G(\pi', M_{\infty}^{\vee})$ vanishes for all
irreducible $\F$-representations $\pi'$ of $G$; see
Proposition \ref{prop: abstract Serre weight freeness and Hecke compatibility}
and Theorem \ref{thm: Minfty is projective}.
The arguments here use the foundational
results of Barthel--Livn\'e \cite{barthel-livne} and Breuil \cite{breuil1}
on the classification of irreducible mod~$p$ representations of $G$,
arguments related to the weight part of Serre's conjecture,
and the fact that the rings $R_p(\sigma)$ are formally smooth over $\OO$, whenever $\sigma$ is of
the form $\Sym^{b-a-1} E^2 \otimes \det^{1-a}$ with $1\le b-a\le p$.

This first step allows
us to conclude that $M^{\vee}$ is an injective envelope of $\pi$,
which depends only on $\rbar$. Since injective envelopes
are unique up to isomorphism, we conclude that any two modules with
an arithmetic action of $R_p$ are isomorphic as $G$-representations.
Therefore, it remains to show that any two arithmetic actions of $R_p$ on $\wP$ coincide.
As $R_p$ is $\OO$-torsion free, it is enough to show that two such actions
induce the same action on the unitary $E$-Banach space
$\Pi:=\Hom_{\OO}^{\cont}(M, E)$. Since $M$ is a projective $\OO[[K]]$-module by (AA2) one may show using
 the ``capture'' arguments that appear in \cite[\S 2.4]{CDP},
 \cite[Prop.\ 5.4.1]{emerton2010local} that the subspace
 of $K$-algebraic vectors in $\Pi$ is dense.
 Since the actions of $R_p$ on $\Pi$ are continuous it is enough to show that
 they agree on this dense subspace. Since the subspace of
 $K$-algebraic vectors is semi-simple as a $K$-representation,
 it is enough to show that the two actions agree on $\sigma$-isotypic subspaces in $\Pi$ for all irreducible
 algebraic $K$-representations $\sigma$.
 These are precisely the representations $\sigma$ in axiom (AA3).
 Taking duals one more time, we are left with showing
 that any two arithmetic actions induce the same action
 of $R_p$ on $M(\sigma^{\circ})[1/p]$ for all $\sigma$ as above.

 At this point we use another special feature of $2$-dimensional
 crystalline representations of $G_{\Qp}$: the associated Weil--Deligne representation
together with Hodge--Tate weights determine a $2$-dimensional crystalline
representation of $G_{\Qp}$ up to isomorphism.
Using this fact and axioms (AA3) and (AA4) for the arithmetic
action we show that the action of the Hecke algebra
$\mathcal H(\sigma):=\End_G(\cInd_K^G \sigma)$ on
$M(\sigma^{\circ})[1/p]$ completely determines the
action of $R_p(\sigma)$ on $M(\sigma^{\circ})[1/p]$; see the proof
of Theorem \ref{unique_arithmetic} as well as the key
Proposition \ref{prop: generic fibre of cristabelline deformation ring}.
Since the action of $\cH(\sigma)$ on $M(\sigma^\circ)[1/p]$
depends only on the $G$-module structure of $M$, we are
able to conclude that the two arithmetic actions are the same.
The reduction from the case when $d$ is arbitrary
to the case when $d=0$ is carried out in \S \ref{subsec: Vytas' argument to factor out patching variables}.

\begin{rem} As we have already remarked, the arguments up to this point
  make no use of $(\varphi, \Gamma)$-modules.
 Indeed the
proof of Theorem \ref{main_intro} does not use them.
One of the objectives of this project was to find out how much
of the $p$-adic Langlands for $\GL_2(\Qp)$ correspondence can
one recover from the patched module $M_{\infty}$
without using Colmez's functors,
as these constructions are not available
for groups other than $\GL_2(\Qp)$, while our patched module $M_{\infty}$ is.
Along the same lines, in section \S \ref{sec: Berger Breuil stuff} we
show that to a large extent we can recover a fundamental theorem of
Berger--Breuil \cite{berger-breuil} on the
uniqueness of unitary completions of locally algebraic principal
series without making use of $(\varphi, \Gamma)$-modules,
see Theorem \ref{thm:Berger--Breuil completions via Minfty} and Remark \ref{we_can_do_crystabelline}.
\end{rem}

\begin{rem} As already explained, Theorem \ref{main_intro} implies that
$\Pi_y$ depends only on the image of $y$ in $\Spec R_p$.
However, we are still not able to deduce using only our methods that
$\Pi_y$ is non-zero for an arbitrary $y\in \mSpec R_{\infty}[1/p]$.
Since $M_{\infty}$ is not a finitely generated module over $R_{\infty}$ theoretically it could happen that
$\Pi_y\neq 0$ for a dense subset of $\mSpec R_{\infty}[1/p]$, but
$\Pi_y=0$ at all other maximal ideals. We can only prove that this
pathological situation does not occur after combining
Theorem~\ref{main_intro} with the results of~\cite{paskunasimage}.
\end{rem}

In \S \ref{sec:comparison} we relate the arithmetic action of $R_p$ on
$\wP$ to the results of \cite{paskunasimage}, where an action of $R_p$ on an injective envelope of $\pi$
 in the subcategory of representations with a fixed central character is constructed using Colmez's functor;
 see Theorem \ref{final_comparison}.
 Then by appealing to the results of \cite{paskunasimage} we show that $\Pi_y$ and $r_x$
correspond to each other under the $p$-adic Langlands correspondence as defined by Colmez \cite{MR2642409}, for
all $y\in \mSpec R_{\infty}[1/p]$, where $x$ denotes the image of $y$ in $\mSpec R_p[1/p]$.

It follows from the construction of $M_{\infty}$ that after quotienting out by a certain ideal
of $R_{\infty}$ we obtain a dual of completed cohomology, see \cite[Corollary 2.11]{Gpatch}.
This property combined with Theorem \ref{main_intro} and with the results in \S \ref{sec:comparison} enables us to obtain a new proof of local-global compatibility as in \cite{emerton2010local} as well as obtaining a genuinely new result, when $\rhobar|_{G_{\Qp}}$ is isomorphic to
$\bigl ( \begin{smallmatrix} 1 & \ast \\ 0 &
    \omega \end{smallmatrix}\bigr)\otimes\chi$, where $\omega$ is the
  cyclotomic character modulo $p$. (See Remark~\ref{rem: we've done something new}.)

\subsection{Prospects for generalization}

      Since our primary goal in this paper is  to build some new connections between various existing ideas
      related to the $p$-adic Langlands program, we have not striven
      for maximal generality, and we expect that some of our
      hypotheses on~$\rbar:G_{\Qp}\to\GL_2(\Fpbar)$ could be
      relaxed. In particular, it should be possible to prove results
      when~$p=2$ by using results of  Thorne~\cite{thornep2} to redo the patching in
      \cite{Gpatch}. It may also be possible to extend our results to
      cover more general~$\rbar$ (recall that we assume that
      $\rbar$~has only scalar endomorphisms, and that it is not a
      twist of an extension of the trivial character by the mod~$p$
      cyclotomic character). In Section~\ref{subsec: wild
        speculations} we discuss the particular case where~$\rbar$ has scalar
      semisimplification; as this discussion (and the arguments
      of~\cite{paskunasimage}) show, while it may well be possible to
      generalise our arguments, they will necessarily be considerably more
      involved in cases where~$\rbar$ does not satisfy the hypotheses
      that we have imposed.

      Since the patching construction in~\cite{Gpatch} applies equally
      well to the case of~$\GL_2(F)$ for any finite extension~$F/\Qp$,
      or indeed to~$\GL_n(F)$, it is natural to ask whether any of our
      arguments can be extended to such cases (where there is at
      present no construction of a $p$-adic local Langlands
      correspondence). As explained in Remark~\ref{rem:axioms are
        natural}, the natural analogues of our axioms (AA1)-(AA4)
      hold, even in the generality of~$\GL_n(F)$. Unfortunately, the
      prospects for proving analogues of our main theorems are less
      rosy, as it seems that none of the main inputs to our arguments
      will hold. Indeed, already for the case of~$\GL_2(\Q_{p^2})$
      there is no analogue available of the classification
      in~\cite{breuil1} of the irreducible $\F$-representations
      of~$\GL_2(\Qp)$, and it is clear from the results
      of~\cite{Breuil_Paškūnas_2012} that any such classification
      would be much more complicated.

      Furthermore, beyond the case of~$\GL_2(\Qp)$ it is no longer the
      case that crystalline representations are (essentially)
      determined by their underlying Weil--Deligne representations, so
      there is no possibility of deducing that a $p$-adic
      correspondence is uniquely determined by the classical
      correspondence in the way that we do here, and no hope that an
      analogue of the results of~\cite{berger-breuil} could
      hold. Finally, it is possible to use the constructions
      of~\cite{MR2128381} to show that for~$\GL_2(\Q_{p^2})$ the
      patched module~$M_\infty$ is not a projective
      $\GL_2(\Q_{p^2})$-module.

\subsection{Outline of the paper}\label{subsec:outline of the paper}In Section~\ref{sec:Galois
  deformation rings and Hecke algebras} we recall some well-known
results about Hecke algebras and crystalline deformation rings
for~$\GL_2(\Qp)$. The main result in this section is
Proposition~\ref{lem:rbar has a unique Serre weight, and the
  deformation ring is smooth, and given by Tp}, which describes the
crystalline deformation rings corresponding to Serre weights as
completions of the corresponding Hecke algebras. In Section~\ref{sec:
  recalling from Gpatching} we explain our axioms for a module with an arithmetic action, and show how the
results of~\cite{Gpatch} produce patched modules~$M_\infty$ satisfying these
axioms.

Section~\ref{sec:
  proof that arithmetic action is unique} proves that the axioms
determine~$M_\infty$ (essentially) uniquely, giving a new construction
of the $p$-adic local Langlands correspondence for~$\GL_2(\Qp)$. It
begins by showing that~$M_\infty$ is a projective $\GL_2(\Qp)$-module
(Theorem~\ref{thm: Minfty is projective}), before making a
category-theoretic argument that allows us to ``factor out'' the
patching variables (Proposition~\ref{cut_it_out}). We then use the
``capture'' machinery to complete the proof.

In Section~\ref{sec: Berger Breuil stuff} we explain how our results
can be used to give a new proof (not making use of
$(\varphi,\Gamma)$-modules) that certain  locally algebraic principal series
representations  admit at most one unitary
completion. Section~\ref{sec:comparison} combines our results with
those of~\cite{paskunasimage} to show that our construction is
compatible with Colmez's correspondence, and as a byproduct extends
some results of~\cite{paskunasimage} to a situation where the central
character is not fixed.

Finally, in Section~\ref{sec: local global} we explain how our results
give a new proof of the second author's local-global compatibility theorem, and
briefly explain how such results can be extended to quaternion
algebras over totally real fields (Remark~\ref{rem: division algebra
  case}).

\subsection{Notation}\label{subsec:notation}We fix an odd prime $p$,
an algebraic closure $\Qpbar$ of $\Qp$, and a
finite extension $E/\Qp$ in $\Qpbar$, which will be our coefficient field. We
write $\cO=\cO_E$ for the ring of integers in $E$,
$\varpi=\varpi_E$ for a uniformiser, and $\F:=\cO/\varpi$ for the residue field. We will assume without comment
that $E$ and $\F$ are sufficiently large, and in particular that if we
are working with representations of the absolute Galois group of a $p$-adic
field $K$, then the images of all embeddings $K\into \Qpbar$ are
contained in $E$.

  \subsubsection{Galois-theoretic notation}
If $K$ is a field, we let $G_K$ denote its absolute Galois group.  Let $\varepsilon$ denote the $p$-adic cyclotomic character,
and $\varepsilonbar=\omega$ the mod $p$ cyclotomic character. If $K$ is a
finite extension of $\bb{Q}_p$ for some $p$, we write $I_K$ for the
inertia subgroup of $G_K$. If $R$ is a local ring we write
$\mf{m}_{R}$ for the maximal ideal of $R$. If $F$ is a number field
and $v$ is a finite place of $F$ then we let $\Frob_v$ denote a
geometric Frobenius element of~$G_{F_v}$.

If $K/\Qp$ is a finite extension, we write
$\Art_K:K^\times \iso W_K^\ab$ for the Artin map normalized to send
uniformizers to geometric Frobenius elements. To avoid cluttering up
the notation, we will use~$\Art_{\Qp}$ to regard characters of
$\Qptimes$, $\Zptimes$ as characters of $G_{\Qp}$, $I_{\Qp}$
respectively, without explicitly mentioning~$\Art_{\Qp}$ when we do
so.

If $K$ is a $p$-adic field and $\rho$ a de Rham
representation of $G_K$ over $E$ and if $\tau:K \into E$
then we will write $\HT_\tau(\rho)$ for the multiset of Hodge--Tate
numbers of $\rho$ with respect to $\tau$.  By definition, the multiset
$\HT_\tau(\rho)$ contains $i$ with multiplicity
$\dim_{E} (\rho\otimes_{\tau,K} \widehat{\overline{K}}(i))^{G_K}
$. Thus for example $\HT_\tau(\varepsilon)=\{ -1\}$.
If $\rho$ is moreover crystalline then we have the associated
filtered $\varphi$-module $\Dcris(\rho):=(\rho\otimes_{\Q_p} B_{\mathrm{cris}})^{G_K}$,
where $B_{\mathrm{cris}}$ is Fontaine's crystalline period ring.

\subsubsection{Local Langlands correspondence}
 Let $n\in \Z_{\ge 1}$, let $K$ be a finite extension of $\Qp$, and let $\rec$ denote the local
 Langlands correspondence from isomorphism classes of irreducible
 smooth representations of $\GL_n(K)$ over $\C$ to isomorphism classes
 of $n$-dimensional Frobenius semisimple Weil--Deligne representations
 of $W_K$ defined in \cite{ht}. Fix an isomorphism $\imath:\Qpbar\to\C$. We define the
 local Langlands correspondence $\rec_p$ over $\Qpbar$ by $\imath \circ
 \rec_p = \rec \circ \imath$. Then $r_p(\pi):=\rec_p(\pi\otimes |\det|^{(1-n)/2})$
 is independent of the choice of $\imath$.
 In this paper we are mostly concerned with the case that $n=2$ and $K=\Qp$.

\subsubsection{Notation for duals}
\label{subsubsec:duals}
If $A$ is a topological  $\cO$-module, we write $A^\vee:=\Hom_\cO^\cont(A,E/\cO)$ for the
Pontryagin dual of $A$. We apply this to $\cO$-modules that are either
discrete or profinite, so that the usual formalism of Pontryagin duality
applies.

If $A$ is a pseudocompact $\cO$-torsion free $\cO$-module,
we write $A^d:=\Hom_\cO^\cont(A,\cO)$ for its Schikhof dual.

If $F$ is a free module of finite rank over a ring $R$,
then we write $F^\ast := \Hom_R(F,R)$ to denote its $R$-linear dual,
which is again a free $R$-module of the same rank over $R$ as $F$.

If $R$ is a commutative $\cO$-algebra, and if $A$
is an $R$-module that is
pseudocompact and $\cO$-torsion free as an $\cO$-module,
then we may form its Schikhof dual $A^d$,
which has a natural $R$-module structure via the transpose action,
extending its $\cO$-module structure.
If $F$ is a finite rank free $R$-module,
then $A\otimes_R F$ is again an $R$-module that is pseudocompact
as an $\cO$-module
(if $F$ has rank $n$
then it is non-canonically isomorphic to a direct sum of $n$ copies
of $A$), and there is a canonical isomorphism of $R$-modules
$(A\otimes_R F)^d\iso
A^d \otimes_R F^\ast.$

\subsubsection{Group-theoretic notation}
Throughout the paper
we write $G=\GL_2(\Qp)$ and $K=\GL_2(\Zp)$, and let $Z=Z(G)$ denote
the centre of $G$.  We also let $B$ denote the Borel subgroup of $G$
consisting of upper triangular matrices, and $T$ denote the diagonal
torus contained in $B$.

If $\chi: T \to E^{\times}$ is a continuous character,
then we define the continuous induction
$(\Ind_B^G \chi)_{\cont}$ to be the $E$-vector space
of continuous functions $f:G \to E$ satisfying the condition
$f(bg) = \chi(b) f(g)$ for all $b \in B$ and $g \in G$; it forms a
$G$-representation with respect to the right regular action.
If $\chi$ is in fact a smooth character,
then we may also form the smooth induction $(\Ind_B^G \chi)_{\sm}$;
this is the $E$-subspace of $(\Ind_B^G\chi)_{\cont}$ consisting of smooth
functions, and is a $G$-subrepresentation of the continuous induction.

If $\chi_1$ and $\chi_2$ are continuous characters of $\Qptimes$,
then the character $\chi_1\otimes \chi_2 : T \to E^{\times}$ is defined
via $\bigl ( \begin{smallmatrix} a & 0 \\ 0 & d \end{smallmatrix} \bigr ) \mapsto
\chi_1(a)\chi_2(d)$.  Any continuous $E$-valued character $\chi$ of $T$
is of this form, and $\chi$ is smooth if and only if $\chi_1$ and $\chi_2$
are.

\section{Galois deformation rings and Hecke algebras}\label{sec:Galois
  deformation rings and Hecke algebras}

\subsection{Galois deformation rings}\label{subsec: local Galois
  deformation rings}Recall that we assume throughout the paper that
$p$ is an odd prime.  Fix a continuous representation
$\rbar:G_\Qp\to\GL_2(\F)$, where as before $\F/\Fp$ is a finite
extension. Possibly enlarging $\F$, we fix a sufficiently large extension
$E/\Qp$ with ring of integers $\cO$ and residue field $\F$.

We will make the following assumption from now on.
\begin{assumption}
  \label{assumption: rbar is generic enough}
  Assume that
  $\End_{G_{\Qp}}(\rbar)=\F$, and that
  $\rbar\not \cong \bigl ( \begin{smallmatrix} \omega & \ast \\ 0 &
    1 \end{smallmatrix}\bigr)\otimes\chi$,
  for any character $\chi: G_{\Qp} \rightarrow \F^{\times}$.
\end{assumption}
\noindent In particular this assumption implies that $\rbar$ has a universal deformation
$\cO$-algebra ~$R_p$,
and that either $\rbar$ is
(absolutely) irreducible, or $\rbar$ is a non-split extension of characters.

We begin by recalling
the relationship between crystalline deformation rings of~$\rbar$, and
the representation theory of~$G := \GL_2(\Q_p)$ and~$K:= \GL_2(\Z_p)$.
Given a pair of integers $a\in \Z$ and $b\in \Z_{\ge 0}$, we let $\sigma_{a,b}$ be the absolutely irreducible
$E$-representation $\det^a\otimes\Sym^{b}E^2$ of
$K$. Note that this is just the algebraic representation of
highest weight $(a+b,a)$ with respect to the Borel subgroup given by the upper-triangular matrices in $G$.

We say that a representation $r:G_{\Qp}\to \GL_2(\Qpbar)$ is crystalline
of Hodge type $\sigma=\sigma_{a,b}$ if it is crystalline with Hodge--Tate
weights $(1-a,-a-b)$,\footnote{Note that this convention agrees with those
  of~\cite{Gpatch} and~\cite{emerton2010local}.} we write
$R_p(\sigma)$, for the reduced, $p$-torsion free
quotient of $R_p$ corresponding to crystalline deformations of Hodge type~$\sigma$.

\subsection{The morphism from the Hecke algebra to the deformation
  ring}\label{subsec: morphism from Hecke to Galois}We briefly recall some
results from~\cite[\S4]{Gpatch}, specialised to the case
of crystalline representations of~$\GL_2(\Qp)$.

Set $\sigma=\sigma_{a,b}$, and let
$\cH(\sigma):=\End_G(\cind_K^G\sigma)$. The action of $K$ on $\sigma$ extends to the action of $G$. This gives rise
to the  isomorphism of $G$-representations:
$$ (\cInd_K^G \Eins)\otimes \sigma\cong \cInd_K^G \sigma, \quad f\otimes v\mapsto [ g\mapsto f(g) \sigma(g) v].$$
The map
\numequation\label{eqn: Hecke algebra iso}\cH(\triv)\rightarrow \cH(\sigma), \quad \phi\mapsto \phi\otimes \id_{\sigma}\end{equation}
is an isomorphism of $E$-algebras  by Lemma 1.4 of \cite{MR2290601}. Using the above isomorphism we will
identify elements of $\cH(\sigma)$ with $E$-valued $K$-biinvariant functions
on~$G$, supported on finitely many double cosets.

\begin{prop}
  \label{prop: explicit description of the Hecke algebra of a principal series}
    Let $S\in \cH(\sigma)$ be the function supported on the double coset of
   $\left(\begin{smallmatrix}
      p&0\\0& p
    \end{smallmatrix}\right)$, with value $p^{2a+b}$ at   $\left(\begin{smallmatrix}
      p&0\\0&p
    \end{smallmatrix}\right)$, and let
    $T\in \cH(\sigma)$ be the function supported on the double coset\footnote{The function supported on the double coset
 $KgK$ with value $1$ at $g$, viewed as an element of $\mathcal H(\Eins)$, acts on $v\in V^K$ by the formula $[KgK] v=\sum_{h K \subset KgK} h v$.}  of $\left(\begin{smallmatrix}
      p&0\\0&1
    \end{smallmatrix}\right)$, with value  $p^{a+b}$ at     $\left(\begin{smallmatrix}
      p&0\\0&1
    \end{smallmatrix}\right)$. Then $\cH(\sigma) =E[S^{\pm 1},T]$ as an $E$-algebra.

    \end{prop}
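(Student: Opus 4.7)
The plan is to reduce the statement to the well-known description of the spherical Hecke algebra $\cH(\triv)=\End_G(\cInd_K^G\Eins)$ of $\GL_2(\Qp)$ via the isomorphism~\eqref{eqn: Hecke algebra iso}. Under this isomorphism, which is precisely what is meant by the identification of elements of $\cH(\sigma)$ with $E$-valued $K$-biinvariant functions on $G$, the operators $T$ and $S$ of the statement correspond to $p^{a+b}\cdot T_1$ and $p^{2a+b}\cdot S_1$, where $T_1:=\mathbf 1_{K\diag(p,1)K}$ and $S_1:=\mathbf 1_{K\diag(p,p)K}$ are viewed as elements of $\cH(\triv)\cong\cH(\sigma)$. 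Since these are nonzero scalar multiples, $E[S^{\pm1},T]=E[S_1^{\pm1},T_1]$ inside $\cH(\sigma)$, so the proposition reduces to the classical claim $\cH(\triv)=E[S_1^{\pm1},T_1]$.

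To prove this, I would first invoke the Cartan decomposition $G=\bigsqcup_{m\geq n}K\diag(p^m,p^n)K$, which yields an $E$-basis of $\cH(\triv)$ given by the characteristic functions of the double cosets. The centrality of $\diag(p,p)$ shows that $S_1$ is invertible with $S_1^\ell\cdot \mathbf 1_{K\diag(p^m,p^n)K}=\mathbf 1_{K\diag(p^{m+\ell},p^{n+\ell})K}$ for every $\ell\in\Z$. It therefore suffices to verify that the characteristic functions $\mathbf 1_{K\diag(p^k,1)K}$ for $k\geq 0$ lie in the $E$-subalgebra generated by $T_1$ and $S_1^{\pm 1}$.

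This is a direct convolution calculation: by counting right $K$-cosets inside $K\diag(p,1)K$ via the Iwahori decomposition, induction on $k$ yields an identity of the form
\[
T_1^k=\mathbf 1_{K\diag(p^k,1)K}+\sum_{j=1}^{\lfloor k/2\rfloor} c_{k,j}\,S_1^{j}\cdot\mathbf 1_{K\diag(p^{k-2j},1)K}
\]
with $c_{k,j}\in\Z$. The change of basis between $\{T_1^k\}$ and $\{\mathbf 1_{K\diag(p^k,1)K}\}$ is therefore unipotent upper-triangular and hence invertible, which together with the previous step expresses every $\mathbf 1_{K\diag(p^m,p^n)K}$ as a polynomial in $T_1$ and $S_1^{\pm 1}$. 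Equivalently and more conceptually, one may appeal to the Satake isomorphism, which identifies $\cH(\triv)$ with the Weyl-invariant ring $E[X^{\pm1},Y^{\pm1}]^{S_2}$, freely generated as an $E$-algebra by $XY$ (invertible) and $X+Y$; these generators correspond up to a scalar to $S_1$ and $T_1$ respectively.

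The main obstacle is really just bookkeeping: correctly identifying how the scalings by $p^{a+b}$ and $p^{2a+b}$ propagate through~\eqref{eqn: Hecke algebra iso}, and carrying out (or citing) the convolution computation that underlies the Satake isomorphism. Beyond this, there is no conceptual difficulty: the polynomial algebra structure comes entirely from the trivial-coefficient case and the twist by $\sigma$ only rescales generators.
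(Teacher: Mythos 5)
Your proposal is correct and follows essentially the same route as the paper, whose proof is simply that the claim is immediate from the isomorphism \eqref{eqn: Hecke algebra iso} and the Satake isomorphism: you reduce to $\cH(\triv)$ via \eqref{eqn: Hecke algebra iso}, note the rescalings by $p^{a+b}$ and $p^{2a+b}$ are harmless, and invoke the standard structure of the spherical Hecke algebra. The additional elementary verification via the Cartan decomposition and the unipotent change of basis between $\{T_1^k\}$ and $\{\mathbf 1_{K\diag(p^k,1)K}\}$ is a correct (if optional) way of making the Satake input explicit.
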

\begin{proof}
This is immediate from~(\ref{eqn: Hecke algebra iso}) and the Satake isomorphism.
\end{proof}

Let  $r, s$ be integers with $r < s$, and let $t, d\in E$ with $d\in E^\times$. We let $D:=D(r,s, t,d)$ be the two-dimensional filtered $\varphi$-module
that has $e_1$, $e_2$ as a basis of its underlying $E$-vector space,
has its $E$-linear Frobenius endomorphism $\varphi$ being given by
$$\varphi(e_1)=e_2, \, \varphi(e_2)=-d e_1 + t e_2,$$
and has its
Hodge filtration given by
$$\Fil^i D= D \text{ if }i\le r , \,  \Fil^i D= E e_1
\text{ if }r+1\le i\le s, \, \text{ and }\Fil^i D=0 \text{ if }i> s.$$
We note that $t$ is the trace and
$d$ is the determinant of $\varphi$ on~$D$, and both are therefore
determined uniquely by~$D$.
The same construction works if $E$ is replaced with an $E$-algebra $A$. We will still write $D(r,s,t,d)$ for the resulting $\varphi$-module with $A$-coefficients if the coefficient algebra is clear from the context.

\begin{lem}\label{well_known} If $V$ is an indecomposable
  $2$-dimensional crystalline representation of $G_{\Qp}$ over $E$
  with distinct Hodge--Tate weights $(s,r)$ then there exists a unique pair $(t, d)\in E\times E^{\times}$ such that $\Dcris(V)\cong D(r,s,t,d)$. Moreover,
$v_p(d)= r+s$ and $v_p(t) \ge r$.
\end{lem}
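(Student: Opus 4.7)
The plan is to work entirely on the filtered $\varphi$-module side, writing $D=\Dcris(V)$ and $L:=\Fil^{r+1}D=\Fil^s D$, which is a one-dimensional $E$-subspace of the two-dimensional space $D$. For existence, the heart of the matter is to produce a vector $e_1\in L\setminus\{0\}$ such that $e_1$ and $\varphi(e_1)$ are linearly independent; once this is done I set $e_2:=\varphi(e_1)$, write $\varphi(e_2)=-d\, e_1 + t\, e_2$ with $d,t\in E$, and check (i) that the filtration of $D$ in this basis is exactly the one prescribed in the definition of $D(r,s,t,d)$, and (ii) that $d\neq 0$ since $\varphi$ is bijective on $D$. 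Uniqueness of $(t,d)$ is then immediate: $t=\mathrm{tr}(\varphi)$ and $d=\det(\varphi)$ in any basis of this shape.

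The main obstacle is showing that $L$ is not $\varphi$-stable; this is what I expect to be the only substantive step. I will argue by contradiction: suppose $L$ is $\varphi$-stable, so it is a sub-filtered-$\varphi$-module of $D$ with Hodge--Tate weight $s$. Let $\alpha$ be the eigenvalue of $\varphi|_L$; weak admissibility applied to the subobject $L$ forces $v_p(\alpha)\geq s$. If $\varphi$ has $\alpha$ as its only eigenvalue, then $2v_p(\alpha)=v_p(\det\varphi)=t_N(D)=t_H(D)=r+s$, giving $v_p(\alpha)=(r+s)/2<s$, a contradiction. Otherwise $\varphi$ has a second eigenvalue $\beta\neq\alpha$ with eigenline $L'$; since $L'\neq L$, one has $L'\cap L=0$, so the induced filtration on $L'$ jumps at $r+1$, i.e.\ $L'$ carries Hodge--Tate weight $r$. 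Weak admissibility for $L'$ gives $v_p(\beta)\geq r$, and combining with $v_p(\alpha)+v_p(\beta)=r+s$ and $v_p(\alpha)\geq s$ forces equality throughout: $v_p(\alpha)=s$, $v_p(\beta)=r$. A direct check on the filtration then shows that $D=L\oplus L'$ as filtered $\varphi$-modules, so $V$ decomposes, contradicting the hypothesis. Hence $L$ is not $\varphi$-stable, and a non-zero $e_1\in L$ satisfies $\varphi(e_1)\notin L=Ee_1$, as needed.

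Finally, the valuation estimates fall out of the same weak admissibility analysis. The identity $v_p(d)=v_p(\det\varphi)=t_N(D)=t_H(D)=r+s$ is the overall weak admissibility equality. For $v_p(t)\ge r$, I note that $t=\alpha+\beta$ is the sum of the eigenvalues of $\varphi$ on $D$, and every eigenvalue $\gamma$ of $\varphi$ satisfies $v_p(\gamma)\ge r$: indeed the corresponding eigenline is a sub-filtered-$\varphi$-module of $D$ whose Hodge--Tate weight is either $r$ or $s$, hence at least $r$, so weak admissibility gives $v_p(\gamma)\ge r$. (In the non-semisimple case with a single eigenvalue $\gamma$, the equality $2v_p(\gamma)=r+s$ yields the even stronger bound $v_p(\gamma)=(r+s)/2\ge r$.) Thus $v_p(t)\ge\min(v_p(\alpha),v_p(\beta))\ge r$, completing the proof.
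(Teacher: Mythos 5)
Your proof is correct and takes essentially the same route as the paper's sketch: choose $e_1$ spanning $\Fil^{s}\Dcris(V)$, use indecomposability (via full faithfulness of $\Dcris$) to see that this line is not $\varphi$-stable, set $e_2=\varphi(e_1)$, and read off $t$, $d$ and the valuation bounds from weak admissibility. The only difference is that you supply the verifications the paper leaves as assertions — the case analysis showing $\Fil^{s}\Dcris(V)$ is not $\varphi$-stable and the eigenvalue estimates (where, if the eigenvalues of $\varphi$ lie only in a quadratic extension of $E$, one should either extend scalars or note that conjugate eigenvalues have common valuation $(r+s)/2\ge r$) — and these are exactly the intended arguments.
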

\begin{proof} This is well known, and is a straightforward computation
	using the fact that $\Dcris(V)$ is weakly admissible.
For the sake of completeness, we sketch the proof; the key fact one employs
is that $V \mapsto \Dcris(V)$ is a fully faithful embedding of the category
of crystalline representations of $G_{\Q_p}$ into the category
of weakly admissible filtered $\varphi$-modules.  (Indeed,
it induces an equivalence between these two categories,
but that more difficult fact isn't needed for this computation.)
We choose $e_1$ to be a basis for $\Fil^{s} \Dcris(V)$; the assumption
that $V$ is indecomposable implies that $\Fil^{s} \Dcris(V)$ is not
stable under $\varphi,$ and so if we write $e_2:= \varphi(e_1)$ then
$e_1,e_2$ is a basis for $\Dcris(V)$, and $\varphi$ has a matrix
of the required form for a uniquely determined $t$ and $d$.
The asserted relations between $v_p(t)$, $v_p(d)$, $r$, and $s$ follow
from the weak admissibility of $\Dcris(V)$.
\end{proof}

In fact, it will be helpful to state a generalisation of the previous
result to the context of finite dimensional $E$-algebras.
(Note that the definition of $D(r,s,t,d)$ extends naturally
to the case when $t$ and $d$ are taken to lie in such a finite-dimensional
algebra.)

\begin{lem}\label{also_well_known} If $A$ is an Artinian local $E$-algebra
	with residue field $E'$, and if $V_A$ is a crystalline representation
	of rank two over $A$ whose associated residual representation
	$V_{E'} := E' \otimes_{A} V_A$ is indecomposable with distinct
	Hodge--Tate weights $(s,r)$, and if $\Dcris(V_A)$ denotes the
	filtered $\varphi$-module  associated to $V_A$, then there exists
	a  unique pair $(t,d) \in A\times A^{\times}$
       	such that $\Dcris(V_A) \cong D(r,s,t,d)$.
\end{lem}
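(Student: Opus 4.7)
The plan is to mimic the proof of Lemma \ref{well_known} verbatim, with Nakayama's lemma bridging the gap between the residue field $E'$ and the Artinian local coefficient ring $A$. First I would invoke the standard structure result for $\Dcris$ with Artinian local coefficients: since $V_A$ is a finite free $A$-module of rank $2$ on which $G_{\Q_p}$ acts crystallinely, the filtered $\varphi$-module $\Dcris(V_A)=(V_A\otimes_{\Q_p}B_{\cris})^{G_{\Q_p}}$ is finite free of rank $2$ over $A$, its formation commutes with base change along $A\to E'$, and the assumption that $V_A$ has constant Hodge--Tate weights $(s,r)$ with $r<s$ implies that $\Fil^s\Dcris(V_A)$ is a free rank-$1$ $A$-direct summand of $\Dcris(V_A)$. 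If desired this can be established by induction on the length of $A$, using that each short exact sequence $0\to (\m_A^i/\m_A^{i+1})\otimes_{E'}V_{E'}\to V_A/\m_A^{i+1}V_A\to V_A/\m_A^iV_A\to 0$ remains short exact after applying $\Dcris$, since all three terms are crystalline and $\Dcris$ preserves $\Q_p$-dimensions on the crystalline category.

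Second, apply Lemma \ref{well_known} to $V_{E'}$ to obtain a basis $\bar e_1,\bar e_2$ of $\Dcris(V_{E'})$ with $\bar e_1$ a generator of $\Fil^s$ and $\bar e_2=\varphi(\bar e_1)$, together with scalars $\bar t\in E'$ and $\bar d\in{E'}^{\times}$. Lift $\bar e_1$ to a generator $\tilde e_1$ of the free rank-$1$ $A$-module $\Fil^s\Dcris(V_A)$ and set $\tilde e_2:=\varphi(\tilde e_1)$. Since $\bar e_1,\bar e_2$ form an $E'$-basis of $\Dcris(V_{E'})$ (this is where the indecomposability of $V_{E'}$ enters, exactly as in Lemma \ref{well_known}), Nakayama's lemma forces $\tilde e_1,\tilde e_2$ to be an $A$-basis of $\Dcris(V_A)$. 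Writing $\varphi(\tilde e_2)=-d\tilde e_1+t\tilde e_2$ defines unique $t,d\in A$; the reduction $\bar d\in{E'}^{\times}$ forces $d\in A^{\times}$ by Nakayama again. The Hodge filtration then matches that of $D(r,s,t,d)$ because $\tilde e_1$ generates $\Fil^s\Dcris(V_A)$ and the remaining graded pieces are forced by the assumption on Hodge--Tate weights.

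For uniqueness, observe that $t$ and $d$ are respectively the trace and determinant of $\varphi$ acting on the rank-$2$ free $A$-module underlying $\Dcris(V_A)$, hence are intrinsic invariants of that $\varphi$-module and cannot depend on any choices of basis or isomorphism. The main obstacle is really only the first paragraph: one needs the coefficient theory of $\Dcris$ to behave well enough that $\Dcris(V_A)$ is free of rank $2$ and that $\Fil^s$ is a rank-$1$ direct summand. Once that is in place, every subsequent step is a formal Nakayama computation, and the overall proof is essentially the computation from Lemma \ref{well_known} read through the $A$-linear lens.
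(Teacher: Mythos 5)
Your argument is correct and is essentially the paper's own proof: lift a generator of $\Fil^s\Dcris(V_A)$ from the residual module, set $e_2=\varphi(e_1)$, and apply Nakayama's lemma twice to get an $A$-basis in which $\varphi$ has the required matrix, with uniqueness of $(t,d)$ coming from their being the trace and determinant of $\varphi$. Your first paragraph merely makes explicit the d\'evissage/length argument that the paper compresses into the phrase ``by considering the length of $\Fil^s\Dcris(V_A)$ as an $E$-vector space,'' so there is no substantive difference in approach.
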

\begin{proof}
	Choose a basis
        $\overline{e}_1$ for~$\Fil^s\Dcris(V_{E'})$, and
        choose~$e_1\in\Fil^s\Dcris(V_A)$  lifting~$\overline{e}_1$. By
        Na\-ka\-ya\-ma's lemma, $e_1$ generates~$\Fil^s\Dcris(V_A)$, and
        by considering the length of~$\Fil^s\Dcris(V_A)$ as an $E$-vector
        space, we see that~$\Fil^s\Dcris(V_A)$ is a free $A$-module of
        rank one.

        Let $e_2=\phi(e_1)$, and write~$\overline{e}_2$ for the image
        of~$e_2$ in~$\Dcris(V_{E'})$. As in the proof of
        Lemma~\ref{well_known}, $\overline{e}_1,\overline{e}_2$ is a
        basis of~$\Dcris(V_{E'})$, and thus by another application of
        Nakayama's lemma, $e_1,e_2$ are an $A$-basis
        of~$\Dcris(V_A)$. The matrix of~$\phi$ in this basis is
        evidently of the required form.
\end{proof}

\begin{cor}
	\label{cor_well_known}
	If $V$ is an indecomposable $2$-dimensional crystalline representation
	of $G_{\Q_p}$ over $E$ with distinct Hodge--Tate weights $(s,r)$,
	for which $\Dcris(V) \cong D(r,s,t_0,d_0)$,
	then the formal crystalline deformation ring of $V$ is naturally isomorphic
	to $E[[t-t_0,d-d_0]].$
\end{cor}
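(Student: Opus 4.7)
The plan is to identify the crystalline deformation functor of $V$ with the functor sending an Artinian local $E$-algebra $A$ with residue field $E$ to the set of pairs $(t,d)\in A^2$ reducing to $(t_0,d_0)$ modulo $\mathfrak{m}_A$. Since the latter functor is visibly pro-represented by $E[[t-t_0,d-d_0]]$, the corollary follows once this identification is established.

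First I would verify that $V$ has only scalar endomorphisms as a $G_{\Qp}$-representation, so that the formal crystalline deformation functor is (pro-)represented by a complete local Noetherian $E$-algebra $R$. By the Colmez--Fontaine equivalence, this reduces to computing endomorphisms of $\Dcris(V)\cong D(r,s,t_0,d_0)$: any such endomorphism must preserve $\Fil^s = Ee_1$ and commute with $\varphi$, hence sends $e_1\mapsto \alpha e_1$ and therefore $e_2=\varphi(e_1)\mapsto \alpha e_2$, so it is scalar. It thus suffices to exhibit the claimed bijection on Artinian points.

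Given a crystalline deformation $V_A$ of $V$ to such an $A$, Lemma~\ref{also_well_known} produces a unique pair $(t,d)\in A\times A^\times$ with $\Dcris(V_A)\cong D(r,s,t,d)$. Because $A$ is local and $d_0\in E^\times$, any $d\in A$ lifting $d_0$ is automatically a unit; so this datum is equivalent to a pair $(t,d)\in A^2$ reducing to $(t_0,d_0)$. This defines the natural transformation from the deformation functor to the functor represented by $E[[t-t_0,d-d_0]]$, and injectivity is immediate from the uniqueness in Lemma~\ref{also_well_known}.

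The main point is then surjectivity: every such pair $(t,d)$ must arise from a crystalline deformation, i.e.\ the filtered $\varphi$-module $D(r,s,t,d)$ with $A$-coefficients is admissible and hence of the form $\Dcris(V_A)$ for a crystalline lift $V_A$ of $V$ that is free of rank two over $A$. This is where I expect the only real (though standard) work: one argues by induction on the length of $A$, using a small extension $0\to I\to A\to A/I\to 0$ with $I^2=0$. The subobject $D(r,s,t,d)\otimes_A I$ is isomorphic to $\Dcris(V)\otimes_E I$, a direct sum of copies of $\Dcris(V)$, hence admissible; the quotient $D(r,s,t,d)\otimes_A A/I$ is admissible by the inductive hypothesis. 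Since crystalline representations of $G_{\Qp}$ are stable under extensions, $D(r,s,t,d)$ is itself admissible and arises as $\Dcris$ of a crystalline $G_{\Qp}$-representation on a free $A$-module of rank two deforming $V$. This exhibits the required inverse and completes the identification $R\cong E[[t-t_0,d-d_0]]$.
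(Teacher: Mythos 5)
Your argument follows the same route as the paper: the paper's proof of Corollary~\ref{cor_well_known} simply cites Lemma~\ref{also_well_known} together with the fact that $V\mapsto\Dcris(V)$ is an equivalence onto weakly admissible filtered $\varphi$-modules, and what you have written is essentially that proof with the details (scalar endomorphisms and pro-representability, the bijection on Artinian points, and the essential surjectivity of $(t,d)\mapsto D(r,s,t,d)$) spelled out.

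One justification in the surjectivity step is misstated, although the step itself is correct and your dévissage is the right way to prove it. You write that ``crystalline representations of $G_{\Qp}$ are stable under extensions''; as a statement about Galois representations this is false (for instance $H^1(G_{\Qp},\Qp)\cong\Hom_{\cont}(G_{\Qp},\Qp)$ is two-dimensional while its crystalline, i.e.\ unramified, part is one-dimensional, so there are non-crystalline self-extensions of the trivial representation), and in any case at that point of the argument you do not yet have a Galois-theoretic extension at your disposal: the extension in hand is $0\to D\otimes_A I\to D\to D\otimes_A (A/I)\to 0$ in the category of filtered $\varphi$-modules, and admissibility of $D$ is exactly what you are trying to establish. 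The fact you need is that weak admissibility is stable under extensions of filtered $\varphi$-modules (immediate from additivity of $t_N$ and $t_H$ and the comparison of induced filtrations on subobjects), combined with the Colmez--Fontaine theorem identifying weakly admissible with admissible; with that substitution your induction on the length of $A$ goes through, and the remaining points (uniqueness of $(t,d)$ giving injectivity, any lift of $d_0$ being a unit, freeness of the resulting $V_A$ over $A$ and its reduction being $V$) are the details the paper treats as immediate.
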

\begin{proof}This is immediate from Lemma~\ref{also_well_known}, and
  the fact that $V\mapsto\Dcris(V)$ is an equivalence of categories.
  \end{proof}

Suppose that $\rbar$ has a crystalline lift of Hodge type~$\sigma$. By~\cite[Thm.\ 4.1]{Gpatch} there is a natural $E$-algebra
homomorphism $\eta:\cH(\sigma)\to R_p(\sigma)[1/p]$ interpolating
a normalized local Langlands correspondence $r_p$ (introduced in Section \ref{subsec:notation}).
In order to characterise this map, one considers the composite
$$\cH(\sigma)\buildrel \eta\over \longrightarrow R_p(\sigma)[1/p]
\hookrightarrow \bigl(R_p(\sigma)\bigr)^{\an},$$
where $\bigl(R_p(\sigma)\bigr)^{\an}$ denotes the ring of rigid analytic
functions on the rigid analytic generic fibre of $\Spf R_p(\sigma)$.
Over $\bigl(R_p(\sigma)\bigl)^{\an}$, we may consider the universal
filtered $\varphi$-module, and the underlying universal Weil group
representation (given by forgetting the filtration).  The trace
and determinant of Frobenius on this representation are certain
elements of $\bigl(R_p(\sigma)\bigl)^{\an}$ (which in fact
lie in $R_p(\sigma)[1/p]$), and $\eta$ is characterised by the fact
that it identifies appropriately chosen generators of $\cH(\sigma)$
with this universal trace and determinant.

It is straightforward to give explicit formulas for these generators
of $\cH(\sigma)$, but we have found it interesting (in part with an eye
to making arguments in more general contexts) to also derive the facts
that we need without using such explicit formulas.

Regarding explicit formulas, we have the following result.

\begin{prop}
  \label{prop: explicit local Langlands for GL_2(Q_p)}The elements $\eta(S),\eta(T)\in
  R_p(\sigma)[1/p]$ are characterised by the following property:
  if  $x:R_p(\sigma)[1/p]\to \Qpbar$ is an $E$-algebra morphism, and $V_x$ is the
corresponding two-dimensional $\Qpbar$-representation of~$G_{\Qp}$,
then $x(\eta(T))=p^{a+b}t$,
and $x(\eta(S))=p^{2a+b-1}d$, where $t$, $d$ are respectively the
trace and the determinant of~$\varphi$ on $\Dcris(V_x)$. \end{prop}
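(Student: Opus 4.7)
The plan is to verify the two formulas pointwise, at the Zariski-dense set of $E$-algebra homomorphisms $x:R_p(\sigma)[1/p]\to\Qpbar$. Since $R_p(\sigma)$ is reduced and $p$-torsion free, such values determine $\eta(T)$ and $\eta(S)$ uniquely, so the asserted properties will indeed characterise these two elements.

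Fix such an $x$ and let $V_x$ be the corresponding crystalline representation of $G_{\Qp}$ of Hodge type $\sigma=\sigma_{a,b}$. As $V_x$ is crystalline, its associated Weil--Deligne representation has $N=0$ and Frobenius given by the $E$-linear $\varphi$-action on $\Dcris(V_x)$; by Lemma~\ref{well_known}, the trace and determinant of this Frobenius are precisely the scalars $t,d$ appearing in the statement. Thus it suffices to express $x(\eta(T))$ and $x(\eta(S))$ in terms of the Frobenius trace and determinant on $\Dcris(V_x)$.

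By the characterisation of $\eta$ recalled just before the proposition, $\eta(T)$ and $\eta(S)$ are pinned down by the requirement that, after pulling back via $x$, they agree with the action of $T$ and $S$ under the normalised local Langlands correspondence $r_p$ applied to the Weil--Deligne representation attached to $V_x$. That smooth representation is an unramified principal series of $G$, and via the isomorphism $\cInd_K^G\sigma\cong(\cInd_K^G\triv)\otimes\sigma$ the action of $S,T\in\cH(\sigma)$ on its spherical line translates into an action of the classical spherical Hecke algebra $\cH(\triv)$, twisted by the values of $\sigma$ at $\diag(p,p)$ and $\diag(p,1)$---which is precisely why the factors $p^{2a+b}$ and $p^{a+b}$ appear in the definitions of $S$ and $T$ in Proposition~\ref{prop: explicit description of the Hecke algebra of a principal series}. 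A direct computation using the Satake isomorphism for $\GL_2(\Qp)$, together with the unramified twist by $|\det|^{-1/2}$ implicit in the normalisation $r_p=\rec_p(\,\cdot\otimes|\det|^{-1/2})$, then produces the asserted formulas.

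The main technical obstacle is the careful tracking of normalisation conventions---the Satake isomorphism, the weight twist by $\sigma_{a,b}$, and the shift from $\rec$ to $r_p$; once these are aligned, the calculation reduces to evaluating two explicit Hecke operators on a spherical vector of an unramified principal series, and the factors of $p$ on each side match up to give $p^{a+b}t$ and $p^{2a+b-1}d$ respectively.
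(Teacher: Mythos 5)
Your proposal follows the same route as the paper's proof: reduce to pointwise verification at $\Qpbar$-points (justified because $R_p(\sigma)[1/p]$ is reduced Jacobson), identify $t,d$ with the Frobenius trace and determinant on $\Dcris(V_x)$ via Lemma~\ref{well_known}, invoke the defining property of~$\eta$ from \cite[Thm.~4.1]{Gpatch}, and transfer the Hecke-eigenvalue computation from $\cH(\sigma)$ to $\cH(\triv)$ through the isomorphism~\eqref{eqn: Hecke algebra iso}. The only difference is that the paper actually carries out the explicit spherical-Hecke computation on the unramified principal series (obtaining eigenvalues $t$ and $p^{-1}d$ for the two double-coset operators, which the normalising factors $p^{a+b}$ and $p^{2a+b}$ then convert to the stated formulas), whereas you assert that the factors of~$p$ match up without displaying the calculation.
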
\begin{proof} Lemma \ref{also_well_known} implies that there are uniquely determined $t, d\in \Qpbar$, such that
$\Dcris(V_x)\cong D(r,s,t,d)$, where $r= -a-b$ and $s=1-a$. The Weil--Deligne representation associated
to $D(r,s,t,d)$ is an unramified $2$-dimensional representation of $W_{\Qp}$, on which the geometric Frobenius
$\Frob_p$ acts by the matrix of crystalline Frobenius on $D(r,s,t,d)$, which is $\bigl( \begin{smallmatrix} 0 & -d\\ 1 & t\end{smallmatrix}
\bigr)$. Thus $$\WD(\Dcris(V_x))= \rec_p(\chi_1)\oplus \rec_p(\chi_2),$$ where $\chi_1, \chi_2:\Qptimes\rightarrow \Qpbartimes$
are unramified characters, such that $\chi_1(p)+\chi_2(p)= t$ and $\chi_1(p)\chi_2(p)=d$.

If $\pi= (\Ind_B^G |\centerdot| \chi_1 \otimes \chi_2)_{\sm}$ then $\pi\otimes |\det|^{-1/2}\cong \iota_B^G(\chi_1\otimes\chi_2)$,
where $\iota_B^G$ denotes smooth normalized parabolic induction. Then
$$ r_p(\pi)=
\rec_p ( \iota_B^G (\chi_1 \otimes \chi_2))= \rec_p(\chi_1)\oplus \rec_p(\chi_2).$$
The action of $\cH(\Eins)$ on $\pi^K$ is given by sending $[K \bigl( \begin{smallmatrix} p & 0 \\ 0 & 1 \end{smallmatrix} \bigr) K]$
to $p|p| \chi_1(p)+ \chi_2(p)= t$ and  $[K \bigl( \begin{smallmatrix} p & 0 \\0 & p \end{smallmatrix} \bigr) K]$
to  $|p|\chi_1(p)\chi_2(p)= p^{-1} d$. By ~\cite[Thm.\ 4.1]{Gpatch} and the fact that the evident isomorphism between $\pi^K=\Hom_K(\Eins,\Eins\otimes \pi)$ and $\Hom_K(\sigma,\sigma\otimes \pi)$ is equivariant with respect to the actions by $\cH(\Eins)$ and $\cH(\sigma)$ via the isomorphism \eqref{eqn: Hecke algebra iso}, we see that
\numequation\label{oh_yes_baby}
x(\eta(T))= p^{-r} t = p^{a+b} t, \quad x(\eta(S))= p^{-r-s} d= p^{2a+b-1} d.
\end{equation}
Since $R_p(\sigma)[1/p]$ is a reduced Jacobson ring, the formulas determine $\eta(T)$ and $\eta(S)$ uniquely.
\end{proof}

\begin{cor}
  \label{cor: S T in normalisation}$\eta(S)$ and $\eta(T)$ are
  contained in the normalisation of $R_p(\sigma)$ in $R_p(\sigma)[1/p]$.
\end{cor}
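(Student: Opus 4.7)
The plan is to verify that $\eta(S)$ and $\eta(T)$ take $p$-adically integral values at every closed point of $\Spec R_p(\sigma)[1/p]$, and then to invoke a standard criterion to deduce integrality over $R_p(\sigma)$.

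For the first step, let $x\colon R_p(\sigma)[1/p]\to \Qpbar$ be a closed point, with associated crystalline representation $V_x$ of Hodge type $\sigma=\sigma_{a,b}$, so $V_x$ has Hodge--Tate weights $(r,s)=(-a-b,1-a)$. Let $t$ and $d$ be the trace and determinant of $\varphi$ on $\Dcris(V_x)$. When $V_x$ is indecomposable, Lemma~\ref{well_known} gives $v_p(d)=r+s$ and $v_p(t)\ge r$; when $V_x$ decomposes as a direct sum of crystalline characters, the same bounds follow from the elementary fact that a one-dimensional crystalline representation of Hodge--Tate weight $k$ has Frobenius eigenvalue of $p$-adic valuation exactly $k$ (so one summand contributes valuation $r$ and the other $s$). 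Combining with the formulas of Proposition~\ref{prop: explicit local Langlands for GL_2(Q_p)}, one obtains
\[
v_p(x(\eta(T)))=(a+b)+v_p(t)\ge 0,\qquad v_p(x(\eta(S)))=(2a+b-1)+v_p(d)=0.
\]

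For the second step, I would appeal to the standard criterion that, for a complete local Noetherian reduced $\cO$-flat algebra $A$, an element $f\in A[1/p]$ lies in the normalization of $A$ in $A[1/p]$ if and only if it evaluates to an element of $\cO_L$ at every $L$-valued point of $\Spec A[1/p]$, with $L$ ranging over finite extensions of $E$. This follows from the excellence of $A$ and the Jacobson property of $A[1/p]$, and is a routine tool in the theory of deformation rings (as used extensively in Kisin's work on potentially semi-stable deformation rings). Applied to $A=R_p(\sigma)$, the computation of the first step yields the claim.

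The main obstacle is the integrality criterion invoked in the second step; while standard, it requires a small amount of care to state in the generality needed. Alternatively, one may bypass it by appealing directly to Kisin's construction of $R_p(\sigma)$: on Kisin's resolution, the universal filtered $\varphi$-module is literally present, so its trace and determinant of Frobenius live there as global sections, and pushing down identifies them with $\eta(T)$ and $\eta(S)$ inside the normalization of $R_p(\sigma)$.
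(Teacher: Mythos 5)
Your argument is essentially the paper's own proof: pointwise integrality of $x(\eta(S))$ and $x(\eta(T))$ at all closed points via the formulas of Proposition~\ref{prop: explicit local Langlands for GL_2(Q_p)} (equation~\eqref{oh_yes_baby}) together with the valuation bounds of Lemma~\ref{well_known}, followed by the criterion that pointwise boundedness on the generic fibre gives integrality over $R_p(\sigma)$, which the paper makes precise by citing \cite[Prop.\ 7.3.6]{deJ} rather than leaving it as a ``standard criterion''. Your separate treatment of decomposable $V_x$ (via the valuation of Frobenius on crystalline characters) is a reasonable extra precaution, but it does not change the route.
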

\begin{proof}It follows from \eqref{oh_yes_baby} and Lemma \ref{well_known}
that for all closed points
  $x:R_p(\sigma)[1/p]\to \Qpbar$, we have $x(\eta(S)),
  x(\eta(T))\in\Zpbar$. The result follows from~\cite[Prop.\ 7.3.6]{deJ}.
\end{proof}

A key fact that we will use, which is special to our context of
$2$-dimensional crystalline representations of $G_{\Q_p}$,
is that the morphism
$$(\Spf R(\sigma))^{\an} \to \bigl(\Spec \cH(\sigma)\bigr)^{\an}$$
induced by $\eta$ is an open immersion of rigid analytic spaces, where the superscript
$\an$ signifies the associated rigid analytic space.
We prove this statement (in its infinitesimal form) in the following result.

\begin{lem}\label{fibre_hecke} Let $\sigma=\sigma_{a,b}$ with $a\in \Z$ and $b\in \Z_{\ge 0}$. Then
 $$\dim_{\kappa(y)} \kappa(y)\otimes_{\cH(\sigma)} R_p(\sigma)[1/p] \le 1, \quad \forall y\in \mSpec \cH(\sigma).$$
\end{lem}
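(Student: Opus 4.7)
The plan is to combine the explicit formulas for~$\eta$ from Proposition~\ref{prop: explicit local Langlands for GL_2(Q_p)} with the uniqueness statement of Lemma~\ref{well_known} and the structure of completed local rings given by Corollary~\ref{cor_well_known}. A maximal ideal $y \in \mSpec \cH(\sigma)$ corresponds to a Galois orbit of pairs $(t_0,d_0) \in \overline{E} \times \overline{E}^\times$ with $\kappa(y) = E(t_0,d_0)$, and a closed point $x$ of $\Spec R_p(\sigma)[1/p]$ lies over $y$ precisely when the corresponding crystalline representation $V_x$ has Frobenius trace $t_0$ and determinant $d_0$ on $\Dcris(V_x)$.

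The first step is to show that $\eta^{*}$ has at most one closed point over each $y$, with $\kappa(x)=\kappa(y)$. The key input is that, under Assumption~\ref{assumption: rbar is generic enough}, any such $V_x$ is absolutely indecomposable, so that Lemma~\ref{well_known} applies to give the desired uniqueness (and forces $V_x$ to be defined over $E(t_0,d_0)$). Absolute indecomposability follows from Brauer--Nesbitt when $\rbar$ is absolutely irreducible, and requires a more delicate case analysis in the reducible non-split case, using the excluded-twist hypothesis to rule out additional closed points coming from decomposable crystalline lifts of Hodge type~$\sigma$; this is the main technical hurdle.

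The second step is the infinitesimal analysis at the unique such~$x$ (when it exists). By Corollary~\ref{cor_well_known}, the completed local ring $R_p(\sigma)[1/p]^\wedge_x$ is isomorphic to $\kappa(y)[[t-t_0,d-d_0]]$, and the induced map from $\widehat{\cH(\sigma)}_y \cong \kappa(y)[[S-s_0,T-\tau_0]]$ sends $S-s_0 \mapsto p^{2a+b-1}(d-d_0)$ and $T-\tau_0 \mapsto p^{a+b}(t-t_0)$ by Proposition~\ref{prop: explicit local Langlands for GL_2(Q_p)}. Since $p \in E^{\times}$, this is an isomorphism of two-dimensional regular complete local rings, so $R_p(\sigma)[1/p]^\wedge_x / \mathfrak{m}_y R_p(\sigma)[1/p]^\wedge_x = \kappa(y)$.

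To pass from this local statement to the global fiber $F_y := \kappa(y)\otimes_{\cH(\sigma)} R_p(\sigma)[1/p]$, one invokes the fact that $R_p(\sigma)[1/p]$ is Jacobson (as the generic fibre of a complete local Noetherian $\cO$-algebra, by a theorem of de Jong), so $F_y$ is Jacobson too. If no closed point lies over $y$, then $F_y = 0$; otherwise, by the first step $F_y$ has a unique maximal ideal (the image of $\mathfrak{m}_x$), and in a Jacobson ring every prime equals an intersection of maximal ideals, forcing every prime of $F_y$ to equal this unique maximal ideal. Hence $F_y$ is Artinian local, equal to its own completion at its maximal ideal, which the second step identifies with $\kappa(y)$. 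Thus in either case $\dim_{\kappa(y)} F_y \le 1$, as required.
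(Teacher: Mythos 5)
Your proposal follows essentially the same route as the paper's proof: first one shows the fibre over $y$ has at most one closed point, with residue field $\kappa(y)$, because by Proposition~\ref{prop: explicit local Langlands for GL_2(Q_p)} all points of the fibre have the same Frobenius trace and determinant, and Lemma~\ref{well_known} then pins down $\Dcris(V_x)$ (and shows $V_x$ descends to $\kappa(y)$); then one does an infinitesimal computation at that point. Where the paper establishes surjectivity of $\mm_y\to\mm_x/\mm_x^2$ using Kisin's regularity of $R_p(\sigma)[1/p]$ together with a two-parameter family of crystalline deformations to the dual numbers, you instead exhibit $\widehat{\cH(\sigma)}_{\mm_y}\to\widehat{R_p(\sigma)[1/p]}_{\mm_x}$ as an isomorphism in the coordinates of Corollary~\ref{cor_well_known} -- this is precisely the ``explicit computation'' proof of Proposition~\ref{prop: generic fibre of cristabelline deformation ring}, and there is no circularity in proving it first. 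Your Jacobson/Artinian bookkeeping in the last step is correct (one can also finish more quickly: the fibre ring has a unique maximal ideal, hence is local, and Nakayama applied to the vanishing cotangent space gives the conclusion). Note that both you and the paper implicitly use Kisin's identification of $\widehat{R_p(\sigma)[1/p]}_{\mm_x}$ with the formal crystalline deformation ring of $V_x$, which is what makes Corollary~\ref{cor_well_known} and Lemma~\ref{also_well_known} applicable there; that is fine.

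The one substantive issue is the step you defer as ``the main technical hurdle'': indecomposability of $V_x$ in the reducible case. The fact is true and is also used silently by the paper when it invokes Lemma~\ref{well_known}, but your proposed resolution points at the wrong hypothesis and is not actually carried out. The excluded-twist condition $\rbar\not\cong\bigl(\begin{smallmatrix}\omega&*\\0&1\end{smallmatrix}\bigr)\otimes\chi$ plays no role here; what does the work is $\End_{G_{\Qp}}(\rbar)=\F$, which forces a reducible $\rbar$ to be a \emph{non-split} extension of two \emph{distinct} characters. If a lift $V_x$ were decomposable, say $V_x\cong\psi_1\oplus\psi_2$, then $\bar\psi_1\neq\bar\psi_2$ (these are the Jordan--H\"older factors of $\rbar$); choosing $g$ with $\bar\psi_1(g)\neq\bar\psi_2(g)$, the operator $\bigl(\rho_x(g)-\psi_2(g)\bigr)/\bigl(\psi_1(g)-\psi_2(g)\bigr)$ is a $G_{\Qp}$-equivariant idempotent on any stable lattice (the image of $\rho_x$ is abelian), so every stable lattice splits as a direct sum of character lattices and its reduction is split -- contradicting the fact that a point of $R_p[1/p]$ comes from a lattice reducing to the non-split $\rbar$. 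With this one-line argument supplied (and the irreducible case handled by Brauer--Nesbitt, as you say), your proof is complete.
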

\begin{proof} Let us assume that $A:=  \kappa(y)\otimes_{\cH(\sigma)} R_p(\sigma)[1/p]$ is non-zero.
If  $x, x'\in \mSpec A$ then  Frobenius on $\Dcris(V_x)$ and $\Dcris(V_{x'})$ will have the same trace and determinant (since,
by Proposition \ref{prop: explicit local Langlands for GL_2(Q_p)},
these are determined by the images of $T$ and $S$ in $\kappa(y)$);
denote them by $t$ and $d$.
It follows from Lemma \ref{well_known} that $\Dcris(V_x)\cong \Dcris(V_{x'})$ and hence $x=x'$.
Since $D(r,s,t,d)$ can be
constructed over $\kappa(y)$ (as $t$ and $d$ lie in $\kappa(y)$),
so can $V_x$ and thus $\kappa(x)=\kappa(y)$.
To complete the proof of the lemma
it is enough to show that the map $\mm_y \rightarrow \mm_x/\mm_x^2$ is surjective. Since we know that $R_p(\sigma)[1/p]$ is a regular ring of dimension $2$ by \cite[Thm. 3.3.8]{kisindefrings}, it is enough to construct a $2$-dimensional family of deformations
of $\Dcris(V_x)$ to the ring of dual numbers $\kappa(y)[\epsilon]$, which induces a non-trivial deformation
of the images of $S, T$. That this is possible is immediate from
Corollary~\ref{cor_well_known} and Proposition~\ref{prop: explicit local Langlands for GL_2(Q_p)}. \end{proof}

\begin{prop}\label{prop: generic fibre of cristabelline deformation ring}  Let $\sigma=\sigma_{a,b}$ with $a\in \Z$ and $b\in \Z_{\ge 0}$. Let $y\in \mSpec \cH(\sigma)$ be the image of $x\in \mSpec R_p(\sigma)[1/p]$ under the morphism
	induced by $\eta: \cH(\sigma)\rightarrow
R_p(\sigma)[1/p]$. Then $\eta$ induces an isomorphism of completions:
$$ \widehat{\cH(\sigma)}_{\mm_y}\overset{\cong}{\longrightarrow} \widehat{R_p(\sigma)[1/p]}_{\mm_x}.$$
\end{prop}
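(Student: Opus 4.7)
The plan is to harvest the two key facts already established in the proof of Lemma~\ref{fibre_hecke}—namely that $\kappa(y)=\kappa(x)$ and that $\mm_y\to\mm_x/\mm_x^2$ is surjective—and then combine these with the regularity of the two rings at the points in question. First I would observe that $\eta$ induces a local homomorphism of Noetherian local rings $\cH(\sigma)_{\mm_y}\to R_p(\sigma)[1/p]_{\mm_x}$, and thus a local homomorphism
\[
\varphi:\widehat{\cH(\sigma)}_{\mm_y}\longrightarrow \widehat{R_p(\sigma)[1/p]}_{\mm_x}
\]
of complete Noetherian local rings which induces an isomorphism on residue fields (by the unicity/equality statement $\kappa(x)=\kappa(y)$ extracted from the proof of Lemma~\ref{fibre_hecke}).

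Next I would show that $\varphi$ is surjective. The proof of Lemma~\ref{fibre_hecke} already produces a two-dimensional family of deformations (via Corollary~\ref{cor_well_known} and Proposition~\ref{prop: explicit local Langlands for GL_2(Q_p)}) and thereby shows that the map $\mm_y\to\mm_x/\mm_x^2$ is surjective. Combined with the fact that the residue field map is an isomorphism, a standard Nakayama-plus-completeness argument then gives surjectivity of $\varphi$: any finite set of elements of $\mm_y$ whose images span $\mm_x/\mm_x^2$ generates the maximal ideal of the target after passing to completions, and iterating the approximation modulo powers of the maximal ideal (using completeness of both rings) yields surjectivity.

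Finally I would identify the structure of the two completions. By Proposition~\ref{prop: explicit description of the Hecke algebra of a principal series}, $\cH(\sigma)=E[S^{\pm 1},T]$ is smooth of relative dimension $2$ over $E$, so $\widehat{\cH(\sigma)}_{\mm_y}$ is a regular complete local ring of Krull dimension $2$ (in particular a domain). By Kisin's theorem \cite[Thm.\ 3.3.8]{kisindefrings} (invoked in the proof of Lemma~\ref{fibre_hecke}), $R_p(\sigma)[1/p]$ is also regular of dimension $2$, so $\widehat{R_p(\sigma)[1/p]}_{\mm_x}$ is regular of dimension $2$ as well. A surjection between Noetherian local domains of the same Krull dimension must have zero kernel, since otherwise the image would be a quotient of $\widehat{\cH(\sigma)}_{\mm_y}$ by a nonzero prime and hence of strictly smaller dimension. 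Thus $\varphi$ is an isomorphism. The main obstacle—constructing enough deformations to see the surjectivity on tangent spaces—has already been dealt with in Lemma~\ref{fibre_hecke}, so the proof of the present proposition should reduce to assembling these ingredients together with the regularity of both sides; no further input is needed.
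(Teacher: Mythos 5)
Your proposal is correct and follows essentially the same route as the paper's own ("pure thought") proof: both completions are regular local of dimension $2$ (via $\cH(\sigma)\cong E[S^{\pm1},T]$ and Kisin's theorem), and the facts $\kappa(y)=\kappa(x)$ and surjectivity on (co)tangent spaces harvested from Lemma~\ref{fibre_hecke} give surjectivity of the map on completions, with injectivity then forced by equality of dimensions. You merely spell out the completeness/Nakayama and dimension-of-quotient steps that the paper leaves implicit.
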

\begin{proof}
This can be proved by explicit computation, taking into account
Corollary~\ref{cor_well_known} and
Proposition~\ref{prop: explicit local Langlands for GL_2(Q_p)}.

We can also deduce it in more pure thought manner as follows:
	Since $\cH(\sigma)\cong E[T, S^{\pm 1}]$ by Proposition \ref{prop: explicit description of the Hecke algebra of a principal series} and $R_p(\sigma)[1/p]$ is a regular ring of dimension $2$ as in the preceding proof,
both completions are regular rings of dimension $2$. It follows from Lemma \ref{fibre_hecke} that $\kappa(y)=\kappa(x)$ and the map induces a surjection on tangent spaces. Hence the map is an isomorphism.
\end{proof}

If $0\le b\le p-1$, then $\sigma_{a,b}$ has a unique (up to homothety) $K$-invariant lattice $\sigmao_{a,b}$, which
is isomorphic to $\det^a \otimes \Sym^b \cO^2$ as a $K$-representation.
We let $\sigmabar_{a,b}$ be its reduction modulo $\varpi$. Then $\sigmabar_{a,b}$ is
the absolutely irreducible $\F$-representation
$\det^a\otimes\Sym^{b}\F^2$ of $\GL_2(\Fp)$; note that every
(absolutely) irreducible $\F$-representation of $\GL_2(\Fp)$ is of
this form for some uniquely determined $a,b$ with $0\le a<p-1$. We
refer to such representations as \emph{Serre weights}.

If $\sigmabar=\sigmabar_{a,b}$ is a Serre weight with the property that $\rbar$ has a
lift $r:G_{\Qp}\to\GL_2(\Zpbar)$ that is crystalline of Hodge type $\sigma=\sigma_{a,b}$,
then we say that $\sigmabar$ is a Serre weight of~$\rbar$.

Again we consider $\sigma=\sigma_{a,b}$ with any $a\in \Z$ and $b\in \Z_{\ge 0}$. Let
$$\sigma^{\circ}:=
\det{}\!^a\otimes\Sym^{b}\cO^2 ,\quad \sigmabar:=  \det{}\!^a\otimes\Sym^{b} \F^2$$ so that $\sigma^{\circ}/\varpi=\sigmabar$.
We let
$$ \cH(\sigmao):=\End_G(\cInd_K^G \sigmao), \quad
\cH(\sigmabar):=\End_G(\cInd_K^G \sigmabar).$$
Note that $\cH(\sigmao)$ is $p$-torsion free, since $\cInd_K^G \sigmao$ is.

\begin{lem}\label{lem: reducing Hecke algebra mod p}
	{\em (1)}
	For any $\sigma$,
	there is a natural isomorphism $\cH(\sigma)\cong \cH(\sigmao)[1/p]$,
	and a natural inclusion
	$\cH(\sigmao)/\varpi \hookrightarrow \cH(\sigmabar).$
	Furthermore, the $\cO$-subalgebra $\cO[S^{\pm 1}, T]$ of $\cH(\sigma)$
	is contained in $\cH(\sigmao)$.

{\em (2)} If, in addition
	$\sigma=\sigma_{a,b}$ with $0\le b\le p-1$,
	then $\cO[S^{\pm 1}, T] = \cH(\sigmao)$,
	and there is
	a natural isomorphism $\cH(\sigmao)/\varpi \cong \cH(\sigmabar).$
\end{lem}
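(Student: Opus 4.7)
The plan is as follows. Part (1) splits into three claims.

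First, for $\cH(\sigma)\cong\cH(\sigma^\circ)[1/p]$, I would use Frobenius reciprocity to write both algebras as $K$-equivariant hom spaces. The natural map $\cH(\sigma^\circ)\to\cH(\sigma)$ given by $E$-linear extension is injective since $\cH(\sigma^\circ)$ is $\cO$-torsion free (because $\cInd_K^G\sigma^\circ$ is), and becomes surjective after inverting $p$ since, for any $\psi\in\Hom_K(\sigma,\cInd_K^G\sigma)$, the image $\psi(\sigma^\circ)$ is finitely generated over $K$ and has support on finitely many $G/K$-cosets, so $p^n\psi$ restricts to $\sigma^\circ\to\cInd_K^G\sigma^\circ$ for some $n\ge 0$.

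Second, for the injection $\cH(\sigma^\circ)/\varpi\hookrightarrow\cH(\sigmabar)$, I would apply $\Hom_G(\cInd_K^G\sigma^\circ,-)$ to the short exact sequence
\[0\to\cInd_K^G\sigma^\circ\xrightarrow{\varpi}\cInd_K^G\sigma^\circ\to\cInd_K^G\sigmabar\to 0\]
and identify the resulting target via Frobenius reciprocity with $\Hom_K(\sigmabar,\cInd_K^G\sigmabar)=\cH(\sigmabar)$, using that any $K$-map into the $\varpi$-torsion module $\cInd_K^G\sigmabar$ factors through the quotient $\sigma^\circ\twoheadrightarrow\sigmabar$.

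Third, for $\cO[S^{\pm 1},T]\subseteq\cH(\sigma^\circ)$, I would check directly that each of $S$, $S^{-1}$, $T$ preserves the lattice $\cInd_K^G\sigma^\circ$. Under the isomorphism $(\cInd_K^G\Eins)\otimes\sigma\cong\cInd_K^G\sigma$ of \eqref{eqn: Hecke algebra iso}, the action of the classical double-coset operators on the tensor factor $\sigma$ involves the $\sigma(g)$-twists for the representatives $g\in\{\diag(p,p),\diag(p,1),\diag(1,p)\}$, and the normalizing factors $p^{2a+b}$ and $p^{a+b}$ are designed precisely to cancel the resulting denominators when one restricts to the $\cO$-lattice $\sigma^\circ$.

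For part (2), under the hypothesis $0\le b\le p-1$, I would employ the Mackey decomposition
\[\cH(\sigma^\circ)=\bigoplus_{g\in K\backslash G/K}\Hom_{K\cap g^{-1}Kg}(\sigma^\circ,(\sigma^\circ)^g),\]
together with its reduction modulo $\varpi$ and the analogous decomposition for $\cH(\sigmabar)$. The constraint $b\le p-1$ is what ensures $\sigmabar$ is absolutely irreducible as $\F[K]$-module, and I would use this to show each summand of $\cH(\sigmabar)$ has $\F$-dimension at most one; parallel integral analysis then shows each nonzero Mackey summand of $\cH(\sigma^\circ)$ is free of rank one over $\cO$. Writing $g=\diag(p^{\mu_1},p^{\mu_2})$ with $\mu_1\le\mu_2$, the monomial $S^{\mu_1}T^{\mu_2-\mu_1}$ has a ``leading term'' giving a unit generator of the rank-one summand at $(\mu_1,\mu_2)$, with lower-order contributions from double cosets $(\mu_1+k,\mu_2-k)$ with $k\ge 1$, and an induction on $\mu_2-\mu_1$ recovers $\cO[S^{\pm 1},T]=\cH(\sigma^\circ)$. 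Reducing modulo $\varpi$ and comparing per double coset then yields $\cH(\sigma^\circ)/\varpi\cong\cH(\sigmabar)$.

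The main obstacle I expect is the explicit rank-one computation in each Mackey summand under the constraint $0\le b\le p-1$: one must track how $\sigma^\circ$ and its twist $(\sigma^\circ)^g$ compare after restriction to $K\cap g^{-1}Kg$ (which modulo $p$ reduces to the upper-triangular Borel in $\GL_2(\F_p)$), and verify that the bound $b\le p-1$ keeps the resulting space of intertwiners as small as possible. For $b>p-1$ the reduction $\sigmabar$ ceases to be irreducible under such restrictions, additional integral Hecke operators outside $\cO[S^{\pm 1},T]$ can appear, and both equalities in (2) break down.
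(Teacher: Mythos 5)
Your part (1) is essentially the paper's proof: the identification $\cH(\sigma)\cong\cH(\sigmao)[1/p]$ via finite generation of $\cInd_K^G\sigmao$ over $\cO[G]$, the injection $\cH(\sigmao)/\varpi\hookrightarrow\cH(\sigmabar)$ by applying $\Hom_G(\cInd_K^G\sigmao,-)$ to the multiplication-by-$\varpi$ sequence, and a direct check on the standard basis of $\sigmao$ that the normalisations $p^{2a+b}$, $p^{a+b}$ make $S^{\pm1},T$ integral. For part (2), however, you take a genuinely different route. The paper quotes the computation of the mod~$p$ spherical Hecke algebra from Barthel--Livn\'e and Breuil, namely that the composite $\F[S^{\pm1},T]\to\cH(\sigmao)/\varpi\to\cH(\sigmabar)$ is an isomorphism, and then finishes formally: the second map is injective by (1), so both are isomorphisms, and the inclusion $\cO[S^{\pm1},T]\hookrightarrow\cH(\sigmao)$ is an isomorphism after reducing mod $\varpi$ and after inverting $\varpi$ (the latter because $\cH(\sigma)=E[S^{\pm1},T]$ by Proposition~\ref{prop: explicit description of the Hecke algebra of a principal series}), hence an isomorphism. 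Your Mackey-decomposition argument --- one-dimensionality of each intertwining space $\Hom_{K\cap g^{-1}Kg}$ for $g=\diag(p^{\mu_1},p^{\mu_2})$ when $0\le b\le p-1$, unit leading term of $S^{\mu_1}T^{\mu_2-\mu_1}$ at the coset $(\mu_1,\mu_2)$, and induction on $\mu_2-\mu_1$ --- is in effect a self-contained reproof of exactly that cited input; it is the standard way those results are established and should go through, but be aware that the step you flag as the ``main obstacle'' (the rank-one computation per double coset mod $p$, and the nonvanishing of the reduction of the leading coefficient) is precisely the nontrivial content of \cite[\S2]{breuil2} and \cite{barthel-livne}, so your route trades a citation for genuine work rather than avoiding it. If you want a shorter finish once the mod-$p$ one-dimensionality is known, the paper's ``isomorphism mod $\varpi$ and after inverting $p$ implies isomorphism'' bootstrap is cleaner than the coset-by-coset induction.
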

\begin{proof}The isomorphism of~(1) follows immediately from the fact  that $\cInd_K^G \sigmao$ is a finitely generated $\OO[G]$-module.
To see the claimed inclusion, apply
$\Hom_G(\cInd_K^G \sigmao, -)$ to the exact sequence
$$0\rightarrow \cInd_K^G \sigmao \overset{\varpi}{\rightarrow} \cInd_K^G \sigmao\rightarrow \cInd_{K}^G \sigmabar\rightarrow 0$$
so as to obtain an injective map
$$\cH(\sigmao)/\varpi\hookrightarrow  \Hom_G(\cInd_K^G \sigmao, \cInd_K^G \sigmabar)\cong \cH(\sigmabar).$$
	To see the final claim of~(1), we recall that
  from \eqref{eqn: Hecke algebra iso} and Frobenius reciprocity we have natural isomorphisms
  $$\cH(\triv)\simeq \cH(\sigma) \simeq \Hom_K(\sigma,\cInd^G_K\sigma);$$
  the image of $\phi\in \cH(\triv)$ under the composite map sends
   $v\in \sigma$ to the function $g\mapsto \phi(g^{-1})\sigma(g)v$.
  A direct computation of the actions of $S,T$ on the standard basis of~$\sigma^\circ$ then verifies
that $S^{\pm 1}$ and $T$
  lie in the $\OO$-submodule $\Hom_K(\sigma^\circ,\cInd^G_K\sigma^\circ)$
  of $\cH(\sigma)$.

To prove~(2), we note that it
follows from \cite[\S 2]{breuil2} and \cite{barthel-livne}
that the composite
$\F[S^{\pm 1},T] \to \cH(\sigmao)/\varpi \to \cH(\sigmabar)$ is
an isomorphism.  Since the second of these maps is injective,
by~(1), we conclude that each of these maps is in fact an isomorphism,
confirming the second claim of~(2).
Furthermore, this shows that
 the inclusion $\cO[S^{\pm 1},T] \hookrightarrow \cH(\sigmao)$
of~(1) becomes an isomorphism both after reducing modulo $\varpi$
as well as after inverting $\varpi$ (because
$\cH(\sigma)$ is generated by~$S^{\pm 1}$ and~$T$ by
Proposition~\ref{prop: explicit description of the Hecke algebra of a
  principal series}). Thus it is an isomorphism, completing the proof of~(2).
\end{proof}

The following lemma is well known, but for lack of a convenient
reference we sketch a proof.\begin{lem}
  \label{lem:rbar has a unique Serre weight, and the deformation ring is smooth, and given by Tp}
  Assume that $\rbar$ satisfies
Assumption~{\em \ref{assumption: rbar is generic enough}}.
Then $\rbar$ has at most two Serre weights. Furthermore, if we let
$\sigmabar=\sigmabar_{a,b}$ be a Serre weight of $\rbar$ then the following hold:
\begin{enumerate}

\item The deformation ring $R_p(\sigma)$ is formally smooth of relative dimension $2$ over~$\cO$.
\item The morphism of $E$-algebras $\eta: \cH(\sigma)\to R_p(\sigma)[1/p]$ induces a
  morphism of $\cO$-algebras $\cH(\sigma^\circ)\to R_p(\sigma)$.

\item The character  $\omega^{1-2a-b}\det\rbar$ is unramified, and if we let
\begin{itemize}
\item $\mu=(\omega^{1-2a-b}\det\rbar)(\Frob_p)$, and
\item if $\rbar$ is irreducible, then $\lambda=0$, and
\item if $\rbar$ is reducible, then we can write $\rbar\cong \omega^{a+b}\otimes
\begin{pmatrix}
  \chibar_1&*\\0&\chibar_2\omega^{-b-1}
\end{pmatrix}$
for unramified characters $\chibar_1$, $\chibar_2$, and let
$\lambda=\chibar_1(\Frob_p)$, \end{itemize}
then the composition $$\alpha: \cH(\sigmao)\rightarrow R_p(\sigma)\rightarrow \F$$
 maps $T\mapsto \lambda$ and $S\mapsto \mu$.

\item Let $\widehat{\cH(\sigmao)}$ be the completion of $\cH(\sigmao)$ with respect to the kernel of $\alpha$. Then the map $\cH(\sigmao)\rightarrow R_p(\sigma)$
induces an isomorphism of local $\OO$-algebras
$\widehat{\cH(\sigmao)}\overset{\cong}{\rightarrow} R_p(\sigma)$. In
coordinates, we have
$R_p(\sigma)=\cO[[S-\widetilde{\mu},T-\widetilde{\lambda}]]$, where
the tilde denotes the Teichm\"uller lift.
\item If we set
$\pi:=(\cInd_K^G\sigmabar)\otimes_{\cH(\sigmabar),\alpha}\F$,
then~$\pi$ is an absolutely irreducible representation of~$G$, and
is independent of the choice of Serre weight~$\sigmabar$
of~$\rbar$.
\end{enumerate}

\end{lem}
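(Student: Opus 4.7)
The plan is to treat the parts in the natural order (1), (2), (3), (4), (5), with the Serre-weight count folded into the analysis.

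First, I would isolate the underlying input from the weight part of Serre's conjecture for $\GL_2(\Qp)$ (e.g.\ Berger--Li--Zhu, Breuil, Gee): under Assumption~\ref{assumption: rbar is generic enough}, $\rbar$ has at most two Serre weights, which can be read off explicitly from $\rbar$ (one weight if $\rbar$ is irreducible, one or two if $\rbar$ is a non-split extension of characters of the excluded shape). For~(1), since a Serre weight $\sigmabar_{a,b}$ has $0\le b\le p-1$, the Hodge--Tate weights of lifts of Hodge type $\sigma$ lie in the Fontaine--Laffaille range, so every deformation of $\rbar$ to an Artinian local $\OO$-algebra of Hodge type~$\sigma$ corresponds to a Fontaine--Laffaille module; a direct tangent/obstruction calculation (the relevant Fontaine--Laffaille $H^2$ vanishes) shows that $R_p(\sigma)$ is formally smooth over $\OO$ of relative dimension $2$, consistent with $\dim R_p(\sigma)[1/p]=2$ from~\cite{kisindefrings}.

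For~(2), Corollary~\ref{cor: S T in normalisation} places $\eta(S),\eta(T)$ in the normalisation of $R_p(\sigma)$ inside $R_p(\sigma)[1/p]$; by~(1) the ring $R_p(\sigma)$ is regular, hence normal, so $\eta(S),\eta(T)\in R_p(\sigma)$, and Lemma~\ref{lem: reducing Hecke algebra mod p}(2) (which gives $\cH(\sigmao)=\OO[S^{\pm1},T]$ for $0\le b\le p-1$) then yields the integral map $\cH(\sigmao)\to R_p(\sigma)$. For~(3), I would compute the reduction modulo $\varpi$ of $\eta(T)$ and $\eta(S)$ by applying Proposition~\ref{prop: explicit local Langlands for GL_2(Q_p)} to an $\OO$-valued point of~$R_p(\sigma)$: the value of $\eta(S)$ is $p^{2a+b-1}d=\det\varphi$ which, via the comparison with the mod $p$ cyclotomic character on the determinant, reduces to $(\omega^{1-2a-b}\det\rbar)(\Frob_p)=\mu$; the value of $\eta(T)=p^{a+b}t$ is, by weak admissibility, an $\OO$-integer whose mod $p$ reduction is the eigenvalue of Frobenius on the unramified quotient of the Fontaine--Laffaille module of~$\rbar$, i.e.\ the ``unit root'' $\chibar_1(\Frob_p)=\lambda$ in the reducible case and $0$ in the irreducible (supersingular) case. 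This Fontaine--Laffaille reduction step is the main technical point; everything else is essentially formal.

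For~(4), by~(3) the map $\cH(\sigmao)\to R_p(\sigma)$ factors through the completion at the maximal ideal $\ker\alpha=(\varpi,S-\widetilde\mu,T-\widetilde\lambda)$, giving a local $\OO$-algebra map
\[
f\colon \widehat{\cH(\sigmao)}\;=\;\OO[[S-\widetilde\mu,\,T-\widetilde\lambda]]\;\longrightarrow\;R_p(\sigma).
\]
Both sides are formally smooth $\OO$-algebras of relative dimension $2$ (the left by inspection, the right by~(1)). Proposition~\ref{prop: generic fibre of cristabelline deformation ring} applied at the closed point of $R_p(\sigma)[1/p]$ attached to any characteristic-zero lift shows that $f[1/p]$ is an isomorphism on formal completions at a smooth point of the generic fibre; combined with $\OO$-flatness and equality of Krull dimensions this forces $f$ itself to be surjective on tangent spaces, hence an isomorphism between regular local rings of the same dimension. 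Finally, for~(5), Lemma~\ref{lem: reducing Hecke algebra mod p}(2) identifies $\cH(\sigmabar)=\F[S^{\pm1},T]$ and presents $\pi$ as the quotient of $\cInd_K^G\sigmabar$ by the Hecke system $(T=\lambda,\;S=\mu)$. The Barthel--Livn\'e/Breuil classification then identifies $\pi$ as a supersingular representation when $\lambda=0$ and as an irreducible principal series when $\lambda\ne 0$; in both cases Assumption~\ref{assumption: rbar is generic enough} guarantees absolute irreducibility. Independence of the Serre weight is immediate when $\rbar$ is irreducible, and when $\rbar$ has two Serre weights one checks directly (by comparing the two Hecke eigensystems $(\lambda,\mu)$ attached to $\sigmabar$ and its companion) that both constructions produce the same irreducible principal series quotient, which is the statement of mod $p$ local Langlands in this case.
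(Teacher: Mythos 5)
Your outline diverges from the paper's proof (which establishes (1), (3) and (4) simultaneously by explicitly identifying which weakly admissible modules $D(-(a+b),1-a,t,d)$ reduce to $\rbar$, so that $R_p(\sigma)$ is literally a power series ring in the Hecke parameters $p^{a+b}t-\widetilde{\lambda}$, $p^{2a+b-1}d-\widetilde{\mu}$), and two of your steps have genuine gaps. First, your appeal to Fontaine--Laffaille theory for all $0\le b\le p-1$ is not valid at the boundary: the Hodge--Tate weights of a lift of Hodge type $\sigma_{a,b}$ differ by $b+1$, so for $b=p-1$ you are outside the (unipotent) Fontaine--Laffaille range, and this weight genuinely occurs (e.g.\ when $\rbar$ is reducible with $n=0$, both $\sigmabar_{m,0}$ and $\sigmabar_{m,p-1}$ are Serre weights). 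The paper handles exactly this case by invoking the main result of Berger--Li--Zhu, which shows that the same description of the relevant weakly admissible modules persists; without some such input your proofs of (1) and (3) do not cover all Serre weights.

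Second, the deduction of (4) is not correct as stated: an isomorphism of completed local rings at characteristic-zero points of the generic fibre (Proposition~\ref{prop: generic fibre of cristabelline deformation ring}), together with $\OO$-flatness, regularity and equality of dimensions, does \emph{not} force the integral map $\widehat{\cH(\sigmao)}\to R_p(\sigma)$ to be surjective on the tangent space at the closed point. For instance, the local $\OO$-algebra map $\OO[[X]]\to\OO[[u]]$, $X\mapsto pu$, is injective, flat, induces isomorphisms on completions at every maximal ideal of the generic fibre (and has all such fibres of length $\le 1$), yet reduces to the zero map on reduced cotangent spaces and is far from an isomorphism; a variant such as $X\mapsto u^2+pu$ shows the same with a finite flat map. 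What is really needed is surjectivity modulo $\varpi$ on the tangent space, i.e.\ that the images of $S-\widetilde{\mu}$ and $T-\widetilde{\lambda}$ generate $\mm_{R_p(\sigma)}/(\mm_{R_p(\sigma)}^2,\varpi)$, and this is precisely the integral information the paper extracts from the Fontaine--Laffaille (resp.\ Berger--Li--Zhu) parametrization of crystalline lifts of $\rbar$ by the pairs $(t,d)$; it cannot be recovered from the generic fibre alone. Your treatment of (2) and (5) is essentially the paper's (note only that for (5), in the reducible two-weight case, what the Assumption buys you is $\lambda^2\ne\mu$, which is the hypothesis needed to apply Barthel--Livn\'e).
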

\begin{proof}
The claim that $\rbar$ has at most two Serre weights is immediate from the proof
of~\cite[Thm.\ 3.17]{bdj}, which explicitly describes the Serre weights of
~$\rbar$. Concretely, in the case at hand these weights are as
follows (see also the discussion of~\cite[\S3.5]{emerton2010local},
which uses the same conventions as this paper). If~$\rbar$ is irreducible, then we may write \[\rbar|_{I_{\Qp}}\cong \omega^{m-1}\otimes
\begin{pmatrix}
  \omega_2^{n+1}&0\\0&\omega_2^{p(n+1)}
\end{pmatrix},
\]where $\omega_2$ is a fundamental character of niveau~$2$, and $0\le m<p-1$,
$0\le n\le p-2$. Then the Serre weights of~$\rbar$ are $\sigmabar_{m,n}$ and
$\sigmabar_{m+n,p-1-n}$ (with $m+n$ taken modulo $p-1$). If ~$\rbar$ is reducible, then  we may write \[\rbar|_{I_{\Qp}}\cong \omega^{m+n}\otimes
\begin{pmatrix}
  1 &*\\0&\omega^{-n-1}
\end{pmatrix},
\]where $0\le m<p-1$, $0\le n<p-1$. Then (under Assumption~\ref{assumption: rbar is generic enough}) if $n\ne 0$, the
unique Serre weight of~$\rbar$ is $\sigmabar_{m,n}$, while if $n=0$,
then $\sigmabar_{m,0}$ and $\sigmabar_{m,p-1}$ are the two Serre
weights of~$\rbar$.

Part~(2) follows from~(1) by Lemma \ref{lem: reducing Hecke algebra mod p}~(2)
and Corollary~\ref{cor: S T in normalisation}.
We prove parts~(1), (3) and~(4) simultaneously. If $\sigmabar$ is not of
the form $\sigmabar_{a,p-1}$, the claims about $R_p(\sigma)$  are a standard consequence of (unipotent)
Fontaine--Laffaille theory; for example, the irreducible case with $\cO=\Zp$ is~\cite[Thm.\
B2]{MR1363495}, and the reducible case follows in the same way. The
key point is that the corresponding weakly admissible modules are
either reducible, or are uniquely determined by the trace and determinant
of~$\varphi$, by Lemma~\ref{well_known}. Concretely, if $\rbar$ is irreducible, then the
crystalline lifts of $\rbar$ of Hodge type $\sigma_{a,b}$  correspond exactly
to the weakly
admissible modules $D(-(a+b),1-a,t,d)$ where  $v_p(t)>-a-b$ and  $\overline{p^{2a+b-1}d}=\mu$. The claimed description of the deformation ring then follows.

Similarly, if $\rbar$ is reducible, then it follows from Fontaine--Laffaille
theory (and Assumption~\ref{assumption: rbar is generic enough}) that any
crystalline lift of Hodge type~$\sigmabar_{a,b}$ is necessarily reducible and
indecomposable, and one finds that these crystalline lifts correspond precisely
to those weakly admissible modules with $D(-(a+b),1-a,t,d)$ where
$v_p(t)=-a-b$, $\overline{p^{a+b}t}=\lambda$, and  $\overline{p^{2a+b-1}d}=\mu$.

This leaves only the case that $\sigmabar$ is of
the form $\sigmabar_{a,p-1}$. In this case the result is immediate from the
main result of~\cite{berger-li-zhu}, which shows that the above
description of the weakly admissible modules continues to hold.

Finally, (5) is immediate from the main results
of~\cite{barthel-livne,breuil1}, together with  the explicit description
of~$\sigmabar$, $\lambda$, established above. More precisely, in the
case that $\rbar$ is irreducible, the absolute irreducibility of~$\pi$
is~\cite[Thm.\ 1.1]{breuil1}, and its independence of the choice
of~$\sigmabar$ is~\cite[Thm.\ 1.3]{breuil1}. If $\rbar$ is reducible
and has only a single Serre weight, then the absolute irreducibility
of~$\pi$ is~\cite[Thm.\ 33(2)]{barthel-livne}. In the remaining
case that $\rbar$ is reducible and has two Serre weights, then
Assumption~\ref{assumption: rbar is generic enough} together with the
explicit description of $\lambda,\mu$ above implies that
$\lambda^2\ne\mu$, and the absolute irreducibility of~$\pi$ is
again~\cite[Thm.\ 33(2)]{barthel-livne}. The independence of~$\pi$
of the choice of~$\sigmabar$ is~\cite[Cor.\ 36(2)(b)]{barthel-livne}. \end{proof}

\begin{rem}\label{rem: pi is not special or one dimensional}
  It follows from the explicit description of~$\pi$ that it is either
  a principal series representation or supersingular, and neither
  one-dimensional nor an element of the special series. (This would no
  longer be the case if we allowed $\rbar$ to be a twist of an
  extension of the trivial character by the mod~$p$ cyclotomic
  character, when in fact $\pi$ would be an extension of a
  one-dimensional representation and a special
  representation, which would also depend on the Serre weight if $\rbar$ is peu ramifi\'e.)\end{rem}
\begin{rem}
  \label{rem: comparting our convention on pi to the literature}If
  $\pi$ has central character~$\psi$, then
  $\det\rbar=\psi\omega^{-1}$.\end{rem}

\section{Patched modules and arithmetic actions}\label{sec: recalling
  from Gpatching}We now introduce the notion of an arithmetic action of (a power series ring over)
$R_p$ on an $\cO[G]$-module. It is not obvious from the definition
that any examples exist, but we will explain later in this section how
to deduce the existence of an example from the results
of~\cite{Gpatch} (that is, from the Taylor--Wiles patching
method). The rest of the paper is devoted to showing a uniqueness
result for such actions, and thus deducing that they encode the
$p$-adic local Langlands correspondence for~$\GL_2(\Qp)$. We
anticipate that the axiomatic approach taken here will be useful in
other contexts (for example, for proving local-global compatibility in
the $p$-adic Langlands correspondence for~$\GL_2(\Qp)$ in global
settings other than those considered in~\cite{emerton2010local} or~\cite{Gpatch}).

\subsection{Axioms}\label{subsec: axioms for arithmetic action}Fix an integer $d\ge 0$, and set
$R_\infty:=R_p\widehat{\otimes}_{\cO}\cO[[x_1,\dots,x_d]]$. Then an
\emph{$\cO[G]$-module with an arithmetic action of $R_\infty$} is by definition a non-zero $R_\infty[G]$-module $M_\infty$
satisfying the following axioms.
\begin{itemize}
\item[(AA1)] $M_\infty$ is a finitely generated
  $R_\infty[[K]]$-module.
\item[(AA2)] $M_\infty$ is projective in the category of pseudocompact
  $\cO[[K]]$-modules.
\end{itemize}
Let $\sigma^\circ$ be a $K$-stable
  $\cO$-lattice in
  $\sigma=\sigma_{a,b}$. Set
  \[M_\infty(\sigma^\circ):=\left(\Hom^{\mathrm{cont}}_{\cO[[K]]}(M_\infty,(\sigma^\circ)^d)\right)^d, \]where
  we are considering continuous homomorphisms for the profinite
  topology on $M_\infty$ and the $p$-adic topology on
  $(\sigma^\circ)^d$. This is a finitely generated $R_\infty$-module
  by (AA1) and~\cite[Cor.\ 2.5]{paskunasBM}.
\begin{itemize}
\item[(AA3)]For any $\sigma$, the action of $R_\infty$ on
  $M_\infty(\sigma^\circ)$ factors through
  $R_\infty(\sigma):=R_p(\sigma)[[x_1,\dots,x_d]]$. Furthermore,
  $M_\infty(\sigma^\circ)$ is maximal Cohen--Macaulay over
  $R_\infty(\sigma)$, and the $R_\infty(\sigma)[1/p]$-module
  $M_\infty(\sigma^\circ)[1/p]$ is locally free of rank one over its
  support.
\end{itemize}
For each $\sigma^\circ$, we have a natural action
of $\cH(\sigma^\circ)$ on $M_\infty(\sigma^\circ)$, and thus of $\cH(\sigma)$ on $M_\infty(\sigma^\circ)[1/p]$.

\begin{itemize}
\item[(AA4)] For any $\sigma$, the action of $\cH(\sigma)$ on
  $M_\infty(\sigma^\circ)[1/p]$ is given by the
  composite \[\cH(\sigma)\stackrel{\eta}{\to}R_p(\sigma)[1/p]\to R_p(\sigma)[[x_1,\dots,x_d]][1/p],\]
  where $\cH(\sigma)\stackrel{\eta}{\to}R_p(\sigma)[1/p]$ is defined in \cite[Thm. 4.1]{Gpatch}.
   \end{itemize}
\begin{rem}\label{rem:axioms are natural}
  While these axioms may appear somewhat mysterious, as we will see in
  the next subsection they arise very
  naturally in the constructions of~\cite{Gpatch}. (Indeed, those
  constructions give modules~$M_\infty$ satisfying obvious analogues of the above
  conditions for $\GL_n(K)$ for any finite extension $K/\Qp$; however,
  our arguments in the rest of the paper will only apply to the case
  of~$\GL_2(\Q_p)$.)

In these examples, axioms (AA1) and (AA2) essentially follow from the
facts that spaces of automorphic forms are finite-dimensional, and
that the cohomology of zero-dimensional Shimura varieties is
concentrated in a single degree (degree zero). Axioms (AA3) and (AA4)
come from the existence of Galois representations attached to
automorphic forms on unitary groups, and from local-global
compatibility at~$p$ for automorphic forms of level prime to~$p$. 
\end{rem}
The following remark explains how axiom~(AA4) is related to Breuil's
original formulation of the $p$-adic Langlands correspondence in terms
of unitary completions of locally algebraic vectors; see also the
proof of Proposition~\ref{AA4_holds} below.
\begin{rem}
  \label{rem: AA4 alternatives}Axiom (AA4) is, in the presence of
  axioms~(AA1)-(AA3), equivalent to an alternative axiom~(AA4'), which
  expresses a pointwise compatibility with the classical local
  Langlands correspondence, as we now explain. Write~$R_\infty(\sigma):=R_p(\sigma)[[x_1,\dots,x_d]]$. If~$y$ is a maximal ideal
  of~$R_\infty(\sigma)[1/p]$ which lies in the support of $M_\infty(\sigma^\circ)[1/p]$, then we
  write \[\Pi_y:=\Hom_\cO^\cont\bigl(M_\infty \otimes_{R_\infty,y}\cO_{\kappa(y)},E\bigr).\]
We write~$x$ for  the corresponding maximal ideal
  of~$R_p(\sigma)[1/p]$, $r_x$ for the deformation of~$\rbar$ corresponding to~$x$,
and set
$\pi_{\sm}(r_x):=r_p^{-1}(\WD(r_x)^{F-\mathrm{ss}})$, which is the smooth representation of $G$
corresponding to the Weil--Deligne representation associated to $r_x$ by the classical Langlands
correspondence~$r_p$ (normalised as in~Section~\ref{subsec:notation}).
We write $\pi_{\alg}(r_x)$ for the algebraic
representation of $G$ whose restriction to $K$ is equal to $\sigma$.
\begin{itemize}
\item[(AA4')] For any $\sigma$ and for any $y$ and $x$ as above,
  there is a non-zero $G$-equivariant map
  \[\pi_{\sm}(r_x)\otimes \pi_{\alg}(r_x)
\rightarrow \Pi_y^{\lalg}.\]
\end{itemize}
That (AA1)-(AA4) imply (AA4') is a straightforward consequence of the
defining property of the map~$\eta$. Conversely, assume (AA1)-(AA3)
and~(AA4'), and write $$\barR_\infty(\sigma):=R_\infty(\sigma)/\mathrm{Ann}(M_\infty(\sigma^\circ)).$$ It follows from (AA3) that  the
natural map
$\barR_\infty(\sigma)[1/p]\to\End_{R_\infty(\sigma)[1/p]}(M_\infty(\sigma^\circ)[1/p])$
is an isomorphism, as it is injective and the cokernel is not supported on any maximal ideal of $R_{\infty}(\sigma)[1/p]$.
In particular the action
of~$\cH(\sigma^\circ)$ on~$M_\infty(\sigma^\circ)$ induces a
homomorphism $\eta':\cH(\sigma)\to \barR_\infty(\sigma)[1/p]$. We have to
show that this agrees with the map induced
by~$\eta$.

It follows from (AA4') and the defining property of $\eta$ that $\eta$ and $\eta'$ agree modulo
every maximal ideal of $R_p(\sigma)[1/p]$ in the support of $M_{\infty}(\sigmao)$.
It follows from (AA3) that $\barR_\infty(\sigma)[1/p]$ is  a union of irreducible components
of $R_{\infty}(\sigma)[1/p]$. Since $R_{\infty}(\sigma)[1/p]$ is reduced we conclude that
$\barR_\infty(\sigma)[1/p]$ is reduced, thus the intersection of all maximal ideals  is equal to zero.  Hence (AA4) holds.
\end{rem}

\subsection{Existence of a patched module $M_\infty$}\label{subsec: M infty}

We now briefly recall some of
main results of~\cite{Gpatch}, specialised to the case of
two-dimensional representations. We emphasise that these results use only the
Taylor--Wiles--Kisin patching method, and use nothing about the
$p$-adic Langlands correspondence
for $\GL_2(\Qp)$. (We should  perhaps remark, though, that we do make implicit use of the
results of~\cite{BLGGT} in the globalisation part of the argument, and thus of
the Taylor--Wiles--Kisin method for unitary groups of rank~$4$, and
not just for~$U(2)$.) We freely use the notation of~\cite{Gpatch}.

Enlarging $\F$ if necessary, we see from~\cite[Lem.\
2.2]{Gpatch} that the hypotheses on $\rbar$ at the start
of~\cite[\S 2.1]{Gpatch} are automatically satisfied. We fix the choice of
weight~$\xi$ and inertial type~$\tau$ in~\cite[\S 2.3]{Gpatch} in
the following way: we take~$\tau$ to be trivial, and we take~$\xi$ to be the
weight corresponding to a Serre weight of~$\rbar$, as in Lemma~\ref{lem:rbar has a unique Serre weight, and the deformation ring is smooth,
  and given by Tp}.

With this choice, the modification of the
Taylor--Wiles--Kisin method carried out in~\cite[\S2.6]{Gpatch}
produces for some $d>0$ an $R_\infty$-module $M_\infty$ with an action
of~$G$. (Note that for our choice of $\rbar$, $\xi$ and
$\tau$, the various framed deformation rings appearing
in~\cite{Gpatch} are formal power series rings over~$\cO$, and
the framed deformation ring of~$\rbar$ is formally smooth over~$R_p$, so all of
these rings are absorbed into the power series ring
$\cO[[x_1,\dots,x_d]]$. The module~$M_\infty$ is patched from the
cohomology of a definite unitary group over some totally real field in
which~$p$ splits completely.)

This $R_\infty[G]$-module automatically satisfies the axioms
(AA1)--(AA4) above. Indeed, (AA1) and (AA2) follow from~\cite[Prop.\ 2.10]{Gpatch}, and (AA3) follows from~\cite[Lem.\ 4.17(1),
4.18(1)]{Gpatch}. Finally, (AA4) is~\cite[Thm.\ 4.19]{Gpatch}.

\section{Existence and uniqueness of arithmetic actions}\label{sec:
  proof that arithmetic action is unique}We fix an $\cO[G]$-module
$M_\infty$ with an arithmetic action of $R_\infty$ in the sense of Section~\ref{subsec: axioms for arithmetic action}.

\subsection{Serre weights and cosocles}\label{subsec: Serre weights theory}

Now let $\sigmabar=\sigmabar_{a,b}$ be a Serre weight, and let
$\sigma^\circ$ be a $K$-stable $\cO$-lattice in $\sigma_{a,b}$, so
that $\sigma^\circ/\varpi\sigma^\circ=\sigmabar$. We define
$M_\infty(\sigmabar)=\Hom^{\mathrm{cont}}_{\cO[[K]]}(M_\infty,(\sigmabar)^\vee)^{\vee}$,
so that by (AA2) we have
$M_\infty(\sigmabar)=M_\infty(\sigma^\circ)/\varpi
M_\infty(\sigma^\circ)$. By definition, the deformation ring
$R_p(\sigma)=R_p(\sigma_{a,b})$ is non-zero if and only if
$\sigmabar$ is a Serre weight of~$\rbar$. Set $R_{\infty}(\sigma)=R_p(\sigma)[[x_1,\dots,x_d]]$.

We let~$\pi$ denote the
absolutely irreducible smooth $\F$-representation of~$G$ associated to $\rbar$ via 
Lemma~\ref{lem:rbar has a unique Serre weight, and the deformation ring is smooth, and given by Tp} (5).

\begin{prop}\label{prop: abstract Serre weight freeness and Hecke
    compatibility}
  \begin{enumerate}
  \item We have $M_\infty(\sigma^\circ)\ne 0$ if and only if
    $\sigmabar$ is a Serre weight of~$\rbar$, in which case
    $M_{\infty}(\sigma^{\circ})$ is a free $R_{\infty}(\sigma)$-module
    of rank one.\item If $\sigmabar$ is a Serre weight of~$\rbar$ then the action of
  $\cH(\sigmabar)$ on $M_{\infty}(\sigmabar)$ factors through the
  natural map $R_p(\sigma)/\varpi\to R_\infty(\sigma)/\varpi$, and
  $M_{\infty}(\sigmabar)$ is a flat $\cH(\sigmabar)$-module.
  \item If $\pi'$ is an  irreducible  smooth  $\F$-representation of~$G$
then we have
  \[\Hom_G(\pi',M_\infty^\vee)\ne 0\]
  if and only if  $\pi'$ is isomorphic to $\pi$.
  \end{enumerate}
\end{prop}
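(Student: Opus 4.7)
For part~(1), I would split on whether $\sigmabar$ is a Serre weight of $\rbar$. If it is not, then $R_p(\sigma)=0$ by definition, so $R_\infty(\sigma)=0$, and (AA3) forces $M_\infty(\sigma^\circ)=0$. If it is, Lemma~\ref{lem:rbar has a unique Serre weight, and the deformation ring is smooth, and given by Tp}(1) makes $R_\infty(\sigma)=R_p(\sigma)[[x_1,\ldots,x_d]]$ a regular local ring of Krull dimension $d+3$, so the MCM condition in (AA3), together with Auslander--Buchsbaum, forces $M_\infty(\sigma^\circ)$ to be free over $R_\infty(\sigma)$; the rank-one-over-its-support property from (AA3) after inverting $p$ then pins the rank down to one.

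Part~(2) will use the freeness from part~(1): $\End_{R_\infty(\sigma)}(M_\infty(\sigma^\circ))=R_\infty(\sigma)$, which is $\cO$-flat and hence injects into $R_\infty(\sigma)[1/p]$. Axiom (AA4), together with Corollary~\ref{cor: S T in normalisation}, then forces the integral Hecke action $\cH(\sigma^\circ)\to R_\infty(\sigma)$ to factor through $\cH(\sigma^\circ)\to R_p(\sigma)\to R_\infty(\sigma)$. Reducing modulo $\varpi$ and identifying $\cH(\sigma^\circ)/\varpi\cong\cH(\sigmabar)$ via Lemma~\ref{lem: reducing Hecke algebra mod p}(2) yields the claimed factorisation. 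For flatness of $M_\infty(\sigmabar)$ over $\cH(\sigmabar)$, I would chain flatness statements: by Lemma~\ref{lem:rbar has a unique Serre weight, and the deformation ring is smooth, and given by Tp}(4), $R_p(\sigma)/\varpi\cong\F[[S-\mu,T-\lambda]]$ is the $\mm_\alpha$-adic completion of the localisation of $\cH(\sigmabar)$ at $\mm_\alpha$ and hence flat over $\cH(\sigmabar)$; $R_\infty(\sigma)/\varpi$ is a formal power series ring over $R_p(\sigma)/\varpi$, hence flat; and $M_\infty(\sigmabar)$ is free over $R_\infty(\sigma)/\varpi$ by~(1).

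For part~(3), both directions proceed via the Frobenius reciprocity identification $\Hom_G(\cind_K^G\sigmabar, M_\infty^\vee)\cong M_\infty(\sigmabar)^\vee$ as $\cH(\sigmabar)$-modules. To see $\pi\hookrightarrow M_\infty^\vee$: because $M_\infty/\varpi M_\infty\ne 0$ (by $\cO$-flatness coming from (AA2)), some Serre weight $\sigmabar$ embeds into $M_\infty^\vee$, so $M_\infty(\sigmabar)\ne 0$; by~(1), $\sigmabar$ is then a Serre weight of $\rbar$, and $M_\infty(\sigmabar)\cong\F[[S-\mu,T-\lambda,x_1,\ldots,x_d]]$. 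The quotient by $(S-\mu,T-\lambda)$ is the non-zero $\F[[x_1,\ldots,x_d]]$, so $M_\infty(\sigmabar)^\vee[\mm_\alpha]=\Hom_G(\pi,M_\infty^\vee)\ne 0$. For the converse, given $\pi'\hookrightarrow M_\infty^\vee$, its $K$-socle contains some Serre weight $\sigmabar$, which~(1) again makes a Serre weight of $\rbar$; Barthel--Livn\'e/Breuil shows $\cH(\sigmabar)$ acts on $\pi'$ through a character $\beta$, and the embedding produces a nonzero eigenvector $f\in M_\infty(\sigmabar)^\vee$ with eigenvalue $\beta$. The main technical hurdle enters here: part~(2) makes $M_\infty(\sigmabar)^\vee$ a smooth module over $R_p(\sigma)/\varpi\cong\F[[S-\mu,T-\lambda]]$, so every element is annihilated by some power of $\mm_\alpha$; combining $(S-\beta(S))f=0$ with $(S-\mu)^n f=0$ forces $\beta(S)=\mu$, and similarly $\beta(T)=\lambda$. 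Thus $\beta=\alpha$, and Lemma~\ref{lem:rbar has a unique Serre weight, and the deformation ring is smooth, and given by Tp}(5) gives $\pi'\cong\pi$, completing the proof.
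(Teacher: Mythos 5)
The one genuine gap is in part~(1): as written, your argument establishes the ``only if'' direction together with the statement that $M_\infty(\sigma^\circ)$ is either zero or free of rank one, but not the ``if'' direction. The MCM--plus--Auslander--Buchsbaum step gives freeness of \emph{some} finite rank, and the rank-one-over-its-support condition in (AA3) is vacuous when the module vanishes (the support is then empty), so nothing in the axioms alone rules out $M_\infty(\sigma^\circ)=0$ when $\sigmabar$ is a Serre weight of~$\rbar$. Non-vanishing is a genuinely global input: it uses $M_\infty\neq 0$ and, since $\rbar$ may have \emph{two} Serre weights, the fact that $\pi\cong(\cind_K^G\sigmabar')\otimes_{\cH(\sigmabar'),\alpha}\F$ for \emph{either} choice of Serre weight $\sigmabar'$ (part~(5) of the lemma on Serre weights and deformation rings). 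Your part~(3) argument actually supplies the missing non-vanishing for one weight --- the one occurring in the $K$-socle of $M_\infty^\vee$ --- but not for the other. The fix is short and uses only what you already have: once $\Hom_G(\pi,M_\infty^\vee)\neq 0$ is known, run the identification $\Hom_G(\pi,M_\infty^\vee)^\vee\cong M_\infty(\sigmabar')\otimes_{\cH(\sigmabar'),\alpha}\F$ for an \emph{arbitrary} Serre weight $\sigmabar'$ of $\rbar$ (legitimate because $\pi$ is independent of the choice and is neither a character nor special series, so the relevant surjection is an isomorphism), and conclude $M_\infty(\sigmabar')\neq 0$, hence $M_\infty((\sigma')^\circ)\neq 0$. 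This is precisely how the paper closes the loop: it proves the ``only if'' part of (1), then (2), then (3), and only then deduces the ``if'' part of (1) from (3); your write-up presents (1) as settled at the outset, which it is not.

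Apart from this, your route is essentially the paper's: (AA3) plus formal smoothness of $R_p(\sigma)$ and Auslander--Buchsbaum for freeness; (AA4) plus integrality of $\eta(S),\eta(T)$ for the factorisation in (2); and the flatness chain $\cH(\sigmabar)\to R_p(\sigma)/\varpi\to R_\infty(\sigma)/\varpi$ with freeness on top is exactly the paper's argument. In (3), your identification of the eigensystem $\beta$ with $\alpha$ by playing $(S-\beta(S))f=0$ against $\mm_\alpha$-power torsion of $M_\infty(\sigmabar)^\vee$ is a harmless variant of the paper's argument (which instead deduces $\alpha=\alpha'$ from the non-vanishing of the $\alpha'$-fibre of the completed Hecke algebra); just note that to have a Hecke eigenvector in $\Hom_K(\sigmabar,\pi')$ at all you should first reduce to $\pi'$ absolutely irreducible by enlarging $\F$, as the paper does, and allow the eigencharacter to take values in a finite extension of~$\F$.
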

\begin{proof}It follows from (AA3) that $M_{\infty}(\sigma^{\circ})\neq 0$ only if $R_p(\sigma)[1/p]\neq 0$, which is equivalent
  to $\sigmabar$ being a Serre weight of $\rbar$.
   In this case, since $\sigma=\sigma_{a,b}$ with $0\le b\le p-1$, $R_\infty(\sigma)$
  is formally smooth over~$\cO$ by Lemma~\ref{lem:rbar has a unique Serre weight, and the deformation ring is smooth,
   and given by Tp}, so it follows from~(AA3) and the
  Auslander--Buchsbaum theorem that $M_{\infty}(\sigmao)$ is a free
  $R_{\infty}(\sigma)$-module of finite rank, and that $M_{\infty}(\sigmao)[1/p]$ is a locally free
  $R_{\infty}(\sigma)[1/p]$-module of rank $1$. Thus
  $M_{\infty}(\sigmao)$ is free of rank one over
  $R_{\infty}(\sigma)$.
  This proves the ``only if'' direction of~(1).

For~(2), note that $M_\infty(\sigma^\circ)\ne 0$ if and only if
$M_\infty(\sigmabar)\ne 0$, so we may assume that~ $M_\infty(\sigma^\circ)\ne 0$.  The first part of (2) follows from~(AA4) together with
  Lemmas~\ref{lem:rbar has a unique Serre weight, and the deformation ring is smooth, and given by Tp}~(2)
  and~\ref{lem: reducing Hecke algebra mod p}~(2).
For the remaining part of~(2), note that $R_p(\sigma)/\varpi$ is flat over~$\cH(\sigmabar)$
by Lemmas~\ref{lem: reducing Hecke algebra mod p} and
\ref{lem:rbar has a unique Serre weight, and the deformation
  ring is smooth, and given by Tp}, and $M_\infty(\sigmabar)$ is flat
over~$R_p(\sigma)/\varpi$ by the only if part of~(1), as required.

 To prove (3) we first note that it is enough to prove the statement for absolutely irreducible $\pi'$ as we may enlarge the field $\F$.
 Let us assume that $\pi'$ is absolutely irreducible and let $\sigmabar'$ be an irreducible representation of $K$ contained in the socle
 of $\pi'$.  It follows from \cite{barthel-livne,breuil1} that the surjection $\cind_K^G \sigmabar' \twoheadrightarrow \pi'$ factors through the map
 \numequation\label{oh_tg_i_love_you_dearly}
  (\cind_K^G \sigmabar')\otimes_{\cH(\sigmabar') , \alpha'}\F\twoheadrightarrow \pi'
 \end{equation}
 where $\alpha': \cH(\sigmabar')\rightarrow \F$ is given by the action of $\cH(\sigmabar')$ on the one dimensional $\F$-vector space
 $\Hom_K(\sigmabar', \pi')$. Moreover, \eqref{oh_tg_i_love_you_dearly} is an isomorphism unless $\pi'$ is a character or special series.
 Since
 $$M_{\infty}(\sigmabar')\cong \Hom_K(\sigmabar', M_{\infty}^{\vee})^{\vee}\cong  \Hom_G(\cind_K^G \sigmabar', M_{\infty}^{\vee})^{\vee},$$
 from \eqref{oh_tg_i_love_you_dearly} we obtain a surjection of $R_{\infty}(\sigmabar')$-modules
 $$ M_{\infty}(\sigmabar')\otimes_{\cH(\sigmabar'), \alpha'} \F\cong   \Hom_G(\cind_K^G \sigmabar' \otimes_{\cH(\sigmabar'), \alpha'} \F, M_{\infty}^{\vee})^{\vee}\twoheadrightarrow \Hom_G(\pi', M_{\infty}^{\vee})^{\vee},$$
 which moreover is an isomorphism if $\pi'$ is not a character or special series. Thus if $\Hom_G(\pi', M_{\infty}^{\vee})$ is non-zero  then we deduce
 from the previous displayed expression that
 $ M_{\infty}(\sigmabar')\otimes_{\cH(\sigmabar'), \alpha'} \F  \neq 0.$
 In particular,
 $M_{\infty}(\sigmabar')\neq 0$ and hence $\sigmabar'$ is a Serre
 weight for $\rbar$ by the only if part of (1).

We claim that $\alpha'$ coincides with the morphism $\alpha$
of Lemma~\ref{lem:rbar has a unique Serre weight, and the deformation ring
    is smooth, and given by Tp}~(3)
(with $\sigmabar'$ in place of $\sigma$).
To see this,
note that by the only if part of~(1),  we have that $\bigl(R_p(\sigma')/\varpi\bigr)
\otimes_{\cH(\sigmabar'),\alpha'} \F \neq 0,$
and hence by
Lemma~\ref{lem:rbar has a unique Serre weight, and the deformation ring
    is smooth, and given by Tp}~(4),
we find that $\widehat{\cH(\sigmabar')}\otimes_{\cH(\sigmabar'),\alpha'}\F\neq 0,$
where $\widehat{\cH(\sigmabar')}$ denotes the completion of $\cH(\sigmabar')$
with respect to the kernel of the morphism $\alpha$.
This proves that $\alpha$ and $\alpha'$ coincide.

Part~(5) of Lemma \ref{lem:rbar has a unique Serre weight, and the deformation ring
    is smooth, and given by Tp}
now implies that $\pi\cong (\cind_K^G \sigmabar')\otimes_{\cH(\sigmabar') , \alpha'}\F$. Hence,
 \eqref{oh_tg_i_love_you_dearly} gives us a $G$-equivariant surjection $\pi\twoheadrightarrow \pi'$, which is an isomorphism as
 $\pi$ is irreducible.

 Conversely, it follows from from (AA1) that there is an irreducible smooth
$\F$-representation $\pi'$ of $G$ such that $\Hom_G(\pi',
M_{\infty}^{\vee})$ is non-zero; we have just seen
that~$\pi\cong\pi'$, so that $\Hom_G(\pi, M_{\infty}^{\vee})\neq 0$, as required.

Finally, suppose that~$\sigmabar$ is a Serre weight of~$\rbar$. Then
as above we have an isomorphism of $R_\infty(\sigmabar)$-modules  $$
M_{\infty}(\sigmabar)\otimes_{\cH(\sigmabar), \alpha} \F\cong
\Hom_G(\pi, M_{\infty}^{\vee})^{\vee}\ne 0$$ so that
$M_\infty(\sigmabar)\ne 0$. This completes the proof of the ``if''
direction of~(1).
\end{proof}

\subsection{Smooth and admissible representations} We record a few definitions, following Section 2
of~\cite{paskunasimage}. Let $(R,\mathfrak{m})$ be a complete local
noetherian $\cO$-algebra with residue field $\F$. Then
$\Mod_{G}^\sm(R)$ is the full subcategory of the category of
$R[G]$-modules consisting of \emph{smooth} objects. More precisely,
these are objects $V$ such
that \[V=\bigcup_{H,n}V^H[\mathfrak{m}^n],\]
where the union is taken over open compact subgroups $H\subset G$ and
over positive integers $n$.

We say that an object $V$ of $\Mod_{G}^\sm(R)$ is \emph{admissible} if
$V^H[\mathfrak{m}^n]$ is a finitely generated $R$-module for every
compact open subgroup $H\subset G$ and every $n\geq 1$. Moreover, $V$
is called \emph{locally admissible} if, for every $v\in V$, the
smallest $R[G]$-submodule of $V$ containing $v$ is admissible. We let
$\Mod_G^{\ladm}(R)$ denote the full subcategory of $\Mod_{G}^\sm(R)$
consisting of locally admissible representations.

The categories $\Mod_{G}^\sm(R)$ and $\Mod_G^{\ladm}(R)$ are abelian (see~\cite{emertonord1} for the second one) and have enough injectives.

\begin{defn}\label{injective envelope}\begin{enumerate}
  \item A monomorphism $\iota: N\hookrightarrow M$ in an abelian
    category is called \emph{essential} if, for every non-zero subobject
    $M'\subset M$, $\iota(N)\cap M'$ is non-zero.
  \item An \emph{injective envelope} of an object $N$ of an abelian category
    is an essential monomorphism $\iota: N\hookrightarrow I$ with $I$
    an injective object of the abelian category.
\end{enumerate}
\end{defn}
\noindent If they exist, injective envelopes are unique up to
(non-unique) isomorphism. By Lemma 2.3 of~\cite{paskunasimage}, the
category $\Mod_{G}^\sm(R)$ admits injective envelopes.   The category $\Mod_G^{\ladm}(R)$ also
admits injective envelopes.  (This follows from
\cite[Lem.~3.2]{unitpask} and
the fact that the inclusion of $\Mod_G^{\ladm}(R)$ into $\Mod_G^{\sm}(R)$
has a right adjoint, namely the functor to which any smooth $G$-representation
associates its maximal locally admissible subrepresentation.)

\begin{lemma}
	\label{lem:socles are essential}
	If $V$ is a locally admissible representation of $G$,
	then the inclusion $\soc_G(V) \hookrightarrow V$ is essential.
\end{lemma}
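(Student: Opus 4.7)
My plan is to prove the equivalent statement that every nonzero subrepresentation $W \subseteq V$ meets $\soc_G(V)$ nontrivially, which is the same as showing $W$ contains a nonzero simple $G$-subrepresentation (since any simple subrepresentation of $W$ automatically lies in $\soc_G(V)\cap W$). Since local admissibility is inherited by subrepresentations, I may choose any $0\neq v\in W$ and pass to $W':=R[G]\cdot v \subseteq W$, which is admissible by the definition of local admissibility. By the smoothness condition there exists an $n_0\geq 1$ with $\mathfrak{m}^{n_0}v=0$, so $W'$ is in fact a smooth admissible representation of $G$ over the Artinian quotient $R/\mathfrak{m}^{n_0}$.

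The key tool is Pontryagin duality. I would fix a uniform pro-$p$ compact open subgroup $K_0 \subseteq G$ (for instance, a sufficiently small principal congruence subgroup of $K$); by Lazard's theorem $\cO[[K_0]]$ is a two-sided Noetherian ring, and since $R/\mathfrak{m}^{n_0}$ is finite over $\cO/\varpi^{k}$ for some $k$, the completed group algebra $R/\mathfrak{m}^{n_0}[[K_0]]$ is Noetherian as well. Standard Pontryagin duality identifies admissibility of $W'$ with the statement that the dual $(W')^\vee$ is a finitely generated module over $R/\mathfrak{m}^{n_0}[[K_0]]$, hence Noetherian.

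Now Pontryagin duality sets up an inclusion-reversing bijection between $G$-subrepresentations of $W'$ and closed $G$-stable submodules of $(W')^\vee$. A descending chain of nonzero subrepresentations of $W'$ therefore corresponds to an ascending chain of submodules of $(W')^\vee$, which must terminate by the Noetherian property (a totally ordered chain of submodules of a Noetherian module is eventually constant, since its union is finitely generated). Applying Zorn's lemma to the poset of nonzero subrepresentations of $W'$ ordered by reverse inclusion produces a minimal nonzero subrepresentation, necessarily simple, finishing the proof.

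The main technical point to verify cleanly is the Pontryagin duality dictionary for $R$-coefficients: namely, that admissibility of $W'$ over $R/\mathfrak{m}^{n_0}$ corresponds to finite generation of $(W')^\vee$ over $R/\mathfrak{m}^{n_0}[[K_0]]$, and that closed $G$-stable submodules of $(W')^\vee$ biject with $G$-subrepresentations of $W'$. Both are standard for smooth admissible representations of $p$-adic groups over Artinian local rings with finite residue field, and the rest of the argument is purely formal.
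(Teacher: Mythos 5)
Your proof is correct, but it takes a different route from the paper. The paper reduces to a nonzero finitely generated subrepresentation $W$ (admissible by local admissibility), notes $\soc_G(V)\cap W=\soc_G(W)$, and then simply cites Emerton's theorem that every finitely generated admissible representation of $G$ is of finite length, which in particular forces $\soc_G(W)\neq 0$. You instead reprove the only consequence that is actually needed --- that a nonzero admissible representation contains an irreducible subrepresentation --- from scratch: passing to $W'=R[G]v$, which is killed by $\mathfrak{m}^{n_0}$ and hence is an admissible smooth representation over the finite ring $R/\mathfrak{m}^{n_0}$, you use the standard duality dictionary (admissibility of $W'$ is equivalent to finite generation of $(W')^\vee$ over the Noetherian algebra $R/\mathfrak{m}^{n_0}[[K_0]]$, and subrepresentations correspond order-reversingly to closed submodules of the dual), so that any chain of nonzero subrepresentations has a nonzero lower bound, and Zorn's lemma yields a simple subobject. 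This is essentially the ``descending chain'' half of the finite-length statement, proved directly; your argument is therefore more self-contained (it needs only Lazard's Noetherianity of Iwasawa algebras and Pontryagin duality, both of which the paper already uses elsewhere, e.g.\ via \cite[Lem.~2.2.11]{emertonord1}), and it works verbatim for any $p$-adic analytic group, whereas the paper's appeal to \cite[Thm.~2.3.8]{emertonord1} is shorter but imports a strictly stronger result. The only points worth stating carefully in a final write-up are the ones you flag yourself: the duality dictionary with $R/\mathfrak{m}^{n_0}$-coefficients, and the observation that an arbitrary (not necessarily countable) chain of submodules of a Noetherian module attains its union, which you correctly justify by finite generation of the union.
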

\begin{proof} Any non-zero subrepresentation of $V$ contains a non-zero
	finitely generated subrepresentation.   Thus it suffices to show that
	any non-zero finitely generated subrepresentation $W$ of $V$
	has a non-zero intersection with $\soc_G(V)$.   Since $\soc_G(V)
	\cap W = \soc_G(W)$, it suffices to show that any such subrepresentation
	has a non-zero socle.  This follows
	from the fact that every finitely generated admissible
representation of $G$ is of finite length by \cite[Thm. 2.3.8]{emertonord1}.
\end{proof}

\begin{defn}\label{projective envelope}
\begin{enumerate}
\item An epimorphism $q: M\twoheadrightarrow N$ in an abelian category
  is called \emph{essential} if a morphism $s: M'\to M$ is an epimorphism
  whenever $q\circ s$ is an epimorphism.
\item A \emph{projective envelope} of an object $N$ of an abelian category is
  an essential epimorphism $q: P\twoheadrightarrow N$ with $P$ a
  projective object in the abelian category.
\end{enumerate}
\end{defn}
\noindent  Pontryagin duality reverses arrows, so it exchanges injective and projective objects as well as injective and projective envelopes.

\subsection{Projectivity of $M_{\infty}$} Our first aim is to show that
$M_{\infty}^{\vee}$ is an injective locally admissible representation
of~$G$.

\begin{lem} $M_{\infty}^{\vee}$ is an admissible object of $\Mod^{\sm}_G(R_\infty)$,
 and thus lies in $\Mod^{\ladm}_G(\cO)$.
\end{lem}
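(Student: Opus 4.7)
The plan is to run both assertions through Pontryagin duality, where (AA1) becomes directly applicable. The core computation---from which both statements follow---is that each $(M_\infty^\vee)^H[\mathfrak{m}_\infty^n]$ is in fact a \emph{finite} set.

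First, fix a compact open $H\subset G$ and an integer $n\ge 1$. After replacing $H$ by a normal open pro-$p$ subgroup of $K$ contained in $H\cap K$ (this only enlarges the invariants, so it suffices to treat this case), standard pseudocompact duality for $M_\infty$ as an $\cO[[K]]$-module yields the identification
\[(M_\infty^\vee)^H[\mathfrak{m}_\infty^n] \;=\; \bigl(M_\infty/(\mathfrak{m}_\infty^n + I(H))M_\infty\bigr)^\vee,\]
where $I(H)\subset \cO[[H]]$ is the augmentation ideal. By~(AA1), the right-hand side is a finitely generated module over the quotient ring $R_\infty[[K]]/(\mathfrak{m}_\infty^n + I(H)) \cong (R_\infty/\mathfrak{m}_\infty^n)[K/H]$, which is \emph{finite} because the residue field $\F$ is finite and $K/H$ is finite. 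Consequently $M_\infty/(\mathfrak{m}_\infty^n + I(H))M_\infty$ is a finite set, and so is its Pontryagin dual $(M_\infty^\vee)^H[\mathfrak{m}_\infty^n]$. The same duality shows that every $f\in M_\infty^\vee$, being continuous with discrete target, must factor through such a finite quotient, and so lies in some $(M_\infty^\vee)^H[\mathfrak{m}_\infty^n]$. Since $K$ is open in $G$, this proves that $M_\infty^\vee$ is smooth as a $G$-representation, and admissible as an object of $\Mod_G^\sm(R_\infty)$.

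For the locally admissible assertion, pick $v\in M_\infty^\vee$ and choose $H_0$ and $n_0$ with $v\in (M_\infty^\vee)^{H_0}[\mathfrak{m}_\infty^{n_0}]$. Since the actions of $R_\infty$ and $G$ on $M_\infty^\vee$ commute, the $\cO[G]$-subrepresentation $W:=\cO[G]\cdot v$ is still annihilated by $\mathfrak{m}_\infty^{n_0}$, so for any compact open $H'\subset G$ and any $n\ge 1$,
\[W^{H'}[\varpi^n] \;\subset\; (M_\infty^\vee)^{H'}[\mathfrak{m}_\infty^{n_0}],\]
which is finite by the previous step, hence a fortiori finitely generated over $\cO$. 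Thus $W$ is admissible as an $\cO[G]$-module, and since $v$ was arbitrary, $M_\infty^\vee\in \Mod_G^{\ladm}(\cO)$. The only point requiring care is the displayed pseudocompact duality identification---a direct consequence of the fact that $M_\infty$ is a finitely generated pseudocompact $R_\infty[[K]]$-module---so no genuine obstacle arises.
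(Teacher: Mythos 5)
Your proof is correct and takes essentially the same route as the paper, which simply dualizes and invokes \cite[Lem.\ 2.2.11]{emertonord1} to reduce admissibility of $M_\infty^\vee$ to the finite generation of $M_\infty$ over $R_\infty[[K]]$ given by (AA1); you have simply unfolded the pseudocompact duality argument underlying that citation, plus the standard deduction of local admissibility over $\cO$ from admissibility over $R_\infty$.
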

\begin{proof} Dually it is enough by~\cite[Lem.\ 2.2.11]{emertonord1} to show that $M_{\infty}$ is a
  finitely generated $R_{\infty}[[K]]$-module, which is (AA1).
\end{proof}

\begin{lem}\label{tree} Let $\mm$ be a maximal ideal of
  $\cH(\sigmabar)$ with residue field $\kappa(\mm)$. Then
$$\Tor^{\cH(\sigmabar)}_i(\cInd_K^G \sigmabar , \kappa(\mm))=0, \quad \forall i>0.$$
\end{lem}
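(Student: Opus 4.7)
The plan is to exploit the explicit presentation $\cH(\sigmabar)\cong \F[S^{\pm 1},T]$ coming from Lemma \ref{lem: reducing Hecke algebra mod p}\,(2) in order to reduce the vanishing statement to a pair of injectivity claims, the second of which will be proved via the Bruhat--Tits tree. After enlarging $\F$ if necessary (harmless for computing Tor), we may assume that $\kappa(\mm)=\F$ and $\mm=(S-\mu,T-\lambda)$ for some $\mu\in\F^{\times}$ and $\lambda\in\F$. The Koszul complex on the regular sequence $(S-\mu,T-\lambda)$ then provides a length-two free resolution of $\kappa(\mm)$ over $\cH(\sigmabar)$, so writing $N:=\cInd_K^G\sigmabar$, the vanishing of $\Tor^{\cH(\sigmabar)}_i(N,\kappa(\mm))$ for $i>0$ becomes equivalent to two statements: (a) $S-\mu$ acts injectively on $N$, and (b) $T-\lambda$ acts injectively on $N/(S-\mu)N$.

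Claim (a) is straightforward. By Proposition \ref{prop: explicit description of the Hecke algebra of a principal series}, the operator $S$ is supported on the coset of the central element $\smallmat{p}{0}{0}{p}$, so its action on $N$ coincides up to an invertible scalar with right translation by $\smallmat{p}{0}{0}{p}$. Since $Z/(Z\cap K)\cong \Z$ acts freely and smoothly on $G/K$, a choice of orbit representatives exhibits $N$ as a free module over $\F[S^{\pm 1}]$, so $S-\mu$ is injective. Moreover, the quotient $N/(S-\mu)N$ is canonically isomorphic as a $G$-representation with $T$-action to the compact induction $\cInd_{KZ}^G\widetilde\sigmabar$, where $\widetilde\sigmabar$ is the extension of $\sigmabar$ to $KZ$ whose central character sends $\smallmat{p}{0}{0}{p}$ to the appropriate scalar associated with $\mu$.

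The substantive step is claim (b), and the hard part of the proof, which I would establish using the Bruhat--Tits tree $\mathcal{T}$ of $\GL_2(\Qp)$. The vertex set of $\mathcal{T}$ is canonically in bijection with $G/KZ$, and under this identification the Hecke operator $T$ acts on $f\in\cInd_{KZ}^G\widetilde\sigmabar$ by a weighted sum (with weights arising from the $\sigmabar$-equivariance) of the values $f(w)$ as $w$ ranges over the $p+1$ tree-neighbors of each vertex. Suppose for contradiction that some non-zero $f$ satisfies $(T-\lambda)f=0$; choose $v_0\in\mathrm{supp}(f)$ at maximal distance from the base vertex $v_*:=KZ$, and choose any neighbor $w$ of $v_0$ with $d(w,v_*)=d(v_0,v_*)+1$. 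Then $w\notin\mathrm{supp}(f)$, and the $p$ neighbors of $w$ other than $v_0$ all have distance $d(v_0,v_*)+2$ from $v_*$, so by the maximality of $v_0$ they too lie outside $\mathrm{supp}(f)$. Therefore $(Tf)(w)$ collapses to a single non-zero multiple of $f(v_0)\in\sigmabar$, while on the other hand $(Tf)(w)=\lambda f(w)=0$; this forces $f(v_0)=0$, contradicting $v_0\in\mathrm{supp}(f)$.
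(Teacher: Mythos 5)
Your Koszul reduction to showing that $(S-\mu,T-\lambda)$ is a regular sequence on $\cInd_K^G\sigmabar$, and your treatment of $S-\mu$ (and the resulting identification of $N/(S-\mu)N$ with a compact induction from $KZ$), match the paper's proof. The divergence is at the $T-\lambda$ step: the paper simply invokes the proof of Theorem~19 of Barthel--Livn\'e to conclude that $\cInd_{KZ}^G\sigmabar$ is free over $\F[T]$, whereas you attempt to re-derive the needed injectivity directly on the tree. Your tree argument, however, has a genuine gap at the final step.

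The claim that $(Tf)(w)$ ``collapses to a single non-zero multiple of $f(v_0)$'' is not justified, because the coefficient through which $T$ records the contribution of $f(v_0)$ to $(Tf)(w)$ is an endomorphism of $\sigmabar$ rather than a scalar, and for non-trivial $\sigmabar$ it is far from injective. Indeed, writing $\alpha=\smallmat{p}{0}{0}{1}$ and using the Frobenius reciprocity description of $T$ from the proof of Lemma~\ref{lem: reducing Hecke algebra mod p} (namely $v\mapsto[g\mapsto T(g^{-1})\sigma(g)v]$), the coefficient attached to each outward edge is, up to composition with elements of $\GL_2(\Fp)$, the mod-$\varpi$ reduction of $p^{a+b}\sigma(\alpha^{-1})$; this sends $X^iY^{b-i}\mapsto p^{b-i}X^iY^{b-i}$, so mod $\varpi$ it is the rank-one projection onto the highest-weight line of $\sigmabar$. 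Thus $f(v_0)\ne 0$ does not force $(Tf)(w)\ne 0$, and the contradiction you want does not materialize. What \emph{is} true --- and is the real content of the cited Barthel--Livn\'e proof --- is that the $p$ edge maps from $v_0$ to all of its outward neighbours have trivial common kernel (a Vandermonde-type computation which crucially uses $b\le p-1$). To repair your argument you would therefore need to impose $(Tf)(w)=0$ simultaneously for every outward neighbour $w$ of $v_0$, and then prove this collective injectivity; at that point you are essentially reproducing the argument the paper chooses to cite.
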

\begin{proof} Since the map $\F\rightarrow \Fbar$ is faithfully flat,
  we can and do assume that  $\F$ is algebraically closed. Since $\cH(\sigmabar)=\F[S^{\pm 1}, T]$, we have $\mm=(S-\mu, T-\lambda)$ for some $\mu\in \F^{\times}$, $\lambda\in \F$. Since the sequence $S-\mu, T-\lambda$ is regular in
$\cH(\sigmabar)$, the Koszul complex $K_{\bullet}$ associated to it is a resolution
of $\kappa(\mm)$ by free $\cH(\sigmabar)$-modules, \cite[Thm. 16.5 (i)]{Matsumura}.  Thus the complex $K_{\bullet}\otimes_{\cH(\sigmabar)} \cInd_K^G\sigmabar$ computes the $\Tor$-groups we are after, and to verify the claim it is enough to show that the sequence $S-\mu, T-\lambda$ is regular
on $\cInd_K^G\sigmabar$.

If $f\in \cInd_K^G \sigmabar$ then
$(S f)(g)= f(gz)$, where $z=\bigl (\begin{smallmatrix} p & 0 \\ 0 & p\end{smallmatrix}\bigr)$.
Since such an $f$ is supported only on finitely many cosets $K\backslash G$, we deduce that
the map
$$ \cInd_K^G \sigmabar \overset{S-\mu}{\longrightarrow}  \cInd_K^G \sigmabar$$
is injective. The quotient is isomorphic to $\cInd_{ZK}^G \sigmabar$, where $z$ acts on $\sigmabar$ by $\mu$.
It follows from the proof of \cite[Thm. 19]{barthel-livne} that  $\cInd_{ZK}^G \sigmabar$ is a free
$\F[T]$-module. Thus the map
$$ \cInd_{ZK}^G \sigmabar \overset{T-\lambda}{\longrightarrow}  \cInd_{ZK}^G \sigmabar$$
is injective, and the sequence  $S-\mu, T-\lambda$ is regular on
$\cInd_K^G\sigmabar$, as required.
\end{proof}

\begin{lem}\label{ext_vanish} Let $\mm$ be a maximal ideal of $\cH(\sigmabar)$. Then
$$\Ext^i_G\bigl(\kappa(\mm)\otimes_{\cH(\sigmabar)} \cInd_K^G \sigmabar, M_{\infty}^{\vee}\bigr)=0, \quad \forall i\ge 1,$$
where the $\Ext$-groups are computed in $\Mod^{\sm}_G(\OO)$.
\end{lem}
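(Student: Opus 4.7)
The approach is to combine the Tor-vanishing of Lemma~\ref{tree} with the projectivity of~$M_\infty$ over~$\cO[[K]]$ from axiom~(AA2), reducing the statement to a Koszul computation on~$M_\infty(\sigmabar)^\vee$ that is controlled by the flatness of~$M_\infty(\sigmabar)$ proved in Proposition~\ref{prop: abstract Serre weight freeness and Hecke compatibility}(2). First, after enlarging~$\F$ so that it contains~$\kappa(\mm)$ (harmless for the vanishing we seek), the maximal ideal~$\mm$ of $\cH(\sigmabar)=\F[S^{\pm 1},T]$ is generated by a regular sequence $(S-\mu,T-\lambda)$, and Lemma~\ref{tree} together with the Koszul resolution of~$\kappa(\mm)$ by finitely generated free $\cH(\sigmabar)$-modules yields an exact sequence
$$ 0 \to \cInd_K^G\sigmabar \to (\cInd_K^G\sigmabar)^2 \to \cInd_K^G\sigmabar \to \kappa(\mm)\otimes_{\cH(\sigmabar)}\cInd_K^G\sigmabar \to 0 $$
in $\Mod^\sm_G(\cO)$.

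The next step is to verify that $\Ext^i_G(\cInd_K^G\sigmabar, M_\infty^\vee)=0$ for $i\ge 1$, which then forces $\Ext^i_G(\kappa(\mm)\otimes_{\cH(\sigmabar)}\cInd_K^G\sigmabar, M_\infty^\vee)$ to be computed as the cohomology of the complex obtained by applying $\Hom_G(-,M_\infty^\vee)$ to the above resolution. Because $K\backslash G$ is discrete, $\cInd_K^G$ is an exact left adjoint to restriction, so restriction from $\Mod^\sm_G(\cO)$ to $\Mod^\sm_K(\cO)$ preserves injectives, giving $\Ext^i_G(\cInd_K^G\sigmabar,M_\infty^\vee)\cong\Ext^i_K(\sigmabar,M_\infty^\vee)$ via Frobenius reciprocity. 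By~(AA2), $M_\infty$ is projective in the category of pseudocompact $\cO[[K]]$-modules; Pontryagin duality (which exchanges projectives and injectives between pseudocompact $\cO[[K]]$-modules and smooth $\cO$-torsion $K$-representations) then implies that $M_\infty^\vee$ is injective in $\Mod^\sm_K(\cO)$ (since $\sigmabar$ is killed by~$\varpi$ the computation may be done with $\cO$-torsion coefficients), yielding the claimed vanishing.

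Finally, using Frobenius reciprocity to identify $\Hom_G(\cInd_K^G\sigmabar,M_\infty^\vee)\cong M_\infty(\sigmabar)^\vee$ together with the resulting action of $\cH(\sigmabar)=\End_G(\cInd_K^G\sigmabar)$, the complex computing $\Ext^\bullet_G(\kappa(\mm)\otimes\cInd_K^G\sigmabar, M_\infty^\vee)$ becomes the Koszul cochain complex for multiplication by $(S-\mu,T-\lambda)$ on $M_\infty(\sigmabar)^\vee$, and hence its cohomology is $\Ext^i_{\cH(\sigmabar)}(\kappa(\mm), M_\infty(\sigmabar)^\vee)$. Since $\cH(\sigmabar)$ is a commutative $\cO$-algebra and Pontryagin duality over~$\cO$ is exact, for any $\cH(\sigmabar)$-module $N$ with free resolution $P_\bullet$ one has
$$ \Ext^i_{\cH(\sigmabar)}(N, M_\infty(\sigmabar)^\vee) \cong H^i\bigl((P_\bullet\otimes_{\cH(\sigmabar)}M_\infty(\sigmabar))^\vee\bigr) \cong \Tor^{\cH(\sigmabar)}_i(N, M_\infty(\sigmabar))^\vee, $$
and taking $N=\kappa(\mm)$ the right-hand side vanishes for $i\ge 1$ by the flatness of $M_\infty(\sigmabar)$ over $\cH(\sigmabar)$ provided by Proposition~\ref{prop: abstract Serre weight freeness and Hecke compatibility}(2) (noting that if $\sigmabar$ is not a Serre weight of~$\rbar$ the module $M_\infty(\sigmabar)$ is zero and the conclusion is trivial). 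The only delicate point in the argument is the bookkeeping of Hecke actions under Frobenius reciprocity so that the cohomology of the resulting complex is genuinely the Ext over~$\cH(\sigmabar)$; this is routine given the commutativity of $\cH(\sigmabar)$.
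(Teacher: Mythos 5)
Your proof is correct and follows essentially the same route as the paper's: identify $\Ext^i_G(\cInd_K^G\sigmabar,M_\infty^\vee)$ with $\Ext^i_K(\sigmabar,M_\infty^\vee)$ via Frobenius reciprocity and kill it using (AA2), then resolve $\kappa(\mm)$ by finite free $\cH(\sigmabar)$-modules, tensor with $\cInd_K^G\sigmabar$ (exactness by Lemma~\ref{tree}), and apply $\Hom_G(-,M_\infty^\vee)$, finally converting to $\Tor^{\cH(\sigmabar)}_\bullet(\kappa(\mm),M_\infty(\sigmabar))$ by Pontryagin duality and invoking the flatness from Proposition~\ref{prop: abstract Serre weight freeness and Hecke compatibility}(2). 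Two small remarks on where you diverge from the paper's write-up: (i) the paper takes an arbitrary finite free resolution $F_\bullet\to\kappa(\mm)$ rather than insisting on the Koszul complex for the specific sequence $(S-\mu,T-\lambda)$, which sidesteps the need to enlarge $\F$ so that $\kappa(\mm)=\F$; if you do enlarge $\F$, you should spell out the descent (these $\Ext$-groups are $\F$-vector spaces since the source is killed by $\varpi$, so vanishing after a faithfully flat base change $\F\to\F'$ gives vanishing over $\F$, taking care that $\kappa(\mm)\otimes_\F\F'$ decomposes as a product of residue fields of primes above $\mm$), and (ii) the ``Hecke bookkeeping'' you defer to commutativity is handled in the paper by observing that the transition maps of $F_\bullet$ are matrices over $\cH(\sigmabar)$ and that $\Hom_G(-,M_\infty^\vee)^\vee$ transposes them twice, giving the identification $\Hom_G(F_\bullet\otimes_{\cH(\sigmabar)}\cInd_K^G\sigmabar,M_\infty^\vee)^\vee\cong F_\bullet\otimes_{\cH(\sigmabar)}M_\infty(\sigmabar)$ directly, without the intermediate $\Ext^\bullet_{\cH(\sigmabar)}(\kappa(\mm),M_\infty(\sigmabar)^\vee)$.
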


\begin{proof} We first prove that
$\Ext^i_G\bigl(\cInd_K^G \sigmabar, M_{\infty}^{\vee}\bigr)=0$, for all $i\ge 1$.
Let $M_{\infty}^{\vee} \hookrightarrow J^{\bullet}$ be an injective resolution of $M_{\infty}^{\vee}$ in $\Mod^{\sm}_G(\OO)$.
Since $$\Hom_K(\tau, J|_K)\cong \Hom_G(\cInd_K^G \tau, J)$$
and the functor $\cInd_K^G$ is exact,
the restriction of an injective object in $\Mod^{\sm}_G(\OO)$ to $K$ is injective in $\Mod^{\sm}_K(\OO)$.
Thus $(J^{\bullet})|_K$ is an injective resolution of $M_{\infty}^{\vee}|_K$ in $\Mod^{\sm}_K(\OO)$.
Since $\Hom_G(\cInd_K^G \sigmabar, J^{\bullet})\cong \Hom_K(\sigmabar, (J^{\bullet})|_K)$, we conclude that
we have natural isomorphisms
$$\Ext^i_G\bigl(\cInd_{K}^G \sigmabar, M_{\infty}^{\vee}\bigr)\cong \Ext^i_K\bigl(\sigmabar, M_{\infty}^{\vee}\bigl), \quad \forall i\ge 0.$$
Since $M_{\infty}$ is a projective $\OO[[K]]$-module by~(AA2), $M_{\infty}^{\vee}$ is injective in
$\Mod^{\sm}_K(\OO)$, and thus the $\Ext$-groups vanish as claimed.

Let $F_{\bullet}\twoheadrightarrow \kappa(\mm)$ be a resolution of $\kappa(\mm)$ by finite free $\cH(\sigmabar)$-modules. Lemma \ref{tree} implies that the complex $F_{\bullet}\otimes_{\cH(\sigmabar)} \cInd_K^G \sigmabar$ is a resolution
of  $\kappa(\mm)\otimes_{\cH(\sigmabar)} \cInd_K^G\sigmabar$ by acyclic objects for the functor $\Hom_G(\ast, M_{\infty}^{\vee})$.
We conclude that the cohomology of the complex $$\Hom_G(F_{\bullet}\otimes_{\cH(\sigmabar)} \cInd_K^G \sigmabar, M_{\infty}^{\vee})$$ computes the groups $\Ext^i_G\bigl(\kappa(\mm)\otimes_{\cH(\sigmabar)} \cInd_K^G \sigmabar, M_{\infty}^{\vee}\bigr)$. We may think of the transition maps in $F_{\bullet}$ as matrices with entries in
$\cH(\sigmabar)$. The functor $\Hom_G(\ast, M_{\infty}^{\vee})^{\vee}$ transposes these matrices twice, thus we get an isomorphism of complexes:
$$ \Hom_G(F_{\bullet}\otimes_{\cH(\sigmabar)} \cInd_K^G \sigmabar, M_{\infty}^{\vee})^{\vee}\cong
F_{\bullet}\otimes_{\cH(\sigmabar)} \Hom_G(\cInd_K^G \sigmabar, M_{\infty}^{\vee})^{\vee}\cong F_{\bullet}\otimes_{\cH(\sigmabar)} M_{\infty}(\sigmabar).$$
The above isomorphism induces a natural isomorphism
$$ \Bigl(\Ext^i_{G}\bigl(\kappa(\mm)\otimes_{\cH(\sigmabar)} \cInd_K^G \sigmabar, M_{\infty}^{\vee}\bigr)\Bigr)^{\vee}\cong \Tor^{\cH(\sigmabar)}_i\bigl(\kappa(\mm), M_{\infty}(\sigmabar)\bigr), \quad \forall i\ge 0.$$
The isomorphism implies the assertion, as $M_{\infty}(\sigmabar)$ is a flat $\cH(\sigmabar)$-module
by Proposition~\ref{prop: abstract Serre weight freeness and Hecke
    compatibility}~(2).
\end{proof}

\begin{lem}\label{rationality} Let $y: \cH(\sigmabar)\rightarrow \F'$ be a homomorphism of $\F$-algebras, where $\F'$ is a
finite field extension of $\F$. Let $\pigmonkey':= \F'\otimes_{\cH(\sigmabar), y} \cInd_K^G \sigmabar$ and
let $\pigmonkey$ be an absolutely irreducible $\F$-representation of $G$, which is either principal series or supersingular.
If $\pigmonkey$ is a subquotient of $\pigmonkey'$  then $\pigmonkey'$ is isomorphic to a direct sum of finitely many copies of $\pigmonkey$.
\end{lem}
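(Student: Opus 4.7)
The plan is to show that the data $(\mu := y(S), \lambda := y(T)) \in (\F')^{\times} \times \F'$ are forced to lie in $\F$ and to equal the Hecke eigenvalues of $\pigmonkey$ on its weight $\sigmabar$, after which $\pigmonkey' \cong \pigmonkey \otimes_\F \F'$ becomes a direct sum of $[\F':\F]$ copies of $\pigmonkey$ as an $\F[G]$-module.

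First I would use Lemma~\ref{lem: reducing Hecke algebra mod p}~(2) to identify $\cH(\sigmabar) = \F[S^{\pm 1}, T]$, set $L := \F[\mu, \lambda] \subseteq \F'$, and define $\pigmonkey_L := \cInd_K^G \sigmabar \otimes_{\cH(\sigmabar), y|_L} L$, so that $\pigmonkey' \cong \pigmonkey_L \otimes_L \F'$ and, as $\F[G]$-modules, $\pigmonkey' \cong \pigmonkey_L^{\oplus [\F':L]}$. Base-changing to $\Fbar$ and using separability of $L/\F$, I would decompose $\pigmonkey_L \otimes_L \Fbar \cong \bigoplus_{\tau: L \hookrightarrow \Fbar} N_\tau$, where $N_\tau := \cInd_K^G (\sigmabar \otimes_\F \Fbar) / (T - \tau(\lambda), S - \tau(\mu))$ is an $\Fbar[G]$-module of finite length by the Barthel--Livn\'e/Breuil classification of \cite{barthel-livne, breuil1}. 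Since $\pigmonkey \otimes_\F \Fbar$ is irreducible and a subquotient of $\pigmonkey' \otimes_\F \Fbar$ (a direct sum of finitely many copies of the $N_\tau$), Jordan--H\"older identifies it with a JH constituent of some single $N_\tau$. By hypothesis $\pigmonkey \otimes \Fbar$ is supersingular or irreducible principal series, hence neither a character nor a special representation; but in every reducible case of the BL/Breuil classification the JH constituents of $N_\tau$ are precisely characters and special representations, so $N_\tau$ must be irreducible and $\pigmonkey \otimes_\F \Fbar \cong N_\tau$.

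The canonical embedding $\sigmabar \otimes_\F \Fbar \hookrightarrow N_\tau|_K$ then yields, by faithful flatness, $\Hom_K(\sigmabar, \pigmonkey) \neq 0$, so $\sigmabar$ is a Serre weight of $\pigmonkey$ and $\cH(\sigmabar)$ acts on $\pigmonkey$ through a character $\alpha_\pigmonkey: \cH(\sigmabar) \to \End_G(\pigmonkey) = \F$. Comparing the action of $S, T$ on $\pigmonkey \otimes \Fbar$ via the two identifications gives $\tau(\mu) = \alpha_\pigmonkey(S)$ and $\tau(\lambda) = \alpha_\pigmonkey(T)$; since $\tau|_\F = \id_\F$ and $\alpha_\pigmonkey$ takes values in~$\F$, we conclude $\mu, \lambda \in \F$, i.e.\ $L = \F$. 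Then $\pigmonkey_L$ is an $\F[G]$-module whose base change to $\Fbar$ is irreducible, hence $\pigmonkey_L$ is itself (absolutely) irreducible; since $\pigmonkey$ is a JH constituent of $\pigmonkey' \cong \pigmonkey_L^{\oplus [\F':\F]}$ and $\pigmonkey_L$ is the only JH constituent of this sum, $\pigmonkey \cong \pigmonkey_L$, giving $\pigmonkey' \cong \pigmonkey^{\oplus [\F':\F]}$ as required. The main obstacle is the bookkeeping needed to descend from $\Fbar$ to $\F$: one must know that each $N_\tau$ has finite length so that Jordan--H\"older applies, and must exploit the very specific list of irreducible subquotients allowed in the reducible cases of BL/Breuil to rule them out under our hypotheses on~$\pigmonkey$.
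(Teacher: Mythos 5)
Your proof is correct, but it takes a genuinely different route from the paper's at the crucial step. Both arguments base-change to $\Fbar$, use $L\otimes_\F\Fbar\cong\prod_\tau\Fbar$ (resp.\ $\F'\otimes_\F\Fbar$) to decompose $\pigmonkey'\otimes_\F\Fbar$ into a sum of quotients of $\cInd_K^G(\sigmabar\otimes\Fbar)$, and invoke the Barthel--Livn\'e/Breuil classification to see that the only summands admitting $\pigmonkey\otimes\Fbar$ as a subquotient are irreducible and equal to it. The divergence is in how one shows all the remaining summands coincide. The paper conjugates by $\Gal(\Fbar/\F)$: any two embeddings $\iota,\iota'$ of $\F'$ into $\Fbar$ differ by an element $\tau$ of this group, and since $\pigmonkey$ is defined over $\F$ its base change is $\tau$-invariant, so every summand is isomorphic to $\pigmonkey\otimes\Fbar$. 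You instead prove the underlying rationality statement directly: because $\pigmonkey$ is absolutely irreducible and $\sigmabar$ occurs in its $K$-socle with multiplicity one (so $\Hom_K(\sigmabar,\pigmonkey)$ is a one-dimensional $\F$-vector space, a fact you use implicitly and should flag), the Hecke action on it is a character $\alpha_\pigmonkey$ valued in $\F$, and matching eigenvalues across the isomorphism $\pigmonkey\otimes\Fbar\cong N_\tau$ forces $\tau(\mu)=\alpha_\pigmonkey(S),\ \tau(\lambda)=\alpha_\pigmonkey(T)\in\F$, hence $L=\F$ and there is really only one $N_\tau$. Your route is slightly longer but yields the cleaner intermediate statement that the Hecke eigenvalues $(\mu,\lambda)$ lie in $\F$; the paper's Galois-descent argument is shorter and does not need the multiplicity-one input. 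Two small corrections: the expression $\pigmonkey_L\otimes_L\Fbar$ should read $\pigmonkey_L\otimes_\F\Fbar$ (otherwise there is only one $\tau$), and the phrase ``$\cH(\sigmabar)$ acts on $\pigmonkey$'' should be replaced by ``$\cH(\sigmabar)$ acts on the one-dimensional space $\Hom_K(\sigmabar,\pigmonkey)$,'' since the Hecke algebra does not act on $\pigmonkey$ itself.
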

\begin{proof} In the course of the proof we will use the following fact repeatedly: if $A$ and $B$ are $\F$-representations of $G$ and $A$ is finitely generated as an $\F[G]$-module then:
\numequation\label{extend_scalars}
\Hom_G(A, B)\otimes_{\F} \Fbar \cong \Hom_G(A\otimes_{\F}\Fbar, B\otimes_{\F}\Fbar),
\end{equation}
where $\Fbar$ denotes the algebraic closure of $\F$, see \cite[Lem. 5.1]{paskunasimage}. Then
\numequation\label{extend_scalars2}
\Fbar\otimes_{\F} \pigmonkey'\cong \Fbar\otimes_\F \F'\otimes_{\cH(\sigmabar), y}\cInd_K^G \sigmabar\cong \bigoplus_{\iota: \F'\rightarrow \Fbar}\Fbar \otimes_{\cH(\sigmabar), \iota\circ y}\cInd_K^G \sigmabar,
\end{equation}
where the sum is taken over $\F$-algebra homomorphisms $\iota: \F'\rightarrow \Fbar$. By the classification
theorems of Barthel--Livne \cite{barthel-livne} and Breuil
\cite{breuil1}, each representation  $\Fbar \otimes_{\cH(\sigmabar),
  \iota\circ y}\cInd_K^G \sigmabar$ is either irreducible, an
extension of a special series by a character, or an extension of a character by
a special series. Since $\pigmonkey$ is a subquotient of $\pigmonkey'$ by assumption, $\pigmonkey\otimes_{\F}\Fbar$ is a subquotient
of $\pigmonkey'\otimes_{\F}\Fbar$, and since $\pigmonkey$ is neither special series nor a character, we deduce that
$$\pigmonkey\otimes_{\F}\Fbar\cong \Fbar \otimes_{\cH(\sigmabar), \iota\circ y}\cInd_K^G \sigmabar,$$
for some embedding $\iota: \F'\hookrightarrow \Fbar$. For every $\tau\in \Gal(\Fbar/\F)$, we have
$$  \Fbar \otimes_{\cH(\sigmabar), \tau\circ \iota\circ y}\cInd_K^G \sigmabar\cong
\Fbar\otimes_{\Fbar, \tau}(  \Fbar \otimes_{\cH(\sigmabar), \iota\circ y}\cInd_K^G \sigmabar)
\cong \Fbar\otimes_{\Fbar, \tau} (\Fbar\otimes_{\F} \pigmonkey)\cong \Fbar \otimes_{\F} \pigmonkey.$$
Hence all the summands in \eqref{extend_scalars2} are isomorphic to $\pigmonkey\otimes_{\F} \Fbar$. It follows
from \eqref{extend_scalars} that $\pigmonkey'$ is isomorphic to a direct sum
of copies of $\pigmonkey$, as required.
\end{proof}

\begin{thm}\label{thm: Minfty is projective} $M_{\infty}^{\vee}$ is an injective object in $\Mod^{\ladm}_G(\cO)$.
\end{thm}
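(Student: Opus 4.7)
The plan is two-fold: first to establish the vanishing $\Ext^1_G(\pi',M_\infty^\vee)=0$ for every absolutely irreducible smooth $\F$-representation $\pi'$ of $G$ (computed in $\Mod_G^{\sm}(\cO)$, which will suffice for the category $\Mod_G^{\ladm}(\cO)$ since the latter is closed under extensions in the former), and then to deduce injectivity in $\Mod_G^{\ladm}(\cO)$ by an injective envelope argument. For the Ext vanishing I will exploit the Barthel--Livn\'e and Breuil classification of irreducible mod $p$ representations of $G$: every such $\pi'$ is either an irreducible principal series, supersingular, one-dimensional, or a twist of the Steinberg, and in each case one can find a Serre weight $\sigmabar'$ and a character $\alpha':\cH(\sigmabar')\to\F$ such that $\pi'$ appears as a $G$-subquotient of $\pi(\sigmabar',\alpha'):=(\cInd_K^G\sigmabar')\otimes_{\cH(\sigmabar'),\alpha'}\F$.

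When $\pi'$ is principal series or supersingular, Lemma~\ref{rationality} forces $\pi(\sigmabar',\alpha')\cong(\pi')^{\oplus n}$ for some $n\ge 1$, so $\pi'$ is a direct summand and Lemma~\ref{ext_vanish} immediately yields the vanishing. When $\pi'$ is a character or Steinberg twist, the pair $(\sigmabar',\alpha')$ can be chosen (taking $\sigmabar'$ to be the $K$-socle of the \emph{other} composition factor, i.e.\ a $\sigmabar_{?,p-1}$ in the character case and a $\sigmabar_{?,0}$ in the Steinberg case) so that $\pi'$ is realized as the cosocle of a length-two $\pi(\sigmabar',\alpha')$, giving a short exact sequence
\[ 0\to\pi''\to\pi(\sigmabar',\alpha')\to\pi'\to 0, \]
with $\pi''$ of complementary type (Steinberg twist if $\pi'$ is a character, and vice versa). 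Applying $\Hom_G(-,M_\infty^\vee)$ produces a piece of long exact sequence
\[ \Hom_G(\pi'',M_\infty^\vee)\to\Ext^1_G(\pi',M_\infty^\vee)\to\Ext^1_G\bigl(\pi(\sigmabar',\alpha'),M_\infty^\vee\bigr), \]
whose right-hand term vanishes by Lemma~\ref{ext_vanish}, and whose left-hand term vanishes because $\pi''$, being a character or Steinberg twist, is \emph{not} isomorphic to the principal-series-or-supersingular representation $\pi$ (by Remark~\ref{rem: pi is not special or one dimensional}), so that Proposition~\ref{prop: abstract Serre weight freeness and Hecke compatibility}(3) applies.

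To pass from this Ext vanishing to injectivity, I fix an injective envelope $M_\infty^\vee\hookrightarrow J$ in $\Mod_G^{\ladm}(\cO)$. If the quotient $Q:=J/M_\infty^\vee$ were non-zero, then by Lemma~\ref{lem:socles are essential} (applied to $Q$) there would exist an irreducible subrepresentation $\pi'\hookrightarrow Q$; pulling back to $J$ would produce a short exact sequence $0\to M_\infty^\vee\to E\to\pi'\to 0$ in $\Mod_G^{\ladm}(\cO)$ which splits by the vanishing just established, giving a copy of $\pi'$ inside $J$ meeting $M_\infty^\vee$ trivially and contradicting the essentiality of $M_\infty^\vee$ in $J$. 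Hence $Q=0$ and $M_\infty^\vee=J$ is injective.

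The delicate step is the case of characters and Steinberg twists in the second paragraph: the argument hinges on the fact that $\pi$ is neither a character nor a Steinberg twist (so that $\pi''\not\cong\pi$ above), which in turn relies on Assumption~\ref{assumption: rbar is generic enough} through Remark~\ref{rem: pi is not special or one dimensional}. One must also verify that the short exact sequences above are available in both the character and Steinberg directions, which amounts to knowing which composition factor of a reducible $\pi(\sigmabar',\alpha')$ sits as the $K$-socle -- information supplied by the Barthel--Livn\'e/Breuil structure theorem and the explicit description of $K$-socles of characters and Steinberg twists.
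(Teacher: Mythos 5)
Your overall architecture (Ext-vanishing against irreducibles via Lemma~\ref{ext_vanish}, then an injective-envelope/essentiality argument) is sound, but the character/Steinberg step contains a concrete error: the Serre weights are assigned backwards, and with your choice the short exact sequence you need does not exist. If $\pi'=\chi\circ\det$ is a character and you take $\sigmabar'$ of the form $\sigmabar_{?,p-1}$, then $\Hom_G(\cInd_K^G\sigmabar',\pi')\cong\Hom_K(\sigmabar',\pi'|_K)=0$ since $\sigmabar_{?,p-1}$ is not one-dimensional; hence $\pi'$ can never be a quotient of any Hecke fibre of $\cInd_K^G\sigmabar_{?,p-1}$, so there is no sequence $0\to\pi''\to\pi(\sigmabar',\alpha')\to\pi'\to 0$ with that weight. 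The Barthel--Livn\'e structure of the degenerate fibres is the opposite of what you wrote: for $\sigmabar_{?,0}$ the character is the cosocle (the augmentation map $\cInd_{K}^G\Eins\to\Eins$ factors through the fibre at the degenerate eigenvalue, giving a non-split $0\to\Sp\otimes\chi\to\pi(\sigmabar',\alpha')\to\chi\circ\det\to 0$), while for $\sigmabar_{?,p-1}$ the character is the socle and the Steinberg twist the cosocle. So the correct recipe is to take $\sigmabar'$ inside the $K$-socle of $\pi'$ \emph{itself}: $\sigmabar_{?,0}$ in the character case and $\sigmabar_{?,p-1}$ in the Steinberg case. With that swap your long-exact-sequence argument (using $\Hom_G(\pi'',M_\infty^\vee)=0$ from Proposition~\ref{prop: abstract Serre weight freeness and Hecke compatibility}(3) and Remark~\ref{rem: pi is not special or one dimensional}, together with Lemma~\ref{ext_vanish}) does go through.

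Once corrected, your route is a genuine variant of the paper's proof, and in fact proves something slightly stronger ($\Ext^1_G(\pi',M_\infty^\vee)=0$ in $\Mod^{\sm}_G(\cO)$ for \emph{every} irreducible $\pi'$, including characters and Steinberg twists), at the cost of needing the explicit internal structure of the reducible fibres --- which is exactly where the slip occurred. The paper avoids this case analysis entirely: given a nonzero quotient $J/M_\infty^\vee$, it produces a Hecke eigenvector to get a nonzero map from a fibre $\pi'=\F'\otimes_{\cH(\sigmabar'),y}\cInd_K^G\sigmabar'$ into $J/M_\infty^\vee$, lifts it to $J$ using Lemma~\ref{ext_vanish}, and then uses that $\soc_G J$ is $\pi$-isotypic together with Lemma~\ref{rationality} to force $\pi'$ to be a direct sum of copies of $\pi$, so that $\Hom_G(\pi',M_\infty^\vee)\to\Hom_G(\pi',J)$ is an isomorphism and a contradiction results; no knowledge of which constituent of a degenerate fibre is sub or quotient is ever needed. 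Two further small points to tidy in your write-up: the reduction to absolutely irreducible $\pi'$ (the irreducible subobject of $J/M_\infty^\vee$ need only be irreducible over $\F$) should be handled as in the paper, by allowing $\F'$-valued Hecke characters in Lemma~\ref{ext_vanish} rather than by a blanket base-change remark; and your appeal to closure of $\Mod^{\ladm}_G(\cO)$ under extensions is unnecessary --- you only need that an extension in $\Mod^{\ladm}_G(\cO)$ which splits in $\Mod^{\sm}_G(\cO)$ splits there, which is automatic for a full subcategory.
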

\begin{proof} Let $M_{\infty}^{\vee}\hookrightarrow J$ be an injective
  envelope of $M_{\infty}^{\vee}$ in $\Mod^{\ladm}_G(\OO)$.
Lemma~\ref{lem:socles are essential} shows that
the composition $\soc_G M_{\infty}^{\vee}\hookrightarrow
M_{\infty}^{\vee}\hookrightarrow J$ is an essential monomorphism,
and thus induces an isomorphism between $\soc_G M_{\infty}^{\vee}$ and
$\soc_G J$.
Proposition~\ref{prop: abstract Serre weight freeness and Hecke
    compatibility}~(3)
shows that
$\soc_G M_{\infty}^{\vee}$, and thus also $\soc_G J$,
is isomorphic to a direct sum of copies of the representation $\pi$
associated to $\rbar$ via Lemma~\ref{lem:rbar has a unique Serre weight, and the deformation ring is smooth, and given by Tp} (5).

  Let us
  assume that the quotient $J/M_{\infty}^{\vee}$ is non-zero; then there is a smooth irreducible $K$-subrepresentation $\sigmabar \subset J/M_{\infty}^{\vee}$. Let $\kappa$ be the $G$-subrepresentation of $J/M_{\infty}^{\vee}$ generated by $\sigmabar$. Since $J/M_{\infty}^{\vee}$ is locally admissible, and $\sigmabar$ is finitely generated as a $K$-representation, $\kappa$ is an admissible representation
of $G$. Thus $\Hom_G(\cInd_K^G \sigmabar, \kappa)\cong \Hom_K(\sigmabar, \kappa)$ is a finite dimensional
$\F$-vector space.

Let $\md$ be any irreducible $\cH(\sigmabar)$-submodule of $\Hom_G(\cInd_K^G \sigmabar, \kappa)$. Since 
$\md$ is finite dimensional over $\F$, Schur's lemma implies that 
$\F':=\End_{\cH(\sigmabar)}(\md)$ is a finite dimensional division algebra over $\F$. Since $\F$ is a finite field we deduce that $\F'$ is a finite field extension of $\F$.  Since $\cH(\sigmabar)$ is commutative we further deduce that 
$\md$ is a one dimensional $\F'$-vector space and thus obtain a surjective homomorphism
of $\F$-algebras $y: \cH(\sigmabar)\twoheadrightarrow \F'$. Moreover, by the construction of $\md$, we obtain a
non-zero $G$-equivariant map:
$$\pi':= \F'\otimes_{\cH(\sigmabar),y} \cInd_K^G \sigmabar \rightarrow \kappa \subset J/M_{\infty}^{\vee}.$$
Since $\Ext^1_G(\pi', M_{\infty}^{\vee})=0$ by Lemma \ref{ext_vanish}, by applying $\Hom_G(\pi', \ast)$ to
the exact sequence $0\rightarrow M_{\infty}^{\vee}\rightarrow J\rightarrow J/M_{\infty}^{\vee}\rightarrow 0$,
we obtain a short exact sequence
$$ 0\rightarrow \Hom_G(\pi', M_{\infty}^{\vee})\rightarrow \Hom_G(\pi', J)\rightarrow \Hom_G(\pi', J/M_{\infty}^{\vee})\rightarrow 0.$$
Moreover, we know that $\Hom_G(\pi', J/M_{\infty}^{\vee})$ is non-zero. Hence, $\Hom_G(\pi', J)$ is non-zero.

Fix a non-zero $G$-equivariant map $\varphi: \pi' \rightarrow J$; then
$\varphi(\pi')\cap \soc_G J\neq 0$.
Since $\soc_G J$ is isomorphic to a direct sum of copies of $\pi$, we find that
$\pi$ is an irreducible subquotient of $\pi'$.
It follows from Lemma \ref{rationality} that $\pi'$ is then isomorphic to a finite direct sum of copies of $\pi$, and so in particular is semi-simple.
As we've already noted,
the map $M_{\infty}^{\vee}\hookrightarrow J$ induces an isomorphism $\soc_G M_{\infty}^{\vee}\cong \soc_G J$, and so
the map $\Hom_G(\pi', M_{\infty}^{\vee})\rightarrow \Hom_G(\pi', J)$ is an isomorphism. This implies
$\Hom_G(\pi', J/M_{\infty}^{\vee})=0$, contradicting the assumption
$J/M_{\infty}^{\vee}\neq 0$. Hence $M_{\infty}^{\vee}=J$ is injective,
as required.
\end{proof}

\subsection{Removing the patching variables}\label{subsec: Vytas' argument to
  factor out patching variables} We now show that we can pass from
$M_\infty$ to an arithmetic action of~$R_p$ on a projective envelope
of~$\pi^\vee$, where as always~$\pi$ is the representation.associated
to $\rbar$ via Lemma~\ref{lem:rbar has a unique Serre weight, and the
  deformation ring is smooth, and given by Tp}~(5).
Let $(A, \mm)$ be a complete local noetherian $\OO$-algebra. Let $\dualcat(A)$ be the Pontryagin dual of $\Mod^{\ladm}_G(A)$, where, for the moment, we
  allow $G$ to be any $p$-adic analytic group.
  There is a forgetful functor from $\dualcat(A)$ to $\dualcat(\cO)$.
   In this subsection we prove a structural result about objects
    $P$ of $\dualcat(A)$ that are projective in $\dualcat(\OO)$. We will apply this result to
   $P=M_{\infty}$ and $G=\GL_2(\Qp)$.

\begin{lem}\label{lem: factoring Vytas out}
 Let $(A,\mathfrak{m})$ be a complete local noetherian $\F$-algebra with residue field~$\F$. Let $P \in \dualcat(A)$ be such that $P$ is projective in $\dualcat(\F)$ and the map $P\twoheadrightarrow \cosoc_{\dualcat(\F)}P$ is essential. Assume that all irreducible
subquotients of $\cosoc_{\dualcat(\F)} P$ are isomorphic to some given
object $S$, for which
$\End_{\dualcat(\F)}(S)=\F$. If $\Hom_{\dualcat(\F)}( P, S)^{\vee}$  is a free $A$-module of rank $1$ then there is an
  isomorphism $A\wtimes_\F \Proj(S) \cong P$ in $\dualcat(A)$,
  where $\Proj (S)\twoheadrightarrow S$ is a projective envelope
  of $S$ in $\dualcat(\F)$.
\end{lem}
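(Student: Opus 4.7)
The plan is to build a specific morphism $\Phi\colon A \wtimes_\F P_0 \to P$ in $\dualcat(A)$, where $P_0 := \Proj(S)$, and then to check separately that it is surjective and injective by testing against $S$. To construct $\Phi$, first note that $\Hom_{\dualcat(A)}(P, S)$ equals the $\m_A$-torsion in $\Hom_{\dualcat(\F)}(P,S) \cong A^\vee$, namely $A^\vee[\m_A] = (A/\m_A)^\vee = \F$; pick a non-zero element $\bar\phi \in \Hom_{\dualcat(A)}(P,S)$, which is automatically surjective since $S$ is simple. Projectivity of $P_0$ in $\dualcat(\F)$ then allows one to lift the cosocle map $\pi\colon P_0 \twoheadrightarrow S$ through $\bar\phi$ to a map $\phi_0\colon P_0 \to P$ with $\bar\phi \circ \phi_0 = \pi$, and then $\Phi(a \wtimes p) := a \cdot \phi_0(p)$ is the desired morphism in $\dualcat(A)$.

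For surjectivity, let $C := \coker \Phi \in \dualcat(A)$ and suppose $C \ne 0$. Any non-zero locally admissible representation has non-zero socle, so $C$ admits a non-zero simple quotient $T$ in $\dualcat(A)$; since $T$ is also a simple quotient of $P$ in $\dualcat(\F)$, the hypothesis on the cosocle of $P$ forces $T \cong S$. This produces a non-zero $A$-linear map $\psi\colon P \to S$ with $\psi \circ \Phi = 0$, which (using $\Hom_{\dualcat(A)}(P,S) = \F \cdot \bar\phi$) must be a non-zero scalar multiple of $\bar\phi$. But $\bar\phi(\Phi(a \wtimes p)) = a \cdot \bar\phi(\phi_0(p)) = a \cdot \pi(p)$, which is non-zero for $a = 1$ and $p \in P_0 \setminus \ker \pi$ (noting that $A$ acts on $S$ through $A/\m_A = \F$), a contradiction; hence $C = 0$.

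For injectivity, the plan is to exploit projectivity of $P$ in $\dualcat(\F)$ to split $\Phi$ there, giving a decomposition $A \wtimes_\F P_0 \cong P \oplus K$ with $K := \ker \Phi$. Factoring any map $A \wtimes_\F P_0 \to S$ in $\dualcat(\F)$ through a finite quotient $(A/\m_A^n) \otimes_\F P_0 \cong P_0^{\oplus d_n}$ (with $d_n = \dim_\F A/\m_A^n$) and using $\Hom_{\dualcat(\F)}(P_0, S) = \F$ yields $\Hom_{\dualcat(\F)}(A \wtimes_\F P_0, S) \cong A^\vee$, which matches $\Hom_{\dualcat(\F)}(P, S)$ and hence forces $\Hom_{\dualcat(\F)}(K, S) = 0$. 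On the other hand, any simple subobject of $(A \wtimes_\F P_0)^\vee \cong A^\vee \otimes_\F P_0^\vee$ lies in some $(P_0^\vee)^{\oplus d_n}$, hence is isomorphic to $S^\vee = \soc(P_0^\vee)$; dually, every simple quotient of $A \wtimes_\F P_0$, and hence of the summand $K$, is isomorphic to $S$. Were $K$ non-zero, local admissibility of $K^\vee$ would force a non-zero simple quotient of $K$ and thus a non-zero element of $\Hom_{\dualcat(\F)}(K, S)$, a contradiction. So $K = 0$ and $\Phi$ is an isomorphism.

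The main difficulty is keeping the distinction between $\dualcat(A)$ and $\dualcat(\F)$ straight throughout; the freeness hypothesis $\Hom_{\dualcat(\F)}(P,S)^\vee \cong A$ is used twice, first to pin down $\Hom_{\dualcat(A)}(P,S)$ as the single line $\F \cdot \bar\phi$ (which makes the surjectivity argument decisive), and second to match the sizes of $\Hom_{\dualcat(\F)}(A \wtimes_\F P_0, S)$ and $\Hom_{\dualcat(\F)}(P, S)$ (which makes the splitting give $K = 0$).
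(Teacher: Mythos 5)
Most of your argument is sound, and it genuinely differs from the paper's: you build $\Phi\colon A\wtimes_\F \Proj(S)\to P$ by lifting the single map $\Proj(S)\twoheadrightarrow S$ through a chosen surjection $\bar\phi\in\Hom_{\dualcat(A)}(P,S)=A^\vee[\mathfrak{m}_A]=\F\cdot\bar\phi$, and your surjectivity argument (a nonzero simple quotient of $\coker\Phi$ must be $S$, giving an $A$-linear map killing $\operatorname{im}\Phi$ that must be a multiple of $\bar\phi$, contradiction) is correct; so is the reduction of injectivity to the vanishing of $\Hom_{\dualcat(\F)}(K,S)$ for $K=\ker\Phi$, via the $\dualcat(\F)$-splitting and the observation that every simple quotient of $A\wtimes_\F\Proj(S)$ is $S$. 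The gap is in how you get that vanishing. You compute $\Hom_{\dualcat(\F)}(A\wtimes_\F\Proj(S),S)\cong A^\vee\cong\Hom_{\dualcat(\F)}(P,S)$ and say this ``matches'', forcing $\Hom_{\dualcat(\F)}(K,S)=0$. That is a size comparison, and in the intended application $A$ is (a quotient of) $\F[[x_1,\dots,x_d]]$ with $d\ge 1$, so these Hom spaces are infinite-dimensional $\F$-vector spaces; an isomorphism $V\cong V\oplus W$ of such spaces does not force $W=0$, so as written the key step of the injectivity argument does not follow.

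The step can be repaired, but only by using the $A$-module structure and the specific comparison map rather than abstract isomorphism types. Precomposition with $\Phi$ and restriction to $K$ give an exact sequence of $A$-modules $0\to\Hom_{\dualcat(\F)}(P,S)\xrightarrow{\ \Phi^{\ast}\ }\Hom_{\dualcat(\F)}(A\wtimes_\F\Proj(S),S)\to\Hom_{\dualcat(\F)}(K,S)\to 0$ (injectivity because $\Phi$ is surjective; right-exactness from your splitting), and both of the first two terms are isomorphic to $A^\vee$ \emph{as $A$-modules}. Taking Pontryagin (Matlis) duals turns $\Phi^{\ast}$ into a surjective $A$-linear endomorphism of the noetherian module $A$, hence an isomorphism; equivalently, $A^\vee$ is an indecomposable injective $A$-module, so the split exact sequence forces $\Hom_{\dualcat(\F)}(K,S)=0$. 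With that correction your proof closes. For comparison, the paper avoids this issue by going in the other direction: it identifies $\cosoc_{\dualcat(\F)}P$ naturally with $\Hom_{\dualcat(\F)}(P,S)^\vee\wtimes_\F S\cong A\wtimes_\F S$, uses the rank-one freeness to see that $A\wtimes_\F\Proj(S)$ is projective in $\dualcat(A)$ and lifts the cosocle identification to a map $A\wtimes_\F\Proj(S)\to P$ in $\dualcat(A)$; since this map induces an isomorphism on cosocles \emph{by construction}, the standard fact that a morphism of projectives in $\dualcat(\F)$ inducing an isomorphism on cosocles is an isomorphism finishes the proof, with no separate injectivity argument of your kind needed.
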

\begin{proof}

  The assumption on the cosocle of $P$ implies that
  $(\cosoc P)^{\vee}$ is isomorphic to a direct sum of copies of
  $\pigmonkey:= S^{\vee}$.
 This means that we have natural isomorphisms:
$$(\cosoc P)^{\vee}\cong  \pigmonkey \otimes_\F \Hom_G(\pigmonkey, (\cosoc P)^{\vee})\cong \pigmonkey \otimes_\F \Hom_{\dualcat(\F)} (\cosoc P, S).$$
Taking Pontryagin duals we get  a natural isomorphism in $\dualcat(\F)$:
$$\Hom_{\dualcat(\F)} ( \cosoc P, S)^{\vee} \wtimes_\F S \cong  \cosoc P.$$

Since the isomorphism is natural, it is an isomorphism in
$\dualcat(A)$ with the trivial action of $A$ on $S$. Hence, we get a
surjection in $\dualcat(A)$:
$$P\twoheadrightarrow  \Hom_{\dualcat(\F)} ( \cosoc P, S)^{\vee} \wtimes_\F S.$$
The surjection $\Proj(S)\twoheadrightarrow S$ induces a surjection
$$\Hom_{\dualcat(\F)} ( \cosoc P, S)^{\vee} \wtimes_\F \Proj (S) \twoheadrightarrow  \Hom_{\dualcat(\F)} ( \cosoc P, S)^{\vee} \wtimes_\F S,$$
with trivial $A$-action on $\Proj(S)$. The source of this surjection is projective in $\dualcat(A)$, since in
general for a compact $A$-module $\mathrm{m}$
\numequation\label{adjointness_tensor} \Hom_{\dualcat(A)} ( \mathrm{m}
\wtimes_\F \Proj (S), -)\cong \Hom_A ( \mathrm{m},
\Hom_{\dualcat(\F)}( \Proj(S), -)),
\end{equation}
and $\Hom_{\dualcat(\F)} ( \cosoc P, S)^{\vee}= \Hom_{\dualcat(\F)}(P, S)^{\vee}$ is
projective since it is a free $A$-module of rank $1$.

Hence there is a map $\Hom_{\dualcat(\F)} ( \cosoc P, S)^{\vee} \wtimes_\F \Proj (S) \rightarrow  P$ in $\dualcat(A)$, such that the diagram
\begin{displaymath}
\xymatrix@1{\Hom_{\dualcat(\F)} ( \cosoc P, S)^{\vee} \wtimes_\F \Proj (S)\ar@{->>}[d]\ar[r] &   P \ar@{->>}[d]\\
 \Hom_{\dualcat(\F)} ( \cosoc P, S)^{\vee} \wtimes_\F S \ar[r]^-{\cong}&  \cosoc P }
\end{displaymath}
commutes.  If we forget the $A$-action then we obtain a map in
$\dualcat(\F)$ between projective objects, which induces an
isomorphism on their cosocles.  Hence the map is an isomorphism in
$\dualcat(\F)$, and hence also an isomorphism in $\dualcat(A)$.
\end{proof}

\begin{prop}\label{factoring patching variables over O} Let $A$ be a complete local noetherian $\cO$-algebra with residue field $\F$, that is $\cO$-flat. Let $P$ in $\dualcat(A)$ be such that $P$ is
  projective in $\dualcat(\cO)$ and the map $P\twoheadrightarrow \cosoc_{\dualcat(\cO)}P$ is essential.  Assume that all the irreducible
  subquotients of $\cosoc_{\dualcat(\cO)} P$ are isomorphic to $S$,
  and $\End_{\dualcat(\cO)}(S)=\F$. If $\Hom_{\dualcat(\cO)}( P, S)^{\vee}$ is a free
  $A/\varpi A$-module of rank $1$ then there is an isomorphism in
  $\dualcat(A)$:
$$A \wtimes_\cO \Proj(S) \cong  P,$$ where $\Proj(S)\twoheadrightarrow S$ is a projective envelope of $S$ in $\dualcat(\cO)$.
\end{prop}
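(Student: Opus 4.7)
The plan is to reduce to the $\F$-algebra version (Lemma \ref{lem: factoring Vytas out}) by reducing modulo $\varpi$, then lift the resulting isomorphism to characteristic zero using projectivity of $\Proj(S)$, and finally invoke Nakayama-style arguments in $\dualcat(\cO)$ to see that the lift is an isomorphism.

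First I would check that $P/\varpi P \in \dualcat(A/\varpi A)$ satisfies the hypotheses of Lemma \ref{lem: factoring Vytas out}. Projectivity of $P/\varpi P$ in $\dualcat(\F)$ follows from the fact that, for any $Z \in \dualcat(\F)$, the adjunction $\Hom_{\dualcat(\F)}(P/\varpi P, Z) = \Hom_{\dualcat(\cO)}(P, Z)$ combined with projectivity of $P$ in $\dualcat(\cO)$ gives exactness in $Z$. Because $S$ is killed by $\varpi$, the map $P\twoheadrightarrow \cosoc_{\dualcat(\cO)} P$ factors through $P/\varpi P$ and identifies with the cosocle map in $\dualcat(\F)$, which is therefore still essential with all subquotients isomorphic to $S$. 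Finally $\Hom_{\dualcat(\F)}(P/\varpi P,S)^\vee = \Hom_{\dualcat(\cO)}(P,S)^\vee$ is free of rank one over $A/\varpi A$ by hypothesis. The lemma then produces an isomorphism
\[
\overline{\phi} \colon (A/\varpi A)\wtimes_{\F}\Proj_{\F}(S)\xrightarrow{\sim} P/\varpi P
\]
in $\dualcat(A/\varpi A)$, where $\Proj_{\F}(S)$ is a projective envelope of $S$ in $\dualcat(\F)$.

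A parallel argument (projectivity and preservation of the essential quotient onto $S$) shows that $\Proj(S)/\varpi\Proj(S)$ is itself a projective envelope of $S$ in $\dualcat(\F)$, hence isomorphic to $\Proj_{\F}(S)$. Setting $Q:=A\wtimes_{\cO}\Proj(S)$, we therefore have $Q/\varpi Q\cong(A/\varpi A)\wtimes_{\F}\Proj_{\F}(S)$, and $\overline{\phi}$ becomes an isomorphism $Q/\varpi Q\cong P/\varpi P$. By the adjunction \eqref{adjointness_tensor},
\[
\Hom_{\dualcat(A)}(Q,P) = \Hom_{\dualcat(\cO)}(\Proj(S),P),
\]
and projectivity of $\Proj(S)$ in $\dualcat(\cO)$ applied to the epimorphism $P\twoheadrightarrow P/\varpi P$ shows that reduction modulo $\varpi$ is surjective on these Hom-sets. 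Choose $\phi\colon Q\to P$ in $\dualcat(A)$ lifting $\overline{\phi}$.

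It remains to show that $\phi$ is an isomorphism. Projective objects in $\dualcat(\cO)$ are Pontryagin dual to injective objects in $\Mod_G^{\ladm}(\cO)$, which are $\varpi$-divisible; hence $P$ and $\Proj(S)$ are $\varpi$-torsion free, and $\cO$-flatness of $A$ makes $Q$ $\varpi$-torsion free as well. For surjectivity, $\coker(\phi)/\varpi\coker(\phi)\cong\coker(\overline{\phi})=0$, so $\coker(\phi)^\vee$ is a $\varpi$-torsion free object of $\Mod_G^{\ladm}(A)$; but every object of $\Mod_G^{\ladm}(\cO)$ is locally killed by powers of $\varpi$, forcing $\coker(\phi)^\vee=0$ and $\phi$ to be an epimorphism. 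For injectivity, $\cO$-flatness of $P$ makes $0\to \ker(\phi)\to Q\to P\to 0$ remain exact after reduction modulo $\varpi$; since $\overline{\phi}$ is an isomorphism, $\ker(\phi)/\varpi\ker(\phi)=0$, and the same Nakayama argument yields $\ker(\phi)=0$. The main technical point (and the only real obstacle) is precisely this verification that projectivity in $\dualcat(\cO)$ implies $\varpi$-torsion freeness and that objects of $\Mod_G^{\ladm}(\cO)$ are $\varpi$-power torsion, which together power both Nakayama steps; everything else is bookkeeping via the adjunction and compatibility of the constructions with reduction mod $\varpi$.
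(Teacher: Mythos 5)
Your argument is correct and follows the same overall strategy as the paper: reduce modulo $\varpi$ and invoke Lemma~\ref{lem: factoring Vytas out}, lift the resulting isomorphism to a map $A\wtimes_\cO\Proj(S)\to P$ in $\dualcat(A)$ using projectivity together with the adjunction~\eqref{adjointness_tensor}, and conclude by topological Nakayama. (Your explicit verification that $P/\varpi P$ and $\Proj(S)/\varpi\Proj(S)$ satisfy the hypotheses of the lemma over $A/\varpi A$, resp.\ are projective envelopes in $\dualcat(\F)$, is exactly what the paper compresses into ``a special case of Lemma~\ref{lem: factoring Vytas out}''.) The one place you genuinely diverge is the injectivity of the lifted map. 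The paper avoids any torsion-freeness statement: having shown surjectivity by Nakayama, it uses the hypothesis that $P$ is projective in $\dualcat(\cO)$ to split the surjection $A\wtimes_\cO\Proj(S)\twoheadrightarrow P$, and then applies Nakayama to the complementary summand $U$, whose reduction vanishes because the map is an isomorphism modulo $\varpi$. You instead keep the kernel sequence exact modulo $\varpi$ by appealing to the $\varpi$-torsion-freeness of $P$, which you justify by asserting that injective objects of $\Mod_G^{\ladm}(\cO)$ are $\varpi$-divisible. That fact is true and standard (and the paper itself uses the $\cO$-torsion-freeness of $\wP$ without comment in the proof of Theorem~\ref{main}), but as stated it is just the dual reformulation of what you need, and proving divisibility of injectives in the \emph{locally admissible} category takes a genuine (if routine) argument; the paper's splitting trick is preferable precisely because it needs nothing beyond the stated hypotheses. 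If you keep your route, either cite the torsion-freeness of projectives in $\dualcat(\cO)$ from the literature or replace that step by the splitting argument.
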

\begin{proof}
  A special case of Lemma \ref{lem: factoring Vytas out} implies that the reductions of
  $A\wtimes_{\cO} \Proj(S)$ and $P$ modulo $\varpi$ are isomorphic in
  $\dualcat(\F)$.  Arguing as in \eqref{adjointness_tensor} we deduce
  that $A\wtimes_{\cO} \Proj(S)$ is projective in $\dualcat(A)$. Thus
  there is a map in $\dualcat(A)$,
  $$A\wtimes_{\cO} \Proj(S)\rightarrow P,$$
  which is an isomorphism
  modulo $\varpi$. If $V$ is a cokernel of this map then
  $V/\varpi V=0$, and Nakayama's lemma for compact $\cO$-modules
  implies that $V=0$. Since $P$ is projective this surjection must
  split. Since the map is an isomorphism modulo $\varpi$, if $U$ is
  the kernel of this map then the decomposition
  $A\wtimes_{\cO} \Proj(S)\cong U\oplus P$ implies that
  $U/\varpi U=0$, and so $U=0$, as required.
\end{proof}

\begin{cor}\label{get_proj} Let $A$, $P$, and $S$ be as in
Proposition~{\em \ref{factoring patching variables over O}}. Then for any
$\OO$-algebra homomorphism $x: A\rightarrow \OO$, $P\wtimes_{A, x} \OO$ is a projective envelope of
$S$ in $\dualcat(\OO)$.
\end{cor}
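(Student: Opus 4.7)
The plan is to deduce the corollary directly from Proposition~\ref{factoring patching variables over O} by base change along $x$. First I would invoke the proposition to produce an isomorphism
\[P \;\cong\; A \wtimes_{\OO} \Proj(S)\]
in $\dualcat(A)$, where $\Proj(S)\twoheadrightarrow S$ is a projective envelope of $S$ in $\dualcat(\OO)$ and $A$ acts on the right-hand side only through its first tensor factor. The hypotheses needed to apply the proposition are precisely those assumed in the statement of the corollary, so there is nothing extra to check at this step.

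Next I would apply the functor $-\wtimes_{A,x}\OO$ to both sides of this isomorphism and use the associativity of the completed tensor product in the pseudo-compact setting. This should give
\[P\wtimes_{A,x}\OO \;\cong\; (A\wtimes_{\OO}\Proj(S))\wtimes_{A,x}\OO \;\cong\; \Proj(S)\wtimes_{\OO}(A\wtimes_{A,x}\OO) \;\cong\; \Proj(S),\]
where the last step uses $A\wtimes_{A,x}\OO = \OO$. The $G$-action on every term is inherited from $\Proj(S)$ and is visibly preserved by this chain of natural identifications, so the composition is an isomorphism in $\dualcat(\OO)$. Since $\Proj(S)$ is by construction a projective envelope of $S$ in $\dualcat(\OO)$, this identifies $P\wtimes_{A,x}\OO$ as the desired projective envelope.

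I do not anticipate a genuine obstacle: once Proposition~\ref{factoring patching variables over O} is in hand the result is formal. The only mildly delicate point is verifying that the associativity isomorphism for the completed tensor product of pseudo-compact modules holds in the form used above, but this is standard and is implicit in the same formalism already exploited in the proof of the proposition (in particular, in the adjunction \eqref{adjointness_tensor}).
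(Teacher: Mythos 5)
Your proposal is correct and matches the paper's own proof: both invoke Proposition~\ref{factoring patching variables over O} to write $P \cong \Proj(S)\wtimes_\OO A$ and then base change along $x\colon A\to\OO$ to cancel the $A$-factor. The paper simply states this in one line, while you spell out the associativity step, but the argument is the same.
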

\begin{proof} It follows from Proposition~\ref{factoring patching variables over O} that
\[P\wtimes_{A, x}\OO\cong (\Proj(S)\wtimes_\OO A)\wtimes_{A, x}\OO\cong \Proj(S).\qedhere\]
\end{proof}

\begin{rem}
\label{rem: the factoring patching variables out lemmas work ok in rank n}
If we replace the assumption in Proposition~\ref{factoring patching variables over
O} that $\Hom_{\dualcat(\cO)}( P, S)^{\vee}$ is a free
$A/\varpi A$-module of rank $1$ with the assumption that it is free
of rank~$n$, then we have an isomorphism
$A^{\oplus n} \wtimes_\cO \Proj(S) \cong  P$  in
 $\dualcat(A)$.

Indeed,
a generalization of Lemma~\ref{lem: factoring Vytas out} to the rank $n$ case gives an isomorphism in $\dualcat(\F)$ of $A^{\oplus n}\wtimes_{\cO}\Proj(S)$ and
$P$ modulo $\varpi$. (In fact, the statement of Lemma~\ref{lem: factoring Vytas out}
can be strengthened as follows: if $\Hom_{\dualcat(\F)}(P,S)^\vee$ is a projective object in the category of compact $A$-modules, then we have an isomorphism $\Hom_{\dualcat(\F)}(P,S)^\vee\wtimes_\F\Proj S\simeq P$. The key is again the projectivity of the completed tensor product $\Hom_{\dualcat(\F)}(P,S)^\vee\wtimes_\F\Proj S$, which follows from our assumption and from~\eqref{adjointness_tensor}.)
We then upgrade the isomorphism modulo $\varpi$ to an isomorphism in $\dualcat(A)$ as in the proof of Proposition~\ref{factoring patching variables over O}, again relying on~\eqref{adjointness_tensor}.
\end{rem}

We now apply the results above in the special case of $P=M_{\infty}$ and $G=\GL_2(\Qp)$.
\begin{prop}\label{cut_it_out} Let $A=\OO[[x_1,\ldots, x_d]]$ and choose a homomorphism of local $\OO$-algebras
$A\rightarrow R_{\infty}$, that induces an isomorphism $R_p\wtimes_{\OO} A\cong R_{\infty}$.
Then there is an isomorphism in $\dualcat(A)$:
$$ M_{\infty}\cong \wP \wtimes_{\OO} A,$$
where $\wP\twoheadrightarrow \pi^{\vee}$ is a projective envelope of $\pi^{\vee}$ in $\dualcat(\OO)$.
\end{prop}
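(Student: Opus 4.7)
The plan is to apply Proposition \ref{factoring patching variables over O} directly, with $P = M_\infty$ and $S = \pi^\vee$, since the completed tensor product is symmetric in the two factors (the $A$-action on $\wP$ is trivial). Under the given hypothesis, $A$ is a complete local noetherian $\cO$-algebra with residue field $\F$ which is $\cO$-flat, so what remains is to verify the four conditions on $P = M_\infty$.

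First, $M_\infty$ is projective in $\dualcat(\cO)$ by Theorem \ref{thm: Minfty is projective} (which says $M_\infty^\vee$ is injective in $\Mod^{\ladm}_G(\cO)$). Next, Proposition \ref{prop: abstract Serre weight freeness and Hecke compatibility}(3) says that $\pi$ is the unique irreducible smooth $\F$-representation of $G$ admitting a non-zero map to $M_\infty^\vee$; this means $\soc_G M_\infty^\vee$ is a direct sum of copies of $\pi$, and that $\soc_G M_\infty^\vee \hookrightarrow M_\infty^\vee$ is essential by Lemma \ref{lem:socles are essential}. Dualising gives that every irreducible subquotient of $\cosoc_{\dualcat(\cO)} M_\infty$ is isomorphic to $\pi^\vee$ and that the projection $M_\infty \twoheadrightarrow \cosoc_{\dualcat(\cO)} M_\infty$ is essential. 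Finally, $\End_{\dualcat(\cO)}(\pi^\vee) = \F$ by Schur's lemma, since $\pi$ is absolutely irreducible (Lemma \ref{lem:rbar has a unique Serre weight, and the deformation ring is smooth, and given by Tp}(5)).

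The main computation is to show that $\Hom_{\dualcat(\cO)}(M_\infty, \pi^\vee)^\vee$ is free of rank one over $A/\varpi A$. Choose a Serre weight $\sigmabar = \sigmabar_{a,b}$ of $\rbar$; by Lemma \ref{lem:rbar has a unique Serre weight, and the deformation ring is smooth, and given by Tp}(5) we may write $\pi = \cind_K^G \sigmabar \otimes_{\cH(\sigmabar), \alpha} \F$. Frobenius reciprocity then gives
\[
\Hom_{\dualcat(\cO)}(M_\infty, \pi^\vee)
 = \Hom_G(\pi, M_\infty^\vee)
 = \F \otimes_{\cH(\sigmabar), \alpha} M_\infty(\sigmabar)^\vee,
\]
so upon taking duals,
\[
\Hom_{\dualcat(\cO)}(M_\infty, \pi^\vee)^\vee
 \cong M_\infty(\sigmabar) \otimes_{\cH(\sigmabar), \alpha} \F.
\]
By Proposition \ref{prop: abstract Serre weight freeness and Hecke compatibility}(1), $M_\infty(\sigma^\circ)$ is free of rank one over $R_\infty(\sigma) = R_p(\sigma)[[x_1,\dots,x_d]]$, so $M_\infty(\sigmabar)$ is free of rank one over $R_\infty(\sigma)/\varpi$. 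By Proposition \ref{prop: abstract Serre weight freeness and Hecke compatibility}(2) the $\cH(\sigmabar)$-action factors through $R_p(\sigma)/\varpi$, and Lemma \ref{lem:rbar has a unique Serre weight, and the deformation ring is smooth, and given by Tp}(4) identifies $\alpha$ with the residue field map of the local ring $R_p(\sigma)/\varpi$, yielding $R_p(\sigma)/\varpi \otimes_{\cH(\sigmabar), \alpha} \F = \F$. Combining,
\[
M_\infty(\sigmabar) \otimes_{\cH(\sigmabar), \alpha} \F
 \cong R_\infty(\sigma)/(\varpi, \m_{R_p(\sigma)})
 \cong A/\varpi A,
\]
where the last isomorphism uses the hypothesis that $A \to R_\infty$ induces $R_\infty \cong R_p \wtimes_\cO A$ (hence $R_\infty(\sigma) \cong R_p(\sigma) \wtimes_\cO A$), and is $A$-equivariant. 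This is free of rank one over $A/\varpi A$, as required.

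With all the hypotheses checked, Proposition \ref{factoring patching variables over O} delivers an isomorphism $A \wtimes_\cO \Proj(\pi^\vee) \cong M_\infty$ in $\dualcat(A)$, which is exactly the claimed isomorphism $M_\infty \cong \wP \wtimes_\cO A$. No step presents a serious obstacle: the main work is organising the identifications in the rank-one calculation above, which is essentially a bookkeeping exercise once one assembles the results from Section \ref{subsec: Serre weights theory} and Section \ref{sec:Galois deformation rings and Hecke algebras}.
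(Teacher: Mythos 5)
Your proposal is correct and follows essentially the same route as the paper's proof: projectivity via Theorem \ref{thm: Minfty is projective}, the essential cosocle statement via Lemma \ref{lem:socles are essential} and Proposition \ref{prop: abstract Serre weight freeness and Hecke compatibility}(3), the rank-one computation of $\Hom_{\dualcat(\OO)}(M_\infty,\pi^\vee)^\vee$ over $A/\varpi$ using parts (1), (2) and Lemma \ref{lem:rbar has a unique Serre weight, and the deformation ring is smooth, and given by Tp}(4), and then Proposition \ref{factoring patching variables over O}. The bookkeeping in your identification $M_\infty(\sigmabar)\otimes_{\cH(\sigmabar),\alpha}\F\cong A/\varpi A$ matches the paper's argument.
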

\begin{proof} Theorem \ref{thm: Minfty is projective} implies that $M_{\infty}$ is projective in $\dualcat(\OO)$.
	As we already noted in the proof of that theorem,
since $M_{\infty}^{\vee}$ is locally admissible,
it follows from Lemma~\ref{lem:socles are essential} that
$\soc_G M_{\infty}^{\vee}\hookrightarrow M_{\infty}^{\vee}$ is essential, and
hence that
$M_{\infty}\twoheadrightarrow \cosoc_{\dualcat(\OO)} M_{\infty}$ is essential.
Proposition~\ref{prop: abstract Serre weight freeness and Hecke
    compatibility}~(3) implies that all the irreducible subquotients of $\cosoc_{\dualcat(\OO)} M_{\infty}$
are isomorphic to $\pi^{\vee}$. It is therefore enough to show that
$$M_{\infty}(\pi):=\Hom_{\dualcat(\OO)}(M_{\infty}, \pi^{\vee})^{\vee}\cong \Hom_G(\pi, M_{\infty}^{\vee})^{\vee}$$
is a free $A/\varpi$-module of rank $1$, since the assertion then follows from Proposition~\ref{factoring patching variables over O}.

As in the proof of Proposition~\ref{prop: abstract Serre weight freeness and Hecke
    compatibility}, we have
$$M_{\infty}(\pi)\cong \F \otimes_{\cH(\sigmabar)} M_{\infty}(\sigmabar)\cong \F \otimes_{\cH(\sigmao)} M_{\infty}(\sigmao).$$
It follows from Proposition~\ref{prop: abstract Serre weight freeness and Hecke
    compatibility}~(1) that $M_{\infty}(\pi)$ is a free
$\F\otimes_{\cH(\sigmao)} R_{\infty}(\sigma)$-module of rank $1$. Since $R_{\infty}(\sigma)\cong R_p(\sigma)\wtimes_{\OO} A$
and the map $\cH(\sigmabar)\rightarrow R_{\infty}(\sigma)/\varpi$ factors
through $R_p(\sigma)/\varpi$
by Proposition~\ref{prop: abstract Serre weight freeness and Hecke
    compatibility}~(2), we conclude that the map $A\rightarrow
  R_{\infty}$ induces an isomorphism $$A/\varpi\cong
  \F\otimes_{\cH(\sigmao)}R_{\infty}(\sigma).$$ (Recall that $\F\otimes_{\cH(\sigmao)}R_p(\sigma)/\varpi
	= \F,$ by Lemma~\ref{lem:rbar has a unique Serre weight, and the deformation ring
    is smooth, and given by Tp}~(4).)
Thus $M_{\infty}(\pi)$ is a free $A/\varpi$-module of rank $1$, as required.
\end{proof}

\begin{cor}\label{cor_cut_it_out} Let $A\rightarrow R_{\infty}$ be as in Proposition \ref{cut_it_out} and let $x: A\rightarrow \OO$ be a homomorphism
of local $\OO$-algebras. Then $M_{\infty}\wtimes_{A,x} \OO$ is a projective envelope of $\pi^{\vee}$ in $\dualcat(\OO)$
with a continuous $R_p\cong R_{\infty}\otimes_{A, x} \OO$-action, which commutes with the action of $G$.
\end{cor}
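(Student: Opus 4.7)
The proof is essentially immediate, combining Proposition~\ref{cut_it_out} with the general fact recorded in Corollary~\ref{get_proj}. The plan is to first establish the isomorphism as objects of $\dualcat(\OO)$ and then separately account for the commuting $R_p$-action; no new technical input is required.

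For the projective envelope statement, Proposition~\ref{cut_it_out} gives an isomorphism $M_\infty \cong \wP \wtimes_\OO A$ in $\dualcat(A)$, where $\wP$ is a projective envelope of $\pi^\vee$ in $\dualcat(\OO)$. Applying $- \wtimes_{A,x}\OO$ to both sides, and noting that completed tensor products with $\OO$ over $A$ via an $\OO$-algebra map commute in the obvious way with completed tensor products over $\OO$, we obtain
\[
M_\infty \wtimes_{A,x}\OO \;\cong\; (\wP \wtimes_\OO A)\wtimes_{A,x}\OO \;\cong\; \wP,
\]
which is the desired projective envelope. This is precisely the specialization of Corollary~\ref{get_proj} to $P=M_\infty$, $S=\pi^\vee$, whose hypotheses were verified in the proof of Proposition~\ref{cut_it_out} (projectivity from Theorem~\ref{thm: Minfty is projective}, essentiality of the map to the cosocle via Lemma~\ref{lem:socles are essential}, and identification of all irreducible cosocle subquotients with $\pi^\vee$ via Proposition~\ref{prop: abstract Serre weight freeness and Hecke compatibility}~(3), together with the rank-one freeness of $M_\infty(\pi)$ over $A/\varpi$).

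For the module structure, the hypothesis $R_p \wtimes_\OO A \cong R_\infty$ gives an isomorphism
\[
R_\infty \otimes_{A,x}\OO \;\cong\; R_p \wtimes_\OO A \otimes_{A,x}\OO \;\cong\; R_p.
\]
Since the $R_\infty$-action on $M_\infty$ is continuous and commutes with $G$, base change along $x$ produces a continuous action of $R_\infty \otimes_{A,x}\OO \cong R_p$ on $M_\infty \wtimes_{A,x}\OO$ which still commutes with $G$. Combining this with the identification above yields the claim. There is no substantive obstacle: the only thing to verify carefully is that the two completed tensor products may be interchanged, but this is a standard manipulation for pseudocompact modules given that $A$ is a formal power series ring over $\OO$ and $x$ is a map of local $\OO$-algebras.
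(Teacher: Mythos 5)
Your proof is correct and takes essentially the same route as the paper, which simply cites Corollary~\ref{get_proj}; you have just spelled out the derivation of that corollary and the transport of the $R_p$-structure under the identification $R_\infty\otimes_{A,x}\OO\cong R_p$, which are exactly the points the paper leaves implicit.
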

\begin{proof} This follows from Corollary \ref{get_proj}.
\end{proof}

\subsection{Uniqueness of arithmetic actions}As in the statement of
Proposition~\ref{cut_it_out}, we let
$\wP\twoheadrightarrow \pi^{\vee}$ be a projective envelope of
$\pi^{\vee}$ in $\dualcat(\OO)$.

\begin{prop}\label{it_exists} $\wP$ can be endowed with an arithmetic
  action of $R_p$ {\em (}in the sense of Section \ref{subsec: axioms for arithmetic action}
  when $d=0${\em )}.
\end{prop}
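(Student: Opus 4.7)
The plan is to transport axioms (AA1)--(AA4) from $M_\infty$ to $\wP$ via the isomorphism of Proposition \ref{cut_it_out}. Fix a homomorphism $A = \cO[[x_1,\ldots,x_d]] \to R_\infty$ inducing $R_p \wtimes_\cO A \iso R_\infty$, and let $x: A \to \cO$ be the augmentation. By Corollary \ref{cor_cut_it_out}, $\wP \cong M_\infty \wtimes_{A,x} \cO$ acquires a continuous $R_p$-action commuting with the $G$-action, inherited from the first tensor factor of $R_\infty$; dually, Proposition \ref{cut_it_out} supplies an identification $M_\infty \cong \wP \wtimes_\cO A$ in $\dualcat(R_\infty)$. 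I then verify (AA1)--(AA4) for $\wP$ with $d=0$.

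Axiom (AA1) is immediate: $\wP$ is a quotient of $M_\infty$ by $(\ker x)M_\infty$, and since $R_\infty[[K]]/((\ker x)R_\infty[[K]]) \cong R_p[[K]]$, finite generation over $R_\infty[[K]]$ descends to finite generation over $R_p[[K]]$. Axiom (AA2) holds by the defining property of $\wP$ as a projective envelope in $\dualcat(\cO)$, together with the standard fact that such envelopes are projective as pseudo-compact $\cO[[K]]$-modules; alternatively, it can be deduced from (AA2) for $M_\infty$ via the isomorphism $M_\infty \cong \wP \wtimes_\cO A$ and the pro-freeness of $A$ over $\cO$.

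The essential formal input for (AA3) and (AA4) is a natural isomorphism
\[ M_\infty(\sigma^\circ) \cong \wP(\sigma^\circ) \wtimes_\cO A \]
of $R_\infty$-modules, compatible with the $\cH(\sigma)$-action after inverting $p$. This follows from $M_\infty \cong \wP \wtimes_\cO A$ together with the fact that $\sigma^\circ$ is finitely generated as an $\cO$-module, which lets one pull the $\wtimes_\cO A$ outside the $\Hom^{\cts}_{\cO[[K]]}$. Granted this, the factorisation part of (AA3) reads off by restricting to the first tensor factor of the factorisation $R_\infty \twoheadrightarrow R_\infty(\sigma) = R_p(\sigma) \wtimes_\cO A$ provided by (AA3) for $M_\infty$. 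The maximal Cohen--Macaulay property over $R_p(\sigma)$ follows because $x_1-x(x_1),\ldots,x_d-x(x_d)$ form a regular sequence on $M_\infty(\sigma^\circ)$ (they act regularly on the $A$-factor), so quotienting by them preserves MCM on passing from $R_\infty(\sigma) = R_p(\sigma)[[x_1,\ldots,x_d]]$ down to $R_p(\sigma)$. Rank-one local freeness of $\wP(\sigma^\circ)[1/p]$ over its support in $\Spec R_p(\sigma)[1/p]$ is obtained by specialising the corresponding statement for $M_\infty(\sigma^\circ)[1/p]$ along the closed immersion $\Spec R_p(\sigma)[1/p] \hookrightarrow \Spec R_\infty(\sigma)[1/p]$ cut out by $x_i - x(x_i)$. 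Axiom (AA4) descends identically: the factorisation of the $\cH(\sigma)$-action on $M_\infty(\sigma^\circ)[1/p]$ through $\eta: \cH(\sigma) \to R_p(\sigma)[1/p] \hookrightarrow R_\infty(\sigma)[1/p]$ lands in the first tensor factor.

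The step requiring genuine care, and the one I expect to be the main obstacle, is verifying that $M_\infty(\sigma^\circ) \cong \wP(\sigma^\circ) \wtimes_\cO A$ simultaneously respects the $R_\infty$-action and the $\cH(\sigma)$-action; this comes down to unwinding how the $R_p$-action on $\wP$ given by Corollary \ref{cor_cut_it_out} matches up with the restriction of the $R_\infty$-action on $M_\infty$ to the first tensor factor, and then invoking the adjunction between completed tensor products and continuous homomorphisms. Once this bookkeeping is done, (AA1)--(AA4) for $\wP$ are formal consequences of (AA1)--(AA4) for $M_\infty$.
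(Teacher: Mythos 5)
Your proposal is correct and takes essentially the same route as the paper: the paper's proof simply picks any morphism $x\colon A\to\cO$ as in Corollary~\ref{cor_cut_it_out}, transports the $R_p$-action to $M_\infty\wtimes_{A,x}\cO\cong\wP$, and asserts that axioms (AA1)--(AA4) with $d=0$ follow immediately from those for $M_\infty$. The descent you spell out (via $M_\infty\cong\wP\wtimes_\cO A$, the adjunction for $\Hom^{\cont}_{\cO[[K]]}$, regularity of the elements $x_i-x(x_i)$ on a maximal Cohen--Macaulay module, and base change to the closed subscheme they cut out) is exactly the bookkeeping the paper leaves implicit.
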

\begin{proof}
  Making any choice of morphism $x:A\to \cO$ in
  Corollary~\ref{cor_cut_it_out}, we obtain an action of $R_p$ on
  $M_\infty\wtimes_{A,x} \OO\cong\wP$. Since the action of $R_\infty$
  on $M_\infty$ is an arithmetic action, it follows immediately from
  the definitions that this induced action of $R_p$ on $\wP$ is also
  an arithmetic action.
\end{proof}

 \subsection{Recapping capture}\label{subsec:capture}We now very briefly
recall the theory of capture from~\cite[\S2.4]{CDP}, specialised to
the case of interest to us. We note that analogues of these results are valid for
general choices of~$G$, and in particular do not use
either Colmez's functor nor $p$-adic Langlands correspondence for
$\GL_2(\Qp)$.

Let $M$ be a compact linear-topological
$\cO[[K]]$-module, and let $\{V_i\}_{i\in I}$ be a set of continuous
$K$-representations on finite-dimensional $E$-vector spaces.
\begin{defn}
  \label{defn: capture}We say that $\{V_i\}_{i\in I}$ \emph{captures}
$M$ if for any proper quotient $M\onto Q$, we have
$\Hom_{\OO[[K]]}^{\cont}(M,V_i^\ast)\ne\Hom_{\OO[[K]]}^{\cont}(Q,V_i^\ast) $
for some $i\in I$. \end{defn}
This definition is used only in the proof of the following result.
\begin{prop}\label{prop:capture implies unique action}
  Suppose that $\phi\in\End_{\OO[[K]]}^{\cont}(\wP)$
  kills each $\Hom_{\OO[[K]]}^{\cont}(\wP,\sigma_{a,b}^\ast)$ for $a\in \Z$ and $b\in \Z_{\ge 0}$. Then $\phi=0$.
\end{prop}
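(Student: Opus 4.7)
The plan is to unwind what ``$\phi$ kills $\Hom^\cont_{\cO[[K]]}(\widetilde{P},\sigma_{a,b}^\ast)$'' means and then apply the capture formalism of Definition~\ref{defn: capture}. By assumption, for every $a\in\Z$, $b\in\Z_{\ge 0}$ and every $\psi\in\Hom^\cont_{\cO[[K]]}(\widetilde{P},\sigma_{a,b}^\ast)$, the composition $\psi\circ\phi$ vanishes. Equivalently, every such $\psi$ factors through the quotient $Q:=\widetilde{P}/\overline{\mathrm{im}(\phi)}$, so the natural pullback
\[
\Hom^\cont_{\cO[[K]]}(Q,\sigma_{a,b}^\ast)\hookrightarrow \Hom^\cont_{\cO[[K]]}(\widetilde{P},\sigma_{a,b}^\ast)
\]
is an equality for every $(a,b)$. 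If $\phi\ne 0$, then $Q$ is a proper quotient of~$\widetilde{P}$, contradicting Definition~\ref{defn: capture} once we know that the family $\{\sigma_{a,b}\}_{a,b}$ captures~$\widetilde{P}$. Thus everything reduces to verifying this capture statement.

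To verify capture, I would invoke the fact that $\widetilde{P}$ is projective in the category of pseudocompact $\cO[[K]]$-modules. This projectivity is inherited from axiom~(AA2) for~$M_\infty$: by Corollary~\ref{cor_cut_it_out}, $\widetilde{P}$ is a direct summand (indeed, a specialization) of $M_\infty$ viewed as an $\cO[[K]]$-module, and axiom~(AA2) says that~$M_\infty$ is projective over $\cO[[K]]$. Granting this, capture of projective pseudocompact $\cO[[K]]$-modules by the set of algebraic $K$-representations $\{\sigma_{a,b}\}$ is a standard fact, established in~\cite[\S2.4]{CDP} (and in a very closely related form in~\cite[Prop.~5.4.1]{emerton2010local}); in dual language, it says that the subspace of $K$-algebraic vectors in the unitary Banach representation $\Hom^\cont_\cO(\widetilde{P},E)$ is dense.

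Combining these two steps gives $\mathrm{im}(\phi)=0$, hence $\phi=0$.

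The only real step is verifying that the hypothesis on $\phi$ really does negate the capture condition, which is a formal diagram chase once one has Definition~\ref{defn: capture} in hand; there is no serious obstacle, since the hard input (density of algebraic vectors in the Banach representation associated to a projective $\cO[[K]]$-module, equivalently capture by the~$\sigma_{a,b}$) is already known and is invoked as a black box.
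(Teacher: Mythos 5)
Your proof is correct and is essentially the paper's: the explicit cokernel/diagram-chase step is exactly \cite[Lem.~2.9]{CDP}, and the capture of $\wP$ by $\{\sigma_{a,b}\}$ is \cite[Prop.~2.12]{CDP}, which the paper invokes directly from the projectivity of $\wP$ in $\dualcat(\OO)$ rather than detouring through $M_\infty$. Your alternative sourcing of the capture hypothesis via (AA2) does work, but note that being a specialization of $M_\infty$ is not by itself a reason to be a direct summand; to see that $\wP$ is an $\OO[[K]]$-module direct summand of $M_\infty$ you should appeal to the decomposition $M_\infty\cong\wP\wtimes_{\OO}\OO[[x_1,\dots,x_d]]$ of Proposition~\ref{cut_it_out} together with the $\OO$-module splitting $\OO[[x_1,\dots,x_d]]\cong\OO\oplus\ker(x)$ determined by the chosen point $x$.
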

\begin{proof}
  Since $\wP$ is projective in $\dualcat(\cO)$, it follows from
  ~\cite[Prop.\ 2.12]{CDP} that  the set
$\{\sigma_{a,b}\}$ captures~$\wP$. The result follows from~\cite[Lem.\
2.9]{CDP} (that is, from an application of the definition of capture
to the cokernel of~$\phi$).
\end{proof}
Set $M(\sigma^\circ):=\left(\Hom^{\mathrm{cont}}_{\cO[[K]]}(\wP,(\sigma^\circ)^d)\right)^d$.
\begin{lem}\label{fibre_hecke_old} Let $\sigma=\sigma_{a,b}$ with $a\in \Z$ and $b\in \Z_{\ge 0}$ and let $\mm_y$ be a maximal ideal of $\cH(\sigma)$. Then
  $\kappa(y)\otimes_{\cH(\sigma)} M(\sigmao)[1/p]\neq 0$ if
  and only if $\mm_x:=\eta(\mm_y) R_p(\sigma)[1/p]$ is a maximal ideal
  of $R_p(\sigma)[1/p]$ in the support of $M(\sigma^\circ)[1/p]$ for
  some \emph{(}equivalently, any\emph{)} arithmetic action of $R_p$ on~$\wP$.
\end{lem}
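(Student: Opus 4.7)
The plan is to reduce the question to Lemma \ref{fibre_hecke} and Proposition \ref{prop: generic fibre of cristabelline deformation ring} by using axioms (AA3) and (AA4) to pass back and forth between the $\cH(\sigma)$-action and the $R_p(\sigma)$-action on $M(\sigma^\circ)[1/p]$.

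Fix any arithmetic action of $R_p$ on $\wP$, which exists by Proposition \ref{it_exists}. By (AA4) the $\cH(\sigma)$-action on $M(\sigma^\circ)[1/p]$ factors through $\eta\colon \cH(\sigma)\to R_p(\sigma)[1/p]$, so
\[
\kappa(y)\otimes_{\cH(\sigma)} M(\sigma^\circ)[1/p] \;\cong\; \bigl(\kappa(y)\otimes_{\cH(\sigma)} R_p(\sigma)[1/p]\bigr) \otimes_{R_p(\sigma)[1/p]} M(\sigma^\circ)[1/p].
\]
By Lemma \ref{fibre_hecke} the first tensor factor on the right is either $0$ or a field of dimension $\le 1$ over $\kappa(y)$. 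In the first case, $\eta(\mm_y)$ generates the unit ideal in $R_p(\sigma)[1/p]$, so $\mm_x$ is not a maximal ideal and both sides of the claimed equivalence fail. In the second case, by (the proof of) Proposition \ref{prop: generic fibre of cristabelline deformation ring}, $\mm_x = \eta(\mm_y)R_p(\sigma)[1/p]$ is a maximal ideal of $R_p(\sigma)[1/p]$ with residue field $\kappa(x)=\kappa(y)$, and the map identifies completions. Then the right-hand tensor product becomes $\kappa(x)\otimes_{R_p(\sigma)[1/p]} M(\sigma^\circ)[1/p]$, which is non-zero exactly when $\mm_x$ lies in the support of $M(\sigma^\circ)[1/p]$. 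This gives the equivalence for the chosen arithmetic action.

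To see the independence of the choice of arithmetic action (the ``equivalently, any'' clause), note that the left-hand side $\kappa(y)\otimes_{\cH(\sigma)} M(\sigma^\circ)[1/p]$, the map $\eta$, and hence the ideal $\mm_x$ itself, are all intrinsic: they depend only on the underlying $\cO[G]$-module structure of $\wP$, and not on the particular $R_p$-action that realizes it as arithmetic. Therefore the condition that $\mm_x$ lies in the support of $M(\sigma^\circ)[1/p]$ is equivalent to an action-independent non-vanishing, and so holds for some arithmetic action if and only if it holds for every arithmetic action. The only real content is the dictionary provided by (AA4) plus Proposition \ref{prop: generic fibre of cristabelline deformation ring}; no further obstacle is anticipated.
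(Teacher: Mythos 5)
Your argument is correct and follows essentially the same route as the paper's own proof: base-changing through $\eta$ using (AA4), invoking Lemma~\ref{fibre_hecke} to see that the fibre ring $\kappa(y)\otimes_{\cH(\sigma)}R_p(\sigma)[1/p]$ is either zero or equal to $\kappa(x)$ with $\mm_x$ maximal, and observing that the $\cH(\sigma)$-action (hence the left-hand side) is intrinsic to the $G$-module structure, which gives the ``some/any'' clause. The only point you leave tacit is that the step ``$\kappa(x)\otimes_{R_p(\sigma)[1/p]}M(\sigmao)[1/p]\neq 0$ exactly when $\mm_x$ lies in the support'' uses Nakayama, and hence that $M(\sigmao)$ is a finitely generated $R_p(\sigma)$-module; this follows from (AA1) and is stated explicitly at the start of the paper's proof.
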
\begin{proof} Since $M(\sigmao)$ is a finitely generated $R_p(\sigma)$-module and the action of $\cH(\sigma)$
on $M(\sigmao)[1/p]$ factors through the action of $R_p(\sigma)[1/p]$ via $\eta$, we deduce that
$\kappa(y)\otimes_{\cH(\sigma)} M(\sigmao)[1/p]$ is a finitely generated $\kappa(y)\otimes_{\cH(\sigma)} R_p(\sigma)[1/p]$-module. If $\kappa(y)\otimes_{\cH(\sigma)} M(\sigmao)[1/p]\neq 0$ then we deduce from Lemma \ref{fibre_hecke} that
$\mm_x$ is a maximal ideal of $R_p(\sigma)[1/p]$ in the support of $M(\sigmao)$. Conversely, if $y$ is the image of $x$
then using Lemma \ref{fibre_hecke} we obtain $\kappa(x)=\kappa(y)\otimes_{\cH(\sigma)} R_p(\sigma)[1/p]$ and hence:
$$ \kappa(x)\otimes_{R_p(\sigma)[1/p]} M(\sigmao)[1/p]\cong \kappa(y)\otimes_{\cH(\sigma)} M(\sigmao)[1/p], $$
which implies that $\kappa(y)\otimes_{\cH(\sigma)} M(\sigmao)[1/p]$ is non-zero.
\end{proof}

\begin{thm}\label{unique_arithmetic} There is a unique arithmetic action of $R_p$ on $\wP$.
\end{thm}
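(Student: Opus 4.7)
\textbf{Plan for Theorem \ref{unique_arithmetic}.} Let $\psi_1, \psi_2 : R_p \to \End^{\cont}_{\cO[[K]]}(\wP)$ be two arithmetic actions (at least one exists by Proposition~\ref{it_exists}), and I aim to show $\psi_1 = \psi_2$. Since both commute with the $G$-action, the difference $\psi_1(r) - \psi_2(r)$ lies in $\End^{\cont}_{\cO[[K]]}(\wP)$ for every $r \in R_p$. By the capture Proposition~\ref{prop:capture implies unique action}, it suffices to show that $\psi_1(r) - \psi_2(r)$ kills each $\Hom^{\cont}_{\cO[[K]]}(\wP, \sigma_{a,b}^*)$. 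Since $\Hom^{\cont}_{\cO[[K]]}(\wP, \sigma^*) = M(\sigma^\circ)^d[1/p]$, dualising reduces the problem to showing that $\psi_1$ and $\psi_2$ induce the same $R_p$-module structure on $M(\sigma^\circ)[1/p]$ for every $\sigma = \sigma_{a,b}$.

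Fix such a $\sigma$ and assume $M(\sigma^\circ)[1/p] \ne 0$. By (AA3), both actions factor through the Kisin crystalline deformation quotient $R_p(\sigma)$, yielding $E$-algebra maps $\psi_i' : R_p(\sigma)[1/p] \to \End_E(M(\sigma^\circ)[1/p])$ for which $M(\sigma^\circ)[1/p]$ is locally free of rank one over its support. Because each $\psi_i'$ commutes with the canonical $\cH(\sigma)$-action on $M(\sigma^\circ)$ (itself inherited from $G$-equivariance of $\wP$), every $\psi_i'(r)$ is $\cH(\sigma)$-linear. By (AA4), the compositions $\psi_1' \circ \eta$ and $\psi_2' \circ \eta$ both recover this canonical Hecke action, so $\psi_1'$ and $\psi_2'$ coincide on the subalgebra $\eta(\cH(\sigma)) \subseteq R_p(\sigma)[1/p]$.

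To upgrade this to equality on all of $R_p(\sigma)[1/p]$ I invoke Proposition~\ref{prop: generic fibre of cristabelline deformation ring}. Lemma~\ref{fibre_hecke_old} shows that the supports of $M(\sigma^\circ)[1/p]$ under $\psi_1'$ and $\psi_2'$ coincide, being the $\Spec\eta$-preimage of the canonical Hecke support. Fix a maximal ideal $\mm_x$ in this common support and set $\mm_y := \eta^{-1}(\mm_x)$. Proposition~\ref{prop: generic fibre of cristabelline deformation ring} supplies an isomorphism $\widehat\eta : \widehat{\cH(\sigma)}_{\mm_y} \stackrel{\sim}{\to} \widehat{R_p(\sigma)[1/p]}_{\mm_x}$; combined with (AA4) and faithful flatness of the completion, this forces $\eta(\mm_y) R_p(\sigma)[1/p]_{\mm_x} = \mm_x R_p(\sigma)[1/p]_{\mm_x}$, so the $\mm_x$-adic filtration on the localisation of $M(\sigma^\circ)[1/p]$ (computed via either $\psi_i'$) is cofinal with the canonical $\mm_y$-adic Hecke filtration. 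The $\mm_x$-adic completion of $M(\sigma^\circ)[1/p]$ is therefore canonically identified with the Hecke completion at $\mm_y$, independent of $i$, and is free of rank one over $\widehat{R_p(\sigma)[1/p]}_{\mm_x}$. Both completed ring maps $\widehat{\psi_i'}$ are continuous and agree on the dense subring $\eta(\cH(\sigma))$, hence are equal.

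To conclude, $\psi_1'(r) - \psi_2'(r)$ is an $\cH(\sigma)$-linear endomorphism of the finitely generated $R_p(\sigma)[1/p]$-module $M(\sigma^\circ)[1/p]$ that vanishes after completing at every maximal ideal in the support. Since every nonzero element of a finitely generated module over a Noetherian ring has nontrivial image in the completion at some maximal ideal containing its annihilator, the natural map $M(\sigma^\circ)[1/p] \to \prod_{\mm_x \in \mathrm{supp}} \widehat{M(\sigma^\circ)[1/p]}_{\mm_x}$ is injective; hence $\psi_1'(r) = \psi_2'(r)$ as endomorphisms, and capture finishes the proof. The main obstacle is the completion identification in the third paragraph: the hypothesis that we are in the $\GL_2(\Qp)$-setting enters essentially through Proposition~\ref{prop: generic fibre of cristabelline deformation ring}, itself relying on the fact (Lemma~\ref{well_known}) that a two-dimensional crystalline representation of $G_{\Qp}$ is determined up to isomorphism by the trace and determinant of Frobenius on its filtered $\varphi$-module.
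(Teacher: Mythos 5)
Your proof is correct and follows essentially the same route as the paper: reduce via capture (Proposition~\ref{prop:capture implies unique action}) to agreement of the two induced $R_p(\sigma)[1/p]$-actions on each $M(\sigma^\circ)[1/p]$, use (AA4) to pin down the action of $\eta(\cH(\sigma))$, upgrade to the full local ring at each point of the support via the completion isomorphism of Proposition~\ref{prop: generic fibre of cristabelline deformation ring} (ultimately resting on Lemma~\ref{well_known}), and conclude by injectivity of a finitely generated module over a Noetherian ring into the product of its completions at maximal ideals in the support. The paper phrases the key third step slightly differently—observing that the $\widehat{R_p(\sigma)[1/p]}_{\mm_x}$-action on $\widehat{M(\sigma^\circ)[1/p]}_{\mm_y}$ is forced to be the Hecke action transported through $\widehat\eta^{-1}$, rather than explicitly invoking continuity and density—but this is a cosmetic difference, not a different argument.
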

\begin{proof} The existence of such an action follows from Proposition
  \ref{it_exists}.

Let $\sigma=\sigma_{a, b}$ with $a\in \Z$ and $b\in \Z_{\ge 0}$.
 Let $y\in \mSpec \cH(\sigma)$, such that $\kappa(y)\otimes_{\cH(\sigma)} M(\sigmao)[1/p]\neq 0$.
Lemma \ref{fibre_hecke_old} implies that $y$ is the image of $x\in \mSpec R_p(\sigma)[1/p]$, which
lies in the support of $M(\sigmao)$.
Proposition ~\ref{prop: generic fibre of cristabelline deformation ring} implies that $$\widehat{M(\sigmao)[1/p]}_{\mm_y}=\widehat{M(\sigmao)[1/p]}_{\mm_x},$$  as $\widehat{R_p(\sigma)[1/p]}_{\mm_x}$-modules.
Moreover, the action of  $\widehat{R_p(\sigma)[1/p]}_{\mm_x}$ on $\widehat{M(\sigmao)[1/p]}_{\mm_y}$ does not depend on a given arithmetic action of $R_p$ on $\wP$,
as it acts via the isomorphism in  Proposition~\ref{prop: generic fibre of cristabelline deformation ring}. If $M$ is a finitely generated module over a noetherian ring $R$ then we have injections:
$$ M\hookrightarrow \prod_{\mm} M_{\mm}\hookrightarrow \prod_{\mm} \widehat{M}_{\mm},$$
where the product is taken over all the maximal ideals in $R$. In fact it is enough to take the product over finitely many maximal ideals: if $\pp_1, \ldots, \pp_n$
are minimal associated primes of $M$, just pick any maximal ideals $\mm_1\in V(\pp_1), \ldots, \mm_n\in V(\pp_n)$. (The second injection follows from \cite[Thm. 8.9]{Matsumura}.)
This observation applied to $R=R_p(\sigma)[1/p]$ and $M=M(\sigmao)[1/p]$ together with Lemma \ref{fibre_hecke_old} implies that we have an injection of $R_p(\sigma)[1/p]$-modules:
$$ M(\sigmao)[1/p]\hookrightarrow \prod_{y\in \mSpec \cH(\sigma)}  \widehat{M(\sigmao)[1/p]}_{\mm_y}.$$
Since the map and the action of $R_p(\sigma)[1/p]$ on the right hand side are independent of the arithmetic action of $R_p$ on $\wP$, we deduce that
the action of $R_p(\sigma)[1/p]$ on $M(\sigmao)[1/p]$ is also
independent of the arithmetic action of $R_p$ on $\wP$.

If $\theta: R_p\rightarrow \End_{\dualcat(\OO)}(\wP)$ and
$\theta': R_p\rightarrow \End_{\dualcat(\OO)}(\wP)$ are two arithmetic actions  and $r\in R_p$ then it follows from the above that
$\theta(r)-\theta'(r)$ will annihilate $M(\sigma_{a,b}^{\circ})$ for all $a\in \Z$ and $b\in \Z_{\ge 0}$.
Proposition~\ref{prop:capture implies unique action} implies that $\theta(r)=\theta'(r)$.
\end{proof}

\begin{rem} A different proof of the Theorem could be given using \cite[Prop. 2.19]{CDP}.
\end{rem}

\begin{thm}\label{main} If $\wP\twoheadrightarrow \pi^{\vee}$ is a projective envelope of $\pi^{\vee}$ in $\dualcat(\OO)$ equipped with an arithmetic action
of $R_p$, then there is an isomorphism in $\dualcat(R_{\infty})$
$$ \wP\wtimes_{R_p} R_{\infty}\cong M_{\infty}.$$
\end{thm}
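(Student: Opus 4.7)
The plan is to combine the structural result from Proposition~\ref{cut_it_out} with the uniqueness statement Theorem~\ref{unique_arithmetic}.

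First, fix an isomorphism $R_p \wtimes_{\OO} A \cong R_{\infty}$ as in Proposition~\ref{cut_it_out}, where $A = \OO[[x_1,\dots,x_d]]$, and choose any local $\OO$-algebra homomorphism $x\colon A \to \OO$ (for instance, the augmentation sending each $x_i$ to $0$). By Corollary~\ref{cor_cut_it_out}, the object $\wP' := M_{\infty}\wtimes_{A,x}\OO$ is a projective envelope of $\pi^{\vee}$ in $\dualcat(\OO)$, and it carries a continuous $R_p$-action obtained by reducing the $R_{\infty}$-action along $R_{\infty} \cong R_p \wtimes_{\OO} A \twoheadrightarrow R_p$. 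A direct check from the definition shows that this induced $R_p$-action on $\wP'$ is arithmetic in the sense of Section~\ref{subsec: axioms for arithmetic action} (with $d=0$): axioms (AA1) and (AA2) are preserved under the base change $\wtimes_{A,x}\OO$, and for (AA3)--(AA4) one observes that $\wP'(\sigma^\circ) = M_{\infty}(\sigma^\circ)\otimes_{R_{\infty}(\sigma),x} R_p(\sigma)$, so the factorization of the $R_{\infty}$- and $\cH(\sigma)$-actions through $R_{\infty}(\sigma)$ passes to a factorization through $R_p(\sigma)$, and the maximal Cohen--Macaulay/generic rank one properties are preserved by the flat base change $R_\infty(\sigma)\to R_p(\sigma)$ induced by~$x$.

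Since projective envelopes are unique up to (non-unique) isomorphism, choose an isomorphism $\varphi\colon \wP \iso \wP'$ in $\dualcat(\OO)$. Transport the arithmetic $R_p$-action from $\wP'$ along $\varphi$ to obtain a second arithmetic $R_p$-action on $\wP$. Theorem~\ref{unique_arithmetic} asserts that the arithmetic $R_p$-action on $\wP$ is unique, so this transported action agrees with the given action on $\wP$. Consequently $\varphi$ is an isomorphism of $R_p$-modules in $\dualcat(\OO)$, and we may henceforth identify $\wP$ with $\wP'$ compatibly with all the structure.

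It remains to deduce the claimed isomorphism in $\dualcat(R_{\infty})$. Proposition~\ref{cut_it_out} gives
$$ M_{\infty} \cong \wP' \wtimes_{\OO} A $$
in $\dualcat(A)$, and tracing through the construction in Corollary~\ref{cor_cut_it_out} shows that the $R_{\infty} \cong R_p \wtimes_{\OO} A$-action on the right-hand side is the natural diagonal one, coming from the $R_p$-action on $\wP'$ and the tautological $A$-action on $A$. Using the identification $\wP \cong \wP'$ of $R_p$-modules, this rewrites as
$$ M_{\infty} \cong \wP \wtimes_{\OO} A \cong \wP \wtimes_{R_p} (R_p \wtimes_{\OO} A) \cong \wP \wtimes_{R_p} R_{\infty} $$
in $\dualcat(R_{\infty})$, which is the desired isomorphism.

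The only genuine content here is the interplay between the existence/structure result (Proposition~\ref{cut_it_out}, which produces the $A$-factorization abstractly) and the uniqueness theorem (Theorem~\ref{unique_arithmetic}, which forces the two a priori distinct $R_p$-actions on $\wP$ to coincide); the main point one must be careful about is verifying that the $R_p$-action inherited by $\wP'$ from $M_{\infty}$ through the specialization $A\to\OO$ really does satisfy axioms (AA1)--(AA4), so that Theorem~\ref{unique_arithmetic} may legitimately be invoked.
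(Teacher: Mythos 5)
There is a genuine gap in the final paragraph of your argument, and it is precisely at the place where the paper does its real work.

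Your first two paragraphs are sound and match the paper: specializing at some $x\colon A\to\OO$ produces the projective envelope $\wP' = M_\infty\cotimes_{A,x}\OO$ with an arithmetic $R_p$-action, and by Theorem~\ref{unique_arithmetic} any abstract isomorphism $\varphi\colon\wP\isoto\wP'$ of projective envelopes is automatically $R_p$-linear. But the sentence ``tracing through the construction in Corollary~\ref{cor_cut_it_out} shows that the $R_\infty$-action on the right-hand side is the natural diagonal one'' does not hold up. Proposition~\ref{cut_it_out} (via Proposition~\ref{factoring patching variables over O} and Lemma~\ref{lem: factoring Vytas out}) produces the isomorphism $M_\infty\cong\wP\cotimes_{\OO}A$ by an abstract projectivity/Nakayama lifting argument entirely within $\dualcat(A)$: one constructs a surjection $A\cotimes_{\F}\Proj(S)\twoheadrightarrow\cosoc P$ and lifts it non-canonically using projectivity. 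The lift is a map in $\dualcat(A)$; nothing in its construction refers to, or is compatible with, the $R_p$-action, so there is no ``tracing through'' to be done. Indeed, showing that this $A$-linear isomorphism is moreover $R_p$-linear \emph{is} the content of Theorem~\ref{main} — asserting it here is essentially circular.

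What you have actually proved is that, for your single chosen $x$, the specialization $(\wP\cotimes_{\OO}A)\cotimes_{A,x}\OO\cong M_\infty\cotimes_{A,x}\OO$ is $R_p$-linear. But $R_p$-linearity of one specialization of an $A$-linear map does not imply $R_p$-linearity of the map itself — for a general $A$-linear map $\alpha$ between modules with $R_p$-action, the ``defect'' $\alpha(r\cdot -) - r\cdot\alpha(-)$ could be nonzero while vanishing modulo $\ker(x)$. The paper closes this gap by running your specialization argument for \emph{every} local $\OO$-algebra homomorphism $x\colon A\to\OO$, and then using the injectivity of $A\hookrightarrow\prod_{x\colon A\to\OO}\OO$ (together with the exactness of $\wP\cotimes_{\OO}-$, which follows from $\OO$-torsion-freeness of $\wP$) to conclude that the top arrow in the commutative square
$$\xymatrix{\wP\cotimes_{\OO}A \ar[r]^-{\cong}\ar[d] & M_\infty\ar[d]\\ \prod_x(\wP\cotimes_{\OO}A)\cotimes_{A,x}\OO\ar[r]^-{\cong} & \prod_x M_\infty\cotimes_{A,x}\OO}$$
is $R_p$-linear because the other three arrows are and the left vertical arrow is injective. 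You should add this product argument to complete the proof; the one-specialization version you give is not enough.
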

\begin{proof} Let $A=\OO[[x_1, \ldots, x_d]]$, and  choose a homomorphism of local $\OO$-algebras
$A\rightarrow R_{\infty}$ which induces an isomorphism $R_p\wtimes_{\OO} A\cong R_{\infty}$.
Proposition \ref{cut_it_out} implies that there is an isomorphism $\wP\wtimes_{\OO}A\cong M_{\infty}$
in $\dualcat(A)$; it is therefore enough to show that this isomorphism is $R_p$-linear. Any $\OO$-algebra homomorphism $x: A\rightarrow \OO$  induces an isomorphism $(\wP\wtimes_{\OO} A)\wtimes_{A,x} \OO \cong M_{\infty}\wtimes_{A, x}\OO$ in $\dualcat(\OO)$.
We get two actions of $R_p$ on $M_{\infty}\wtimes_{A, x}\OO$: one of them coming from the action of $R_p$ on $M_{\infty}$,
the other transported by the isomorphism. Both actions are arithmetic: the first one by Proposition \ref{it_exists}, the second one by assumption. Theorem \ref{unique_arithmetic} implies that  the two actions coincide; thus the isomorphism
$(\wP\wtimes_{\OO} A)\wtimes_{A,x} \OO \cong M_{\infty}\wtimes_{A,
  x}\OO$ is $R_p$-linear.

We have a commutative diagram in $\dualcat(A)$:
 \begin{displaymath}
 \xymatrix@1{\wP\wtimes_{\OO} A\ar[d]\ar[r]^{\cong} &   M_{\infty} \ar[d]\\
 \prod\limits_{x: A\rightarrow \OO} (\wP\wtimes_{\OO} A)\wtimes_{A, x} \OO \ar[r]^-{\cong}& \prod\limits_{x: A\rightarrow \OO} M_{\infty}\wtimes_{A, x} \OO }
\end{displaymath}
where the product is taken over all $\OO$-algebra homomorphisms $x: A\rightarrow \OO$. We know that both vertical and the lower horizontal arrows are
$R_p$-linear. The map $A\rightarrow \prod\limits_{x:A\rightarrow \OO}
\OO$ is injective (see for example \cite[Lem. 9.22]{paskunasimage}).
Since $\wP$ is $\OO$-torsion free the functor $\wP \wtimes_{\OO}-$ is exact, hence the first vertical arrow is injective, which implies that the top
horizontal arrow is $R_p$-linear.
\end{proof}

\begin{rem}
	The preceding result shows that,
  in particular, the construction of $M_{\infty}$ in \cite{Gpatch} is
  independent of the choices made in the case of~$\GL_2(\Qp)$. More precisely, the isomorphism $$
  \wP\wtimes_{R_p} R_{\infty}\cong M_{\infty}$$ exhibits $M_\infty$ as
  the extension of scalars of~$\wP$, and this latter object,
  with its arithmetic $R_p$-action, is independent of all
  choices by Theorem~\ref{unique_arithmetic}.  Thus, the only ambiguity in
  the construction of~$M_\infty$ is in the number of power series
  variables in~$R_\infty$, and in their precise action. As one is free
  to choose as many Taylor--Wiles primes as one wishes in the patching
  construction, and as the presentations of global deformation rings
  as quotients of power series rings over local deformation rings are
  non-canonical, it is evident that this is the exact degree of
  ambiguity that the construction of~$M_\infty$ is forced to
  permit.
 \end{rem}
\section{Unitary completions of principal series
  representations}\label{sec: Berger Breuil stuff}

In this section, we record some arguments related
to the paper~\cite{berger-breuil}, which proves using
$(\varphi,\Gamma)$-module techniques that the locally algebraic
representations associated to crystabelline Galois representations
admit a unique unitary completion.  We will use the machinery
developed in the previous sections, namely the projective envelope
$\wP$ and the purely local map $R_p\to \End_{\dualcat(\OO)}(\wP)$, to independently
deduce (without using $(\varphi,\Gamma)$-module techniques) that the
locally algebraic representations corresponding to crystalline types
of regular weight admit \emph{at most one} unitary completion
satisfying certain properties, and that such a completion comes from
some~$\wP$.

We will use this result in Section~\ref{sec: local global} below to show,
assuming the existence results of~\cite{berger-breuil}, that certain of these
representations occur in completed cohomology.  This gives an
alternative approach to proving modularity results in the crystalline
case.

As in Section~\ref{sec:Galois
  deformation rings and Hecke algebras}, we write
$\sigma=\sigma_{a,b}=\det^{a}\otimes\Sym^b E^2$. Let
$\theta:\cH(\sigma)\to E$ be a homomorphism, and set
$\Psi:=(\cInd_K^G\sigma)\otimes_{\cH(\sigma),\theta}E$; so $\Psi$ is a locally algebraic principal series representation of~$G$.

\begin{thm}
  \label{thm:Berger--Breuil completions via Minfty}If~$\Psi$ is irreducible, then $\Psi$ admits at most one non-zero admissible unitary completion
  $\widehat{\Psi}$ with the following property: for an open bounded  $G$-invariant lattice $\Theta$ in $\widehat{\Psi}$,
  $(\Theta/\varpi)\otimes_{\F} \Fpbar$ contains no  subquotient of the form $(\Ind_B^G \chi\otimes \chi \omega^{-1})_{\sm}$,
  for any character $\chi: \Qptimes\rightarrow \overline{\F}_p^\times$, and no special series or characters.

  If a completion satisfying this property exists, then it is
  absolutely irreducible.\end{thm}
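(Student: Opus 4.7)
The plan is to prove uniqueness by identifying any such completion $\widehat{\Psi}$ with the canonical Banach space $\Pi_x := \Hom^{\cont}_{\cO}(\wP \otimes_{R_p, x} \cO, E)$, where $\wP$ is the projective envelope of $\pi^\vee$ in $\dualcat(\cO)$ equipped with its arithmetic $R_p$-action (Proposition~\ref{it_exists}), and $x: R_p \to E$ is the closed point corresponding, via Lemma~\ref{well_known} and Proposition~\ref{prop: explicit local Langlands for GL_2(Q_p)}, to the unique crystalline representation $r_\theta$ of $G_{\Qp}$ with Hodge--Tate weights $(1-a, -a-b)$ whose Frobenius has trace and determinant dictated by $\theta$. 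First I would verify that the reduction $\rbar$ of $r_\theta$ satisfies Assumption~\ref{assumption: rbar is generic enough}: if $\rbar$ were a twist of $\bigl(\begin{smallmatrix}\omega & * \\ 0 & 1\end{smallmatrix}\bigr)$, then by the explicit form of the mod $p$ local Langlands correspondence (cf.\ the classification recalled in Section~\ref{subsec: Serre weights theory} and Remark~\ref{rem: pi is not special or one dimensional}) the associated $G$-representation would be a character, a special series, or a principal series of the form $(\Ind_B^G\chi\otimes\chi\omega^{-1})_{\sm}$, and each such representation would appear as a subquotient of $\Theta/\varpi$ for any lattice $\Theta$, contradicting the hypothesis. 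Hence the machinery of $\wP$ and $R_p$ applies.

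Next, fix an open bounded $G$-stable lattice $\Theta\subset\widehat{\Psi}$ and view $M := \Theta$ as an object of $\dualcat(\cO)$. The inclusion $\sigma\hookrightarrow\Psi\hookrightarrow\widehat{\Psi}$, suitably rescaled, yields a $K$-equivariant injection $\iota:\sigma^\circ\hookrightarrow\Theta$ whose $G$-linear extension to $\cInd_K^G\sigma^\circ$ has $\cH(\sigma^\circ)$ acting through $\theta$. I would then show that $\cosoc_{\dualcat(\cO)}M \cong (\pi^\vee)^{\oplus m}$ for some finite $m\ge 1$: $\soc_G(\Theta^\vee)$ is essential in $\Theta^\vee$ by Lemma~\ref{lem:socles are essential}, its irreducible constituents are subquotients of $(\Theta/\varpi)^\vee$, and the hypothesis combined with the Barthel--Livn\'e/Breuil classification and Lemma~\ref{lem:rbar has a unique Serre weight, and the deformation ring is smooth, and given by Tp}~(5) forces every such constituent to be $\pi$ (since the Hecke eigenvalues induced by $\bar\iota$ match those dictated by $\rbar$); finiteness of $m$ follows from admissibility of $\widehat{\Psi}$. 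Projectivity of $\wP$ (Theorem~\ref{thm: Minfty is projective} combined with Proposition~\ref{cut_it_out}) then lifts the surjection $M\twoheadrightarrow\cosoc M$ to a surjection $\Phi:\wP^{\oplus m}\twoheadrightarrow M$.

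The key technical step is to show that the arithmetic $R_p$-action on $\wP^{\oplus m}$ descends along $\Phi$ through $x$. By axioms~(AA3)--(AA4), the induced surjection $\wP(\sigma^\circ)^{\oplus m}\twoheadrightarrow M(\sigma^\circ)$ is $R_p(\sigma)$-linear, with $\cH(\sigma)$ acting on the target by $\theta$ (from $\iota$ and the definition of $\Psi$) and on the source via $\eta$; Proposition~\ref{prop: generic fibre of cristabelline deformation ring} then pins the action of $R_p(\sigma)[1/p]$ on $M(\sigma^\circ)[1/p]$ to factor through $x$. Repeating for every weight $\sigma_{a',b'}$ and invoking Proposition~\ref{prop:capture implies unique action} shows that the whole $R_p$-action on $M$ factors through $\cO = R_p/\ker(x)$, so $\Phi$ descends to $\bar\Phi:(\wP\otimes_{R_p,x}\cO)^{\oplus m}\twoheadrightarrow M$. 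To conclude $m=1$ and that $\bar\Phi$ is an isomorphism I compare $\sigma^\circ$-types: on the source, $(\wP\otimes_{R_p,x}\cO)(\sigma^\circ)[1/p]$ is one-dimensional over $E$ as the $x$-fibre of the rank-one $R_p(\sigma)$-module $\wP(\sigma^\circ)$ (axiom~(AA3)); on the target, the $\theta$-eigenspace of $\Hom_K(\sigma,\widehat{\Psi})$ is one-dimensional because density of $\Psi$ in $\widehat{\Psi}$ and the Hecke eigencondition force any such $K$-map to extend to a $G$-map factoring through $\Psi$. Extending this comparison to all weights forces $m=1$ and upgrades $\bar\Phi$ to an isomorphism, so $\widehat{\Psi}\cong\Pi_x$ depends only on $\theta$.

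Finally, for absolute irreducibility: any proper closed $G$-invariant subspace $W\subset\widehat{\Psi}$ would produce a completion $\widehat{\Psi}/W$ of $\Psi$ satisfying the hypothesis (its mod-$\varpi$ lattice is a quotient of $\Theta/\varpi$), whence by uniqueness $\widehat{\Psi}/W\cong\widehat{\Psi}$; the resulting $G$-equivariant continuous surjection $\widehat{\Psi}\twoheadrightarrow\widehat{\Psi}$ must then act as a scalar on the one-dimensional $\theta$-eigenspace of $\Hom_K(\sigma,\widehat{\Psi})$, and Proposition~\ref{prop:capture implies unique action} forces it to be a scalar globally, so $W=0$. The hardest step is expected to be the verification that the $R_p$-action on $M$ factors through $x$, where both the axioms and capture must be jointly deployed and where the decisive input is the $\GL_2(\Qp)$-specific fact that a 2-dimensional crystalline representation is determined by its Hodge--Tate weights and the trace and determinant of Frobenius.
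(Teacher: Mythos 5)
The proposal takes a genuinely different route from the paper — it tries to directly identify $\Theta^d$ with a quotient of $\wP^{\oplus m}$ and then show the $R_p$-action descends through $x$ — but the argument has a serious gap at its central step. You assert that the hypothesis on $(\Theta/\varpi)\otimes\Fpbar$ ``forces every constituent [of $\soc_G\Theta^\vee$] to be $\pi$ (since the Hecke eigenvalues induced by $\bar\iota$ match those dictated by $\rbar$).'' This is exactly the hard part of the theorem, and the reasoning does not establish it. The Hecke constraint coming from $\iota:\sigma^\circ\hookrightarrow\Theta$ pins down the eigenvalue only on the image of $\cInd_K^G\sigmabar$, not on an arbitrary constituent of the socle; there is no a priori reason that a second irreducible $\pi'$, associated to some $\rbar'\neq\rbar$, cannot appear. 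Indeed the paper's proof goes to considerable lengths precisely to exclude this: it first shows $M(\Pi')=0$ for the functor $M(-)=\Hom_{\dualcat(\OO)}(\wP,(-)^d)^d[1/p]$, and then splits into cases according to whether $\rbar'\cong\rbar$, or $\rbar'\not\cong\rbar$ but $(\rbar')^{\ss}\cong\rbar^{\ss}$ (the ``atome automorphe'' situation), invoking \cite[Cor.~8.9]{paskunasimage} to rule out the latter. Without an argument of comparable force, the surjection $\Phi:\wP^{\oplus m}\twoheadrightarrow\Theta^d$ you want may simply not exist, because the cosocle of $\Theta^d$ could contain $(\pi')^\vee$ with $\pi'\not\cong\pi$.

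There are two further issues. First, you construct $r_\theta$ and the point $x\in\mSpec R_p[1/p]$ \emph{before} knowing that $\theta$ lifts along $\eta$: the map $\mSpec R_p(\sigma)[1/p]\to\mSpec\cH(\sigma)$ is only an open immersion, and for a given $\theta$ there may be no weakly admissible $D(r,s,t,d)$ at all. The paper is careful to first extract $\rbar$ from a subquotient of $\Theta_1/\varpi$ and only then deduce via Lemma~\ref{fibre_hecke_old} that $\theta$ lifts; your step has the logic inverted, and while it is repairable it needs to be flagged. Second, you assert that the $\theta$-eigenspace of $\Hom_K(\sigma,\widehat\Psi)$ is one-dimensional ``because density of $\Psi$ in $\widehat\Psi$ \ldots force[s] any such $K$-map to extend to a $G$-map factoring through $\Psi$.'' Density does not yield this: a $K$-map $\sigma\to\widehat\Psi$ lands in the locally algebraic vectors, which can be strictly larger than $\Psi$. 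The one-dimensionality the paper actually uses is that $M(\Psi)\cong M(\sigma^\circ)[1/p]\otimes_{\cH(\sigma),\theta}E$ has dimension at most one, which follows from the rank-one condition in (AA3) together with Lemma~\ref{fibre_hecke} — a statement about $\wP$, not about $\widehat\Psi$. Until the socle-purity gap is closed, the reduction to $m=1$ and the descent of the $R_p$-action are not available, so the proposed identification $\widehat\Psi\cong\Pi_x$ remains unestablished.
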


 \begin{proof}
	 Let $\Pi$ be a non-zero  admissible unitary completion
   of $\Psi$ that satisfies the property in the statement of the theorem.
We will first show that $\Pi$ is absolutely irreducible (and, indeed, most
of the work of the proof will be in showing this). We note that in the course of the proof we are allowed to replace
$E$ by a finite field extension $E'$, since if $\Pi\otimes_E E'$ is an absolutely irreducible $E'$-Banach space representation of
$G$ then $\Pi$ is an absolutely irreducible $E$-Banach space representation of $G$.

   Since $\Pi$ is admissible, it will
 contain an irreducible closed sub-Banach space representation $\Pi_1$.
If we let $\Pi'$ denote the quotient $\Pi/\Pi_1$, then we must show
that $\Pi'$ is zero. For the moment, we note simply that if $\Pi'$
is non-zero, then since the composite $\Psi \to \Pi \to \Pi'$ has dense
image, we see that $\Pi'$ is another non-zero
admissible unitary completion of $\Psi$.

 Let $\Theta$ be an open bounded
 $G$-invariant lattice in $\Pi$, and let $\Theta_1:= \Pi_1\cap \Theta$. Since $\Pi_1$ is also admissible,
 $\Theta_1/\varpi$ will contain an irreducible subquotient $\pi$.  Since we are allowed to enlarge $E$, we may assume that $\pi$ is absolutely irreducible. Let $\wP\twoheadrightarrow \pi^{\vee}$
 be a projective envelope of $\pi^{\vee}$ in $\dualcat(\OO)$. By the
 assumption on the subquotients of $\Theta/\varpi$, there is a Galois
 representation~ $\rbar$ satisfying Assumption~\ref{assumption: rbar is generic enough}
that
 corresponds to $\pi$ via Lemma~\ref{lem:rbar has a unique Serre weight, and the deformation ring
    is smooth, and given by Tp}~(5). We equip $\wP$ with the
  arithmetic action of $R_p$ provided by Theorem~\ref{unique_arithmetic}.

 We let (for any admissible unitary $E$-Banach space representation $\Pi$ of $G$)
 $$M(\Pi):= \Hom_{\dualcat(\OO)}( \wP, \Theta^d)^d[1/p]\cong \Hom_G^{\cont}(\Pi, \Hom_{\OO}^{\cont}(\wP, E))^d. $$
 The projectivity of $\wP$ implies that $\Pi\mapsto M(\Pi)$ is an exact covariant functor from the category of admissible unitary $E$-Banach space representations of $G$ to the category of $R_p[1/p]$-modules.
 In particular, we have an injection $M(\Pi_1)\hookrightarrow
 M(\Pi)$. Since $\wP$ is projective and $\pi$ is a subquotient of
 $\Theta_1/\varpi$, we have $M(\Pi_1)\neq 0$ by \cite[Lem. 4.13]{paskunasimage}, and hence $M(\Pi)\neq 0$.

 Similarly, we let
 $$M(\Psi):=\Hom_G(\Psi, \Hom_{\OO}^{\cont}(\wP, E))^d\cong M(\sigmao)[1/p]\otimes_{\cH(\sigma), \theta} E. $$

 Recall that $\cH(\sigma)$ acts on $M(\sigmao)[1/p]$ through
the composite of  the homomorphism
$\cH(\sigma)\buildrel \eta \over \longrightarrow R_p(\sigma)[1/p]$ and the $R_p[1/p]$-action on $M(\sigmao)[1/p]$,
and that, by Lemma~\ref{fibre_hecke_old},
either the fibre module
$M(\sigmao)[1/p] \otimes_{\cH(\sigma),\theta} E$ vanishes,
or else  $\theta$ extends to a homomorphism
$\thetahat: R_p(\sigma)[1/p] \to E$
(so that then $\theta = \thetahat \circ \eta$),
in which case there is a natural isomorphism
$$M(\sigmao)[1/p] \otimes_{\cH(\sigma),\theta}E
\iso
M(\sigmao)[1/p] \otimes_{R_p(\sigma)[1/p],\thetahat}E$$
of $E$-vector spaces of dimension at most $1$.

 The map $\Psi\rightarrow \Pi$ induces a continuous homomorphism
 $$ \Hom_G^{\cont}(\Pi, \Hom_{\OO}^{\cont}(\wP, E))\rightarrow \Hom_G(\Psi, \Hom_{\OO}^{\cont}(\wP, E)).$$
 Since the image of $\Psi$ in $\Pi$ is dense this  map is injective,
 and by taking duals we obtain a surjection
 of $R_p[1/p]$-modules $$M(\Psi)\twoheadrightarrow M(\Pi).$$
 Since the target is non-zero, and the source is an $E$-vector space
 of dimension at most $1$,   this must be an isomorphism.
Since $M(\Pi_1)$ is a non-zero subspace of $M(\Pi)$,
we therefore have induced isomorphisms $M(\Pi_1) \iso M(\Pi)\iso M(\Psi)$.
Since, as was noted above, $M$ is an exact functor, we find that
$M(\Pi') = 0$.

We digress for a moment in order to establish that $\Pi_1$ is in fact
absolutely irreducible.  Since it is irreducible and admissible, its endomorphism ring
is a division algebra over $E$.  On the other hand, since $M$ is a functor,
and since $M(\Pi_1)$ is one-dimensional over $E$, we see that this division
algebra admits a homomorphism to $E$.  Thus this division algebra is in
fact equal to $E$, and this implies that $\Pi_1$ is absolutely irreducible by \cite[Lem. 4.2]{paskunasimage}.

Suppose now that $\Pi' \neq 0.$
We may then apply the above argument with $\Pi'$
in place of $\Pi,$ and find an absolutely irreducible subrepresentation $\Pi_2$ of $\Pi',$ a $G_{\Q_p}$-representation $\rbar'$,
an associated irreducible $\GL_2(\Q_p)$-representation $\pi'$,
for which the projective envelope $\wP_2$ of $(\pi')^{\vee}$
gives rise to an exact functor $M'$ such that $M'(\Psi) = M'(\Pi')
= M'(\Pi_2)$,
with all three being one-dimensional.

Since both $M(\Psi)$ and $M'(\Psi)$ are non-zero, we find that
each of $\rbar$ and $\rbar'$ admits a lift that is a lattice in
a crystalline representation~$V$ of Hodge--Tate weights  $(1-a, -(a+b))$ determined (via~Lemma~\ref{well_known}) by the homomorphism $\hat{\theta}: R_p[1/p]\rightarrow E$.
(Strictly speaking, for this to make sense we need to know that $V$ is
indecomposable; but this is automatic, since each of $\rbar$ and $\rbar'$ is indecomposable and has
non-scalar semisimplification.)

We note that $V$ is reducible if and only if $\Psi$ may be identified with the locally algebraic subrepresentation
of a continuous induction $(\Ind_B^G \chi)_{\cont},$ for some unitary character $\chi:T \to E^{\times}$, where $T$ denotes the
diagonal torus contained in the upper triangular Borel subgroup
$B$ of $G$.   In this case it is well-known that
this continuous induction is the universal unitary completion
of $\Psi$, see \cite[Prop. 2.2.1]{MR2667890}, and the theorem is true in this case.  Thus, for the remainder of the argument,
we suppose that $V$ is irreducible, or, equivalently,
that $\Psi$ does not admit an embedding into the continuous parabolic induction
of a unitary character.

We now consider separately two cases, according to whether or not $\rbar$
and $\rbar'$ are themselves isomorphic.
If they are, then $M$ and $M'$ are isomorphic
functors, and we obtain a contradiction from the fact that $M(\Pi') = 0$
while $M'(\Pi')$ is one-dimensional, implying that in fact $\Pi' = 0,$
as required.

If $\rbar$ and $\rbar'$ are not isomorphic, but have isomorphic
semi-simplifications, then they must each consist of an extension of
the same two characters, but in opposite directions.   In this case
$\pi\cong (\Ind_{B}^G \omega \chi_1\otimes \chi_2)_{\sm}$, $\pi'\cong  (\Ind_{B}^G \omega \chi_2\otimes \chi_1)_{\sm},$
for some smooth characters $\chi_1, \chi_2:\Qp^{\times}\rightarrow\F^{\times}$. In the terminology of \cite{MR2642409}, $\pi$ and
$\pi'$
 are the two constituents of an atome
automorphe, which by definition is the unique (up to isomorphism) non-split extension between $\pi$ and $\pi'$.

We first show that $\Pi_2 = \Pi'.$ To this end, set $\Pi'' := \Pi'/\Pi_2$. If $\Pi'' \neq 0,$ then,
running through the above argument another time,
we find $\rbar''$, etc., such that $(\rbar'')^{\ssg} \cong (\rbar')^{\ssg}
\cong \rbar^{\ssg}$, and such that $M''(\Pi'') \neq 0.$
But either $\rbar'' \cong \rbar$ or $\rbar'' \cong \rbar'$.  Thus
the functor $M''$ is isomorphic to either $M'$ or to $M$.
On the other hand, $M'(\Pi'') = 0$, and also $M(\Pi'') = 0$ (since
$\Pi''$ is a quotient of $\Pi'$ and $M(\Pi') = 0$).    This contradiction
shows that $\Pi'' = 0$, and thus that $\Pi' = \Pi_2$ is absolutely irreducible.

Since $M(\Pi') = 0,$ we see that $\Theta'/\unif$ does not contain
a copy of $\pi$ as a subquotient (here $\Theta'$ denotes
some choice of $G$-invariant lattice in $\Pi'$).  Since (as we have just shown)
$\Pi'$ is absolutely irreducible,
it follows
from \cite[Cor.~8.9]{paskunasimage} that $\Pi' \cong (\Ind_B^G \chi)_{\cont}$
for some unitary character $\chi:T \to E^{\times}$, where $T$ denotes the
diagonal torus contained in the upper triangular Borel subgroup
$B$ of $G$.   (The proof of this result uses various $\Ext$ computations
in $\Mod^{\ladm}_G(\OO)$, but does not use either of Colmez's functors.
The basic idea is that the extension of $\pi'$ by $\pi$ given
by the atome automorphe induces an embedding $\wP \hookrightarrow
\wP'$ whose cokernel is dual to
the induction from $B$ to $G$ of a character; since $M(\Pi') = 0$,
we see that $\Theta'$ embeds into this induction, and so is itself
such an induction.) Thus $\Psi$ admits an embedding into the continuous parabolic induction of
a unitary character, contradicting our hypothesis that $V$
is irreducible.   Thus we conclude that in fact $\Pi' = 0$, as required.

 If $\Psi$ were to admit two non-isomorphic admissible irreducible unitary completions $\Pi_1$ and $\Pi_2$ satisfying the
 assumptions of the theorem,  the image of the diagonal map $\Psi \rightarrow \Pi_1\oplus \Pi_2$ would be dense in $\Pi_1\oplus \Pi_2$. This yields a contradiction to what we have already proved, as $\Pi_1\oplus \Pi_2$ is not irreducible.
\end{proof}
\begin{rem}\label{rem: BB give completions}
It follows from the proof of Theorem~\ref{thm:Berger--Breuil
  completions via Minfty} that if a completion $\widehat{\Psi}$ of the
kind considered there exists, then there is a Galois representation
$\rbar$ satisfying Assumption~\ref{assumption: rbar is generic enough} such that~$\theta$ extends to a homomorphism
$\widehat{\theta}:R_p(\sigma)[1/p]\to E$. Furthermore, for any
$M_\infty$ as in Section~\ref{sec: recalling
  from Gpatching}, we have
$M_\infty(\sigma^\circ)[1/p]\otimes_{\cH(\sigma),\theta}E\ne 0$.

Conversely, if there is an
$\rbar$ satisfying Assumption~\ref{assumption: rbar is generic enough}
such that $\theta$ extends to a homomorphism
$\widehat{\theta}:R_p(\sigma)[1/p]\to E$, then the existence of a completion~ $\widehat{\Psi}$  is immediate from the main theorem
  of~\cite{berger-breuil} (which shows that some completion exists)
  together with the main theorem of~\cite{MR2642408} (which implies
  the required property of the subquotients). \end{rem}

\begin{rem}\label{we_can_do_crystabelline}
  \label{rem: crystabelline cases would be fine if we could be
    bothered to set up the notation}As is true throughout this paper,
  the results of this section are equally valid for crystabelline
  representations, but for simplicity of notation we have restricted ourselves
  to the crystalline case.
\end{rem}

\section{Comparison to the local approach}\label{sec:comparison}
We now examine the
compatibility of our constructions with those of~\cite{paskunasimage},
which work with fixed central characters. The arguments
of~\cite{paskunasimage} make use of Colmez's functor, and the results
of this section therefore also depend on this functor. In Section~\ref{subsec:rings} we briefly discuss how to prove some of the more
elementary statements in~\cite{paskunasimage} using the results of the
previous section (and in particular not using Colmez's
functor). We assume throughout this section that~$p\ge 5$, as this
assumption is made in various of the results of~\cite{paskunasimage}
that we cite.

There are two approaches that we could take to this comparison. One
would be to note that the axioms of Section~\ref{subsec: axioms for
  arithmetic action} and arguments of Section~\ref{sec: proof that
  arithmetic action is unique} admit obvious analogues in the setting
of a fixed central character, and thus show that if we pass to a
quotient of~$M_\infty$ with fixed central character, we obtain a
uniquely determined $p$-adic Langlands correspondence. These axioms
are satisfied by the purely local object constructed
in~\cite{paskunasimage} (which is a projective envelope of~$\pi^\vee$
in a category of representations with fixed central character), and
this completes the comparison.

While this route would be shorter, we have preferred to take the
second approach, and go in the opposite direction: we promote the
projective envelope from~\cite{paskunasimage} to a representation with
non-constant central character by tensoring with the universal
deformation of the trivial 1-dimensional representation (which has a
natural Galois action by local class field theory), and show that this
satisfies the axioms of Section~\ref{subsec: axioms for arithmetic
  action}. This requires us to make a careful study of various
twisting constructions; the payoff is that we prove a stronger result
than that which would follow from the first approach.

\subsection{Deformation rings and twisting}

Let $\Lambda$ be the universal deformation ring of the trivial $1$-dimensional representation of $G_\Qp$ and let $\Eins^{\univ}$ be the universal deformation.
Then $\Eins^{\univ}$ is a free $\Lambda$-module of rank $1$ with a continuous $G_\Qp$-action. We let $G$ act on $\Eins^{\univ}$
via the inverse determinant (composed with the Artin map). We let $\Lambda^{\ur}$ be the quotient of $\Lambda$ unramified deformations.
We note that  $\Lambda$ and $\Lambda^{\ur}$ are formally
smooth over $\cO$ of relative dimensions $2$ and~$1$, respectively,
and in particular
are $\OO$-torsion free.

Let $\psi: G_{\Qp}\rightarrow \cO^{\times}$ be a continuous character such that $\psi\varepsilon^{-1}$ is congruent to $\det \rbar$ modulo $\varpi$.
(By Remark~\ref{rem: comparting our convention on pi to the
  literature}, this implies that $\psi$ modulo $\varpi$ considered
as a character of $\Q_p^{\times}$ coincides with the central character
of $\pi$.) We let $R^{\psi}_p$ denote the quotient of $R_p$ parameterising deformations with determinant
$\psi\varepsilon^{-1}$. Let $r^{\univ, \psi}$ be the tautological deformation of $\rbar$ to $R^{\psi}_p$. Then
$r^{\univ, \psi}\wtimes_{\OO} \Eins^{\univ}$ is a deformation of
$\rbar$ to $R^{\psi}_p \wtimes_{\OO} \Lambda$. Since $p>2$, this induces an isomorphism of local $\OO$-algebras
\numequation\label{iso_twist_univ}R_p\overset{\cong}{\longrightarrow} R_p^{\psi}\wtimes_{\OO} \Lambda.\end{equation}

Let $R_p^\psi(\sigma)$ denote the quotient of $R_p(\sigma)$
corresponding to deformations of determinant
$\psi\varepsilon^{-1}$; note that $R_p^\psi(\sigma)=0$ unless
$\psi|_{\Z_p^{\times}}$ is equal to the central character of
$\sigma$. If $\psi|_{\Z_p^{\times}}$ is equal to the central character of
$\sigma$, then the isomorphism $R_p\overset{\cong}{\longrightarrow} R_p^{\psi}\wtimes_{\OO} \Lambda$ induces an isomorphism \numequation\label{iso_twist_pst}R_p(\sigma)\overset{\cong}{\longrightarrow} R_p^{\psi}(\sigma)\wtimes_{\OO} \Lambda^{\ur}.\end{equation}

Let $\delta: G_{\Qp}\rightarrow \OO^{\times}$ be a character that is trivial modulo $\varpi$.
Twisting by $\delta$ induces
isomorphisms of $\OO$-algebras
$$\tw_{\delta}: R_p\iso R_p, \quad \tw_{\delta}: R_p^{\psi\delta^2}\iso R_p^{\psi}.$$
(In terms of the deformation functor $D_{\rbar}$ pro-represented
by $R_p$, and the deformation functors $D^{\psi}_{\rbar}$
and $D^{\psi\delta^2}_{\rbar}$ pro-represented by $R_p^{\psi}$
and $R_p^{\psi \delta^2}$, these isomorphisms are induced by the
natural bijections $$D_{\rbar}(A)\iso D_{\rbar}(A),\quad D^{\psi}_{\rbar}(A)\iso D^{\psi\delta^2}_{\rbar}(A)$$ defined by $ r_A\mapsto r_A\otimes \delta$.)
Similarly we obtain
isomorphisms
$$\tw_{\delta}: \Lambda \iso \Lambda, \quad \tw_{\delta}:
\Lambda(\delta)^{\ur}\iso \Lambda^{\ur},$$and we have a commutative
diagram \numequation\label{yet_another_diagram}
 \xymatrix@1{R_p \ar[d]_-{\id} \ar[r]^-{\cong}_-{(\ref{iso_twist_univ})}&   R_p^{\psi\delta^2}\wtimes_{\OO} \Lambda \ar[d]^{\tw_{\delta}\wtimes \tw_{\delta^{-1}}}\\
 R_p\ar[r]^-{\cong}_-{(\ref{iso_twist_univ})}& R_p^{\psi}\wtimes_{\OO} \Lambda}
\end{equation}
\subsection{Unfixing the central character}
Let $\Mod^{\ladm, \psi}_G(\OO)$ be the full subcategory of $\Mod^{\ladm}_G(\OO)$,
consisting of those representations, where $Z$ acts by the central character $\psi$. Let $\dualcat^{\psi}(\OO)$
be the Pontryagin dual of $\Mod^{\ladm, \psi}_G(\OO)$, so that we can identify $\dualcat^{\psi}(\OO)$ with a
full subcategory of $\dualcat(\OO)$ consisting of those objects on which $Z$ acts by $\psi^{-1}$.

Let $\wP^{\psi}\twoheadrightarrow \pi^{\vee}$ be a projective envelope of $\pi^{\vee}$ in $\dualcat^{\psi}(\OO)$.
By \cite[Prop. 6.3, Cor. 8.7, Thm. 10.71]{paskunasimage} there is a natural isomorphism
\numequation\label{def_endo}
R^{\psi}_p\iso \End_{\dualcat(\OO)}(\wP^{\psi}).
\end{equation}

In Corollary~\ref{identify_ring} below we will prove a version of the
isomorphism~(\ref{def_endo}) for~$R_p$.

\begin{lem}\label{twist_and_shout_1} Let $\delta: G_{\Qp}\rightarrow \OO^{\times}$ be a character that is trivial modulo $\varpi$. There is
an isomorphism in $\dualcat(\OO)$:
$$ \varphi: \wP^{\psi\delta^2}\iso\wP^{\psi}\otimes \delta^{-1} \circ \det.$$
Moreover, the following diagram commutes:
\begin{displaymath}
 \xymatrix@1{R_p^{\psi\delta^2} \ar[d]_-{\eqref{def_endo}} ^-{\cong} \ar[r]^-{\tw_{\delta}}_-{\cong}&   R_p^{\psi}\ar[d]^-{\eqref{def_endo}}_-{\cong}
 \\
 \End_{\dualcat(\OO)}(\wP^{\psi\delta^2})\ar[r]^-{\cong}&  \End_{\dualcat(\OO)}(\wP^{\psi})}
\end{displaymath}
where the lower horizontal arrow is given by $\alpha \mapsto \varphi\circ \alpha \circ \varphi^{-1}$.
\end{lem}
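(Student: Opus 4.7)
The argument naturally splits into two steps: constructing $\varphi$, and then verifying commutativity of the diagram.

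For the first step, tensoring with the character $\delta^{-1}\circ\det\colon G\to\OO^{\times}$ defines an autoequivalence $F_{\delta}\colon M\mapsto M\otimes(\delta^{-1}\circ\det)$ of $\dualcat(\OO)$, with inverse $F_{\delta^{-1}}$. Since $(\delta^{-1}\circ\det)|_{Z}=\delta^{-2}$, the functor $F_{\delta}$ restricts to an equivalence $\dualcat^{\psi}(\OO)\iso\dualcat^{\psi\delta^{2}}(\OO)$. The hypothesis $\delta\equiv 1\pmod{\varpi}$ ensures that $\delta^{-1}\circ\det$ becomes trivial after reduction modulo $\varpi$, so $F_{\delta}(\pi^{\vee})\cong \pi^{\vee}$ (canonically, after fixing an $\OO$-basis of the rank-one module defining the twist). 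As an equivalence, $F_{\delta}$ sends projective objects to projective objects and essential epimorphisms to essential epimorphisms, so $F_{\delta}(\wP^{\psi})\twoheadrightarrow \pi^{\vee}$ is a projective envelope of $\pi^{\vee}$ in $\dualcat^{\psi\delta^{2}}(\OO)$. The uniqueness of projective envelopes up to isomorphism then supplies $\varphi\colon \wP^{\psi\delta^{2}}\iso \wP^{\psi}\otimes(\delta^{-1}\circ\det)$.

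For the second step, recall that the isomorphism \eqref{def_endo} is built from Colmez's functor $\cV$: concretely, $\cV(\wP^{\psi})$ carries the universal deformation of $\rbar$ with determinant $\psi\varepsilon^{-1}$, and the resulting action of $\End_{\dualcat(\OO)}(\wP^{\psi})$ identifies it with $R_{p}^{\psi}$ via the universal property. The crucial input is the twisting compatibility of $\cV$: for any character $\eta\colon G_{\Qp}\to\OO^{\times}$ one has, in the normalisation of~\cite{paskunasimage}, a natural isomorphism relating $\cV(M\otimes(\eta\circ\det))$ to $\cV(M)\otimes\eta^{\pm 1}$. Applying this with $\eta=\delta^{-1}$ and transporting along $\varphi$, one identifies the Galois module $\cV(\wP^{\psi\delta^{2}})$ with $\cV(\wP^{\psi})$ twisted by a power of $\delta$; by the very definition of $\tw_{\delta}$ as the map induced by the natural bijection $r_{A}\mapsto r_{A}\otimes\delta$ of deformation functors, this is precisely the pullback of $r^{\univ,\psi\delta^{2}}$ along $\tw_{\delta}$. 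Commutativity of the diagram follows by applying the universal properties of $R_{p}^{\psi}$ and $R_{p}^{\psi\delta^{2}}$, noting that conjugation $\alpha\mapsto\varphi\circ\alpha\circ\varphi^{-1}$ on endomorphism rings descends, via the canonical identification $\End(\wP^{\psi}\otimes(\delta^{-1}\circ\det))=\End(\wP^{\psi})$, to the horizontal map in the diagram.

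The main obstacle lies in Step~2: one must pin down the precise form of the twisting compatibility of $\cV$ in the conventions of~\cite{paskunasimage} (including the direction of the twist on the Galois side and the role of the cyclotomic character $\varepsilon$), and verify that these match the chosen direction of $\tw_{\delta}$. Step~1, by contrast, is essentially formal once the matching of central characters is observed and the uniqueness of projective envelopes is invoked.
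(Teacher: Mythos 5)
Your argument is essentially identical to the paper's proof: the isomorphism $\varphi$ is obtained exactly as in the paper, from the fact that twisting by $\delta^{-1}\circ\det$ gives an equivalence $\dualcat^{\psi}(\OO)\iso\dualcat^{\psi\delta^2}(\OO)$ carrying a projective envelope of $\pi^{\vee}$ to a projective envelope of $\pi^{\vee}$, and the commutativity of the diagram is deduced, as in the paper, from the compatibility of the constructions of~\cite{paskunasimage} (in particular of the isomorphism~\eqref{def_endo}, i.e.\ of Colmez's functor $\cV$) with twisting. The only difference is that you spell out slightly more of the bookkeeping (central characters, the definition of $\tw_{\delta}$ via $r_A\mapsto r_A\otimes\delta$), which is consistent with the paper's treatment.
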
\begin{proof}
	The claimed isomorphism follows from the fact that twisting
	by $\delta^{-1}\circ \det$
	induces an equivalence of categories between $\dualcat^{\psi}(\OO)$
	and $\dualcat^{\psi\delta^2}(\OO)$,
	so that the twist of a projective envelope
	of $\pi^{\vee}$ in $\dualcat^{\psi}(\OO)$
	is a projective envelope of $\pi^{\vee}$ in $\dualcat^{\psi
		\delta^2}(\OO).$
	The commutativity of the diagram follows from the compatibility of
	the constructions of \cite{paskunasimage}, and in particular
	of the isomorphism~(\ref{def_endo}), with twisting. (This comes
	down to the compatibility of the functor $\cV$, discussed below,
	with twisting.)
\end{proof}

Let $\delta: G_{\Qp}\rightarrow \OO^{\times}$ be a character that is trivial modulo $\varpi$. There is
an evident isomorphism of pseudocompact $\OO[[G_{\Qp}]]$-modules:
$$ \theta: \Eins^{\univ}\otimes \delta  \iso \Eins^{\univ},$$
and a commutative diagram
\numequation\label{twist_and_shout_2}
 \xymatrix@1{\Lambda\ar[d]^-{\cong} \ar[r]^-{\tw_{\delta^{-1}}}_-{\cong}&   \Lambda\ar[d]_-{\cong}
 \\
 \End_{G_{\Qp}}^{\cont}(\Eins^{\univ})\ar[r]^-{\cong}&  \End_{G_{\Qp}}^{\cont}(\Eins^{\univ})}
\end{equation}
where the lower horizontal arrow is given by $\alpha \mapsto
\theta\circ \alpha \circ \theta^{-1}$.
(In terms of the deformation functor $D_{\Eins}$ pro-represented by $\Lambda$,
the isomorphism $\tw_{\delta^{-1}}$ is induced by the bijection
$D_{\Eins}(A) \iso D_{\Eins}(A)$ defined by $\chi \mapsto \chi\delta^{-1}$.)

\begin{lem}\label{lem:twisting} Let $\delta: \Q_p^{\times}\rightarrow \cO^{\times}$ be a character that is trivial modulo
$\varpi$.
Then
there is an isomorphism
$$\wP^{\psi\delta^2}\wtimes_{\OO} \Eins^{\univ}\iso \wP^{\psi}\wtimes_{\OO}\Eins^{\univ}$$
in the category $\dualcat(R_p)$, where $R_p$ acts on both sides by the isomorphism of~\emph{(\ref{iso_twist_univ})}.
\end{lem}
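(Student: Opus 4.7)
The plan is to construct the desired isomorphism as the composition $(\id\wtimes\theta)\circ(\varphi\wtimes\id)$, with an intermediate tautological step that transfers a one-dimensional $G$-twist from the $\wP$-factor to the $\Eins^{\univ}$-factor, and then to derive $R_p$-equivariance by combining Lemma~\ref{twist_and_shout_1}, (\ref{twist_and_shout_2}), and the commutative diagram (\ref{yet_another_diagram}).

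More precisely, I would proceed as follows. Applying $\varphi$ from Lemma~\ref{twist_and_shout_1} to the first factor yields an isomorphism in $\dualcat(\OO)$
\[\wP^{\psi\delta^2}\wtimes_{\OO}\Eins^{\univ} \stackrel{\varphi\wtimes\id}{\iso} \bigl(\wP^\psi\otimes(\delta^{-1}\circ\det)\bigr)\wtimes_\OO \Eins^{\univ}.\]
Since $\delta^{-1}\circ\det$ is a one-dimensional $G$-character, the twist can be transferred to the second factor, producing $\wP^\psi\wtimes_\OO(\Eins^{\univ}\otimes(\delta^{-1}\circ\det))$. Unwinding the definition of the $G$-action on $\Eins^{\univ}$ (as inverse determinant composed with $\Art_{\Qp}$) and using the paper's convention identifying characters of $\Qptimes$ and $G_{\Qp}$ via Artin, one checks that, as $G$-representations, $\Eins^{\univ}\otimes(\delta^{-1}\circ\det)$ coincides with $\Eins^{\univ}\otimes\delta$, where the latter denotes the $G_{\Qp}$-twist viewed as a $G$-representation. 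Finally, $\id\wtimes\theta$ from (\ref{twist_and_shout_2}) provides an isomorphism $\wP^\psi\wtimes_\OO(\Eins^{\univ}\otimes\delta)\iso\wP^\psi\wtimes_\OO\Eins^{\univ}$, and the composition gives the required map $\Phi$.

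It remains to verify that $\Phi$ is $R_p$-equivariant. By the commutative diagram in Lemma~\ref{twist_and_shout_1}, $\varphi$ intertwines the actions of $R_p^{\psi\delta^2}$ and $R_p^\psi$ via $\tw_\delta$, and by (\ref{twist_and_shout_2}), $\theta$ intertwines the $\Lambda$-actions via $\tw_{\delta^{-1}}$. Therefore $\Phi$ is equivariant for the composite isomorphism
\[\tw_\delta\wtimes\tw_{\delta^{-1}}:R_p^{\psi\delta^2}\wtimes_\OO\Lambda\iso R_p^\psi\wtimes_\OO\Lambda,\]
and the commutativity of (\ref{yet_another_diagram}) shows that, once both sides are pulled back along (\ref{iso_twist_univ}), this intertwines the two $R_p$-actions with the identity. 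The main potential source of difficulty is the twist-transfer step, that is, the careful matching of the $G$-twist $\delta^{-1}\circ\det$ on $\Eins^{\univ}$ with the $G_{\Qp}$-twist $\delta$; once this bookkeeping is in place, the rest follows formally from the previously established lemmas.
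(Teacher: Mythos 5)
Your proposal is correct and follows essentially the same route as the paper: the isomorphism is the composite $(\id\wtimes\theta)\circ(\varphi\wtimes\id)$ (i.e.\ $\varphi\wtimes\theta$ after the tautological transfer of the one-dimensional twist, using that the $G$-twist by $\delta^{-1}\circ\det$ of $\Eins^{\univ}$ is the Galois twist by $\delta$), and $R_p$-linearity is deduced exactly as you say from the commutative diagrams of Lemma~\ref{twist_and_shout_1}, \eqref{twist_and_shout_2} and \eqref{yet_another_diagram}.
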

\begin{proof} Using Lemma \ref{twist_and_shout_1} and~(\ref{twist_and_shout_2}) we obtain isomorphisms
in $\dualcat(\OO)$:
\begin{displaymath}
\begin{split}
\wP^{\psi\delta^2}\wtimes_{\OO} \Eins^{\univ}\overset{\varphi\wtimes \id}{\longrightarrow}&
( \wP^{\psi}\otimes \delta^{-1}\circ\det)\wtimes_{\OO} \Eins^{\univ}\iso\\ & \iso\wP^{\psi}\wtimes_{\OO} (\Eins^{\univ}\otimes \delta\circ\det) \overset{\id\wtimes \theta}{\longrightarrow} \wP^{\psi}\wtimes_{\OO} \Eins^{\univ}.
\end{split}
\end{displaymath}
The composition of these isomorphisms  is equal to $\phi\wtimes \theta: \wP^{\psi\delta^2}\wtimes_{\OO} \Eins^{\univ}\iso \wP^{\psi}\wtimes_{\OO}\Eins^{\univ}$, which is an isomorphism in $\dualcat(\OO)$. It follows from Lemma
\ref{twist_and_shout_1} and (\ref{twist_and_shout_2}) that the following diagram commutes:
\begin{displaymath}
 \xymatrix@1{R_p^{\psi\delta^2}\wtimes_{\OO}\Lambda\ar[d] \ar[r]^-{\tw_{\delta}\wtimes\tw_{\delta^{-1}}}_-{\cong}&   R_p^{\psi}\wtimes_{\OO
}\Lambda\ar[d]
 \\
 \End_{\dualcat(\OO)}(\wP^{\psi\delta^2}\wtimes_{\OO} \Eins^{\univ})\ar[r]^-{\cong}&
 \End_{\dualcat(\OO)}(\wP^{\psi}\wtimes_{\OO} \Eins^{\univ})}
\end{displaymath}
where the lower horizontal arrow is given by $\alpha \mapsto
(\varphi\wtimes \theta)\circ \alpha\circ
(\varphi\wtimes\theta)^{-1}$. It follows from the above diagram and~ (\ref{yet_another_diagram}) that $\varphi\wtimes \theta$ is an
isomorphism in $\dualcat(R_p)$.
\end{proof}

\begin{prop}\label{AA2_holds} $\wP^{\psi}\wtimes_{\OO} \Eins^{\univ}$ is projective in the category of pseudocompact $\OO[[K]]$-modules.
\end{prop}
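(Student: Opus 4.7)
The plan is to exploit the fact that the $K$-action on $\Eins^{\univ}$ factors through the determinant, and to combine this with a clean decomposition of $K$ available for $p$ odd.

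First, I would split $\Eins^{\univ}$ into an unramified and an inertial part. Since $\Lambda$ is the universal deformation ring of the trivial character of $G_{\Qp}$, there is a natural decomposition $\Lambda \cong \Lambda^{\ur} \hat{\otimes}_{\cO} \Lambda^{\inert}$ corresponding to the decomposition of $G_{\Qp}^{\ab,\text{pro-}p}$ into the Frobenius line and inertia. Correspondingly, write $\Eins^{\univ} \cong \Eins^{\univ}_{\ur} \hat{\otimes}_{\cO} \Eins^{\univ}_{\inert}$. Because $K$ acts on $\Eins^{\univ}$ via $\det^{-1}$ followed by the Artin map, and $\det(K) \subset \Z_p^{\times}$ lands in $I_{\Qp}^{\ab}$, the $K$-action on $\Eins^{\univ}_{\ur}$ is trivial. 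Hence $\wP^{\psi} \wtimes_{\cO} \Eins^{\univ} \cong \Lambda^{\ur} \hat{\otimes}_{\cO} (\wP^{\psi} \wtimes_{\cO} \Eins^{\univ}_{\inert})$ as $\cO[[K]]$-modules, and since $\Lambda^{\ur}$ is a pseudocompact $\cO$-flat module, completed tensor with $\Lambda^{\ur}$ preserves projectivity over $\cO[[K]]$. It therefore suffices to prove projectivity of $\wP^{\psi} \wtimes_{\cO} \Eins^{\univ}_{\inert}$.

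Next, I would identify $\Eins^{\univ}_{\inert}$ as an induced module. The character $K \to \Lambda^{\inert,\times}$ factors through $K \xrightarrow{\det} \Z_p^{\times} \twoheadrightarrow 1 + p\Z_p$ (the pro-$p$ quotient, since $\mu_{p-1}$ acts trivially on any deformation of the trivial character), with the resulting map $1 + p\Z_p \hookrightarrow \Lambda^{\inert,\times}$ being tautological. Setting $K^{*} := \det^{-1}(\mu_{p-1})$, this identifies $\Eins^{\univ}_{\inert} \cong \Ind_{K^{*}}^{K} \cO$ as $\cO[[K]]$-modules (up to the harmless inversion automorphism of $1 + p\Z_p$). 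The projection formula then yields
\[
\wP^{\psi} \wtimes_{\cO} \Ind_{K^{*}}^{K} \cO \;\cong\; \Ind_{K^{*}}^{K}\!\bigl(\wP^{\psi}|_{K^{*}}\bigr),
\]
and since induction from a closed subgroup preserves projectivity (as $\cO[[K]]$ is pro-free over $\cO[[K^{*}]]$), it remains to show $\wP^{\psi}|_{K^{*}}$ is projective over $\cO[[K^{*}]]$.

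For the final step I would use that, because $p$ is odd, the pro-$p$ central subgroup $Z_{K}^{1} := \{uI : u \in 1 + p\Z_p\}$ satisfies $Z_{K}^{1} \cap K^{*} = \{I\}$ and $Z_{K}^{1} \cdot K^{*} = K$, so $K \cong Z_{K}^{1} \times K^{*}$. Under this direct product decomposition, the category of pseudocompact $\cO[[K]]$-modules on which $Z_{K}$ acts by $\psi^{-1}$ becomes equivalent to the category of pseudocompact $\cO[[K^{*}]]$-modules on which the finite group $Z_{K} \cap K^{*} \cong \mu_{p-1}$ acts by $\psi^{-1}|_{\mu_{p-1}}$. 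Since $|\mu_{p-1}| = p - 1$ is invertible in $\cO$, this central character condition is cut out by a central idempotent in $\cO[[K^{*}]]$, so the corresponding eigencategory is a direct summand of the category of all pseudocompact $\cO[[K^{*}]]$-modules. Projectivity of $\wP^{\psi}|_{K^{*}}$ in that eigencategory (which follows from the projectivity of $\wP^{\psi}$ in $\dualcat^{\psi}(\cO)$) therefore upgrades to projectivity over $\cO[[K^{*}]]$, completing the argument.

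The main technical obstacle is the identification $\Eins^{\univ}_{\inert} \cong \Ind_{K^{*}}^{K} \cO$, which requires carefully tracking the interaction of the Artin map, the determinant, and the pro-$p$ structure of the universal character; once this identification is in hand, the rest is a sequence of standard manipulations (projection formula, induction, idempotent decomposition).
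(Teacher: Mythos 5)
Your argument is correct, but it takes a genuinely different route from the one in the paper. The paper's proof reduces at once to the pro-$p$ Iwahori subgroup $I_1$, where projective and pro-free coincide: it writes $\Lambda\cong\cO[[\Gamma]][[x]]$ with $\Gamma=1+p\Zp$, so that $\Eins^{\univ}$ is pro-free over $\cO[[\Gamma]]$, twists $\wP^{\psi}$ by a character $\delta$ with $\delta^2=\psi$ (possible as $p>2$) to make the central character trivial on $Z_1=I_1\cap Z$, and then concludes by writing both factors as products of copies of $\cO[[I_1/Z_1]]$ and $\cO[[\Gamma]]$ and invoking the decomposition $I_1\cong I_1/Z_1\times\Gamma$, $g\mapsto (gZ_1,(\det g)^{-1})$. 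You instead split off the unramified part of $\Lambda$ (on which $K$ acts trivially), identify the inertial part of $\Eins^{\univ}$ with the induced module $\cO[[K]]\wtimes_{\cO[[K^*]]}\cO\cong\cO[[K/K^*]]$ from the closed (non-open) subgroup $K^*=\det^{-1}(\mu_{p-1})$, apply the projection formula, and replace the paper's twisting step by the product decomposition $K\cong Z_K^1\times K^*$ (again using $p$ odd, since it amounts to extracting square roots in $1+p\Zp$) together with the prime-to-$p$ idempotent cutting out the $\mu_{p-1}$-eigencategory in $\cO[[K^*]]$-modules. Your route is a clean alternative and somewhat more structural, in that it exhibits $\wP^{\psi}\wtimes_{\cO}\Eins^{\univ}$, up to the harmless factor $\Lambda^{\ur}$, as a module induced from $K^*$, rather than verifying pro-freeness over a pro-$p$ group; the price is that you need the pseudocompact versions of several standard facts (freeness of $\cO[[K]]$ over the completed group algebra of a closed subgroup, the projection formula for $\wtimes$, and compatibility of $\wtimes$ with products), all of which do hold.

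One step you treat too casually: the parenthetical claim that projectivity of $\wP^{\psi}$ in $\dualcat^{\psi}(\cO)$ yields projectivity of $\wP^{\psi}|_{K^*}$ in your eigencategory is not formal. What your product decomposition actually converts into the desired statement over $\cO[[K^*]]$ is the assertion that $\wP^{\psi}|_K$ is projective in the category of pseudocompact $\cO[[K]]$-modules on which $Z\cap K$ acts by $\psi^{-1}$; this is precisely the input the paper takes from \cite[Cor.\ 5.3]{paskunasBM} (it can be deduced from exactness of $\cInd_{KZ}^G$ and Frobenius reciprocity), and it should be cited or proved rather than asserted. With that input made explicit, your proof goes through.
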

\begin{proof} Let $I_1:=\{ g\in K: g \equiv \bigl(\begin{smallmatrix}
    1 & \ast \\ 0 & 1 \end{smallmatrix}\bigr )\pmod{\varpi}\}$. Then
  $I_1$ is a pro-$p$ Sylow subgroup
of $K$ and it is enough to show that $\wP^{\psi}\wtimes_{\OO} \Eins^{\univ}$ is projective in the category of pseudocompact $\OO[[I_1]]$-modules.
Since $I_1$ is a pro-$p$ group there is only one indecomposable projective object in the category, namely $\OO[[I_1]]$, and thus
a pseudocompact $\OO[[I_1]]$-module is projective if and only if it is
pro-free, which means that it is isomorphic to $\prod_{j\in J} \OO[[I_1]]$
for some indexing set $J$.

Let $\Gamma:=1+p \Zp$. We identify $\Gamma$ with the image in $G_{\Qp}^{\ab}$ of the wild inertia subgroup of $G_{\Qp}$.
We may identify $\Lambda$ with the completed group algebra of the pro-$p$ completion of $G_{\Qp}^{\ab}$,
which is isomorphic to $\Gamma \times \Zp$. There is an isomorphism
of $\OO$-algebras  $\Lambda\cong \OO[[\Gamma]] [[x]]$, and in particular $\Eins^{\univ}$ is a pro-free and hence  projective $\OO[[\Gamma]]$-module.

The restriction of $\wP^{\psi}$ to $K$ is projective in the category of pseudocompact $\OO[[K]]$-modules on which $Z\cap K$ acts by the
central character $\psi^{-1}$, \cite[Corollary 5.3]{paskunasBM}. By restricting further to $I_1$, we deduce that
$\wP^{\psi}$ is projective in the category of pseudocompact $\OO[[I_1]]$-modules, where $Z_1:= I_1\cap Z$ acts by the central character
$\psi^{-1}$.

Since $p>2$ there is a character $\delta: \Gamma\rightarrow \OO^{\times}$, such that $\delta^2=\psi$. Twisting by characters preserves projectivity,
so that $\Eins^{\univ} \otimes \delta$ is a projective in the category of pseudocompact $\OO[[\Gamma]]$-modules and
$\wP^{\psi}\otimes (\delta\circ\det)$ is projective in the category of pseudocompact $\OO[[I_1]]$-modules on which $Z_1$ acts trivially. We may identify this last category with the category of pseudocompact $\OO[[I_1/Z_1]]$-modules.

We have an isomorphism of $I_1$-representations
$$\wP^{\psi}\wtimes_{\OO} \Eins^{\univ}\cong (\wP^{\psi}\otimes \delta
\circ \det)\wtimes_{\OO} ( \Eins^{\univ} \otimes_{\OO} \delta),$$ where
$I_1$ acts on $\Eins^{\univ} \otimes_{\OO} \delta$ via the homomorphism
$I_1\rightarrow \Gamma$, $g\mapsto (\det g)^{-1}$. We may therefore assume that $\psi|_{Z_1}$ is trivial, so that $\wP^{\psi}$ is projective in the category of pseudocompact
$\OO[[I_1/Z_1]]$-modules. Thus there are indexing sets $J_1$ and $J_2$ such that
$$ \wP^{\psi}|_{I_1} \cong \prod_{j_1\in J} \OO[[I_1/Z_1]], \quad \Eins^{\univ} \cong \prod_{j_2\in J_2} \OO[[\Gamma]]$$
where the first isomorphism is an isomorphism of pseudocompact
$\OO[[I_1]]$-modules, and the second isomorphism is an isomorphism of
pseudocompact $\OO[[\Gamma]]$-modules.
Since completed tensor products commute with products, we obtain
an isomorphism of pseudocompact $\OO[[I_1]]$-modules:
$$ (\wP^{\psi}\wtimes_{\OO} \Eins^{\univ})\cong \prod_{j_1\in J_1} \prod_{j_2\in J_2} \OO[[I_1/Z_1]]\wtimes_{\OO} \OO[[\Gamma]].$$
Since $p>2$, the determinant induces an isomorphism between $Z_1$ and $\Gamma$. Thus the map
$I_1\rightarrow I_1/Z_1 \times \Gamma$, $g \mapsto (g Z_1, (\det g)^{-1})$ is an isomorphism of groups. The isomorphism
induces a natural isomorphism of $\OO[[I_1]]$-modules, $\OO[[I_1]]\cong \OO[[I_1/Z_1]]\wtimes_{\OO} \OO[[\Gamma]]$.
We conclude that $\wP^{\psi}\wtimes_{\OO} \Eins^{\univ}$ is a pro-free and hence a projective $\OO[[I_1]]$-module.
\end{proof}

\begin{remark}
	We will see in Theorem~\ref{final_comparison} below that in fact
$\wP^{\psi}\wtimes_{\OO} \Eins^{\univ}$ is a projective object
of $\dualcat(\OO)$.   It is not so difficult to prove this directly,
but we have found it more convenient to deduce it as part of
the general formalism of arithmetic actions.\end{remark}

\begin{prop}\label{manipulate} Let $\sigma=\sigma_{a,b}$, and let $\delta: \Q_p^{\times}\rightarrow \cO^{\times}$ be a character
that is trivial modulo $\varpi$, chosen so that $\psi\delta^2|_{\Z_p^{\times}}$ is the central
character of $\sigmao$. Then there is a natural isomorphism of $R_p$-modules
$$ \Hom^{\cont}_{\OO[[K]]}(\wP^{\psi}\wtimes_{\OO}\Eins^{\univ}, (\sigmao)^d)^d\overset{\cong}{\longrightarrow}
\Hom^{\cont}_{\OO[[K]]}(\wP^{\psi\delta^2}, (\sigmao)^d)^d \wtimes_{\OO} \Lambda^{\ur},$$
where $R_p$ acts on the left hand side via the isomorphism $R_p\cong R_p^{\psi}\wtimes_{\OO} \Lambda$ and on the right hand side
via the isomorphism $R_p\cong R_p^{\psi\delta^2}\wtimes_{\OO} \Lambda$.
\end{prop}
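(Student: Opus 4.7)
The plan is as follows.

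Using Lemma~\ref{lem:twisting}, we may replace $\wP^\psi\wtimes_\OO\Eins^\univ$ by $\wP^{\psi\delta^2}\wtimes_\OO\Eins^\univ$: the lemma provides an isomorphism of the two in $\dualcat(R_p)$ with $R_p$ acting on each side through the relevant instance of~\eqref{iso_twist_univ}. It therefore suffices to produce a natural $R_p$-linear isomorphism
\[
\Hom^\cont_{\OO[[K]]}(\wP^{\psi\delta^2}\wtimes_\OO\Eins^\univ,(\sigmao)^d)^d \iso \Hom^\cont_{\OO[[K]]}(\wP^{\psi\delta^2},(\sigmao)^d)^d\wtimes_\OO\Lambda^\ur.
\]

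The key observation is a central-character calculation. Since $\psi\delta^2|_{\Zptimes}$ is the central character of $\sigmao$, the subgroup $Z\cap K\cong \Zptimes$ acts on both $\wP^{\psi\delta^2}$ and $(\sigmao)^d$ by the same $\OO^\times$-valued character $(\psi\delta^2)^{-1}|_{\Zptimes}$. On $\Eins^\univ$, by contrast, $aI\in Z\cap K$ acts by $\chi^{-2}(a)\in\Lambda^\times$, where $\chi\colon G_{\Qp}\to\Lambda^\times$ denotes the universal deformation of the trivial character. The $\Lambda$-action on $\Eins^\univ$ commutes with $G$, so the Hom above inherits a $\Lambda$-module structure; and $K$-equivariance applied to $Z\cap K$ forces every element of this Hom to be annihilated by $\chi^{-2}(a)-1$ for every $a\in\Zptimes$. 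Since $p$ is odd, these elements generate the kernel of $\Lambda\twoheadrightarrow\Lambda^\ur$, so any such continuous $K$-equivariant map factors through the quotient $\wP^{\psi\delta^2}\wtimes_\OO(\Eins^\univ\otimes_\Lambda\Lambda^\ur) \cong \wP^{\psi\delta^2}\wtimes_\OO\Lambda^\ur$. Moreover, on this quotient $K$ acts trivially on the $\Lambda^\ur$-factor, since $K$ acts on $\Eins^\univ$ through $\chi^{-1}\circ\det\colon K\to\Zptimes\to\Lambda^\times$, and $\chi|_{\Zptimes}$ is by definition trivial in $\Lambda^\ur$.

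The standard Hom--tensor adjunction for pseudocompact modules (using that $K$ acts trivially on the $\Lambda^\ur$-factor) now gives
\[
\Hom^\cont_{\OO[[K]]}(\wP^{\psi\delta^2}\wtimes_\OO\Lambda^\ur,(\sigmao)^d) \cong \Hom^\cont_\OO\bigl(\Lambda^\ur, N\bigr),
\]
where $N:=\Hom^\cont_{\OO[[K]]}(\wP^{\psi\delta^2},(\sigmao)^d)$. Reflexivity ($N\cong N^{dd}$) together with the identity $\Hom^\cont_\OO(A,B^d)\cong(A\wtimes_\OO B)^d$ for pseudocompact $\OO$-modules then yields $\Hom^\cont_\OO(\Lambda^\ur, N)^d \cong \Lambda^\ur\wtimes_\OO N^d$, which is the desired isomorphism.

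Finally, the $R_p$-equivariance is checked by unwinding the naturality of each step with respect to the $R_p^{\psi\delta^2}$-action on $\wP^{\psi\delta^2}$ (coming from~\eqref{def_endo}) and the $\Lambda$-action on $\Eins^\univ$, together with the commutative diagram~\eqref{yet_another_diagram} that reconciles the two instances of~\eqref{iso_twist_univ}. The main point of care in the whole argument is the central-character computation that forces the Hom to factor through $\Lambda^\ur$; the remaining manipulations are formal consequences of Schikhof duality and completed tensor products.
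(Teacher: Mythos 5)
Your proposal is correct and follows essentially the same route as the paper's own proof: reduce via Lemma~\ref{lem:twisting} to the case where the central characters of the relevant objects match, observe that equivariance under $Z\cap K$ forces the Hom to factor through the quotient $\Eins^{\univ}\otimes_{\Lambda}\Lambda^{\ur}$ (your explicit computation with $\chi^{-2}(a)-1$ is just an unwinding of the paper's observation that this quotient is the maximal one on which $Z\cap K$ acts trivially), and then conclude by a Schikhof-duality and completed-tensor/Hom adjunction argument. The one cosmetic difference is that the paper first proves the duality identity for plain $\Hom^{\cont}_{\OO}$ and then passes to $\Hom^{\cont}_{\OO[[K]]}$ by naturality, while you build the $K$-equivariance into the adjunction from the start; also, your final dualization step implicitly uses reflexivity both of $N$ and of the pseudocompact module $\Lambda^{\ur}\wtimes_{\OO}N^d$, just as the paper does, though you only mention the former.
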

\begin{proof} Using Lemma \ref{lem:twisting} we may assume that $\psi|_{K\cap Z}$ is the central character of
$\sigma$. We note that $\Eins^{\univ}\otimes_{\Lambda} \Lambda^{\ur}$ is the largest quotient of $\Eins^{\univ}$ on which $K\cap Z$ acts trivially. Since the central character of $(\sigmao)^d$ is $\psi^{-1}|_{\Z_p^{\times}}$, and the central character of
$\wP^{\psi}$ is $\psi^{-1}$, we have a natural isomorphism
$$ \Hom^{\cont}_{\OO[[K]]}(\wP^{\psi}\wtimes_{\OO}\Eins^{\univ}, (\sigmao)^d)^d\overset{\cong}{\longrightarrow}
\Hom^{\cont}_{\OO[[K]]}(\wP^{\psi}\wtimes_{\OO}(\Eins^{\univ}\otimes_{\Lambda} \Lambda^{\ur}), (\sigmao)^d)^d.$$
Since $K$ acts trivially on $\Eins^{\univ}\otimes_{\Lambda} \Lambda^{\ur}$, we have an isomorphism:
$$ \Hom^{\cont}_{\OO[[K]]}(\wP^{\psi}\wtimes_{\OO}(\Eins^{\univ}\otimes_{\Lambda} \Lambda^{\ur}), (\sigmao)^d)^d
\cong \Hom^{\cont}_{\OO[[K]]}(\wP^{\psi}\wtimes_{\OO}\Lambda^{\ur}, (\sigmao)^d)^d,$$
where $\Lambda^{\ur}$ carries a trivial $K$-action and $R_p$ acts by the isomorphism $R_p\cong R_p^{\psi}\wtimes_{\OO} \Lambda$.
To finish the proof
we need to construct a natural isomorphism of $R^{\psi}_p\wtimes_{\OO} \Lambda$-modules:
\numequation\label{delightful_duals}
\Hom^{\cont}_{\OO[[K]]}(\wP^{\psi}\wtimes_{\OO}\Lambda^{\ur}, (\sigmao)^d)^d\cong
\Hom^{\cont}_{\OO[[K]]}(\wP^{\psi}, (\sigmao)^d)^d\wtimes_{\OO} \Lambda^{\ur}.
\end{equation}
Both sides of \eqref{delightful_duals} are finitely generated $R_p^{\psi}\wtimes_{\OO} \Lambda^{\ur}$-modules. The $\mm$-adic topology on $R_p^{\psi}\wtimes_{\OO} \Lambda^{\ur}$ induces a
topology on them, and makes them into pseudo-compact, $\OO$-torsion free $\OO$-modules. It is
therefore enough to construct a natural  isomorphism between the Schikhof duals of both sides of
\eqref{delightful_duals}. To ease the notation we let $A=\wP^{\psi}$, $B=\Lambda^{\ur}$,
$C=(\sigmao)^d$. Since for a pseudo-compact $\OO$-module $D$, we have
$D^d=\Hom^{\cont}_{\OO}(D, \OO)\cong \varprojlim_{n}
\Hom_{\OO}^{\cont}(D, \OO/\varpi^n)$, using the adjointness between $\wtimes_{\OO}$ and $\Hom^{\cont}_{\OO}$ (see   \cite[Lem. 2.4]{MR0202790}), we obtain natural isomorphisms
 \begin{displaymath}
 \begin{split}
 (\Hom^{\cont}_\OO( A, C)^d \wtimes_{\OO} B)^{d}&\cong \Hom^{\cont}_{\OO}( B, (\Hom^{\cont}_\OO(A, C)^{d})^{d})\\
 &\cong
 \Hom^{\cont}_{\OO}(B, \Hom^{\cont}_{\OO}(A,C))\cong \Hom^{\cont}_{\OO}(A\wtimes_{\OO} B, C),
 \end{split}
\end{displaymath}
and hence a natural isomorphism
$$ \Hom^{\cont}_{\OO}(A\wtimes_{\OO} B, C)^d \cong \Hom^{\cont}_{\OO}(A, C)^d\wtimes_{\OO} B.$$
Since the isomorphism is natural and $K$ acts trivially on $B$ we obtain a natural  isomorphism
$$ \Hom^{\cont}_{\OO[[K]]}(A\wtimes_{\OO} B, C)^d \cong \Hom^{\cont}_{\OO[[K]]}(A, C)^d\wtimes_{\OO}B.$$
Since the action of $R^{\psi}_p$ commutes with the action of $K$ and the isomorphism is natural, we deduce that \eqref{delightful_duals} holds.
\end{proof}

\begin{prop}\label{M_sigma_CM_loc_free} For any~$\sigma$, the action of $R_p\cong R_p^{\psi}\wtimes_{\OO} \Lambda$ on
$$M'(\sigmao):=\Hom^{\cont}_{\OO[[K]]}(\wP^{\psi}\wtimes_{\OO} \Eins^{\univ}, (\sigmao)^d)^d$$ factors through $R_p(\sigma)$. Moreover, $M'(\sigmao)$ is a maximal Cohen--Macaulay $R_p(\sigma)$-module and $M'(\sigmao)[1/p]$ is a locally free $R_p(\sigma)[1/p]$-module of rank $1$.
\end{prop}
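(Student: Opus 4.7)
The plan is to reduce the statement to the fixed central character setting, for which the corresponding results are already known from \cite{paskunasimage} and \cite{paskunasBM}, and then to transport these results back through the isomorphisms of~\eqref{iso_twist_pst} and Proposition~\ref{manipulate}.

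First, choose $\delta:\Q_p^\times \to \cO^\times$ trivial modulo $\varpi$ such that $\psi\delta^2|_{\Z_p^\times}$ equals the central character of $\sigma^\circ$ (if no such $\delta$ exists then the central character of $\sigma$ is not a square of a deformation of $\psibar$ in the appropriate sense, but this is handled by the fact that $p$ is odd so we can always extract a square root on $1 + p\Z_p$). Then by Proposition~\ref{manipulate} we have a natural isomorphism of $R_p$-modules
\[ M'(\sigma^\circ) \cong M''(\sigma^\circ) \wtimes_{\cO} \Lambda^{\ur}, \]
where $M''(\sigma^\circ) := \Hom^{\cont}_{\cO[[K]]}(\wP^{\psi\delta^2}, (\sigma^\circ)^d)^d$, and where $R_p$ acts on the right-hand side through the isomorphism $R_p \cong R_p^{\psi\delta^2}\wtimes_{\cO} \Lambda$.

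Second, I would invoke the fixed-central-character analogues of (AA3) for $\wP^{\psi\delta^2}$. Namely, by~\cite[Cor.~4.11, Prop.~2.22]{paskunasBM} (together with the description of $R_p^{\psi\delta^2}(\sigma)$ in Lemma~\ref{lem:rbar has a unique Serre weight, and the deformation ring is smooth, and given by Tp}, noting that this deformation ring is formally smooth over $\cO$), the action of $R_p^{\psi\delta^2}$ on $M''(\sigma^\circ)$ factors through $R_p^{\psi\delta^2}(\sigma)$, and $M''(\sigma^\circ)$ is maximal Cohen--Macaulay over $R_p^{\psi\delta^2}(\sigma)$, with $M''(\sigma^\circ)[1/p]$ locally free of rank one over $R_p^{\psi\delta^2}(\sigma)[1/p]$.

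Third, I would transfer these properties across the completed tensor product with $\Lambda^{\ur}$. The isomorphism~\eqref{iso_twist_pst} gives $R_p(\sigma) \cong R_p^{\psi\delta^2}(\sigma) \wtimes_{\cO} \Lambda^{\ur}$, so the $R_p$-action on $M'(\sigma^\circ)$ factors through $R_p(\sigma)$. Since $\Lambda^{\ur}$ is formally smooth over $\cO$, it is flat over $\cO$, and the completed tensor product $M''(\sigma^\circ) \wtimes_{\cO} \Lambda^{\ur}$ agrees with an ordinary tensor product at each finite level; since Cohen--Macaulayness and depth behave well under base change by a regular (in particular, flat) extension, the module $M'(\sigma^\circ)$ is maximal Cohen--Macaulay over $R_p(\sigma)$. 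Similarly, local freeness of rank one on the generic fibre is preserved: the support of $M'(\sigma^\circ)[1/p]$ is the preimage in $\Spec R_p(\sigma)[1/p]$ of the support of $M''(\sigma^\circ)[1/p]$ under the flat map $R_p^{\psi\delta^2}(\sigma)[1/p]\to R_p(\sigma)[1/p]$, and local freeness of rank one pulls back along flat maps.

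The main potential obstacle is checking that completed tensor product with $\Lambda^{\ur}$ really does preserve the Cohen--Macaulay property and the rank-one locally free structure after inverting $p$. This is essentially formal, since $\Lambda^{\ur}\cong\cO[[y]]$ is a regular local ring formally smooth over $\cO$, and completed tensor products of noetherian complete local rings with power series rings behave like ordinary base change; the Cohen--Macaulay claim then follows because a maximal regular sequence on $M''(\sigma^\circ)$ can be extended by the regular parameter $y$ of $\Lambda^{\ur}$ to give a maximal regular sequence on $M'(\sigma^\circ)$ whose length matches $\dim R_p(\sigma) = \dim R_p^{\psi\delta^2}(\sigma) + 1$.
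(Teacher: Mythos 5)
Your proposal follows essentially the same route as the paper: decompose $M'(\sigma^\circ)\cong M^{\psi\delta^2}(\sigma^\circ)\wtimes_{\cO}\Lambda^{\ur}$ via Proposition~\ref{manipulate}, import the fixed-central-character statements from \cite{paskunasBM}, and transfer maximal Cohen--Macaulayness and generic rank one across the formally smooth extension $\Lambda^{\ur}\cong\cO[[x]]$ using~\eqref{iso_twist_pst}. One small correction, though: your parenthetical claim that a suitable $\delta$ always exists because ``$p$ is odd so we can always extract a square root on $1+p\Z_p$'' is not right. Since $\delta$ is required to be trivial modulo $\varpi$, its restriction to $\Z_p^\times$ factors through $1+p\Z_p$, so a $\delta$ with $(\psi\delta^2)|_{\Z_p^\times}$ equal to the central character of $\sigma$ exists if and only if that central character is congruent to $\psi|_{\Z_p^\times}$ modulo $\varpi$; when this fails, the correct observation (and the one the paper makes) is simply that both $R_p(\sigma)$ and $M'(\sigma^\circ)$ vanish, so the statement is vacuous. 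Your treatment of the rank-one claim via flat base change along $R_p^{\psi\delta^2}(\sigma)[1/p]\to R_p(\sigma)[1/p]$ is a fine alternative to the paper's argument (which uses Auslander--Buchsbaum for local freeness and then computes fibre dimensions via \cite[Prop.~4.14, 2.22]{paskunasBM}); both routes hinge on the identification $M'(\sigma^\circ)\cong M^{\psi\delta^2}(\sigma^\circ)\otimes_{R_p^{\psi\delta^2}(\sigma)}R_p(\sigma)$, which holds because the completed tensor product with $\Lambda^{\ur}$ of a finitely generated module is an ordinary tensor product.
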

\begin{remark}\label{M'(sigmao)=M(sigmao)}
  Recall that $M(\sigmao)$ was defined in the last section. Soon we will see from Theorem \ref{final_comparison} below that $\wP \iso \wP^{\psi}\cotimes_{\OO} \Eins^{\univ}$, thus also that $M(\sigmao)\simeq M'(\sigmao)$.
\end{remark}

\begin{proof} If the central character of $\sigma$ is not congruent to $\psi|_{\Z_p^{\times}}$ modulo $\varpi$, then
both $R_p(\sigma)$ and $M'(\sigmao)$ are zero. Otherwise, there is a character $\delta: \Qp^{\times}\rightarrow \OO^{\times}$ trivial modulo $\varpi$
such  that $(\psi \delta^2)|_{\Z_p^{\times}}$ is equal to the central character of $\sigma$. Proposition \ref{manipulate} gives us an isomorphism
of $R_p$-modules
\numequation\label{decompose_module}
 M'(\sigmao)\cong M^{\psi\delta^2}(\sigmao)\wtimes_{\OO} \Lambda^{\ur},
 \end{equation}
where  $M^{\psi\delta^2}(\sigmao):= \Hom^{\cont}_{\OO[[K]]}(\wP^{\psi\delta^2}, (\sigmao)^d)^d$, and the action of $R_p$ on the right hand side
is given by $R_p\cong R_p^{\psi\delta^2}\wtimes_{\OO} \Lambda$. The action of $R_p^{\psi\delta^2}$ on $M^{\psi\delta^2}(\sigmao)$ factors through
the action of $R_p^{\psi\delta^2}(\sigma)$ and makes it into a maximal Cohen--Macaulay $R_p^{\psi\delta^2}(\sigma)$-module
\cite[Cor. 6.4, 6.5]{paskunasBM}. Since $\Lambda^{\ur}\cong \OO[[x]]$, we conclude that $M^{\psi\delta^2}(\sigmao)\wtimes_{\OO} \Lambda^{\ur}$
is a maximal Cohen--Macaulay $R_p^{\psi\delta^2}(\sigma)\wtimes_{\OO}
\Lambda^{\ur}$-module. Using~(\ref{iso_twist_pst}) we see that $M'(\sigmao)$ is a
maximal Cohen--Macaulay $R_p(\sigma)$-module. Since $R_p(\sigma)[1/p]$
is a regular ring a standard argument using the Auslander--Buchsbaum theorem shows that
$M'(\sigmao)[1/p]$ is a locally free $R_p(\sigma)[1/p]$-module. It follows from \cite[Prop. 4.14, 2.22]{paskunasBM} that
$$\dim_{\kappa(x)} M^{\psi\delta^2}(\sigmao)\otimes_{R^{\psi\delta^2}_p} \kappa(x)=1, \quad \forall x\in \mSpec R^{\psi\delta^2}_p(\sigma)[1/p].$$ This together
with \eqref{decompose_module} gives us $$\dim_{\kappa(x)} M'(\sigmao)\otimes_{R_p} \kappa(x)=1,  \quad \forall x\in \mSpec R_p(\sigma)[1/p].$$
Hence, $M'(\sigmao)[1/p]$ is a locally free $R_p(\sigma)[1/p]$-module of rank $1$.
\end{proof}

The natural action of $\cH(\sigmao)$ on $M'(\sigmao)$ commutes with the action of $R_p(\sigma)$, and hence induces an action of $\cH(\sigma)$ on $M'(\sigmao)[1/p]$. Since $M'(\sigmao)[1/p]$ is
 locally free of rank $1$ over $R_p(\sigma)[1/p]$ by Proposition \ref{M_sigma_CM_loc_free}, we obtain
 a homomorphism of $E$-algebras:
 $$ \alpha: \cH(\sigma)\rightarrow \End_{R_p(\sigma)[1/p]}( M'(\sigmao)[1/p])\cong R_p(\sigma)[1/p].$$

\begin{prop}\label{AA4_holds} The map $\alpha: \cH(\sigma)\rightarrow R_p(\sigma)[1/p]$ coincides with the map
$\eta: \cH(\sigma)\rightarrow R_p(\sigma)[1/p]$ constructed in \cite[Thm. 4.1]{Gpatch}.
\end{prop}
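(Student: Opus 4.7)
The plan is to verify that $\alpha$ satisfies the characterization of $\eta$ given in Proposition \ref{prop: explicit local Langlands for GL_2(Q_p)}, namely that for every closed point $x \in \mSpec R_p(\sigma)[1/p]$ corresponding to a crystalline lift $V_x$ of $\rbar$ of Hodge type $\sigma=\sigma_{a,b}$, one has $x(\alpha(T)) = p^{a+b} t_x$ and $x(\alpha(S)) = p^{2a+b-1} d_x$, where $t_x$ and $d_x$ are the trace and determinant of crystalline Frobenius on $\Dcris(V_x)$. Since $R_p(\sigma)[1/p]$ is reduced and Jacobson by \cite[Thm. 3.3.8]{kisindefrings}, this pointwise identification is enough.

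First, I fix a closed point $x$. By Proposition \ref{M_sigma_CM_loc_free}, the fibre $M'(\sigma^\circ)[1/p] \otimes_{R_p(\sigma)[1/p], x} \kappa(x)$ is one-dimensional over $\kappa(x)$, and $x \circ \alpha$ records the Hecke eigenvalues of $S, T$ on it. I choose a character $\delta\colon \Qp^\times \to \cO^\times$ trivial modulo $\varpi$ so that $(\psi\delta^2)|_{\Zp^\times}$ equals the central character of $\sigma$, and then apply the decomposition \eqref{decompose_module} together with the twisting isomorphism \eqref{iso_twist_pst}. Under these identifications, the fibre of $M'(\sigma^\circ)$ at $x$ becomes (compatibly with the $\cH(\sigma)$-action) the fibre of $M^{\psi\delta^2}(\sigma^\circ)$ at the corresponding point $x^\flat \in \mSpec R_p^{\psi\delta^2}(\sigma)[1/p]$, tensored with the residue field of the unramified twist data in $\Lambda^{\ur}$. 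This reduces the determination of $x\circ\alpha$ to the analogous determination of the Hecke action on the fibre of $M^{\psi\delta^2}(\sigma^\circ)$.

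Second, I invoke the fixed-central-character version of this compatibility, which is a consequence of Colmez's construction and is established in \cite[Cor. 6.5]{paskunasBM} (or equivalently via the results recalled in the cited portions of \cite{paskunasimage}): the natural action of $\cH(\sigma)$ on $M^{\psi\delta^2}(\sigma^\circ)[1/p]$ is given by the restriction to $R_p^{\psi\delta^2}(\sigma)[1/p]$ of the map $\eta$ from \cite[Thm. 4.1]{Gpatch}, i.e.\ by classical local Langlands applied to $\Dcris(V_x \otimes \delta^{-1})$. A direct computation (using that $\delta$ is a character of $\Qp^\times$, viewed as a character of $G_{\Qp}$ via $\Art_{\Qp}$, and that Frobenius on $\Dcris(\delta^{-1})$ acts by $\delta(p)^{-1}$) shows that the Frobenius trace and determinant on $\Dcris(V_x)$ and on $\Dcris(V_x\otimes\delta^{-1})$ differ by powers of $\delta(p)$, and these powers are precisely absorbed by the unramified twist recorded by $\Lambda^{\ur}$ under \eqref{iso_twist_pst}. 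Putting these ingredients together yields $x\circ\alpha = x\circ\eta$ for every closed point $x$, proving the proposition.

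The main obstacle will be executing this last bookkeeping step cleanly: the isomorphism \eqref{iso_twist_pst} identifies $R_p(\sigma)$ with $R_p^{\psi\delta^2}(\sigma)\wtimes_\OO \Lambda^{\ur}$ via the deformation $r^{\univ,\psi\delta^2}\wtimes \Eins^{\univ,\ur}\otimes\delta$, and one must verify that the contribution of the $\Eins^{\univ,\ur}\otimes\delta$-factor to the Frobenius trace/determinant on $\Dcris$ matches the passage between the normalizations of $\eta^{\psi\delta^2}$ and $\eta$ prescribed by Proposition \ref{prop: explicit local Langlands for GL_2(Q_p)}. Once this is done, the axiom (AA4) is verified for $\wP^\psi\wtimes_\OO\Eins^{\univ}$, and in combination with Propositions \ref{AA2_holds} and \ref{M_sigma_CM_loc_free} this identifies $\wP^\psi\wtimes_\OO\Eins^{\univ}$ with the $d=0$ arithmetic module, enabling Theorem \ref{final_comparison} (and Remark \ref{M'(sigmao)=M(sigmao)}).
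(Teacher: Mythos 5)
Your overall skeleton (reduce to closed points using that $R_p(\sigma)[1/p]$ is reduced and Jacobson, pass to the fixed-central-character object by twisting, and then invoke a Colmez-based compatibility) parallels the paper's proof, but there is a genuine gap at your Step 2, which is exactly where the real content lies. The statement you invoke --- that the $\cH(\sigma)$-action on $M^{\psi\delta^2}(\sigmao)[1/p]$ is given by (the restriction of) $\eta$, i.e.\ by classical local Langlands at crystalline points --- is essentially the fixed-central-character form of the proposition itself, and it is \emph{not} what \cite[Cor.\ 6.5]{paskunasBM} proves: that corollary (together with Cor.\ 6.4) is the input used in Proposition \ref{M_sigma_CM_loc_free} for the factoring through $R_p^{\psi\delta^2}(\sigma)$ and the maximal Cohen--Macaulay property, not for any Hecke compatibility. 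No single citable statement of this form is available, which is why the paper assembles it: by \cite[Prop.\ 2.22]{paskunasBM} the fibre of $M'(\sigmao)$ at $x$ is identified with the one-dimensional space $\Hom_K(\sigma,\Pi_x)$, carrying the specialisation of $\alpha$; by the main theorem of \cite{paskunasimage}, $\Pi_x\cong \Pi_y\otimes(\Eins^{\univ}_z\circ\det)$ is the Banach space attached to $r_x^{\univ}$ by the $p$-adic correspondence; and then the compatibility with the classical correspondence --- the embedding $\pi_{\sm}(r_x^{\univ})\otimes\pi_{\alg}(r_x^{\univ})\hookrightarrow\Pi_x^{\lalg}$ --- comes from \cite[Thm.\ 4.3.1]{berger-breuil} in the irreducible case (for a Zariski-dense set of $x$, which suffices) and from the explicit description in \cite{MR2667890} in the reducible case. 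Without this last input about locally algebraic vectors, which your sketch never mentions, the identification of the Hecke eigenvalues with $x\circ\eta$ is simply asserted.

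Two smaller points. First, the paper's formulation avoids your trace/determinant bookkeeping entirely: by phrasing the key input as a statement about $\Pi_x$ and $r_x^{\univ}$ (rather than about the fixed-determinant representation), the twist is absorbed into $r_x^{\univ}\cong r_y^{\univ,\psi}\otimes\Eins^{\univ}_z$ and no comparison of normalisations between a putative $\eta^{\psi\delta^2}$ and $\eta$ is needed; if you insist on the module-level route via \eqref{decompose_module} and \eqref{iso_twist_pst}, you must still carry out the bookkeeping you flag as the "main obstacle", and it is not merely cosmetic. Second, within that bookkeeping your representation $V_x\otimes\delta^{-1}$ is not the right object: the point of $R_p^{\psi\delta^2}(\sigma)$ below $x$ differs from $V_x$ by the \emph{unramified} character recorded by $\Lambda^{\ur}$, not by $\delta^{-1}$ (and $V_x\otimes\delta^{-1}$ need not even be crystalline, since $\delta$ is in general ramified), so the discrepancy in Frobenius trace and determinant is by values of the unramified twist at $p$, not by powers of $\delta(p)$.
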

\begin{proof} It is enough to show that the specialisations of $\alpha$ and $\eta$ at $x$ coincide for $x$ in  a Zariski dense subset of  $\mSpec R_p(\sigma)[1/p]$.  The isomorphism
$R_p\cong R_p^{\psi} \wtimes_{\OO} \Lambda$ maps $x$ to a pair $(y, z)$,
where $y\in \mSpec R_p^{\psi}[1/p]$ and $z\in \mSpec \Lambda[1/p]$, so that
if $r_x^{\univ}$, $r_y^{\univ, \psi}$ and $\Eins_z^{\univ}$ are Galois representations
corresponding to $x$, $y$ and $z$ respectively then
$$ r_x^{\univ}\cong r_y^{\univ,\psi} \otimes \Eins_z^{\univ}.$$
Let
$$\Pi_x:=\Hom^{\cont}_{\OO}( ( \wP^{\psi}\wtimes_{\OO} \Eins^{\univ})\wtimes_{R_p} \OO_{\kappa(x)}, E),$$

$$\Pi_y:=\Hom^{\cont}_{\OO}(  \wP^{\psi}\wtimes_{R_p^{\psi}} \OO_{\kappa(y)}, E).$$
Then both are unitary $G$-Banach space representations and we have
$$\Pi_x\cong \Pi_y \otimes (\Eins^{\univ}_z \circ \det).$$
It follows from \cite[Prop. 2.22]{paskunasBM}
that $\Hom_K(\sigma, \Pi_x)$ is a one-dimensional $\kappa(x)$-vector space, and the action
of $\cH(\sigma)$ on it coincides with the specialisation of $\alpha$ at $x$, which can be written
as the composite $$\cH(\sigma)\rightarrow \End_{\kappa(x)}(M'(\sigmao)\otimes_{R_p} \kappa(x))\cong
\kappa(x).$$ Since $\sigma$ is an algebraic representation of $K$, we have
$$\Hom_K(\sigma, \Pi_x)\cong \Hom_K(\sigma, \Pi_x^{\lalg}),$$ where
$\Pi_x^{\lalg}$ is the subspace of locally algebraic vectors in $\Pi_x$.

It follows from the main result of \cite{paskunasimage} that the  specialisation $\Pi_x$ of $\wP^{\psi}$
coincides with the Banach space representation attached to $r_x^{\univ}$
via the $p$-adic local Langlands correspondence.
It is then a
consequence of the construction of the appropriate cases of
the $p$-adic local Langlands correspondence (that is, the construction
of~$\Pi_x$) that there is an embedding
$$\pi_{\sm}(r_x^{\univ})\otimes \pi_{\alg}(r_x^{\univ})
\hookrightarrow \Pi_x^{\lalg},$$
where
$\pi_{\sm}(r_x^{\univ})=r_p^{-1}(\WD(r_x^{\univ})^{F-\mathrm{ss}})$
 is the smooth representation of $G$
corresponding to the Weil--Deligne representation associated to $r_x^{\univ}$ by the classical Langlands
correspondence~$r_p$ (normalised as in~Section~\ref{subsec:notation}),
and $\pi_{\alg}(r_x^{\univ})$ is the algebraic
representation of $G$ whose restriction to $K$ is equal to $\sigma$.
Indeed, if the representation~$r_x^\univ$ is irreducible,
then (for a Zariski dense set of~$x$)~$\Pi_x$ is a completion of~$\pi_{\sm}(r_x^{\univ})\otimes
\pi_{\alg}(r_x^{\univ})$ by the main result of~\cite{berger-breuil}
(see in particular~\cite[Thm.\ 4.3.1]{berger-breuil}), while in the
case that~$r_x^\univ$ is reducible, the result follows from the
explicit description of $\Pi_x$ in~\cite{MR2667890}.

  Since we have already noted that $\Hom_K(\sigma, \Pi_x)$ is one-dimensional,
  we find that in fact
$$\Hom_K(\sigma, \Pi_x)\cong
\Hom_K\bigl(\sigma,  \pi_{\sm}(r_x^{\univ})\otimes \pi_{\alg}(r_x^{\univ})\bigr)
\cong \pi_{\sm}(r_x^{\univ})^K,$$
and the right-hand side of this isomorphism is indeed a one-dimensional
vector space on which $\cH(\sigma)$ acts via the specialisation of $\eta$.
  \end{proof}

Let~$R_p$ act on $\wP^{\psi}\wtimes_{\OO} \Eins^{\univ}$ via the
isomorphism $R_p\cong R_p^{\psi}\wtimes_{\OO} \Lambda$, where (as
throughout this section) the action
of $R_p^{\psi}$ on $\wP^{\psi}$ is via the isomorphism
$R_p^{\psi}\cong \End_{\dualcat^{\psi}(\OO)}(\wP^{\psi})$ constructed in \cite{paskunasimage}.
\begin{thm}\label{final_comparison} $\wP^{\psi}\wtimes_{\OO} \Eins^{\univ}$ is a projective envelope of $\pi^{\vee}$ in
$\dualcat(\OO)$, and the action of $R_p$ on $\wP^{\psi}\wtimes_{\OO} \Eins^{\univ}$ is arithmetic.
\end{thm}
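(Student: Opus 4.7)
The plan is to verify that $M := \wP^{\psi} \wtimes_{\OO} \Eins^{\univ}$, endowed with the $R_p$-action arising from the isomorphism $R_p \cong R_p^{\psi} \wtimes_{\OO} \Lambda$ of (\ref{iso_twist_univ}), satisfies the axioms (AA1)--(AA4) of Section \ref{subsec: axioms for arithmetic action} in the degenerate case $d = 0$ (so that $R_\infty = R_p$). Once this is in hand, both conclusions of the theorem fall out of the structural results of Section \ref{sec: proof that arithmetic action is unique}.

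The bulk of the axiom verification is already done. Axiom (AA2) is Proposition \ref{AA2_holds}; axiom (AA3) is Proposition \ref{M_sigma_CM_loc_free}; and axiom (AA4) is Proposition \ref{AA4_holds} (with $M'(\sigma^\circ)$ playing the role of $M_\infty(\sigma^\circ)$). What remains is (AA1): the finite generation of $M$ over $R_p[[K]]$. For this I would invoke the fact, established in \cite{paskunasimage}, that $\wP^{\psi}$ is finitely generated as an $R_p^{\psi}[[K]]$-module, together with the observation that $\Eins^{\univ}$ is free of rank one over $\Lambda$ with trivial $K$-action. Since completed tensor products commute appropriately with completed group algebras, the completed tensor product $M$ is finitely generated over $R_p^{\psi}[[K]] \wtimes_{\OO} \Lambda \cong R_p[[K]]$, where the identification comes from (\ref{iso_twist_univ}) and is compatible with the prescribed $R_p$-action by construction.

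Once (AA1)--(AA4) are confirmed, $M$ is by definition an $\OO[G]$-module with an arithmetic action of $R_p$, and Theorem \ref{thm: Minfty is projective} gives that $M$ is projective in $\dualcat(\OO)$. Applying Proposition \ref{cut_it_out} in the trivial setting $d = 0$, $A = \OO$, yields an isomorphism $M \cong \wP$ in $\dualcat(\OO)$, exhibiting $M$ as a projective envelope of $\pi^{\vee}$. Finally, transporting the arithmetic action along this isomorphism and invoking the uniqueness statement of Theorem \ref{unique_arithmetic} shows that the $R_p$-action on $M$ coincides with the canonical arithmetic action on $\wP$ of Proposition \ref{it_exists}, completing the proof.

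The step I expect to require the most care is the verification of (AA1), not because it is deep, but because one must track the identification $R_p^{\psi}[[K]] \wtimes_{\OO} \Lambda \cong R_p[[K]]$ compatibly with the $R_p$-module structure implicit in the statement; all the other axioms can simply be cited from the propositions established earlier in the section.
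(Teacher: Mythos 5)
Your proposal is correct and follows essentially the same route as the paper: one verifies (AA1)--(AA4) with $d=0$ exactly as you do (with (AA1) coming from the finite generation of $\wP^{\psi}$ over $R_p^{\psi}[[K]]$, via \cite[Prop.\ 6.1]{paskunasBM} and topological Nakayama, plus freeness of $\Eins^{\univ}$ over $\Lambda$), after which arithmeticity holds by definition. The only cosmetic difference is that the paper concludes the projective-envelope claim by citing Theorem~\ref{main} directly (applied with $d=0$ and $M_\infty=\wP^{\psi}\wtimes_{\OO}\Eins^{\univ}$), whereas you unpack that theorem into its ingredients (Theorem~\ref{thm: Minfty is projective}, Proposition~\ref{cut_it_out}, Theorem~\ref{unique_arithmetic}), which amounts to the same argument.
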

\begin{proof}
We will show that the action of $R_p$ on $\wP^{\psi}\wtimes_{\OO}
\Eins^{\univ}$ satisfies the axioms (AA1) -- (AA4) with $d=0$; then
the action is arithmetic by definition, and $\wP^{\psi}\wtimes_{\OO}
\Eins^{\univ}$ is a projective envelope of $\pi^{\vee}$
in $\dualcat(\OO)$ by Theorem~\ref{main} (applied with $d= 0 $
and with $M_{\infty}$ taken to be $\wP^{\psi}\cotimes_{\OO} \Eins^{\univ}$).

It is shown in \cite[Propositition 6.1]{paskunasBM} that
$\F\wtimes_{R_p^{\psi}}\wP^{\psi}$ is a finitely generated
$\OO[[K]]$-module, so the topological version of Nakayama's
lemma implies that $\wP^{\psi}$ is a finitely generated $R_p^{\psi}[[K]]$-module. Since $\Eins^{\univ}$ is a
free $\Lambda$-module of rank $1$, $\wP^{\psi}\wtimes_{\OO}\Eins^{\univ}$ is a finitely generated module over
$(R_p^{\psi}\wtimes_{\OO} \Lambda)[[K]]$, and so (AA1) holds. Proposition \ref{AA2_holds}
implies that (AA2) holds. Proposition \ref{M_sigma_CM_loc_free}
implies that (AA3) holds (indeed, it shows that the support
of~$M'(\sigma^\circ)$ is all of~$R_p(\sigma)[1/p]$).
Proposition \ref{AA4_holds} implies that (AA4) holds.\end{proof}

  Recall that for each fixed
central character $\psi: Z\rightarrow \OO^{\times}$, there is an exact functor
$\cV$ from $\dualcat^{\psi}(\OO)$ to the category of continuous $G_{\Qp}$-representations
on compact $\OO$-modules. This is a modification of the functor introduced by Colmez
in \cite{MR2642409}, see \cite[\S 5.7]{paskunasimage} for details (we additionally have to twist
the functor in \cite{paskunasimage} by the inverse of the cyclotomic character to get the
desired relationship between the determinant of the Galois representations and the
central character of $\GL_2(\Qp)$-representations.) If $\Pi$ is an admissible unitary $E$-Banach space representation
of $G$ with central character $\psi$, and $\Theta$ is an open bounded $G$-invariant lattice
in $\Pi$ then the Schikhof dual of $\Theta$  is an
object of $\dualcat^{\psi}(\OO)$ and $\cV(\Pi):=\cV(\Theta^d)[1/p]$
does not depend on  the choice of $\Theta$.

The representation $\wP^{\psi}$ satisfies the conditions (N0), (N1), (N2) of
\cite[\S4]{paskunasBM} by \cite[Prop. 6.1]{paskunasBM}. In particular, we have
$\cV(\wP^{\psi})\cong r^{\univ, \psi}$ as $R^{\psi}_p[[G_{\Qp}]]$-modules.

Let $R_{\infty}=R_p[[x_1, \ldots, x_d]]$ and let $M_{\infty}$ be an $R_{\infty}[G]$-module satisfying
the axioms (AA1)--(AA4). To $x\in \mSpec R_{\infty}[1/p]$ we associate a unitary $\kappa(x)$-Banach
space representation of $G$,
$\Pi_{\infty, x}:=\Hom_{\OO}^{\cont}(M_{\infty}\wtimes_{R_{\infty}} \OO_{\kappa(x)}, E).$
The map $R_p\rightarrow R_{\infty}$ induces a map $R_p\rightarrow \kappa(x)$ and we let
 $r_x^{\univ}:= r^{\univ}\otimes_{R_p} \kappa(x)$.

\begin{cor}\label{specialize} We have an isomorphism of Galois representations $\cV(\Pi_{\infty,x})\cong r_x^{\univ}$.
In particular, $\Pi_{\infty, x}\neq 0$ for all $x\in \mSpec R_{\infty}[1/p]$.
\end{cor}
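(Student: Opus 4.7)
The plan is to combine Theorem~\ref{main} with Theorem~\ref{final_comparison} to reduce the computation of $\cV(\Pi_{\infty,x})$ to known computations for $\wP^{\psi}$ and for $\Eins^{\univ}$. By Theorem~\ref{final_comparison}, $\wP^{\psi}\wtimes_{\OO}\Eins^{\univ}$ carries an arithmetic action of $R_p$ and is a projective envelope of $\pi^{\vee}$ in $\dualcat(\OO)$. Applying Theorem~\ref{main} to $M_\infty$ therefore yields an isomorphism in $\dualcat(R_\infty)$
\[
M_\infty \;\cong\; (\wP^{\psi}\wtimes_{\OO}\Eins^{\univ})\wtimes_{R_p} R_\infty,
\]
with $R_p$ acting on $\wP^{\psi}\wtimes_{\OO}\Eins^{\univ}$ via the isomorphism $R_p\cong R_p^{\psi}\wtimes_{\OO}\Lambda$ of~\eqref{iso_twist_univ}.

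First I would unwind what this gives at a closed point $x\in\mSpec R_\infty[1/p]$. Let $y$ be the image of $x$ in $\mSpec R_p[1/p]$, and let $(y_\psi,z)\in \mSpec R_p^{\psi}[1/p]\times \mSpec\Lambda[1/p]$ be its image under the isomorphism $R_p[1/p]\cong (R_p^{\psi}\wtimes_{\OO}\Lambda)[1/p]$, so that $r_x^{\univ}\cong r_{y_\psi}^{\univ,\psi}\otimes \Eins^{\univ}_z$ as $\kappa(x)$-representations of $G_{\Q_p}$. Because the patching variables $x_1,\dots,x_d$ act trivially on the residue field $\kappa(x)$ after inverting $p$, the displayed isomorphism descends to an isomorphism
\[
M_\infty\wtimes_{R_\infty}\OO_{\kappa(x)} \;\cong\; \bigl(\wP^{\psi}\wtimes_{R_p^{\psi}}\OO_{\kappa(y_\psi)}\bigr)\wtimes_{\OO_{\kappa(y_\psi)}} \bigl(\Eins^{\univ}\wtimes_{\Lambda}\OO_{\kappa(z)}\bigr),
\]
after possibly enlarging the coefficient field so as to contain $\kappa(x)$. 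Dualizing gives
\[
\Pi_{\infty,x}\;\cong\; \Pi_{y_\psi}\otimes\bigl(\Eins^{\univ}_z\circ\det^{-1}\bigr),
\]
where $\Pi_{y_\psi}:=\Hom^{\cont}_{\OO}(\wP^{\psi}\wtimes_{R_p^{\psi}}\OO_{\kappa(y_\psi)},E)$ is the admissible unitary Banach space representation attached to $y_\psi$ via the construction of~\cite{paskunasimage}, and the action of $G$ on $\Eins^{\univ}_z$ is via the inverse determinant as in the conventions fixed at the start of Section~\ref{sec:comparison}.

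Next I would apply Colmez's functor $\cV$. The functor is compatible with twisting by characters of $G$ in the sense that if $\delta\colon\Q_p^{\times}\to E^{\times}$ is a continuous character and $\Pi$ an admissible unitary Banach space representation, then $\cV(\Pi\otimes(\delta\circ\det))\cong \cV(\Pi)\otimes \delta^{-1}$ (the inverse arising from the twist of $\cV$ by the inverse cyclotomic character recorded in~\cite[\S 5.7]{paskunasimage}, together with the convention of Section~\ref{sec:comparison} that $G$ acts on $\Eins^{\univ}$ via the inverse determinant). Since $\cV(\wP^{\psi})\cong r^{\univ,\psi}$ specializes at $y_\psi$ to $r_{y_\psi}^{\univ,\psi}$, we obtain
\[
\cV(\Pi_{\infty,x})\;\cong\; r_{y_\psi}^{\univ,\psi}\otimes \Eins^{\univ}_z\;\cong\; r_x^{\univ},
\]
which is the desired identification.

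For the final assertion, since $r_x^{\univ}$ is a non-zero (in fact two-dimensional) $\kappa(x)$-representation of $G_{\Q_p}$, the exactness and faithfulness properties of $\cV$ on objects arising from admissible unitary Banach space representations force $\Pi_{\infty,x}\ne 0$ for every $x\in\mSpec R_\infty[1/p]$. The main obstacle in carrying out this plan will be keeping the normalizations straight: the twist of $\cV$ by the inverse cyclotomic character, the convention that $G$ acts on $\Eins^{\univ}$ through the inverse determinant, and the identification~\eqref{iso_twist_univ} must all line up so that the isomorphism $r_x^{\univ}\cong r_{y_\psi}^{\univ,\psi}\otimes\Eins^{\univ}_z$ on the Galois side matches the tensor decomposition of $\Pi_{\infty,x}$ on the automorphic side.
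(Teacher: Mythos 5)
Your overall strategy matches the paper's: reduce via Theorem~\ref{main} and Theorem~\ref{final_comparison} to the case where $M_\infty$ is replaced by $\wP^{\psi}\wtimes_{\OO}\Eins^{\univ}$, specialize at $x$ to obtain a tensor decomposition of $\Pi_{\infty,x}$, and then apply $\cV$ using its compatibility with twisting. This is also exactly how the paper proceeds (the paper additionally invokes Theorem~\ref{unique_arithmetic} explicitly to identify $\wP$ with $\wP^{\psi}\wtimes_{\OO}\Eins^{\univ}$, but this is already baked into your application of Theorem~\ref{main}).

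However, you have two compensating sign errors in the bookkeeping. When you dualize $\bigl(\wP^{\psi}\wtimes_{R_p^{\psi}}\OO_{\kappa(y_\psi)}\bigr)\wtimes \bigl(\Eins^{\univ}\wtimes_\Lambda\OO_{\kappa(z)}\bigr)$, the $G$-action on the one-dimensional factor $\Eins^{\univ}_z$ \emph{before} dualizing is via $(\Eins^{\univ}_z\circ\det)^{-1}$ (that is the convention fixed at the start of Section~\ref{sec:comparison}), and taking $\Hom^{\cont}_\OO(-,E)$ inverts this, so the correct decomposition is
$\Pi_{\infty,x}\cong \Pi_{y_\psi}\otimes(\Eins^{\univ}_z\circ\det)$,
not $\Pi_{y_\psi}\otimes(\Eins^{\univ}_z\circ\det^{-1})$ as you write. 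Correspondingly, your claimed twisting compatibility $\cV(\Pi\otimes(\delta\circ\det))\cong\cV(\Pi)\otimes\delta^{-1}$ is also off by a sign; the correct statement, and the one the paper uses, is $\cV(\Pi\otimes(\delta\circ\det))\cong\cV(\Pi)\otimes\delta$ (the fixed cyclotomic twist in the normalization of $\cV$ does not affect its behavior under twisting by a further character). Because your two errors cancel, you do land on $\cV(\Pi_{\infty,x})\cong r_{y_\psi}^{\univ,\psi}\otimes\Eins^{\univ}_z\cong r_x^{\univ}$, but both intermediate steps should be corrected.
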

\begin{proof} Theorem \ref{main} allows us to assume that $R_{\infty}=R_p$ and
$M_{\infty}=\wP$ is a projective envelope of $\pi^{\vee}$ in $\dualcat(\OO)$. Since
an arithmetic action of $R_p$ on $\wP$ is unique by  Theorem \ref{unique_arithmetic},
using Theorem \ref{final_comparison} we may assume that
$R_{\infty}=R_p^{\psi}\wtimes_{\OO} \Lambda$ and $M_{\infty}=\wP^{\psi}\wtimes_{\OO} \Eins^{\univ}$.
Then, with the notation introduced in the course of the proof of Proposition \ref{AA4_holds},  $x$ corresponds to a pair $(y, z)$,
$\Pi_{\infty, x}= \Pi_x\cong \Pi_y \otimes (\Eins^{\univ}_z\circ \det)$
as in the proof of Proposition \ref{AA4_holds}. It follows from
\cite[Lem. 4.3]{paskunasBM} that $\cV(\Pi_y)\cong r_y^{\univ, \psi}$. Since $\cV$ is compatible
with twisting by characters we have $\cV(\Pi_x)\cong \cV(\Pi_y)\otimes \Eins^{\univ}_z\cong
r^{\univ}_x$, as required.
\end{proof}

\begin{cor} $R_{\infty}$ acts faithfully on $M_{\infty}$.
\end{cor}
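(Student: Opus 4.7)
The plan is to bootstrap faithfulness directly from the non-vanishing result in Corollary~\ref{specialize}. Suppose $r\in R_{\infty}$ annihilates $M_\infty$; I will argue that $r=0$. For any closed point $x\in\mSpec R_\infty[1/p]$, write $\kappa(x)$ for its residue field and consider the non-zero Banach space $\Pi_{\infty,x}=\Hom^{\cont}_\OO(M_\infty\wtimes_{R_\infty}\OO_{\kappa(x)},E)$, whose non-vanishing is precisely Corollary~\ref{specialize} (via $\cV(\Pi_{\infty,x})\cong r_x^{\univ}\ne 0$). The element $r$ acts as zero on $M_\infty$, hence on $M_\infty\wtimes_{R_\infty}\OO_{\kappa(x)}$; after inverting $p$, its image in $\kappa(x)$ acts as zero on the non-zero $\kappa(x)$-vector space $\Pi_{\infty,x}^\vee$ and must therefore vanish. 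Thus $r$ lies in every maximal ideal $\mm_x$ of $R_\infty[1/p]$.

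It then remains to show that the intersection of these maximal ideals is zero. For this, I would use that $R_\infty = R_p[[x_1,\ldots,x_d]]$ is $\OO$-flat and reduced. The $\OO$-flatness gives the inclusion $R_\infty\hookrightarrow R_\infty[1/p]$, so it is enough to prove $r=0$ in $R_\infty[1/p]$; and the reducedness of $R_\infty$ follows from the fact that under Assumption~\ref{assumption: rbar is generic enough}, the obstruction group $H^2(G_{\Qp},\ad\rbar)$ vanishes, making $R_p$ formally smooth (hence reduced) over~$\OO$, and a power series ring over a reduced ring is reduced.

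Given these two properties, the standard fact that closed points of the rigid-analytic generic fibre $\Sp R_\infty^{\rig}$ are Zariski-dense in each irreducible component — equivalently, the intersection of the maximal ideals of $R_\infty[1/p]$ coincides with its nilradical — forces $r=0$ in $R_\infty[1/p]$, and hence in $R_\infty$.

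The only real content of the corollary is Corollary~\ref{specialize} itself, which has already been proved; everything else is commutative-algebra packaging, and the main thing to be careful about is simply verifying that $R_\infty$ is $\OO$-flat and reduced so that the passage from ``$r\in\bigcap_x\mm_x$'' to ``$r=0$'' is legitimate.
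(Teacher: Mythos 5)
Your overall strategy is the same as the paper's: Corollary~\ref{specialize} shows that $M_\infty\wtimes_{R_\infty}\kappa(x)$ is non-zero for every $x\in\mSpec R_\infty[1/p]$, so the annihilator of $M_\infty$ lies in every maximal ideal of $R_\infty[1/p]$, and one concludes using the Jacobson property of $R_\infty[1/p]$ together with reducedness and $\OO$-flatness of $R_\infty$ (the paper compresses this last step into ``since $R_p$ and hence $R_\infty$ are reduced''). The genuine gap is in your justification of reducedness and flatness. It is not true that Assumption~\ref{assumption: rbar is generic enough} forces $H^2(G_{\Qp},\ad\rbar)=0$: by local Tate duality this group is dual to $\Hom_{G_{\Qp}}(\rbar,\rbar\otimes\omega)$, and when $\rbar\cong\bigl(\begin{smallmatrix}1&\ast\\0&\omega\end{smallmatrix}\bigr)\otimes\chi$ --- a case explicitly allowed by the assumption (only $\bigl(\begin{smallmatrix}\omega&\ast\\0&1\end{smallmatrix}\bigr)\otimes\chi$ is excluded), and indeed the headline new case of the paper --- the composite $\rbar\twoheadrightarrow\omega\chi\hookrightarrow\rbar\otimes\omega$ is a non-zero element of this $\Hom$ space. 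So $H^2\neq 0$ there, the deformation problem is obstructed, and in fact $R_p$ is not formally smooth over $\OO$ in this case; your argument for reducedness (and for the $\OO$-flatness you also need in order to pass from $R_\infty[1/p]$ back to $R_\infty$) therefore breaks down in precisely one of the cases the corollary must cover. (For irreducible $\rbar$ with $p\ge 5$, and for generic reducible $\rbar$, your unobstructedness computation is fine.)

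The repair is the one the paper implicitly relies on: reducedness and $\OO$-torsion-freeness of $R_p$ are not obtained from unobstructedness, but from the isomorphism $R_p\cong R_p^{\psi}\wtimes_{\OO}\Lambda$ of~\eqref{iso_twist_univ}, with $\Lambda$ formally smooth over $\OO$, together with the known structure of $R_p^{\psi}$ in all the cases at hand (including the non-generic one) established in \cite{paskunasimage}, which gives that $R_p^{\psi}$ is reduced and $\OO$-torsion free. With that input, the rest of your argument --- support equal to all of $\mSpec R_\infty[1/p]$ via Corollary~\ref{specialize}, Jacobson plus reduced giving vanishing in $R_\infty[1/p]$, and flatness giving $R_\infty\hookrightarrow R_\infty[1/p]$ --- is correct and coincides with the paper's (terser) proof.
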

\begin{proof} It follows from Corollary \ref{specialize} that $M_{\infty}\otimes_{R_{\infty}}\kappa(x)$
is non-zero for all $x\in \mSpec R_{\infty}[1/p]$. Since $R_p$ and hence $R_{\infty}$ are reduced
we deduce that the action is faithful.
\end{proof}

We now use the results of~\cite{paskunasimage} to
describe~$\F\wtimes_{R_p} \wP$. We will use this result in Corollary~\ref{cor:
  the m-torsion in completed cohomology} below to describe the
$\m$-torsion in the completed cohomology of a modular curve,
where~$\m$ is a maximal ideal in a Hecke algebra.

\begin{prop}
  \label{prop: an atome automorphe by any other name would smell as sweet}
  The representation~$\pi^\vee$ occurs as a subquotient
  of~$\F\wtimes_{R_p} \wP$ with multiplicity one. More precisely, if we let
  $\kappa(\rbar):=(\F\wtimes_{R_p} \wP)^{\vee}$ then the $G$-socle filtration of $\kappa(\rbar)$ is described as follows:
   \begin{enumerate}
\item If $\rbar$ is irreducible then
  $\kappa(\rbar)\cong \pi$.
\item  If $\rbar$ is a generic
  non-split extension of characters \emph{(}so the ratio of the two
  characters is not $1, \omega^{\pm 1}$\emph{)}, the $G$-socle
  filtration of $\kappa(\rbar)$ has length two, with graded pieces
  consisting of $\pi$ and of the other principal series representation
  in the block of $\pi$.
\item  If
  $\rbar\cong \bigl ( \begin{smallmatrix} 1 & \ast \\ 0 &
    \omega \end{smallmatrix}\bigr)\otimes\chi$ then the $G$-socle
  filtration of $\kappa(\rbar)$ has length three and
  the graded pieces are $\pi$, the twist by $\chi\circ \det$ of the
  Steinberg representation, and two copies of the one-dimensional
  representation $\chi\circ\det $.
\end{enumerate}
\end{prop}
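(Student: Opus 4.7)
The plan is to combine Theorem~\ref{final_comparison} with the block-structure results of~\cite{paskunasimage} on injective envelopes of~$\pi$ in $\Mod^{\ladm,\psi}_G(\F)$. Theorem~\ref{final_comparison} provides an identification $\wP \cong \wP^{\psi} \wtimes_{\OO} \Eins^{\univ}$ compatible with the $R_p$-action via the isomorphism $R_p \cong R_p^{\psi} \wtimes_{\OO} \Lambda$ of~\eqref{iso_twist_univ}. The $G$-action on~$\Eins^{\univ}$ factors through the (inverse) determinant, and modulo~$\m_{\Lambda}$ the universal character of~$G_{\Qp}$ becomes trivial, so the induced $G$-action on $\Eins^{\univ}/\m_{\Lambda} \cong \F$ is trivial. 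Consequently
\[
\F \wtimes_{R_p} \wP \;\cong\; \F \wtimes_{R_p^{\psi}} \wP^{\psi}
\]
as $\F[G]$-modules, and dualising identifies $\kappa(\rbar)$ with $(\wP^{\psi})^{\vee}[\m_{R_p^{\psi}}]$, the $\m_{R_p^{\psi}}$-torsion in an injective envelope of~$\pi$ in $\Mod^{\ladm,\psi}_G(\cO)$.

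The multiplicity-one assertion for~$\pi$ is then automatic: since $\wP^{\psi}$ is a projective envelope of~$\pi^{\vee}$, the $\F$-vector space $\Hom_G(\pi,\kappa(\rbar)) \cong \Hom_{\dualcat^{\psi}(\F)}(\F \wtimes_{R_p^{\psi}} \wP^{\psi}, \pi^{\vee})$ is one-dimensional, so~$\pi$ embeds uniquely into~$\kappa(\rbar)$; because an injective envelope has irreducible socle, this embedding realises $\soc_G \kappa(\rbar) \cong \pi$ in all three cases.

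The remainder of the argument is a case-by-case appeal to the block decomposition of $\Mod^{\ladm,\psi}_G(\F)$ classified in~\cite{paskunasimage} and to the explicit structure of $(\wP^{\psi})^{\vee}[\m_{R_p^{\psi}}]$ computed there (the block containing~$\pi$ depends only on~$\rbar^{\ssg}$ via Colmez's functor~$\cV$). In Case~(1) the block of~$\pi$ contains only~$\pi$ itself, so $\kappa(\rbar) \cong \pi$. In Case~(2) the block consists of precisely two irreducible principal series; combining the $\Ext^1$-computations in~\cite{paskunasimage} with the fact (from Lemma~\ref{lem:rbar has a unique Serre weight, and the deformation ring is smooth, and given by Tp}) that $R_p^{\psi}$ is formally smooth over~$\OO$ of relative dimension~$1$ pins down the length-two socle filtration, with graded pieces~$\pi$ and the other principal series in the block. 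In Case~(3), the non-generic reducible case, the block contains three irreducible objects --- the principal series~$\pi$, a twist of the Steinberg, and the character $\chi \circ \det$ --- and the explicit description of the injective envelope in this block given in~\cite{paskunasimage} produces the length-three socle filtration stated, in particular yielding the multiplicity-two appearance of $\chi \circ \det$ at the top.

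The principal obstacle will be Case~(3): here one genuinely needs the full Colmez-functor machinery and the Breuil--M\'ezard-style cycle calculations of~\cite{paskunasimage}, as the non-generic block has a more intricate structure than can be detected from Serre-weight theory alone. By contrast, Cases~(1) and~(2) could in principle be handled with softer methods relying only on the classification of irreducible mod~$p$ representations of~$G$ and the regularity results for the relevant deformation rings established in Section~\ref{sec:Galois deformation rings and Hecke algebras}. The new content of the proposition, beyond what is already in~\cite{paskunasimage}, is merely the passage from fixed to variable central character, which is harmless thanks to the triviality of $\Eins^{\univ}/\m_\Lambda$ noted in the first paragraph.
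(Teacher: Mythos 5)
Your proposal follows essentially the same route as the paper: reduce to the fixed-central-character setting via Theorem~\ref{final_comparison} (so that $\F\wtimes_{R_p}\wP\cong\F\wtimes_{R_p^{\psi}}\wP^{\psi}$), and then read off the structure of $(\F\wtimes_{R_p^{\psi}}\wP^{\psi})^{\vee}$ from the explicit constructions of~\cite{paskunasimage}; the paper does exactly this, phrasing the second step through Lemma~3.7 of~\cite{paskunasimage} (hypotheses (H1)--(H4)) to identify $\F\wtimes_{R_p^{\psi}}\wP^{\psi}$ with the explicitly constructed object $Q$ there, and then citing Propositions~6.1, 8.3 and Remark~10.33 of \emph{loc.\ cit.}\ for the socle filtrations.

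Two small caveats. First, your claim that the multiplicity-one assertion is ``automatic'' is an overstatement: the projectivity of $\wP^{\psi}$ only shows that $\Hom_{\dualcat^{\psi}(\OO)}(\F\wtimes_{R_p^{\psi}}\wP^{\psi},\pi^{\vee})$ is one-dimensional, i.e.\ that $\pi$ occurs once in the \emph{socle} of $\kappa(\rbar)$; the statement that $\pi$ occurs with multiplicity one as a \emph{subquotient} is not formal and comes, exactly as in the paper, from the explicit description of the object in~\cite{paskunasimage} (this is the real content in cases (2) and (3)). Since your case-by-case analysis invokes those descriptions anyway, this does not break the argument, but the intermediate claim should not be presented as automatic. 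Second, the parenthetical in your case~(2) asserting that $R_p^{\psi}$ is formally smooth over $\OO$ of relative dimension~$1$, attributed to Lemma~\ref{lem:rbar has a unique Serre weight, and the deformation ring is smooth, and given by Tp}, is incorrect: that lemma concerns the crystalline deformation rings $R_p(\sigma)$ (relative dimension~$2$), and $R_p^{\psi}$ has relative dimension~$3$. This fact is not needed, since the length-two socle filtration in the generic reducible case is again part of the explicit computation in~\cite{paskunasimage}, but the citation as written is wrong.
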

\begin{proof} We choose any continuous character $\psi$ such that $\psi\varepsilon^{-1}$ lifts
$\det \rbar$. It follows from Theorem \ref{final_comparison} that
$\F\wtimes_{R_p} \wP\cong \F \wtimes_{R^{\psi}_p} \wP^{\psi}$. Since we can identify
the endomorphism ring of $\wP^{\psi}$ with $R^{\psi}_p$, see \eqref{def_endo}, it follows from
Lemma 3.7 in \cite{paskunasimage} applied with $S=\pi^{\vee}$ that any $Q$ in $\dualcat^{\psi}(\OO)$
satisfying the hypotheses (H1)--(H4) of \cite[\S 3]{paskunasimage} is isomorphic to
$\F\wtimes_{R_p^{\psi}} \wP^{\psi}$, so that $\kappa(\rbar)\cong Q^{\vee}$. (We leave the reader to check that (H5) is not used to prove this part of Lemma 3.7 in \cite{paskunasimage}.) In all the cases $Q$ has been constructed explicitly
in \cite{paskunasimage} and it is immediate from the construction of $Q$ that the assertions about the socle filtration hold,
see \cite[Propositions 6.1, 8.3, Remark 10.33]{paskunasimage}.
\end{proof}

\begin{rem}\label{compare_to_atome_automorphe} In the first two cases of Proposition
\ref{prop: an atome automorphe by any other name would smell as sweet}, $\kappa(\rbar)$ coincides
with what Colmez calls the \textit{atome automorphe} in \cite[\S VII.4]{MR2642409}. In the last case, $\kappa(\rbar)$
has an extra  copy of $\chi\circ \det$. This has to do with the fact that Colmez requires that his atome automorphe lift to irreducible unitary Banach space representations of $G$.
\end{rem}

\begin{cor}\label{identify_ring} There is a natural isomorphism $R_p\iso \End_{\dualcat(\OO)}(\wP)$.
\end{cor}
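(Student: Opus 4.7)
The plan is to exhibit the natural ring homomorphism $\rho\colon R_p \to A := \End_{\dualcat(\OO)}(\wP)$ coming from the arithmetic action of $R_p$ on $\wP$ furnished by Theorem~\ref{final_comparison} (and unique by Theorem~\ref{unique_arithmetic}) as an isomorphism, proving injectivity and surjectivity separately.

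For injectivity, I would invoke Theorem~\ref{main} to write $M_\infty \cong \wP \wtimes_{R_p} R_\infty$ in $\dualcat(R_\infty)$, together with the corollary immediately preceding Proposition~\ref{prop: an atome automorphe by any other name would smell as sweet}, which asserts that $R_\infty$ acts faithfully on $M_\infty$. Any $r \in \ker\rho$ would annihilate $\wP$, hence (through its image in $R_\infty$) annihilate $M_\infty$, and therefore must vanish already in $R_p$.

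For surjectivity, I would first use the identification $\wP \cong \wP^\psi \wtimes_\OO \Eins^\univ$ as objects of $\dualcat(R_p)$ provided by Theorem~\ref{final_comparison} (relying on uniqueness of projective envelopes in $\dualcat(\OO)$ and on Theorem~\ref{unique_arithmetic}), under which $R_p \cong R_p^\psi \wtimes_\OO \Lambda$ via~\eqref{iso_twist_univ} acts factor-by-factor. It then suffices to show that the natural map $R_p^\psi \wtimes_\OO \Lambda \to \End_{\dualcat(\OO)}(\wP^\psi \wtimes_\OO \Eins^\univ)$ is surjective, knowing that $R_p^\psi \cong \End_{\dualcat^\psi(\OO)}(\wP^\psi)$ by~\eqref{def_endo} (this coincides with $\End_{\dualcat(\OO)}(\wP^\psi)$ since $\dualcat^\psi(\OO)$ is a full subcategory of $\dualcat(\OO)$) and $\Lambda \cong \End_{G_{\Qp}}^\cont(\Eins^\univ)$ by the universal property of $\Lambda$. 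I would establish this K\"unneth-type factorization of endomorphism rings by adjunctions of the same flavor as those used in the proof of Proposition~\ref{manipulate}, exploiting that $\Eins^\univ$ is free of rank one as a pseudocompact $\Lambda$-module and that $\wP^\psi$ is projective in $\dualcat^\psi(\OO)$.

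The main obstacle will be making the K\"unneth-type identification rigorous, since $G$ acts on $\Eins^\univ$ through the composition of the inverse determinant with the universal character, so the $G$-actions on the two factors interact via the center $Z$. Should the direct adjunction argument become technically awkward, a safer fallback is to argue via capture: by Proposition~\ref{prop:capture implies unique action}, an element $\phi \in A$ is determined by its action on each $M(\sigma^\circ)[1/p]$, which by Proposition~\ref{M_sigma_CM_loc_free} is locally free of rank one over $R_p(\sigma)[1/p]$; thus $\phi$ produces a compatible system $\{\phi_\sigma\} \in \prod_\sigma R_p(\sigma)[1/p]$, and one shows, using the analogous specialization at every $\lambda\colon \Lambda \to \OO'$ (which yields a twisted projective envelope $\wP^\psi \otimes (\delta_\lambda \circ \det^{-1})$ with endomorphism ring $R_p^{\psi\delta_\lambda^2} \otimes \OO'$ via Lemma~\ref{twist_and_shout_1}), that such compatible systems are exactly those originating from $R_p \cong R_p^\psi \wtimes_\OO \Lambda$.
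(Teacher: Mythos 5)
Your injectivity argument is fine (and is a reasonable alternative to the paper's brief assertion): an element of $\ker(R_p\to\End_{\dualcat(\OO)}(\wP))$ kills $\wP\wtimes_{R_p}R_\infty\cong M_\infty$, so faithfulness of the $R_\infty$-action together with the injectivity of $R_p\to R_\infty$ gives the claim. The genuine gap is in surjectivity. Your primary plan — deducing $\End_{\dualcat(\OO)}(\wP^{\psi}\wtimes_\OO\Eins^{\univ})\cong\End_{\dualcat(\OO)}(\wP^{\psi})\wtimes_\OO\Lambda$ by ``K\"unneth-type'' adjunctions as in Proposition~\ref{manipulate} — cannot be made formal: the adjunction \eqref{adjointness_tensor} applies when $G$ acts trivially (or through the coefficients) on one tensor factor, whereas here $G$ acts nontrivially on $\Eins^{\univ}$ through $\det^{-1}$ and the universal character, and no twist by a single character trivialises this action. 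A priori $\End_{\dualcat(\OO)}(\wP)$ could contain $G$-equivariant endomorphisms that do not decompose, and the paper emphasises (Remark in \S\ref{subsec:rings}) that no proof of surjectivity of $R_p\to\End_{\dualcat(\OO)}(\wP)$ avoiding Colmez's functor is known; so nothing of the purely formal flavour you propose can suffice. Your fallback via capture has the same problem in a different guise: capture plus Propositions~\ref{M_sigma_CM_loc_free} and~\ref{prop: generic fibre of cristabelline deformation ring} shows that $\phi$ commutes with $R_p$ and is determined by the elements $\phi_\sigma\in R_p(\sigma)[1/p]$ (this is exactly how Proposition~\ref{prop:commutative} proves commutativity), but the final step — ``such compatible systems are exactly those originating from $R_p$'' — is precisely the assertion to be proved, and you give no mechanism for gluing the $\phi_\sigma$ (or the specialisations at points of $\Spf\Lambda$) into a single element of $R_p$; the natural maps $R_p\to\prod_\sigma R_p(\sigma)[1/p]$ and $R_p\to\prod_\lambda R_p\otimes_{\Lambda,\lambda}\OO'$ are injective but far from surjective, so ``compatibility'' alone does not force membership in $R_p$.

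The missing idea is the paper's reduction of surjectivity to a finite-dimensional statement: since the map $R_p\cong R_p^{\psi}\wtimes_\OO\Lambda\to\End_{\dualcat(\OO)}(\wP)$ is an injection of pseudocompact $\OO$-algebras, $\End_{\dualcat(\OO)}(\wP)$ is a compact $R_p$-module, and by the topological Nakayama lemma it suffices to show that $\F\wtimes_{R_p}\End_{\dualcat(\OO)}(\wP)$ is one-dimensional over $\F$. Projectivity of $\wP$ identifies this with $\Hom_{\dualcat(\OO)}(\wP,\F\wtimes_{R_p}\wP)$, and the one-dimensionality then follows from the fact that $\pi^{\vee}$ occurs with multiplicity one in $\F\wtimes_{R_p}\wP$ — Proposition~\ref{prop: an atome automorphe by any other name would smell as sweet}, which rests on the explicit constructions of~\cite{paskunasimage} (and hence on Colmez's functor). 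Without this input, or something equivalent, your proposal does not close.
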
\begin{proof}
 Theorems~\ref{unique_arithmetic} and~\ref{final_comparison} yield an isomorphism
$\wP \iso \wP^{\psi}\cotimes_{\OO} \Eins^{\univ}$ as $R_p[G]$-modules via
the isomorphism $R_p \iso R_p^{\psi} \cotimes_{\OO} \Lambda$ given by~(\ref{iso_twist_univ}).
Now $\End_{G}^{\cont}(\Eins^{\univ}) = \Lambda$,
while~(\ref{def_endo})
gives a natural isomorphism
$R_p^{\psi} \iso \End_{\dualcat(\OO)}(\wP^{\psi}).$
Thus the corollary amounts to proving that the natural homomorphism
$$\End_{\dualcat(\OO)}(\wP^{\psi}) \cotimes_{\OO} \End_{G}^{\cont}(\Eins^{\univ})
\to \End_{\dualcat(\OO)}(\wP^{\psi}\cotimes_{\OO} \Eins^{\univ})$$
is an isomorphism. The map is an injection of pseudo-compact $\OO$-algebras. This makes the ring 
$\End_{\dualcat(\OO)}(\wP)$ into a compact $R_p$-module.
By the topological version of Nakayama's lemma, in order to show that
the map is surjective it is enough to show that $\F\wtimes_{R_p} \End_{\dualcat(\OO)}(\wP)$
is a one dimensional $\F$-vector space. Since $\wP$ is projective, we have an isomorphism:
$$\F\wtimes_{R_p} \End_{\dualcat(\OO)}(\wP) \cong \Hom_{\dualcat(\OO)}( \wP, \F\wtimes_{R_p} \wP).$$

By Proposition~\ref{prop: an atome automorphe by any other name would smell as
    sweet}, $\pi^{\vee}$ occurs as a subquotient of~$\F\wtimes_{R_p} \wP$ with multiplicity one. Since $\wP$ is a projective envelope of $\pi^{\vee}$
and $\End_{\dualcat(\OO)}(\pi^{\vee})=\F$,  this implies that
$\Hom_{\dualcat(\OO)}( \wP, \F\wtimes_{R_p} \wP)$ is a one dimensional
$\F$-vector space, as required.
\end{proof}

\subsection{Endomorphism rings and deformation rings}
\label{subsec:rings}
We maintain the notation of the previous sections; in particular, $\tP$
denotes a projective envelope of $\pi^{\vee}$ in $\dualcat(\cO)$.
If we write $\tR := \End_{\dualcat(\OO)}(\tP)$, then
the arithmetic action of $R_p$ on $\tP$ provided by
Theorem~\ref{unique_arithmetic} gives a morphism $R_p \to \tR$,
which Corollary~\ref{identify_ring} shows is an isomorphism.
The proof of that Corollary uses the analogous statement proved
in~\cite{paskunasimage} (when the central character is fixed),
a key input to the proof of which
is Colmez's functor from $\GL_2(\Q_p)$-representation to Galois representations.
It is natural to ask (especially in light of possible
generalizations) whether this isomorphism
can be proved using just the methods of the present paper,
without appealing to Colmez's results.
In this subsection we address this question, to the extent that we can.

We begin by noting that
since $\tP$ is a projective envelope
of the absolutely irreducible representation $\pi^\vee$, the ring $\tR$ is
a local ring.
We will furthermore give a proof that it is commutative,
from the perspective of this paper.
As already noted,
this result is not new.   Indeed, in addition to being
a consequence of Corollary~\ref{identify_ring} (and thus,
essentially, of the results of \cite{paskunasimage}),
another proof is given in \cite{CDP} (see Cor.~2.22 of that
paper).   This latter proof uses the capturing techniques that
we are also employing in the present paper, and (since it
is easy to do so) we present a slightly
rephrased version of the argument here, in order to illustrate how
it fits naturally into our present perspective.

\begin{prop}
	\label{prop:commutative}
The ring $\tR$ is commutative.
\end{prop}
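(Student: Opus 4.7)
The approach is to use the capture machinery of Proposition \ref{prop:capture implies unique action} to reduce commutativity of $\tR$ to the commutativity of its image in $\End_{\OO[[K]]}(M(\sigma^\circ)[1/p])$ for each $\sigma=\sigma_{a,b}$, and then deduce the latter from axioms (AA3) and (AA4) combined with the local density statement of Proposition \ref{prop: generic fibre of cristabelline deformation ring}.

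Concretely, fix $\phi,\psi\in\tR$. Their commutator $[\phi,\psi]$ lies in $\End^{\cont}_{\OO[[K]]}(\tP)$, so by Proposition \ref{prop:capture implies unique action} it suffices to show that $[\phi,\psi]$ annihilates $\Hom^{\cont}_{\OO[[K]]}(\tP,\sigma_{a,b}^{\ast})$ for every $a,b$; by Schikhof duality (Section \ref{subsubsec:duals}), this is equivalent to showing that the images of $\phi$ and $\psi$ commute in $\End_{\OO[[K]]}(M(\sigma^\circ)[1/p])$ for every $\sigma=\sigma_{a,b}$.

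The $\tR$-action on $M(\sigma^\circ)$ is induced from the $\tR$-action on $\tP$ by functoriality, and therefore commutes tautologically with the Hecke action of $\cH(\sigma^\circ)$, which arises from endomorphisms of the other variable $\cInd_K^G\sigma^\circ$. By axiom (AA4), the induced $\cH(\sigma)$-action on $M(\sigma^\circ)[1/p]$ factors through $\eta:\cH(\sigma)\to R_p(\sigma)[1/p]$. The key step will be to upgrade commutation with $\cH(\sigma)$ to commutation with the whole ring $R_p(\sigma)[1/p]$. For this, the plan is to localize $M(\sigma^\circ)[1/p]$ at each maximal ideal $\mm_x$ in its support and pass to the $\mm_x$-adic completion: Proposition \ref{prop: generic fibre of cristabelline deformation ring} identifies $\widehat{\cH(\sigma)}_{\mm_y}$ with $\widehat{R_p(\sigma)[1/p]}_{\mm_x}$, so the closures of the images of $\cH(\sigma)$ and of $R_p(\sigma)[1/p]$ inside the endomorphism ring of the completed module coincide. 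Since $\phi$ acts continuously, its commutation with $\cH(\sigma)$ extends by continuity to commutation with $R_p(\sigma)[1/p]$ on each completion; and because $M(\sigma^\circ)[1/p]$ is finitely generated over the noetherian ring $R_p(\sigma)[1/p]$, it embeds into the product of its completions at the maximal ideals in its support (exactly as used in the proof of Theorem \ref{unique_arithmetic}), so $\phi$ is $R_p(\sigma)[1/p]$-linear on all of $M(\sigma^\circ)[1/p]$.

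The conclusion is then immediate from axiom (AA3): since $M(\sigma^\circ)[1/p]$ is locally free of rank one over its support in $R_p(\sigma)[1/p]$, the ring of $R_p(\sigma)[1/p]$-linear endomorphisms is isomorphic to $R_p(\sigma)[1/p]/\Ann\bigl(M(\sigma^\circ)[1/p]\bigr)$, which is commutative. Thus $\phi$ and $\psi$ commute on each $M(\sigma^\circ)[1/p]$, and applying capture finishes the proof. The principal obstacle is the upgrade from $\cH(\sigma)$-linearity to $R_p(\sigma)[1/p]$-linearity; this is the only step not amounting to a direct unpacking of the axioms, and it hinges on Proposition \ref{prop: generic fibre of cristabelline deformation ring} together with the continuity-plus-completions argument.
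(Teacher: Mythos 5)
Your proposal is correct and follows essentially the same route as the paper: capture (Proposition~\ref{prop:capture implies unique action}) reduces everything to the modules $M(\sigmao)[1/p]$, the tautological commutation with $\cH(\sigma)$ is upgraded to $R_p(\sigma)[1/p]$-linearity via Proposition~\ref{prop: generic fibre of cristabelline deformation ring} and the completion/product-embedding argument of Theorem~\ref{unique_arithmetic}, and commutativity then follows from (AA3) since the endomorphism ring of a module locally free of rank one over its support is commutative. The only difference is organizational (the paper first records that $R_p$ is central in $\tR$ and then concludes, while you treat an arbitrary commutator directly), and your spelled-out completion step is exactly what the paper leaves implicit.
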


\begin{proof}We first prove that the image of $R_p$ in $\tR$ lies in the
	centre of $\tR$.  To see this, suppose that $\phi \in \tR$.
	By Proposition~\ref{prop:capture implies unique action}, to show
	that $\phi$ commutes with the action of $R_p$, it suffices to
	show that $\phi$ commutes with the action of $R_p(\sigma)[1/p]$
	on $M(\sigmao)[1/p]$ for each $\sigma$.
	Since the action of $\cH(\sigma)$ on $M(\sigmao)[1/p]$
	depends only on the $G$-action on $\tP$, we see that $\phi$
	commutes with the $\cH(\sigma)$-action on $M(\sigmao)[1/p]$.
	The desired result then follows from Proposition~\ref{prop: generic
	fibre of cristabelline deformation ring}.

        To see that $\tR$ is commutative, we again apply
	Proposition~\ref{prop:capture implies unique action}, by which
	it suffices to show that $\tR$ acts on each $M(\sigmao)[1/p]$
	through a commutative quotient.  This follows from the
	fact that each $M(\sigmao)[1/p]$ is locally free of rank one
	over its support in $\Spec R_p(\sigma)[1/p],$ and the fact
	that (by the result of the previous paragraph) the $\tR$-action
	commutes with the $R_p$-action.
\end{proof}

\begin{remark}
	As for proving the stronger result that the canonical map $R_p \to \tR$ is an isomorphism,
	in the forthcoming paper~\cite{EmertonPaskunas} two of us (ME and VP) will
	establish the {\em injectivity} of the morphism $R_p \to \tR$. (In fact
        we will prove a result in the more general context of \cite{Gpatch};
	in particular, our arguments won't rely on any special aspects
	of the $\GL_2(\Q_p)$ situation, such as the existence of Colmez's
	functors.)
	However, proving the {\em surjectivity} of this morphism seems to be more
	difficult, and we currently don't know a proof of this surjectivity
	that avoids appealing to the theory of Colmez's functor from
	$\GL_2(\Q_p)$-representations to $G_{\Q_p}$-representations.
\end{remark}

\subsection{Speculations in the residually scalar semi-simplification
  case}\label{subsec: wild speculations} Suppose for the rest of this section that $\rbar\cong
\bigl( \begin{smallmatrix}  \chi & \ast\\ 0 &
  \chi\end{smallmatrix}\bigr)$ for some $\chi$; so in particular
$\rbar$ does not satisfy Assumption~\ref{assumption: rbar is generic
  enough}.
It is natural to ask what the modules $M_{\infty}$ constructed in
\cite{Gpatch} look like in this case;
we give a speculative answer below. By twisting we may assume that $\chi$ is the trivial character.
Let $\pi= (\Ind_B^G \omega\otimes \Eins)_{\sm}$, and let $\wP$ be  a projective envelope of $\pi^{\vee}$.
We first give a conjectural description of $\End_{\dualcat(\OO)}(\wP)$,
under the assumption $p>2$.

Let $D^{\ps}$ be a functor from the category $\mathfrak A$ of complete
local noetherian
$\OO$-algebras
with residue field $\F$ to the category of
sets, that assigns to $A\in \mathfrak A$ the set of pairs of  functions
 $(t, d): \gal\rightarrow A$, where:
 \begin{itemize}
 \item $d:\gal \rightarrow A^{\times}$ is a continuous group
   homomorphism, congruent to $\det \rbar$ modulo $\mm_A$,
 \item    $t: \gal\rightarrow A$ is a continuous function with
   $t(1)=2$, and,
 \item  for all $g, h\in \gal$, we have:
   \begin{enumerate}
   \item $t(g)\equiv \tr \rbar(g) \pmod{\mm_A}$;
   \item $t(gh)=t(hg)$;
   \item $d(g) t(g^{-1}h)-t(g)t(h)+ t(gh)=0$.
   \end{enumerate}
 \end{itemize}
(The ``ps'' is for ``pseudocharacter''. By~\cite[Lem.\ 1.9]{chenevierpseudo},
$D^{\ps}(A)$ is the set of pseudocharacters deforming the
pseudocharacter~$(\tr\rbar,\det\rbar)$ associated to~$\rbar$.)
This functor is representable by a complete local noetherian $\OO$-algebra $R^{\ps}$. Let
$(t^{\univ}, d^{\univ}): G_{\Qp}\rightarrow R^{\ps}$ be the universal object.  We expect that there is a
natural isomorphism of $\OO$-algebras
\numequation\label{conjectural_description}
\wE:= \End_{\dualcat(\OO)}(\wP)\cong (R^{\ps}[[G_{\Qp}]]/J)^{\op},
 \end{equation}
where $J$ is the closed two sided ideal of $R^{\ps}[[G_{\Qp}]]$ generated by all the elements of the form
$g^2-t^{\univ}(g)g + d^{\univ}(g)$ for all $g\in G_{\Qp}$, and the
superscript $\op$ indicates the opposite algebra. We note that such an isomorphism has been established in \cite[\S 9]{paskunasimage}, when
the central character is fixed, and we expect that one can deduce \eqref{conjectural_description}
from this using the twisting techniques of the previous subsection.

Let $R_p^{\square}$ be the framed deformation ring of $\rbar$ and let $M_{\infty}$ be the patched
module constructed in \cite{Gpatch} (or the variant for the completed
cohomology of modular curves that we briefly discuss in Section~\ref{sec:
  local global} below) and let $R_{\infty}$ be the patched ring. Then $R_{\infty}$ is an
$R_p^{\square}$-algebra, and the map $R_p^{\square}\rightarrow R_{\infty}$ gives rise to a Galois
representation $r_{\infty}: G_{\Qp}\rightarrow \GL_2(R_{\infty})$ lifting $\rbar$. The pair
$(\tr r_{\infty}, \det r_{\infty})$ gives us a point in $D^{\ps}(R_{\infty})$ and hence a map
$R^{\ps}\rightarrow R_{\infty}$. Hence we obtain a homomorphism of $R^{\ps}$-algebras
$R^{\ps}[[G_{\Qp}]]\rightarrow M_{2\times 2}(R_{\infty})$.

The Cayley--Hamilton theorem implies that this
map is zero on $J$, so we obtain a left action of $R^{\ps}[[G_{\Qp}]]/J$ on
the standard module $R_{\infty}\oplus R_{\infty}$. If we admit \eqref{conjectural_description}
then we get a right action of $\wE$ on $R_{\infty}\oplus R_{\infty}$. We expect that there
are isomorphisms in $\dualcat(R_{\infty})$
\numequation\label{conj_des}
M_{\infty}\cong (R_{\infty}\oplus R_{\infty})\wtimes_{\wE} \wP\cong
R_{\infty}\wtimes_{R_p^{\square}}(R_p^{\square}\oplus R_p^{\square})\wtimes_{\wE} \wP.
\end{equation}
We note that the representation appearing on the right  hand side of this equation has been studied
by Fabian Sander in his thesis \cite{fabian}, in the setting where the central character is fixed. Motivated by
\cite[Thm. 2]{fabian} we expect  $(R_{\infty}\oplus R_{\infty})\wtimes_{\wE} \wP$
to be  projective in the category of pseudocompact $\OO[[K]]$-modules.
We do not expect $(R_{\infty}\oplus R_{\infty})\wtimes_{\wE} \wP$ to be projective
in $\dualcat(\OO)$, so the methods of Section~\ref{sec:
  proof that arithmetic action is unique} cannot directly be applied
to this case.
However, it might
be possible to prove \eqref{conj_des} using Colmez's functor. This would show that $M_{\infty}$
does not depend on the choices made in the patching process.

\section{Local-global compatibility}\label{sec: local global}In this final section, we briefly explain how the results of this
paper give a simple new proof of the local-global compatibility
theorem of~\cite{emerton2010local} (under the hypotheses that we have
imposed in this paper, which differ a little from those
of~\cite{emerton2010local}:
locally at $p$, we have excluded the case
of split $\rbar$, and have allowed a slightly different collection
of indecomposable reducible $\rbar$'s; and in the global context
we consider below, we exclude the possibility of so-called {vexing
	primes}). Applying these considerations to the
patched modules constructed in~\cite{Gpatch} allows us to prove a
local-global compatibility result for the completed cohomology of a
compact unitary group, but for ease of comparison
to~\cite{emerton2010local}, we instead briefly discuss the output of
Taylor--Wiles patching for modular curves.

Patching in this context goes back to~\cite{MR1333036}, but the
precise construction we need is not in the literature. It is, however,
essentially identical to that of~\cite{Gpatch} (or the variant for
Shimura curves presented in~\cite{scholze}), so to keep this paper at
a reasonable length we simply recall the output of the construction
here.

Let $\rhobar:G_\Q\to\GL_2(\F)$ be an absolutely irreducible odd (so
modular, by Serre's conjecture) representation, and assume that $p\ge
5$ and that $\rhobar|_{G_{\Q(\zeta_p)}}$
is irreducible. Write $\rbar:=\rhobar|_{G_{\Qp}}$, and assume that
$\rbar$ satisfies Assumption~\ref{assumption: rbar is generic
  enough}. Write $r^{\univ}:G_{\Qp}\to\GL_2(R_p)$ for the universal
deformation of~$\rbar$. Let~$N(\rhobar)$ be the
prime-to-$p$ conductor of~$\rhobar$; that is, the level of~$\rhobar$
in the sense of~\cite{MR885783}. We assume that if~$q|N(\rhobar)$
with~$q\equiv -1\pmod{p}$ and~$\rhobar|_{G_{\Q_q}}$ is irreducible
then~$\rhobar|_{I_{\Q_q}}$ is also irreducible.

\begin{rem}
  \label{rem: avoiding vexing primes} The last condition we have imposed excludes the
  so-called \emph{vexing primes}~$q$. The assumption that there are no
  vexing primes means that the Galois representations associated to
  modular forms of level~$N(\rhobar)$ are necessarily minimally
  ramified. This assumption can be removed by considering inertial
  types at such primes as in~\cite{MR1639612}. Since the arguments
  using types are standard and are orthogonal to the main concerns of
  this paper, we restrict ourselves to this simple case.

  We caution the reader that while the use of types would also allow
  us to work at certain non-minimal levels, the most naive analogues
  of Theorem~\ref{thm:local global compatibility for modular curves}
  fail to hold at arbitrary tame levels. It seems that to formulate a
  clean statement, one should pass to infinite level at a finite set
  of primes, and formulate the compatibility statement in terms of the local
  Langlands correspondence in families of~\cite{MR3250061}, as is
  done in~\cite{emerton2010local}.

  However, it does not seem to be
  easy to prove this full local-global compatibility statement using
  only the methods of the present paper; indeed, the proof
  in~\cite{emerton2010local} ultimately makes use of mod~$p$
  multiplicity one theorems that rely on $q$-expansions, whereas in
  our approach, we are only using multiplicity one theorems that
  result from our patched modules being Cohen--Macaulay, and certain
  of our local
  deformation rings being regular (namely the minimal deformation rings at places not dividing~$p$, and the deformation rings
  considered in Lemma~\ref{lem:rbar has a unique Serre weight, and the deformation ring is smooth, and given by Tp}). 
  Note that in general the (non-minimal) local
  deformation rings at places away from~$p$ need not be regular (even
  after inverting~$p$), so that carrying out the patching construction
  below would result in a ring~$R_\infty$ that was no longer formally
  smooth over~$R_p$, to which the results of Section~\ref{sec:
  proof that arithmetic action is unique} would not apply.
\end{rem}

Let $\TT$ be the usual Hecke algebra acting on (completed) homology and
cohomology
of modular curves with (tame)
level $\Gamma_1(N(\rhobar))$ and $\cO$-coefficients; so
$\TT$ is an $\cO$-algebra, generated by the operators $T_l$, $S_l$
with $l\nmid Np$. Let $\m(\rhobar)$ be the maximal ideal of $\TT$ corresponding
to $\rhobar$ (so that $T_l-\tr\rhobar(\Frob_l)$ and
$lS_l-\det\rhobar(\Frob_l)$ are both zero in $\TT/\m(\rhobar)$). Let $R_{\Q,N(\rhobar)}$ be the universal
deformation ring for deformations of~$\rhobar$ that are minimally ramified
at primes~$l\ne p$, in the sense that they have the same conductor
as~$\rhobar|_{G_{\Ql}}$ (and in particular are unramified if~$l\nmid N(\rhobar)$). Let
$\rho^{\univ}_{\Q,N(\rhobar)}: G_{\Q} \to \GL_2(R_{\Q,N(\rhobar)})$
denote the corresponding universal deformation of $\rhobar$.

We now use the notation introduced in Section~\ref{sec: recalling
  from Gpatching}, so that in particular we write
$R_\infty:=R_p\widehat{\otimes}_{\cO}\cO[[x_1,\dots,x_d]]$ for some
$d\ge 0$. Patching the completed \'etale homology of the modular curves $Y_1(N(\rhobar))$
  (and using an argument of Carayol~\cite{MR1279611}, as
  in~\cite[\S 5.5]{emerton2010local} and~\cite[\S\S6.2, 6.3]{emertongeesavitt},
  to factor out the Galois action
  on the completed cohomology; see also~\cite[\S 9]{scholze} for the
  analogous patching construction for Shimura curves),
  we obtain (for some~$d\ge 0$) an $R_\infty[G]$-module $M_\infty$ with an arithmetic action,
with the further property that there is an ideal $\mathfrak{a}_\infty$ of $R_\infty$, an
  isomorphism of local $\cO$-algebras $R_\infty/\mathfrak{a}_\infty\isoto
  R_{\Q,N(\rhobar)}$, and an isomorphism of $R_{\Q,N(\rhobar)}[G\times
  G_{\Q}]$-modules \numequation\label{eqn: M infinity gives completed cohomology }(M_\infty/\mathfrak{a}_\infty)\otimes_{R_{\Q,N(\rhobar)}}
    (\rho^{\univ}_{\Q,N(\rhobar)})^{\ast}
    \isoto
  \widetilde{H}_{1,\et}\bigl(Y_1(N(\rhobar)),\cO\bigr)_{\m(\rhobar)}.
\end{equation}
Here $\widetilde{H}_{1,\et}\bigl(Y_1(N(\rhobar)),\cO\bigr)$ denotes completed
\'etale homology, as described for example in~\cite{MR2905536}. The
action of~$G_{\Q}$
    on the left hand side is via its action on
    $(\rho^{\univ}_{\Q,N(\rhobar)})^{\ast}$, which as in~(\ref{subsubsec:duals}) 
 denotes the
    $R_{\Q,N(\rhobar)}$-linear dual of
    $\rho^{\univ}_{\Q,N(\rhobar)}$.

\begin{rem}
  \label{rem: minimally ramified}Here we have used implicitly that the
  minimally ramified local (framed) deformation rings are all smooth,
  which follows for example from~\cite[Lem.\ 2.4.19]{cht}; this
  ensures that the ring~$R_\infty$ occurring in the patching argument
  is formally smooth over~$R_p$.
\end{rem}

\begin{thm}\label{thm:local global compatibility for modular curves}Let $p>3$
  be prime, and let $\rhobar:G_\Q\to\GL_2(\F)$ be an absolutely irreducible
  odd representation, with the property that
  $\rhobar|_{G_{\Q(\zeta_p)}}$ is irreducible, $N(\rhobar)$ is not
  divisible by any vexing primes, and
  $\rhobar|_{G_{\Qp}}$ satisfies Assumption~{\em \ref{assumption: rbar is
    generic enough}}. Then
  there is an isomorphism of
  $R_{\Q,N(\rhobar)}[G\times G_{\Q}]$-modules
  \[\widetilde{H}_{1,\et}\bigl(Y_1(N(\rhobar)),\cO\bigr)_{\m(\rhobar)}\isoto\wP\,
	  \cotimes_{R_p}
	  \, (\rho^{\univ}_{\Q,N(\rhobar)})^{\ast},\]
where the completed tensor product on the right-hand side is computed by regarding
$(\rho^{\univ}_{\Q,N(\rhobar)})^{\ast}$ as a $G_{\Q}$-representation
on an $R_p$-module via the natural morphism $R_p \to R_{\Q,N(\rhobar)}$.
\end{thm}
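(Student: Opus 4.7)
The plan is to combine the uniqueness theorem for arithmetic actions (Theorem~\ref{main}) with the patching isomorphism~\eqref{eqn: M infinity gives completed cohomology }. The point is that the left-hand side of the target isomorphism has already been expressed as a quotient and twist of the patched module $M_\infty$, while Theorem~\ref{main} tells us that (after extending scalars along $R_p\to R_\infty$) this patched module is nothing more than $\wP$. So the proof is essentially a rewriting exercise, provided one verifies that the identifications are compatible with all the relevant actions.

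First I would invoke Theorem~\ref{main}: the patched module~$M_\infty$ built from the completed homology of the modular curves satisfies axioms (AA1)--(AA4) (the relevant facts are that completed homology is locally admissible in an appropriate sense, that the local deformation rings away from~$p$ are formally smooth by Remark~\ref{rem: minimally ramified}, and that local-global compatibility at primes away from~$p$ together with the construction of the map $\eta$ in~\cite{Gpatch} give (AA3) and (AA4)). This yields an isomorphism of $R_\infty[G]$-modules
\[
M_\infty \;\cong\; \wP \cotimes_{R_p} R_\infty.
\]
Reducing modulo~$\mathfrak{a}_\infty$ and using the isomorphism $R_\infty/\mathfrak{a}_\infty \iso R_{\Q,N(\rhobar)}$, I get an isomorphism of $R_{\Q,N(\rhobar)}[G]$-modules
\[
M_\infty/\mathfrak{a}_\infty \;\cong\; \wP \cotimes_{R_p} R_{\Q,N(\rhobar)}.
\]
Substituting this into~\eqref{eqn: M infinity gives completed cohomology } and using associativity of (completed) tensor products yields
\[
\widetilde{H}_{1,\et}\bigl(Y_1(N(\rhobar)),\cO\bigr)_{\m(\rhobar)} \;\cong\; \wP \cotimes_{R_p} (\rho^{\univ}_{\Q,N(\rhobar)})^{\ast},
\]
as required, where the $G_{\Q}$-action on the right-hand side comes through the natural map $R_p\to R_{\Q,N(\rhobar)}$.

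The only real content, beyond bookkeeping, is the verification that the patched module $M_\infty$ coming from completed homology of modular curves really does fit into our axiomatic framework, so that Theorem~\ref{main} applies; this in turn depends on the fact that all the relevant framed deformation rings at primes $\ell \ne p$ are formally smooth (which uses the absence of vexing primes, as noted in Remark~\ref{rem: avoiding vexing primes} and Remark~\ref{rem: minimally ramified}), so that the patched ring~$R_\infty$ is formally smooth over $R_p$ and can be written as $R_p[[x_1,\dots,x_d]]$. The Galois/Hecke bookkeeping needed to pass between the completed cohomology of the tower of modular curves and an $R_\infty[G]$-module (i.e.\ factoring out the universal Galois representation via Carayol's lemma, as in~\cite[\S 5.5]{emerton2010local} or~\cite[\S\S6.2,6.3]{emertongeesavitt}) is the main technical obstacle, but it is entirely standard; once accomplished, the uniqueness statement in Theorem~\ref{main} does all the remaining work.
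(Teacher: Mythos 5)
Your proposal is correct and follows essentially the same route as the paper: apply Theorem~\ref{main} to the patched module, reduce modulo $\mathfrak{a}_\infty$ via $R_\infty/\mathfrak{a}_\infty\cong R_{\Q,N(\rhobar)}$, and then tensor with $(\rho^{\univ}_{\Q,N(\rhobar)})^{\ast}$ and invoke~\eqref{eqn: M infinity gives completed cohomology }. The only point the paper makes explicit that you leave implicit is that $M_\infty\neq 0$ (needed for the arithmetic-action axioms), which follows from the modularity of $\rhobar$ via Serre's conjecture.
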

\begin{proof}As noted before Remark \ref{rem: avoiding vexing primes}, 
$\rhobar$ is modular by Serre's
  conjecture, so in particular $M_\infty$ is not zero.
By Theorem~\ref{main} there is an isomorphism
of $R_\infty[G]$-modules \[M_\infty\cong
  \wP\wtimes_{\cO}\cO[[x_1,\dots,x_d]]
  \cong \wP\wtimes_{R_p} R_{\infty}
  .\]
  Quotienting out
by~$\mathfrak{a}_\infty$ yields an isomorphism
$$ M_{\infty}/\mathfrak{a}_{\infty} \cong \wP\wtimes_{R_p} R_{\Q,N(\rhobar)}.$$
The result now follows by tensoring both sides
with $(\rho^{\univ}_{\Q,N(\rhobar)})^{\ast}$ and applying~(\ref{eqn: M
  infinity gives completed cohomology }).\end{proof}

We now show how to compute the $\mathfrak{m}(\rhobar)$-torsion in the completed \'etale
cohomology of modular curves $\widetilde{H}^1_{\et}\bigl(Y_1(N(\rhobar)),\cO\bigr)$ as
a $\GL_2(\Qp)$-representation.

\begin{cor}\label{cor: the m-torsion in completed cohomology}Under the
 assumptions of Theorem~{\em \ref{thm:local global compatibility for
    modular curves}}, we have an isomorphism of $\F[G\times G_\Q]$-modules
$$ \widetilde{H}^1_{\et}\bigl(Y_1(N(\rhobar)),\F\bigr)[\m(\rhobar)]\simeq
 \kappa(\rhobar|_{G_{\Qp}})\otimes_{\F} \rhobar,
$$ where~$\kappa(\rhobar|_{G_{\Qp}})$ is the representation defined in
Proposition~{\em \ref{prop: an atome automorphe by any other name would smell as
    sweet}}.\end{cor}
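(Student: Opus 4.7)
The plan is to deduce this from Theorem \ref{thm:local global compatibility for modular curves} by a sequence of formal manipulations: Pontryagin duality, tensor-product reductions, and the defining property of $\kappa(\rbar)$ from Proposition \ref{prop: an atome automorphe by any other name would smell as sweet}.

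First I would set up the Pontryagin duality. Write $X := \widetilde{H}_{1,\et}\bigl(Y_1(N(\rhobar)),\cO\bigr)_{\m(\rhobar)}$, which is a compact $\cO[G\times G_{\Q}]$-module. Standard results on completed (co)homology of modular curves (e.g.\ from~\cite{MR2905536}) identify the completed cohomology with $E/\cO$ coefficients as the Pontryagin dual of completed homology with $\cO$ coefficients, and so in particular $\widetilde{H}^1_{\et}(Y_1(N(\rhobar)),E/\cO)_{\m(\rhobar)}\cong X^{\vee}$. Since $\m(\rhobar)$ contains $\varpi$, taking $\m(\rhobar)$-torsion gives
\[
\widetilde{H}^1_{\et}\bigl(Y_1(N(\rhobar)),\F\bigr)[\m(\rhobar)]\;=\;X^{\vee}[\m(\rhobar)]\;\cong\;\bigl(X/\m(\rhobar)X\bigr)^{\vee},
\]
by the usual adjunction between $\m$-torsion and quotient by $\m$. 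The ideal $\m(\rhobar)\subset \TT$ maps to the maximal ideal $\m_R\subset R_{\Q,N(\rhobar)}$ under the surjection $\TT_{\m(\rhobar)}\onto R_{\Q,N(\rhobar)}$ (a standard consequence of the construction of $R_{\Q,N(\rhobar)}$ together with the $R=\TT$ theorems), and these ideals induce the same quotient on $X$.

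Next I would compute $X/\m_R X$ using Theorem \ref{thm:local global compatibility for modular curves}, which gives an $R_{\Q,N(\rhobar)}[G\times G_{\Q}]$-equivariant isomorphism $X\iso \wP\wtimes_{R_p}(\rho^{\univ}_{\Q,N(\rhobar)})^{\ast}$. Since $(\rho^{\univ}_{\Q,N(\rhobar)})^{\ast}$ is free of rank $2$ as an $R_{\Q,N(\rhobar)}$-module, the completed tensor product commutes with the reduction modulo $\m_R$, and one obtains
\[
X/\m_R X\;\cong\;\bigl(\wP\wtimes_{R_p}R_{\Q,N(\rhobar)}\bigr)\otimes_{R_{\Q,N(\rhobar)}}(\rho^{\univ}_{\Q,N(\rhobar)})^{\ast}/\m_R\;\cong\;\bigl(\wP/\m_p\wP\bigr)\otimes_{\F}\rhobar^{\ast},
\]
where $\m_p\subset R_p$ is the maximal ideal (the preimage of $\m_R$ under $R_p\to R_{\Q,N(\rhobar)}$) and $\rhobar^{\ast}=\rhobar\otimes(\det\rhobar)^{-1}$ is the $\F$-linear dual.

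Finally I would Pontryagin dualize. For a finite-dimensional $\F$-vector space $V$ killed by $\varpi$, one has $V^{\vee}=\Hom_{\F}(V,\F)=V^{\ast}$, and since $\rhobar^{\ast}$ is finite-dimensional over $\F$ the tensor product commutes with Pontryagin duality:
\[
\bigl((\wP/\m_p\wP)\otimes_{\F}\rhobar^{\ast}\bigr)^{\vee}\;\cong\;\bigl(\wP/\m_p\wP\bigr)^{\vee}\otimes_{\F}(\rhobar^{\ast})^{\ast}\;\cong\;\kappa(\rhobar|_{G_{\Qp}})\otimes_{\F}\rhobar,
\]
using $(\F\wtimes_{R_p}\wP)^{\vee}=\kappa(\rhobar|_{G_{\Qp}})$ from Proposition \ref{prop: an atome automorphe by any other name would smell as sweet} and $(\rhobar^{\ast})^{\ast}\cong \rhobar$. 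Chaining these identifications yields the claimed isomorphism of $\F[G\times G_{\Q}]$-modules. The main technical step to verify is the first one, namely that Pontryagin duality between completed homology and cohomology, combined with localization at $\m(\rhobar)$, really delivers the identification $\widetilde{H}^1_{\et}(Y_1(N(\rhobar)),\F)[\m(\rhobar)]\cong (X/\m(\rhobar)X)^{\vee}$; however, this is essentially bookkeeping with the definitions in~\cite{MR2905536} and the fact that $X$ is a pseudocompact $\cO$-module.
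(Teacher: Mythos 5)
Your argument is correct and is essentially the paper's own proof: Pontryagin dualize, apply Theorem~\ref{thm:local global compatibility for modular curves}, reduce modulo the maximal ideal (using freeness of $(\rho^{\univ}_{\Q,N(\rhobar)})^{\ast}$ to pass the reduction through the completed tensor product), identify $(\F\wtimes_{R_p}\wP)^{\vee}$ with $\kappa(\rhobar|_{G_{\Qp}})$, and dualize back using $\rhobar^{\ast}\cong\rhobar^{\vee}$. The only difference is expository: you spell out the duality between completed homology and $E/\cO$-coefficient completed cohomology and the matching of $\m(\rhobar)$ with the maximal ideal of $R_{\Q,N(\rhobar)}$, steps the paper treats as implicit when it asserts that the Pontryagin dual of the left-hand side is $\widetilde{H}_{1,\et}\bigl(Y_1(N(\rhobar)),\cO\bigr)\wtimes_{R_{\Q,N(\rhobar)}}\F$.
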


\begin{proof} The Pontryagin dual of the left hand side is $\widetilde{H}_{1,\et}\bigl(Y_1(N(\rhobar)),\cO\bigr)\wtimes_{R_{\Q, N(\rhobar)}}\mathbb{F}$.
By Theorem~\ref{thm:local global compatibility for modular curves}, we have an isomorphism of $\F[G \times G_{\Q}]$-modules
$$\widetilde{H}_{1,\et}\bigl(Y_1(N(\rhobar)),\cO\bigr)\wtimes_{R_{\Q, N(\rhobar)}}
\mathbb{F}\simeq  (\F \wtimes_{R_p} \wP) \wtimes_{\F} \rhobar^{\ast}
\cong
\kappa(\rhobar|_{G_{\Qp}})^{\vee}\wtimes_{\F} \rhobar^{\ast},$$the last isomorphism following
from the definition of~$\kappa(\rhobar|_{G_{\Qp}})$.
Passing to Pontryagin duals (and noting that since~$\rhobar$ is an
$\F$-representation, we have $\rhobar^{\ast}=\rhobar^\vee$) gives the result.
\end{proof}

\begin{rem} Corollary~\ref{cor: the m-torsion in completed cohomology} together with
Proposition~ \ref{prop: an atome automorphe by any other name would smell as
    sweet} gives a description of the $G$-socle filtration of $\widetilde{H}^1_{\et}\bigl(Y_1(N(\rhobar)),\F\bigr)[\m(\rhobar)]$. Even more is true.

Since in Corollary \ref{identify_ring} we have identified the endomorphism ring of $\wP$ with $R_p$ and $\kappa(\rbar)$ is by definition $\F\wtimes_{R_p} \wP$,
a completely formal argument
(see the proof of Proposition 2.8 in \cite{paskunas2}) shows that $\kappa(\rbar)$ is up to isomorphism the unique representation in
$\Mod^{\mathrm{l.adm}}_{G}(\cO)$ that is maximal with respect to the following two properties:
\begin{enumerate}
\item the socle of $\kappa(\rbar)$ is $\pi$;
\item $\pi$ occurs as a subquotient of $\kappa(\rbar)$ with multiplicity $1$.
\end{enumerate}
(It is maximal in the sense that it cannot be embedded into any other strictly larger representation in
$\Mod^{\mathrm{l.adm}}_{G}(\cO)$ satisfying these two
properties.)

 Corollary~\ref{cor: the m-torsion in completed cohomology} shows
 that (after
 factoring out the $G_\Q$-action) the same characterisation
 carries over to
 $\widetilde{H}^1_{\et}\bigl(Y_1(N(\rhobar)),\F\bigr)[\m(\rhobar)]$. However, we warn the reader that a
 simple-minded application of  this recipe will not work in general,
 and in particular it fails if $\rhobar|_{G_{\Qp}}\cong
 \bigl(\begin{smallmatrix} \omega & \ast \\ 0 &
   1\end{smallmatrix}\bigr)\otimes \chi$.

More precisely, if  Assumption \ref{assumption: rbar is generic enough}
 is in force then~$\rbar$ is determined up to isomorphism by the data
 of its determinant and its unique irreducible subrepresentation. This information can be recovered from $\pi$, which in turn determines $\kappa(\rbar)$.
 If on the other hand $\rbar\cong \bigl(\begin{smallmatrix} \omega & \ast \\ 0 & 1\end{smallmatrix}\bigr)\otimes \chi$
  and $\End_{G_{\Qp}}(\rbar)=\F$ then the $G$-socle of the atome automorphe associated by Colmez
  in \cite[\S VII.4]{MR2642409} to $\rbar$, which we still call $\pi$, is the Steinberg representation twisted by $\chi\circ \det$.
  This representation still carries the information about the irreducible subrepresentation of $\rbar$ and
  the determinant of $\rbar$ but it does not determine $\rbar$ up to isomorphism, as it
  does not carry the information about the extension class
  in $\Ext^1_{G_{\Qp}}(\chi, \chi\omega)$ corresponding to $\rbar$. The maximal representation
  satisfying (1) and (2) above will contain the atome automorphe corresponding to $\rbar$ as a
  subrepresentation, but it will be strictly bigger. In fact it can be shown that it is the smallest
  representation that contains all the atomes automorphes
  corresponding to different non-zero
  extensions in $\Ext^1_{G_{\Qp}}(\chi, \chi\omega)$.
 \end{rem}

\begin{rem}
  \label{rem: we've done something new}Theorem~\ref{thm:local global
    compatibility for modular curves} and Corollary~\ref{cor: the m-torsion in
    completed cohomology}, when
  combined with Theorems~\ref{final_comparison} and~\ref{main} (which together
  show that~$\tP$ realises the usual $p$-adic local Langlands
  correspondence for~$\GL_2(\Qp)$), prove a local-global compatibility
  result for completed cohomology. They are new in the case that $\rhobar|_{G_{\Qp}}\cong \bigl ( \begin{smallmatrix} 1 & \ast \\ 0 &
    \omega \end{smallmatrix}\bigr)\otimes\chi$. In particular, they answer a question raised in Remark 1.2.9 in \cite{emerton2010local} by confirming the expectation of Remark 6.1.23 of \textit{loc.cit.}. In the other cases,
  Theorem~\ref{thm:local global  compatibility for modular curves} can be deduced from  Theorem 6.4.6
   in~\cite{emerton2010local} with $\wP$ replaced by a deformation of $\kappa(\rbar)^{\vee}$ to $R_p$,
   such that one obtains the universal deformation of $\rbar$ after applying Colmez's functor $\cV$ to it.
   If $\rhobar|_{G_{\Qp}}\cong \bigl ( \begin{smallmatrix} 1 & \ast \\ 0 &
    \omega \end{smallmatrix}\bigr)\otimes\chi$ then it can be shown that $\wP$ is not flat over $R_p$, and that
    is why the approach of \cite{emerton2010local} does not work in this case.
   \end{rem}

\begin{rem}\label{rem: division algebra case}
  Theorem~\ref{thm:local global compatibility for modular curves}
  and Corollary~\ref{cor: the m-torsion in completed cohomology} have analogues
  in more general settings when the group at $p$ is essentially $\GL_2(\Q_p)$
  (or a product of copies of $\GL_2(\Q_p)$). For example, taking~$M_\infty$ to be the
  patched module of Section~\ref{subsec: M infty}
  (as constructed in~\cite{Gpatch} for $n=2$),
  we obtain statements about the completed cohomology of unitary groups that
  are compact at infinity.

  Perhaps a case of greater interest is that of the completed
  cohomology of definite quaternion algebras over totally real fields. (One reason
  for this case to be of interest is its relationship to the cohomology of the
  Lubin-Tate tower as in~\cite[Thm. 6.2]{scholze}.) We
  expect that our results can be extended to this setting, although
  there is one wrinkle: in order to carry out Taylor--Wiles patching,
  we need to fix a central character, and as a consequence, our
  patched module has a fixed central character, and no longer
  satisfies the axioms of Section~\ref{sec: recalling
  from Gpatching}. One approach to this difficulty would be to formulate analogues of those axioms with an arbitrary fixed central
character, making use of the twisting constructions of
Section~\ref{sec:comparison} and ``capture" arguments of  \cite[\S2.1]{paskunas2}, but this leads to ugly statements.

Instead, we content ourselves with considering the case that the fixed
central character is the trivial character. In this case we can think
of our patched modules as modules for~$\PGL_2(\Qp)$, and natural
analogues of our axioms can be formulated in this setting; this is
carried out in~\cite[\S 5]{geenewtonderivedpatching}, where an
analogue of the results of Section~\ref{sec:
  proof that arithmetic action is unique} is proved. In fact, the
arguments there allow us to consider modules for a product of copies
of~$\PGL_2(\Qp)$, which is convenient when there is more than one
place lying over~$p$; accordingly, we work below with cohomology that
is completed at all primes above~$p$. (Of course, the case of
cohomology that is completed at a single prime above~$p$ can be
deduced from this by returning to finite level via taking appropriate
locally algebraic vectors.)

As explained in Remark~\ref{rem: avoiding vexing primes}, one has to
take some care with ramification at places away from~$p$, and we
therefore content ourselves with considering quaternion algebras that
split at all finite places. The reader wishing to prove extensions of
these results to more general quaternion algebras is advised to
examine the patching arguments of~\cite[\S 4]{geekisin}, which work in
this setting. We further caution the reader that we have not attempted
to check every detail of the expected result explained below.

Let $F$ be a totally real field in which $p\ge 5$ splits completely, and let~$D$ be
a quaternion algebra over $F$ that is split at all finite places and  definite
at all infinite places (note that in particular this requires~$[F:\Q]$
to be even).

Let~$\rhobar:G_F\to\GL_2(\F)$ be absolutely irreducible, and assume
that~$\rhobar|_{G_{F(\zeta_p)}}$ is irreducible, and that
$\det\rhobar=\omega^{-1}$. Suppose that~$\rhobar$ has no vexing
primes; that is, if~$v\nmid p$ is a finite place at which~$\rhobar$ is
ramified and $\mathbf{N}v\equiv -1\pmod{p}$, and~$\rhobar|_{G_{F_v}}$
is irreducible, then~$\rhobar|_{I_{F_v}}$ is also
irreducible. Finally, suppose that  for each
place~$v|p$, $\rhobar|_{G_{F_{v}}}$ satisfies
Assumption~\ref{assumption: rbar is generic enough}.

Since~$D$ splits at all finite places, we can consider the tame level subgroup $U_1(N(\rhobar))\subset \PGL_1(D\otimes\mathbb{A}_{F}^{\infty,p})\simeq \PGL_2(\A_F^{\infty,p})$
given by the image of those matrices in $\GL_2(\A_F^{\infty,p})$ that are unipotent and upper triangular modulo $N(\rhobar)$.
Let $\widetilde{H}_0(U_1(N(\rhobar)),\cO)$ (resp.\ $\widetilde{H}^0(U_1(N(\rhobar)),\cO)$) denote
the completed homology (resp.\ cohomology) of the tower of locally
symmetric spaces associated to $\PGL_1(D)$ with tame
level~$U_1(N(\rhobar))$. (Note that at finite level, the locally symmetric spaces
are just finite sets of points.)

We assume that~$\rhobar$ is modular, in the following sense: $\rhobar$
determines a maximal ideal $\m(\rhobar)$ in the spherical
Hecke algebra (generated by Hecke operators at places not dividing~$p$
at which~$\rhobar$ is unramified) acting on
$\widetilde{H}_0(U_1(N(\rhobar)),\cO)$ and we assume that
$\widetilde{H}_0(U_1(N(\rhobar)),\cO)_{\m(\rhobar)}\not = 0$.

Let $R_{\rhobar}$ be the universal deformation ring
for deformations of $\rhobar$ that are minimally ramified at places
not dividing~$p$, and which have
determinant~$\varepsilon^{-1}$. For each place~$v|p$, let~$R_v$ be the
universal deformation ring for deformations of~$\rhobar|_{G_{F_v}}$
with determinant~$\varepsilon^{-1}$, and set~$R_p:=\wtimes_{v|p}R_v$.

Set~$G=\prod_{v|p}\PGL_1(D_v)$, which we identify with $\prod_{v|p}\PGL_2(\Q_p)$ via a fixed isomorphism.
By patching the completed homology
$\widetilde{H}_0(U_1(N(\rhobar)),\cO)_{\m(\rhobar)}$ (with a variation of the argument in~\cite[\S
9]{scholze}), we obtain\footnote{We caution the reader that one should carefully check this claim and we have not done so.} a ring~$R_\infty$ which is a power series ring
over~$R_p$, and an $R_\infty[G]$-module $M_\infty$ with an arithmetic action
in the sense of~\cite[\S 5.2]{geenewtonderivedpatching},
together
with an ideal $\ga_\infty\subset R_\infty$ such that $ R_\infty/\ga_\infty \cong R_{\rhobar}$
as local $\cO$-algebras and $M_\infty/\ga_\infty \cong \widetilde{H}_0(U_1(N(\rhobar)),\cO)_{\m(\rhobar)}$
as $R_{\rhobar}[G]$-modules.
(Note that we can ensure the smoothness of~$R_\infty$ over~$R_p$
since we are assuming that $D$ splits at all finite places and that there are no vexing primes.)

Applying~\cite[Prop.\ 5.2.2]{geenewtonderivedpatching} now gives a $G$-equivariant isomorphism
\[ \widetilde{H}_0(U_1(N(\rhobar)),\cO)_{\m(\rhobar)}\simeq (\wtimes_{v|p}\wP^1(\rhobar|_{G_{F_v}}))\wtimes_{R_p}R_{\rhobar},\]  where~$\wP^1(\rhobar|_{G_{F_v}})$ denotes the
projective envelope considered in Section~\ref{sec:comparison} in the
case that~$\rbar=\rhobar|_{G_{F_v}}$ and $\psi=1$. Arguing as in the proof of Corollary~\ref{cor: the
m-torsion in completed cohomology}, we obtain a $G$-equivariant isomorphism
\[ \widetilde{H}^0(U_1(N(\rhobar)),\F)[\m(\rhobar)]\simeq
  \wtimes_{v|p}\kappa(\rhobar|_{G_{F_v}}),\] where
$\kappa(\rhobar|_{G_{F_v}})$ denotes the representation  defined in Proposition~\ref{prop: an atome automorphe by any other name would smell as
    sweet}.

One can obtain analogous results in the case where~$[F:\Q]$ is odd and $D$ is split at one infinite place of $F$ and ramified at
all the others. In this case, one works with a tower of Shimura curves.
The main difference to the argument is to note that $\rhobar$ only contributes to completed
homology (or cohomology) in degree $1$ (since the $D^\times (\mathbb{A}^\infty)$-action factors through the reduced norm in degree $0$),
and  the $G_F$-action can be factored out by the same argument as for modular curves.
\end{rem}

\acknowledgements{This paper has its germ in conversations between three of us (M.E.,
T.G., V.P.) during the 2011 Durham Symposium on Automorphic Forms and
Galois Representation, and we would like to thank the organizers Fred
Diamond, Payman Kassaei and Minhyong Kim as well as Durham University,
EPSRC and the LMS for providing a fertile atmosphere for discussion.
The ideas of the paper were developed further when the six of us
participated in focussed research groups on ``The $p$-adic Langlands
program for non-split groups'' at the Banff Centre and AIM; we
would like to thank AIM and BIRS for providing an excellent working
atmosphere, and for their financial support. We would also like to
thank the anonymous referees for their comments on the paper.}

\bibliographystyle{amsalpha}
\bibliography{breuil-schneider}
\end{document}